\newcommand{\arXiv}[1]{arXiv:\,\href{http://arxiv.org/abs/#1}{#1}}
\newcommand{\MSN}[2]{MR:\,\href{http://www.ams.org/mathscinet-getitem?mr=MR#1}{#1}  (#2)}
\newcommand{\Zbl}[1]{Zbl:\,\href{http://www.zentralblatt-math.org/zmath/en/search/?q=an:#1}{#1}}
\newcommand{\half}{\ensuremath{\protect\tfrac{1}{2}}}
\newcommand{\eighth}{\ensuremath{\protect\tfrac{1}{8}}}
\newcommand{\quarter}{\ensuremath{\protect\tfrac{1}{4}}}
\newcommand{\sixth}{\ensuremath{\protect\tfrac{1}{6}}}
\newcommand{\third}{\ensuremath{\protect\tfrac{1}{3}}}
\newcommand{\Oh}[1]{\ensuremath{\protect\mathcal{O}(#1)}}
\theoremstyle{plain} 
\newtheorem{theorem}{Theorem}[section]
\newtheorem{lemma}[theorem]{Lemma}
\newtheorem{corollary}[theorem]{Corollary}
\newtheorem{proposition}[theorem]{Proposition}
\theoremstyle{definition} 
\newcommand{\ceil}[1]{\ensuremath{\protect\lceil#1\rceil}}
\newcommand{\FLOOR}[1]{\ensuremath{\protect\left\lfloor#1\right\rfloor}}
\newcommand{\floor}[1]{\ensuremath{\protect\lfloor#1\rfloor}}
\newcommand{\SET}[1]{\ensuremath{\protect\left\{#1\right\}}}
\newcommand{\thmlabel}[1]{\label{thm:#1}}
\newcommand{\thmref}[1]{Theorem~\ref{thm:#1}}
\newcommand{\twothmref}[2]{Theorems~\ref{thm:#1} and \ref{thm:#2}}
\newcommand{\lemlabel}[1]{\label{lem:#1}}
\newcommand{\lemref}[1]{Lemma~\ref{lem:#1}}
\newcommand{\twolemref}[2]{Lemmas~\ref{lem:#1} and \ref{lem:#2}}
\newcommand{\eqnlabel}[1]{\label{eqn:#1}}
\newcommand{\eqnref}[1]{\eqref{eqn:#1}}
\newcommand{\Eqnref}[1]{Equation~\eqref{eqn:#1}}
\newcommand{\figlabel}[1]{\label{fig:#1}}
\newcommand{\figref}[1]{Figure~\ref{fig:#1}}
\newcommand{\seclabel}[1]{\label{sec:#1}}
\newcommand{\secref}[1]{Section~\ref{sec:#1}}
\newcommand{\twosecref}[2]{Sections~\ref{sec:#1} and \ref{sec:#2}}
\newcommand{\threesecref}[3]{Sections~\ref{sec:#1}, \ref{sec:#2} and \ref{sec:#3}}
\newcommand{\corlabel}[1]{\label{cor:#1}}
\newcommand{\corref}[1]{Corollary~\ref{cor:#1}}
\newcommand{\twocorref}[2]{Corollaries~\ref{cor:#1} and \ref{cor:#2}}
\newcommand{\proplabel}[1]{\label{prop:#1}}
\newcommand{\propref}[1]{Proposition~\ref{prop:#1}}
\newcommand{\twopropref}[2]{Propositions~\ref{prop:#1} and \ref{prop:#2}}
\newcommand{\tablabel}[1]{\label{tab:#1}}
\newcommand{\tabref}[1]{Table~\ref{tab:#1}}
\newcommand{\Figure}[3][htb]{
  \begin{figure}[#1]
    \begin{center}\includegraphics{#2}\end{center}
    \vspace*{-1ex}
    \caption{\figlabel{#2}#3}
  \end{figure}}
\newcommand{\CR}{Chandran and Raju~\citep{CR05,Raju06}}
\newcommand{\A}[1]{\ensuremath{#1^+}}
\newcommand{\B}[1]{\ensuremath{#1^-}}
\newcommand{\CartProd}{\ensuremath{\square}}
\newcommand{\SP}[2]{\ensuremath{#1{\,\boxtimes\,}#2}}
\newcommand{\CP}[2]{\ensuremath{#1{\,\CartProd\,}#2}}
\newcommand{\TCP}[3]{\ensuremath{#1{\,\CartProd\,}#2{\,\CartProd\,}#3}}
\newcommand{\CCP}[3]{\ensuremath{#1{\,\CartProd\,}#2{\,\CartProd}\cdots{\CartProd\,}#3}}
\newcommand{\pw}[1]{\ensuremath{\textup{\textsf{pw}}(#1)}}
\newcommand{\tw}[1]{\ensuremath{\textup{\textsf{tw}}(#1)}}
\newcommand{\bw}[1]{\ensuremath{\textup{\textsf{bw}}(#1)}}
\newcommand{\cd}[1]{\ensuremath{\gamma_{\textup{\textsf{c}}}(#1)}}
\newcommand{\bal}[1]{\ensuremath{\textup{\textsf{bal}}(#1)}}
\newcommand{\hang}[1]{\ensuremath{\textup{\textsf{hang}}(#1)}}
\newcommand{\rad}[1]{\ensuremath{\textup{\textsf{rad}}(#1)}}
\newcommand{\verts}[1]{\ensuremath{\textup{\textsf{v}}(#1)}}
\newcommand{\edges}[1]{\ensuremath{\textup{\textsf{e}}(#1)}}
\newcommand{\STAR}[1]{\ensuremath{\textup{\textsf{star}}(#1)}}
\newcommand{\X}{\ensuremath{\mathcal{X}}}
\title[Clique Minors in Cartesian Products of Graphs]{Clique Minors in \\Cartesian Products of Graphs}
\author{David~R.~Wood}
\address{\newline Department of Mathematics and Statistics
\newline The University of Melbourne
\newline Melbourne, Australia}
\email{woodd@unimelb.edu.au}
\thanks{Supported by QEII Research Fellowship from the Australian Research Council. Research initiated at the Universitat Polit{\`e}cnica de Catalunya (Barcelona, Spain) where supported by a Marie Curie Fellowship of the European Commission under contract MEIF-CT-2006-023865, and by the projects MEC MTM2006-01267 and DURSI 2005SGR00692.}
\keywords{graph minor, cartesian product, Hadwiger number}
\subjclass{graph minors 05C83, structural characterization of types of graphs 05C75}
\begin{document} 
 
\begin{abstract}
  A \emph{clique minor} in a graph $G$ can be thought of as a set of
  connected subgraphs in $G$ that are pairwise disjoint and pairwise
  adjacent. The \emph{Hadwiger number} $\eta(G)$ is the maximum
  cardinality of a clique minor in $G$. It is one of the principle
  measures of the structural complexity of a graph.

This paper studies clique minors in the Cartesian product $G\square H$. Our main result is a rough structural characterisation theorem for Cartesian products with bounded Hadwiger number. It implies that if the product of two sufficiently large graphs has bounded Hadwiger number then it is one of the following graphs:
\begin{itemize}
\item a planar grid with a vortex of bounded width in the outerface,
\item a cylindrical grid with a vortex of bounded width in each of the two `big' faces, or
\item a toroidal grid.
\end{itemize}

Motivation for studying the Hadwiger number of a graph includes Hadwiger's Conjecture, which asserts that the chromatic number $\chi(G)\leq\eta(G)$. It is open whether Hadwiger's Conjecture holds for every Cartesian product. We prove that \CP{G}{H} (where $\chi(G)\geq\chi(H)$) satisfies Hadwiger's Conjecture whenever:
\begin{itemize}
\item $H$ has at least $\chi(G)+1$ vertices, or
\item the treewidth of $G$ is sufficiently large compared to $\chi(G)$.
\end{itemize}
On the other hand, we prove that Hadwiger's Conjecture holds for all Cartesian products if and only if it holds for all $G\square K_2$. We then show that $\eta(G\square K_2)$ is tied to the treewidth of $G$. 

We also develop connections with pseudoachromatic colourings and
connected dominating sets that imply near-tight bounds on the Hadwiger
number of grid graphs (Cartesian products of paths) and Hamming graphs
(Cartesian products of cliques). 
\end{abstract}

\date{November 8, 2007; revised: \today}

\maketitle
\newpage
\tableofcontents
\newpage

\section{Introduction}
\seclabel{Intro}

A \emph{clique minor} in a graph $G$ can be thought of as a set of
connected subgraphs in $G$ that are pairwise disjoint and pairwise
adjacent. The \emph{Hadwiger number} $\eta(G)$ is the maximum
cardinality of a clique minor in $G$. It is one of the principle
measures of the structural complexity of a graph.

Robertson and Seymour \citep{RS-GraphMinorsXVI-JCTB03} proved a rough
structural characterisation of graphs with bounded Hadwiger number. It
says that such a graph can be constructed by a combination of four
ingredients: graphs embedded in a surface of bounded genus, vortices
of bounded width inside a face, the addition of a bounded number of
apex vertices, and the clique-sum operation. Moreover, each of these
ingredients is essential. This result is at the heart of Robertson and
Seymour's proof of Wagner's Conjecture
\citep{RS-GraphMinorsXX-JCTB04}: Every infinite set of finite graphs
contains two graphs, one of which is a minor of the other.

This paper studies clique minors in the (Cartesian) product
\CP{G}{H}. Our main result is a rough structural characterisation of
products with bounded Hadwiger number, which is less rough than the
far more general result by Robertson and Seymour. It says that for
connected graphs $G$ and $H$, each with at least one edge, \CP{G}{H}
has bounded Hadwiger number if and only if at least one of the
following conditions are satisfied:
\begin{itemize}
\item $G$ has bounded treewidth and $H$ has bounded order,
\item $H$ has bounded treewidth and $G$ has bounded order, or
\item $G$ has bounded hangover and $H$ has bounded hangover,
\end{itemize}
where hangover is a parameter defined in \secref{RoughGraphs}. Basically, a graph with bounded hangover is
either a cycle or consists of a path of degree-2 vertices joining two
connected subgraphs of bounded order with no edge between the
subgraphs. This implies that if the product of two sufficiently large
graphs has bounded Hadwiger number then it is one of the following
graphs:
\begin{itemize}
\item a planar grid (the product of two paths) with a vortex of
  bounded width in the outerface,
\item a cylindrical grid (the product of a path and a cycle) with a
  vortex of bounded width in each of the two `big' faces, or
\item a toroidal grid (the product of two cycles).
\end{itemize}
The key case for the proof of this structure theorem is when $G$ and
$H$ are trees. This case is handled in \secref{TwoTrees}. The proof
for general graphs is given in
\twosecref{GeneralComplete}{RoughGraphs}.

Before proving our main results we develop connections with
pseudoachromatic colourings (\secref{Pseudo}) and connected dominating
sets (\twosecref{StarMinors}{Domination}) that imply near-tight bounds
on the Hadwiger number of grid graphs (products of paths;
\threesecref{Grids}{Pseudo}{Domination}) and Hamming graphs (products
of cliques; \secref{Hamming}). As summarised in \tabref{Results}, in
each case, we improve the best previously known lower bound by a
factor of between $\Omega(n^{1/2})$ and $\Omega(n^{3/2})$ to conclude
asymptotically tight bounds for fixed $d$.

\begin{table}[hbt]
  \tablabel{Results}
  \caption{Improved lower bounds on the Hadwiger number of specific graphs.}
  \begin{tabular}{ccccc}
    \hline
    graph	& $d$	& previous best	& new result 	& reference \\\hline
    grid graph $P_n^d$	&
    even	&
    $\Omega(n^{(d-2)/2})$ &
    $\Theta(n^{d/2})$	& 
    \thmref{EvenGrid}\\
    grid graph $P_n^d$	&
    odd	&
    $\Omega(n^{(d-1)/2})$ &
    $\Theta(n^{d/2})$	& 
    \thmref{OddDimGrid}\\
    Hamming graph $K_n^d$	&
    even	&
    $\Omega(n^{(d-2)/2})$	& 
    $\Theta(n^{(d+1)/2})$	&
    \thmref{CompleteProduct}\\
    Hamming graph $K_n^d$	&
    odd	& 
    $\Omega(n^{(d-1)/2})$	& 
    $\Theta(n^{(d+1)/2})$	&
    \thmref{CompleteProduct}\\\hline
  \end{tabular}
\end{table}

\subsection{Hadwiger's Conjecture}

Motivation for studying clique minors includes Hadwiger's Conjecture,
a far reaching generalisation of the $4$-colour theorem, which states
that the chromatic number $\chi(G)\leq\eta(G)$ for every graph $G$. It
is open whether Hadwiger's Conjecture holds for every product. The
following classes of products are known to satisfy Hadwiger's
Conjecture (where $G$ and $H$ are connected and $\chi(G)\geq\chi(H)$):
\begin{itemize}
\item The product of sufficiently many graphs relative to their
  maximum chromatic number satisfies Hadwiger's Conjecture
  \citep{CS-DM07}.
\item If $\chi(H)$ is not too small relative to $\chi(G)$, then
  \CP{G}{H} satisfies Hadwiger's Conjecture \citep{CR05,Raju06}.
\end{itemize}
See \secref{HadwigerConjecture} for precise versions of this
statements. We add to this list as follows:
\begin{itemize}
\item If $H$ has at least $\chi(G)+1$ vertices, then \CP{G}{H}
  satisfies Hadwiger's Conjecture (\thmref{HadwigerBigH}).
\item If the treewidth of $G$ is sufficiently large compared to
  $\chi(G)$, then \CP{G}{H} satisfies Hadwiger's Conjecture
  (\thmref{HadwigerBigTW}).
\end{itemize}
On the other hand, we prove that Hadwiger's Conjecture holds for all
\CP{G}{H} with $\chi(G)\geq\chi(H)$ if and only if Hadwiger's
Conjecture holds for \CP{G}{K_2}. We then show that
$\eta(\CP{G}{K_2})$ is tied to the treewidth of $G$. All these results
are presented in \secref{HadwigerConjecture}.

Clique minors in products have been previously considered by a number
of authors \citep{Zelinka-MS76, Miller-DM78, Ivanco88, ABPS97,
  Kotlov-EuJC01, CR05, Raju06, CS-DM07}. In related work,
\citet{XuYang-DM06} and \citet{Spacapan08} studied the connectivity of
products, \citet{DS-RSA06} studied minors in lifts of graphs, and
\citet{Goldberg-LAA09} studied the Colin de Verdi\`ere  number of
products. See \citep{IKR,GraphProducts,ProductsHandbook} for more on graph products. 

\section{Preliminaries}

All graphs considered in this paper are undirected, simple, and
finite; see \citep{Bollobas,Diestel00}. Let $G$ be a graph with vertex
$V(G)$ and edge set $E(G)$. Let $\verts{G}=|V(G)|$ and
$\edges{G}=|E(G)|$ respectively denote the \emph{order} and
\emph{size} of $G$. Let $\Delta(G)$ denote the maximum degree of $G$.
The  \emph{chromatic number} of $G$, denoted by $\chi(G)$, is the
  minimum integer $k$ such that each vertex of $G$ can be assigned one
  of $k$ colours such that adjacent vertices receive distinct
  colours. Let $K_n$ be the complete graph with $n$ vertices. A \emph{clique} of
a graph $G$ is a complete subgraph of $G$. The \emph{clique number} of
$G$, denoted by $\omega(G)$, is the maximum order of a clique of
$G$. Let $P_n$ be the path with $n$ vertices. By default, $V(K_n)=[n]$
and $P_n=(1,2,\dots,n)$. A \emph{leaf} in a graph is a vertex of
degree $1$. Let $S_n$ be the star graph with $n$ leaves; that is,
$S_n=K_{1,n}$.

\subsection{Vortices}

Consider a graph $H$ embedded in a surface; see \citep{MoharThom}. Let
$(v_1,v_2,\dots,v_k)$ be a facial cycle in $H$. Consider a graph $G$
obtained from $H$ by adding sets of vertices $S_1,S_2,\dots,S_k$
(called \emph{bags}), such that for each $i\in[k]$ we have $v_i\in
S_i\cap V(H)\subseteq\{v_1,\dots,v_k\}$, and for each vertex $v\in
\cup_iS_i$, if $R(v):=\{i\in[k],v\in S_i\}$ then for some $i,j$,
either $R(v)=[i,j]$ or $R(v)=[i,k]\cup[1,j]$, and for each edge $vw\in
E(G)$ with $v,w\in\cup_i S_i$ there is some $i\in[k]$ for which
$v,w\in S_i$. Then $G$ is obtained from $H$ by \emph{adding a vortex
  of width} $\max_i|S_i|$.

\subsection{Graph Products}

Let $G$ and $H$ be graphs. The \emph{Cartesian} (or \emph{square})
\emph{product} of $G$ and $H$, denoted by \CP{G}{H}, is the graph with
vertex set $$V(\CP{G}{H})\,:=\,V(G)\times V(H)\,:=\,\{(v,x):v\in
V(G),x\in V(H)\}\enspace,$$ where $(v,x)(w,y)$ is an edge of \CP{G}{H}
if and only if $vw\in E(G)$ and $x=y$, or $v=w$ and $xy\in E(H)$.

Assuming isomorphic graphs are equal, the Cartesian product is
commutative and associative, and $\CCP{G_1}{G_2}{G_d}$ is
well-defined. We can consider a Cartesian product
$G:=\CCP{G_1}{G_2}{G_d}$ to have vertex set
$$V(G)=\{v=(v_1,v_2,\dots,v_d):v_i\in V(G_i),i\in[d]\}\enspace,$$
where $vw\in E(G)$ if and only if $v_iw_i\in E(G_i)$ for some $i$, and
$v_j=w_j$ for all $j\ne i$; we say that the edge $vw$ is in
\emph{dimension} $i$.  For a graph $G$ and integer $d\geq1$, let $G^d$
denote the $d$-fold Cartesian product 
$$G^d:=\underbrace{G\,\square\, G\,\square\,\cdots\,\square\, G}_d\enspace.$$

Since the Cartesian product is the focus of this paper, it will
henceforth be simply referred to as the \emph{product}. Other graph
products will be briefly discussed. The \emph{direct product} $G\times
H$ has vertex set $V(G)\times V(H)$, where $(v,x)$ is adjacent to
$(w,y)$ if and only if $vw\in E(G)$ and $xy\in E(H)$. The \emph{strong
  product} \SP{G}{H} is the union of \CP{G}{H} and $G\times H$. The
\emph{lexicographic product} (or \emph{graph composition}) $G\cdot H$
has vertex set $V(G)\times V(H)$, where $(v,x)$ is adjacent to $(w,y)$
if and only if $vw\in E(G)$, or $v=w$ and $xy\in E(H)$. Think of
$G\cdot H$ as being constructed from $G$ by replacing each vertex of
$G$ by a copy of $H$, and replacing each edge of $G$ by a complete
bipartite graph. Note that the lexicographic product is not
commutative.

\subsection{Graph Minors}

A graph $H$ is a \emph{minor} of a graph $G$ if $H$ can be obtained
from a subgraph of $G$ by contracting edges. For each vertex $v$ of
$H$, the connected subgraph of $G$ that is contracted into $v$ is
called a \emph{branch set} of $H$.  Two subgraphs $X$ and $Y$ in $G$
are \emph{adjacent} if there is an edge with one endpoint in $X$ and
the other endpoint in $Y$. A $K_n$-minor of $G$ is called a
\emph{clique minor}. It can be thought of as $n$ connected subgraphs
$X_1,\dots,X_n$ of $G$, such that distinct $X_i$ and $X_j$ are
disjoint and adjacent. The \emph{Hadwiger number} of $G$, denoted by
$\eta(G)$, is the maximum $n$ such that $K_n$ is a minor of $G$.

The following observation is used repeatedly.

\begin{lemma}
  \lemlabel{TotalMinor} If $H$ is a minor of a connected graph $G$,
  then $G$ has an $H$-minor such that every vertex of $G$ is in some
  branch set.
\end{lemma}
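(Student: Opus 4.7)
The plan is to start with any $H$-minor of $G$, with branch sets $X_1,\ldots,X_n$, and then enlarge the branch sets greedily until every vertex of $G$ is covered, without destroying the $H$-minor structure. The key observation is that enlarging a branch set by a vertex adjacent to it preserves connectivity of that set, while enlarging any branch set (regardless of how) preserves all existing inter-branch-set adjacencies.

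More concretely, I would argue by iteration. Let $U:=V(G)\setminus\bigcup_i V(X_i)$. If $U=\emptyset$ we are done. Otherwise, since $G$ is connected and $\bigcup_i V(X_i)\ne\emptyset$ (as $H$ has at least one vertex, assuming the trivial case $n=0$ is vacuous), there must exist an edge $uv\in E(G)$ with $u\in U$ and $v\in V(X_i)$ for some $i$. Replace $X_i$ by the subgraph $X_i'$ obtained by adding the vertex $u$ and the edge $uv$. Then $X_i'$ is connected (being $X_i$ with a pendant attached), the sets $V(X_1),\ldots,V(X_{i-1}),V(X_i'),V(X_{i+1}),\ldots,V(X_n)$ are still pairwise disjoint (since $u\notin\bigcup_j V(X_j)$), and each pair $X_j,X_k$ with $j\ne k$ remains adjacent because we only added vertices/edges. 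Hence $X_1,\ldots,X_i',\ldots,X_n$ is still an $H$-minor of $G$, but $|U|$ has decreased by one.

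Iterating this process (which must terminate after at most $|V(G)|$ steps) yields an $H$-minor whose branch sets partition a set containing all of $V(G)$, as required. The main obstacle is essentially cosmetic: one must check that the pairwise-adjacency condition for the $H$-minor is preserved, but since adjacency between $X_j$ and $X_k$ is witnessed by a single edge of $G$ and enlarging branch sets only adds possible witnesses, this is immediate. The use of connectedness of $G$ is exactly to guarantee, at each step, the existence of a border edge from $U$ to some $X_i$.
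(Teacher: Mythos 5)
Your proposal is correct and follows essentially the same approach as the paper's proof: both repeatedly use connectedness of $G$ to find an uncovered vertex adjacent to some branch set, absorb it into that branch set, and observe that connectivity, disjointness, and pairwise adjacency are all preserved. The only difference is that you spell out the termination argument and the adjacency-preservation step in more detail, which the paper leaves implicit.
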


\begin{proof}
  Start with an $H$-minor of $G$. If some vertex of $G$ is not in a
  branch set, then since $G$ is connected, some vertex $v$ of $G$ is
  not in a branch set and is adjacent to a vertex that is in a branch
  set $X$. Adding $v$ to $X$ gives an $H$-minor using more vertices of
  $G$. Repeat until every vertex of $G$ is in some branch set.
\end{proof}

In order to describe the principal result of this paper
(\thmref{RoughGraph}), we introduce the following formalism. Let
$\alpha:\X\rightarrow\mathbb{R}$ and $\beta:\X\rightarrow\mathbb{R}$
be functions, for some set \X. Then $\alpha$ and $\beta$ are
\emph{tied} if there is a function $f$ such that $\alpha(x)\leq
f(\beta(x))$ and $\beta(x)\leq f(\alpha(x))$ for all
$x\in\mathcal{X}$. \thmref{RoughGraph} presents a function that is
tied to $\eta(\CP{G}{H})$.

\subsection{Upper Bounds on the Hadwiger Number}

To prove the tightness of our lower bound constructions, we use the
following elementary upper bounds on the Hadwiger number.

\begin{lemma}
  \lemlabel{UpperBound} For every connected graph $G$ with average
  degree at most $\delta\geq2$,
  \begin{align*}
    \eta(G)\leq\sqrt{(\delta-2)\verts{G}}+3\enspace.
  \end{align*}
\end{lemma}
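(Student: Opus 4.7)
The plan is to use a standard edge-counting argument based on the branch sets of a maximum clique minor. Let $n := \eta(G)$. By \lemref{TotalMinor}, since $G$ is connected, we may assume $G$ has a $K_n$-minor with branch sets $X_1, \dots, X_n$ such that every vertex of $G$ lies in some $X_i$; in particular, $\{X_1, \dots, X_n\}$ partitions $V(G)$.

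Next, I would lower-bound $\edges{G}$ by counting two disjoint types of edges. First, each branch set $X_i$ is connected, so it contains a spanning tree with $|X_i|-1$ edges, contributing $\sum_i (|X_i|-1) = \verts{G} - n$ internal edges in total. Second, each pair of distinct branch sets $X_i, X_j$ must be joined by at least one edge (disjoint from all internal edges), contributing $\binom{n}{2}$ inter-branch edges. Hence
\begin{equation*}
  \edges{G} \;\geq\; \verts{G} - n + \binom{n}{2}.
\end{equation*}

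The assumption that $G$ has average degree at most $\delta$ gives $\edges{G} \leq \tfrac{\delta}{2}\verts{G}$, so combining the two bounds yields
\begin{equation*}
  \tfrac{\delta}{2}\verts{G} \;\geq\; \verts{G} - n + \tfrac{n(n-1)}{2},
\end{equation*}
which rearranges to the quadratic inequality $n^2 - 3n \leq (\delta - 2)\verts{G}$. Completing the square gives $(n - \tfrac{3}{2})^2 \leq (\delta - 2)\verts{G} + \tfrac{9}{4}$, and the elementary inequality $\sqrt{a+b} \leq \sqrt{a} + \sqrt{b}$ applied with $a = (\delta-2)\verts{G}$ and $b = \tfrac{9}{4}$ yields $n \leq \sqrt{(\delta-2)\verts{G}} + 3$.

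There is no real obstacle here; the only mildly delicate point is ensuring that the internal spanning-tree edges and the inter-branch edges are counted disjointly (they are, since the $X_i$ partition $V(G)$), and handling the final $+3$ cleanly via subadditivity of the square root rather than leaving a messier expression.
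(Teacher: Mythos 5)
Your proof is correct and matches the paper's argument essentially line by line: same invocation of Lemma~\ref{lem:TotalMinor}, same decomposition of $\edges{G}$ into inter-branch edges plus spanning-tree edges within branch sets, and the same quadratic inequality $n^2-3n\leq(\delta-2)\verts{G}$. The only cosmetic difference is that you finish by completing the square and using $\sqrt{a+b}\leq\sqrt{a}+\sqrt{b}$, whereas the paper just says ``the result follows from the quadratic formula''; both are equivalent.
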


\begin{proof}
  Let $k:=\eta(G)$. Say $X_1,\dots,X_k$ are the branch sets of
  $K_k$-minor in $G$. By \lemref{TotalMinor}, we may assume that every
  vertex is in some branch set. Since at least $\binom{k}{2}$ edges
  have endpoints in distinct branch sets,
  \begin{equation*}
    \eqnlabel{UpperBound}
    \edges{G}
    \geq\binom{k}{2}+\sum_{i=1}^k\edges{X_i}
    \geq\binom{k}{2}+\sum_{i=1}^k\big(\verts{X_i}-1\big)
    =\binom{k}{2}+\verts{G}-k.
  \end{equation*}
  Since $2\edges{G}=\delta\,\verts{G}$, we have
  $k^2-3k-(\delta-2)\verts{G}\leq 0$. The result follows from the
  quadratic formula.
\end{proof}

The following result, first proved by \citet{Ivanco88}, is another
elementary upper bound on $\eta(G)$. It is tight for a surprisingly
large class of graphs; see \propref{LexCliqueMinor}. We include the
proof for completeness.

\begin{lemma}[\citep{Ivanco88,Stiebitz-DM92}]
  \lemlabel{NewUpperBound} For every graph $G$,
  \begin{align*}
    \eta(G)\leq \FLOOR{\frac{\verts{G}+\omega(G)}{2}} \enspace.
  \end{align*}
\end{lemma}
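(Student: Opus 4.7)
The plan is to classify the branch sets of a maximum clique minor by size, then exploit the fact that singleton branch sets form a clique.

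Let $k := \eta(G)$ and let $X_1, \dots, X_k$ be the branch sets of a $K_k$-minor in $G$. Split the branch sets into those of size $1$ and those of size at least $2$: say $a$ of them are singletons and $b = k - a$ have $|X_i| \geq 2$. The first key observation is that if $X_i = \{v_i\}$ and $X_j = \{v_j\}$ are both singletons, then the required adjacency between $X_i$ and $X_j$ forces $v_iv_j \in E(G)$; hence the $a$ singleton branch sets correspond to a clique in $G$, giving $a \leq \omega(G)$.

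The second observation is just a vertex count: the branch sets are pairwise disjoint, so
\begin{equation*}
\verts{G} \;\geq\; \sum_{i=1}^k \verts{X_i} \;\geq\; a \cdot 1 + b \cdot 2 \;=\; a + 2(k-a) \;=\; 2k - a.
\end{equation*}
Rearranging gives $2k \leq \verts{G} + a \leq \verts{G} + \omega(G)$, and since $k$ is an integer, $k \leq \FLOOR{(\verts{G} + \omega(G))/2}$.

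There is no real obstacle: the whole argument is a two-line counting step once one sees that singleton branch sets must form a clique. Note that, unlike the proof of \lemref{UpperBound}, there is no need to first invoke \lemref{TotalMinor} — the inequality $\verts{G} \geq \sum_i \verts{X_i}$ already suffices, and inflating branch sets would only tighten the trivial half of the estimate, not the essential $a \leq \omega(G)$ bound.
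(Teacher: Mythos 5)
Your proof is correct and is essentially the same as the paper's: both partition the branch sets of a maximum clique minor by size, use the fact that singleton branch sets form a clique so their number is at most $\omega(G)$, and count vertices to get $\verts{G}\geq 2k-a$. The paper writes the count in terms of $n_j$ (number of branch sets of size exactly $j$), but the inequality $\verts{G}-n_1\geq 2(n-n_1)$ it derives is precisely your $\verts{G}\geq a+2(k-a)$.
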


\begin{proof}
  Consider a $K_n$-minor in $G$, where $n:=\eta(G)$. For $j\geq1$, let
  $n_j$ be the number of branch sets that contain exactly $j$
  vertices. Thus $$\verts{G}-n_1\geq\sum_{j\geq2}j\cdot
  n_j\geq2\sum_{j\geq2}n_j=2(n-n_1).$$ Hence $\verts{G}+n_1\geq
  2n$. The branch sets that contain exactly one vertex form a
  clique. Thus $n_1\leq \omega(G)$ and $\verts{G}+\omega(G)\geq
  2n$. The result follows.
\end{proof}

\subsection{Treewidth, Pathwidth and Bandwidth}

Another upper bound on the Hadwiger number is obtained as follows.  A
\emph{tree decomposition} of a graph $G$ consists of a tree $T$ and a
set $\{T_x\subseteq V(G):x\in V(T)\}$ of `bags' of vertices of $G$
indexed by $T$, such that
\begin{itemize}
\item for each edge $vw\in E(G)$, there is some bag $T_x$ that
  contains both $v$ and $w$, and
\item for each vertex $v\in V(G)$, the set $\{x\in V(T):v\in T_x\}$
  induces a non-empty (connected) subtree of $T$.
\end{itemize}
The \emph{width} of the tree decomposition is $\max\{|T_x|:x\in
V(T)\}-1$. The \emph{treewidth} of $G$, denoted by \tw{G}, is the
minimum width of a tree decomposition of $G$. For example, $G$ has
treewidth $1$ if and only if $G$ is a forest. A tree decomposition
whose underlying tree is a path is called a \emph{path decomposition},
and the \emph{pathwidth} of $G$, denoted by \pw{G}, is the minimum
width of a path decomposition of $G$. The \emph{bandwidth} of $G$,
denoted by \bw{G}, is the minimum, taken over of all linear orderings
$(v_1,\dots,v_n)$ of $V(G)$, of $\max\{|i-j|:v_iv_j\in E(G)\}$. It is
well known \citep{Bodlaender-TCS98} that for every graph $G$,
\begin{equation}
  \eqnlabel{TreePathBand}
  \eta(G)\leq\tw{G}+1\leq\pw{G}+1\leq\bw{G}+1\enspace.
\end{equation}

\section{Hadwiger Number of Grid Graphs} \seclabel{Grids}

In this section we consider the Hadwiger number of the products of
paths, so called \emph{grid} graphs. First consider the $n\times m$
grid \CP{P_n}{P_m}. It has no $K_5$-minor since it is planar. In fact,
$\eta(\CP{P_n}{P_m})=4$ for all $n\geq m\geq3$. Similarly,
\CP{P_n}{P_2} has no $K_4$-minor since it is outerplanar, and
$\eta(\CP{P_n}{P_2})=3$ for all $n\geq2$.

Now consider the \emph{double-grid} \TCP{P_n}{P_m}{P_2}, where $n\geq
m\geq 2$.  For $i\in[n]$, let $C_i$ be the $i$-th column in the base
copy of \CP{P_n}{P_m}; that is, $C_i:=\{(i,y,1):y\in[m]\}$.  For
$j\in[m]$, let $R_j$ be the $j$-th row in the top copy of
\CP{P_n}{P_m}; that is, $R_j:=\{(x,j,2):x\in[n]\}$.  Since each $R_i$
and each $C_j$ are adjacent, contracting each $R_i$ and each $C_j$
gives a $K_{n,m}$-minor. \citet{CS-DM07} studied the case $n=m$, and
observed that a $K_m$-minor is obtained by contracting a matching of
$m$ edges in $K_{m,m}$. In fact, contracting a matching of $m-1$ edges
in $K_{n,m}$ gives a $K_{m+1}$-minor. (In fact, $\eta(K_{m,m})=m+1$;
see \citep{Wood-GC07} for example.)\ Now observe that $R_1$ is
adjacent to $R_2$ and $C_1$ is adjacent to $C_2$. Thus contracting
each edge of the matching $R_3C_3,R_4C_4,\dots,R_mC_m$ gives a
$K_{m+2}$-minor, as illustrated in \figref{DoubleGrid}. Hence
$\eta(\TCP{P_n}{P_m}{P_2})\geq m+2$.

\Figure{DoubleGrid}{A $K_{m+2}$-minor in \TCP{P_m}{P_m}{P_2}.}

Now we prove a simple upper bound on $\eta(\TCP{P_n}{P_m}{P_2})$.
Clearly, $$\bw{\CP{P_n}{P_m}}\leq m\enspace.$$ Thus, by
\lemref{BandwidthProduct} and \eqnref{TreePathBand},
$$\eta(\TCP{P_n}{P_m}{P_2})\leq \bw{\TCP{P_n}{P_m}{P_2}}+1\leq 2m+1\enspace.$$
Summarising\footnote{In the case $n=m$, \citet{CS-DM07} claimed an
  upper bound of $\eta(\TCP{P_m}{P_m}{P_2})\leq 2m+2$ without proof.},
\begin{equation}
  \eqnlabel{DoubleGrid}
  m+2\leq \eta(\TCP{P_n}{P_m}{P_2})\leq 2m+1.
\end{equation}
We conjecture that the lower bound in \eqnref{DoubleGrid} is the
answer; that is, $$\eta(\TCP{P_n}{P_m}{P_2})=m+2\enspace.$$


The above construction of a clique minor in the double-grid
generalises as follows.

\begin{proposition}
  \proplabel{DoubleGridLike} For all connected graphs $G$ and $H$,
  each with at least one edge,
  \begin{align*}
    \eta(\TCP{G}{H}{P_2}) &\geq
    \omega(G)+\omega(H)+\min\{\verts{G}-\omega(G),\verts{H}-\omega(H)\}\\
    &\geq
    \min\{\verts{G},\verts{H}\}+\min\{\omega(G),\omega(H)\}\\
    &\geq \min\{\verts{G},\verts{H}\}+2\enspace.
  \end{align*}
\end{proposition}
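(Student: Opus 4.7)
The first inequality is the substantive one, and the second and third follow by elementary manipulations, so my plan focuses on constructing a $K_N$-minor of size $N := \omega(G)+\omega(H)+\min\{\verts{G}-\omega(G),\verts{H}-\omega(H)\}$, generalising the construction in \figref{DoubleGrid}. Fix a maximum clique $K_G\subseteq V(G)$ and a maximum clique $K_H\subseteq V(H)$. For each $x\in V(G)$, write $C_x:=\{(x,y,1):y\in V(H)\}$, which induces a copy of $H$ at level $z=1$ and is therefore connected; similarly for $y\in V(H)$, write $R_y:=\{(x,y,2):x\in V(G)\}$, which induces a connected copy of $G$ at level $z=2$. Two columns $C_x,C_{x'}$ are joined in \TCP{G}{H}{P_2} exactly when $xx'\in E(G)$, two rows $R_y,R_{y'}$ exactly when $yy'\in E(H)$, and every $C_x$ is joined to every $R_y$ by the $P_2$-edge $(x,y,1)(x,y,2)$.

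The first $\omega(G)+\omega(H)$ branch sets will be $\{C_a:a\in K_G\}\cup\{R_b:b\in K_H\}$; these are pairwise adjacent by the three observations above. To obtain additional branch sets, assume without loss of generality that $p:=\verts{G}-\omega(G)\leq \verts{H}-\omega(H)$, enumerate $V(G)\setminus K_G=\{v_1,\dots,v_p\}$, and choose distinct $u_1,\dots,u_p\in V(H)\setminus K_H$ (possible since $|V(H)\setminus K_H|\geq p$). For each $k\in[p]$ set
\[
D_k:=C_{v_k}\cup R_{u_k},
\]
which is connected via the $P_2$-edge $(v_k,u_k,1)(v_k,u_k,2)$ joining its two halves. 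Routine checks confirm: the $D_k$ are pairwise disjoint and disjoint from the chosen columns and rows (by distinctness of the $v_k$ and $u_k$ and their avoidance of $K_G$ and $K_H$); each $D_k$ is adjacent to every $C_a$ via the $P_2$-edge $(a,u_k,1)(a,u_k,2)$ joining $C_a$ to $R_{u_k}\subseteq D_k$, and to every $R_b$ via $(v_k,b,1)(v_k,b,2)$; and $D_k,D_{k'}$ are adjacent via $(v_{k'},u_k,1)(v_{k'},u_k,2)$. Together these give the claimed $K_N$-minor.

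For the second inequality, in the same WLOG regime the middle expression simplifies to $\verts{G}+\omega(H)$, and a short case split on the sign of $\verts{G}-\verts{H}$ (using $\omega(G)-\omega(H)\geq \verts{G}-\verts{H}$ when $\verts{G}>\verts{H}$) shows this is at least $\min\{\verts{G},\verts{H}\}+\min\{\omega(G),\omega(H)\}$. The third inequality is immediate from $\omega(G),\omega(H)\geq 2$, which holds since $G$ and $H$ each contain an edge. I do not anticipate a substantive obstacle; the main thing to get right is to certify connectedness of each $D_k$ and to exhibit, in each of the several adjacency cases, a concrete edge of \TCP{G}{H}{P_2}, which the coordinate structure of the triple product makes straightforward.
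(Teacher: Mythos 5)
Your construction is identical to the paper's: your columns $C_x$ and rows $R_y$ are the paper's $A\langle x\rangle$ and $B\langle y\rangle$, your cliques $K_G,K_H$ are the paper's $P,Q$, and your merged branch sets $D_k=C_{v_k}\cup R_{u_k}$ are the paper's $X_i=A\langle v_i\rangle\cup B\langle w_i\rangle$, with the same $P_2$-edge used to certify connectivity and pairwise adjacency. The proof is correct and follows essentially the same route as the paper.
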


\begin{proof}
  Let $P$ be a maximum clique of $G$.  Let $Q$ be a maximum clique of
  $H$.  Without loss of generality, $n:=\verts{G}-\omega(G)\leq
  \verts{H}-\omega(H)$.  Say $V(G)-V(P)=\{v_1,v_2,\dots,v_n\}$ and
  $V(H)-V(Q)=\{w_1,w_2,\dots,w_m\}$, where $n\leq m$. Let
  $V(P_2)=\{1,2\}$.

  For $x\in V(G)$, let $A\langle x\rangle$ be the subgraph of
  \TCP{G}{H}{P_2} induced by $\{(x,y,1):y\in V(H)\}$.  For $y\in
  V(G)$, let $B\langle y\rangle$ be the subgraph of \TCP{G}{H}{P_2}
  induced by $\{(x,y,2):x\in V(G)\}$.  Note that each subgraph
  $A\langle x\rangle$ is isomorphic to $H$, and is thus
  connected. Similarly, each subgraph $B\langle y\rangle$ is
  isomorphic to $G$, and is thus connected.

  Distinct subgraphs $A\langle x\rangle$ and $A\langle x'\rangle$ are
  disjoint since the first coordinate of every vertex in $A\langle
  x\rangle$ is $x$.  Distinct subgraphs $B\langle y\rangle$ and
  $B\langle y'\rangle$ are disjoint since the second coordinate of
  every vertex in $B\langle x\rangle$ is $y$.  Subgraphs $A\langle
  x\rangle$ and $B\langle y\rangle$ are disjoint since the third
  coordinate of every vertex in $A\langle x\rangle$ is $1$, and the
  third coordinate of every vertex in $B\langle y\rangle$ is $2$.

  Since the vertex $(x,y,1)$ in $A\langle x\rangle$ is adjacent to the
  vertex $(x,y,1)$ in $B\langle y\rangle$, the $A\langle x\rangle$ and
  $B\langle y\rangle$ subgraphs are the branch sets of a complete
  bipartite $K_{\verts{G},\verts{H}}$-minor in
  \TCP{G}{H}{H}. Moreover, for distinct vertices $x$ and $x'$ in the
  clique $P$, for any vertex $y\in V(H)$, the vertex $(x,y,1)$ in
  $A\langle x\rangle$ is adjacent to the vertex $(x',y,1)$ in
  $A\langle x'\rangle$. Similarly, for distinct vertices $y$ and $y'$
  in the clique $Q$, for any vertex $x\in V(G)$, the vertex $(x,y,2)$
  in $B\langle y\rangle$ is adjacent to the vertex $(x,y',2)$ in
  $B\langle y'\rangle$. For each $i\in[n]$, let $X_i$ be the subgraph
  induced by $A\langle v_i\rangle \cup B\langle w_i\rangle$. Now
  $X\langle i\rangle$ is connected, since the vertex $(v_i,w_i,1)$ in
  $A\langle i\rangle$ is adjacent to the vertex $(v_i,w_i,2)$ in
  $B\langle i\rangle$.

  We have shown that $\{A\langle x\rangle:x\in P\}\cup \{B\langle
  x\rangle:x\in Q\}\cup\{X_i:i\in [n]\}$ is a set of
  $\omega(G)+\omega(H)+n$ connected subgraphs, each pair of which are
  disjoint and adjacent. Hence these subgraphs are the branch sets of
  a clique minor in \TCP{G}{H}{P_2}. Therefore
  $\eta(\TCP{G}{H}{P_2})\geq \omega(G)+\omega(H)+n$, as desired. The
  final claims are easily verified.
\end{proof}


Now consider the Hadwiger number of the $d$-dimensional grid graph
$$P_n^d:=\underbrace{\CCP{P_n}{P_n}{P_n}}_d\enspace.$$
The best previously known bounds are due to \citet{CS-DM07} who proved
that $$n^{\floor{(d-1)/2}}\leq
\eta(P_n^d)\leq\sqrt{2d}\,n^{d/2}+1\enspace.$$ In the case that $d$ is
even we now improve this lower bound by a $\Theta(n)$ factor, and thus
determine $\eta(P_n^d)$ to within a factor of $4\sqrt{2d}$ (ignoring
lower order terms).

\begin{theorem}
  \thmlabel{EvenGrid} For every integer $n\geq2$ and even integer
  $d\geq4$,
$$\tfrac{1}{4}n^{d/2}-\Oh{n^{d/2-1}}
\leq \eta(P_n^d)< \sqrt{2d-2}\,n^{d/2}+3\enspace.$$
\end{theorem}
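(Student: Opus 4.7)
I would apply \lemref{UpperBound}. The grid $P_n^d$ has $n^d$ vertices and $dn^{d-1}(n-1)$ edges, so its average degree is $\delta = 2d(n-1)/n$, which gives $\delta - 2 \le 2d-2$. Substituting into \lemref{UpperBound} yields
\[
  \eta(P_n^d) \;\le\; \sqrt{(2d-2)\,n^d}+3 \;=\; \sqrt{2d-2}\,n^{d/2}+3,
\]
matching the stated upper bound exactly.

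\textbf{Lower bound.} Write $m = d/2$ and view $P_n^d$ as $P_n^m \CartProd P_n^m$. The plan is to construct $\tfrac14 n^m - \Oh{n^{m-1}}$ pairwise disjoint, pairwise adjacent, connected subgraphs, by adapting the row-plus-column scheme of \propref{DoubleGridLike} to the setting where the auxiliary $P_2$ factor is absent. First, fix a connected set $L \subseteq V(P_n^m)$ of size $\tfrac14 n^m$ whose complement $K := V(P_n^m)\setminus L$ is also connected; for concreteness take $L = [\tfrac{n}{2}]^m$, so that $K$ is its L-shaped complement. Second, for each $v \in L$, build the branch set $B_v$ as the union of a column portion $\{v\}\times T_v$ and a row portion $T_v\times\{v\}$, glued at the diagonal vertex $(v,v)$, where $T_v\subseteq V(P_n^m)$ is connected, contains $v$ and all of $K$, and meets $L$ only at $\{v\}$. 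Pairwise disjointness of the $B_v$'s is forced by $T_v\cap L=\{v\}$, while pairwise adjacency between $B_v$ and $B_{v'}$ comes from the shared corridor $K\subseteq T_v\cap T_{v'}$: any $P_n^m$-edge inside $K$ lifts to an edge between the row portion of one branch set and the column portion of the other.

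The subtle step is constructing $T_v$ when $v$ lies deep in the interior of $L$: every $P_n^m$-path from $v$ to $K$ passes through $L\setminus\{v\}$, so the detour must instead leave the column plane and re-enter via a brief excursion in the second coordinate of the product $P_n^m\CartProd P_n^m$. Performing these detours in a mutually compatible way across all $v \in L$ requires discarding the branch sets whose detours would collide; bookkeeping the losses gives an additive shortfall of $\Oh{n^{m-1}}$, which accounts for the lower-order term in the statement.

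The main obstacle is this final surgery: ensuring disjointness, connectivity, and pairwise adjacency simultaneously while keeping the total sacrifice to $\Oh{n^{m-1}}$ branch sets. Should the direct approach prove too delicate, a fallback strategy is to exploit the subgraph embedding $P_n^{2m-1}\CartProd P_2\hookrightarrow P_n^{2m}$, apply \propref{DoubleGridLike} inside it to obtain $n^{m-1}+2$ branch sets, and then ``stack'' the construction $\Theta(n)$ times across the remaining $P_n$ dimension with matched identifications in each layer, multiplying the count by a factor of $\Theta(n)$ to reach the desired $\Theta(n^m)$.
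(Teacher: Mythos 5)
Your upper bound is the same as the paper's: apply \lemref{UpperBound} with the average degree of $P_n^d$ bounded above by $\Delta(P_n^d)=2d$.

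Your lower-bound construction, however, has a critical gap in the adjacency argument, not in the connectivity of individual $T_v$'s as you suggest. Take two non-adjacent $v,v'\in L$. Any vertex of $B_v$ has the form $(v,b)$ with $b\in T_v$, or $(a,v)$ with $a\in T_v$; any vertex of $B_{v'}$ has the form $(v',d)$ with $d\in T_{v'}$, or $(c,v')$ with $c\in T_{v'}$. An edge of $\CP{P_n^m}{P_n^m}$ changes exactly one of the two $P_n^m$-coordinates by a single step. Checking the four cross-cases: an edge from $(v,b)$ to $(v',d)$ or from $(a,v)$ to $(c,v')$ forces $vv'\in E(P_n^m)$; an edge from $(v,b)$ to $(c,v')$ forces $b=v'$, impossible since $v'\in L\setminus\{v\}$ is excluded from $T_v$; an edge from $(a,v)$ to $(v',d)$ forces $a=v'$, again impossible. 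So $B_v$ and $B_{v'}$ are \emph{not} adjacent unless $v$ and $v'$ are already adjacent in $P_n^m$. The ``shared corridor'' $K\subseteq T_v\cap T_{v'}$ does not help, because a $K$-edge $xy$ lifts only to edges inside a single branch set (for instance $(v,x)(v,y)$ inside $B_v$), never between $B_v$ and $B_{v'}$: the crossing point $(v,v')$ belongs to neither branch set precisely because of the disjointness constraint $T_v\cap L=\{v\}$. In other words, the very condition you impose to make the $B_v$'s disjoint also severs all cross-adjacencies.

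The paper resolves this with a parity offset that your scheme lacks. Rather than taking the ``column'' of the branch set indexed by $j=(j_1,\dots,j_p)$ to sit over $j$ itself, the column $A\langle j\rangle$ sits over the even tuple $(2j_1,2j_2,j_3,\dots,j_p)$, while the ``row'' $B\langle j\rangle$ occupies, in its first $p$ coordinates, the complementary odd-parity region (first or second coordinate odd). Adjacency between $X\langle j\rangle$ and $X\langle j'\rangle$ is then obtained by a single dimension-1 step $(2j_1,\dots,j_p,j'_1,\dots,j'_p)\to(2j_1-1,\dots,j_p,j'_1,\dots,j'_p)$, moving from the even slab (in $A\langle j\rangle$) into the odd region over $j'$ (in $B\langle j'\rangle$). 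This one-step parity crossing is exactly what your $L$/$K$ scheme cannot provide. Your fallback strategy is also unjustified: ``stacking'' $\Theta(n)$ copies of the \propref{DoubleGridLike} minor across a spare coordinate produces disjoint clique minors, not a single larger one, since branch sets in different copies are not pairwise adjacent. A secondary issue: $L=[n/2]^m$ has only $n^m/2^m$ vertices, which is much smaller than the claimed $\tfrac14 n^m$ once $m\geq3$; the paper avoids this by restricting only the first two coordinates to evens.
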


\begin{proof}
  The upper bound follows from \lemref{UpperBound} since
  $\verts{P_n^d}=n^d$ and $\Delta(P_n^d)=2d$.  Now we prove the lower
  bound. Let $V(P_n^d)=[n]^d$, where two vertices are adjacent if and
  only if they share $d-1$ coordinates in common and differ by $1$ in
  the remaining coordinate. Let $p:=\frac{d}{2}$.

  For $j_1,j_2\in\floor{\tfrac{n}{2}}$ and $j_3,\dots,j_p\in[n]$, let
  $A\langle j_1,\dots,j_p\rangle$ be the subgraph of $P_n^d$ induced
  by
  \begin{align*}
    \{(2j_1,2j_2,j_3,j_4,\dots,j_p,x_1,x_2,\dots,x_p):x_i\in[n],i\in[p]\};
  \end{align*}
  let $B\langle j_1,\dots,j_p\rangle$ be the subgraph of $P_n^d$
  induced by
  \begin{align*}
    &\{(2x_1-1,x_2,x_3,\dots,x_p,j_1,j_2,\dots,j_p)
    :x_1\in\floor{\tfrac{n}{2}},x_i\in[n],i\in[2,p]\}\\
    \cup &\{(x_1,2x_2-1,x_3,x_4\dots,x_p,j_1,j_2,\dots,j_p)
    :x_1\in[n],x_2\in\floor{\tfrac{n}{2}},\\
    &\hspace*{70mm}x_i\in[n],i\in[3,p]\};
  \end{align*}
  and let $X\langle j_1,\dots,j_p\rangle$ be the subgraph of $P_n^d$
  induced by $A\langle j_1,\dots,j_p\rangle\cup B\langle
  j_1,\dots,j_p\rangle$.

  Each $A\langle j_1,\dots,j_p\rangle$ subgraph is disjoint from each
  $B\langle j'_1,\dots,j'_p\rangle$ subgraph since the first and
  second coordinates of each vertex in $A\langle j_1,\dots,j_p\rangle$
  are both even, while the first or second coordinate of each vertex
  in $B\langle j'_1,\dots,j'_p\rangle$ is odd. Two distinct subgraphs
  $A\langle j_1,\dots,j_p\rangle$ and $A\langle
  j'_1,\dots,j'_p\rangle$ are disjoint since the $p$-tuples determined
  by the first $p$ coordinates are distinct. Similarly, two distinct
  subgraphs $B\langle j_1,\dots,j_p\rangle$ and $B\langle
  j'_1,\dots,j'_p\rangle$ are disjoint since the $p$-tuples determined
  by the last $p$ coordinates are distinct. Hence each pair of
  distinct subgraphs $X\langle j_1,\dots,j_p\rangle$ and $X\langle
  j'_1,\dots,j'_p\rangle$ are disjoint.

  Observe that $A\langle j_1,\dots,j_p\rangle$ is isomorphic to
  $P^p_n$, and is thus connected. In particular, every pair of
  vertices $(2j_1,2j_2,j_3,j_4,\dots,j_p,x_1,x_2,\dots,x_p)$ and
  $(2j_1,2j_2,j_3,j_4,\dots,j_p,x'_1,x'_2,\dots,x'_p)$ in $A\langle
  j_1,\dots,j_p\rangle$ are connected by a path of length
  $\sum_i|x_i-x'_i|$ contained in $A\langle j_1,\dots,j_p\rangle$. To
  prove that $B\langle j_1,\dots,j_p\rangle$ is connected, consider a
  pair of vertices $$v=(x_1,x_2,\dots,x_p,j_1,j_2,\dots,j_p)\text{ and
  } v'=(x'_1,x'_2,\dots,x'_p,j_1,j_2,\dots,j_p)$$ in $B\langle
  j_1,\dots,j_p\rangle$.  If $x_1$ is even then walk along any one of
  the dimension-$1$ edges incident to $v$. This neighbour is in
  $B\langle j_1,\dots,j_p\rangle$, and its first coordinate is
  odd. Thus we can now assume that $x_1$ is odd. Similarly, we can
  assume that $x_2$, $x'_1$, and $x'_2$ are all
  odd. Then $$(x_1,x_2,\dots,x_p,j_1,j_2,\dots,j_p)\text{ and
  }(x'_1,x'_2,\dots,x'_p,j_1,j_2,\dots,j_p)$$ are connected by a path
  of length $\sum_i|x_i-x'_i|$ contained in $B\langle
  j_1,\dots,j_p\rangle$.  Thus $B\langle j_1,\dots,j_p\rangle$ is
  connected. The vertex
$$(2j_1,2j_2,j_3,j_4,\dots,j_p,j_1,j_2,\dots,j_p)$$ 
in $A\langle j_1,\dots,j_p\rangle$ is adjacent to the vertex
$$(2j_1-1,2j_2,j_3,j_4,\dots,j_p,j_1,j_2,\dots,j_p)$$ 
in $B\langle j_1,\dots,j_p\rangle$. Thus $X\langle
j_1,\dots,j_p\rangle$ is connected.  Each pair of subgraphs $X\langle
j_1,\dots,j_p\rangle$ and $X\langle j'_1,\dots,j'_p\rangle$ are
adjacent since the
vertex $$(2j_1,2j_2,j_3,j_4,\dots,j_p,j'_1,j'_2,\dots,j'_p)$$ in
$A\langle j_1,\dots,j_p\rangle$ is adjacent to the
vertex $$(2j_1-1,2j_2,j_3,j_4,\dots,j_p,j'_1,j'_2,\dots,j'_p)$$ in
$B\langle j'_1,\dots,j'_p\rangle$.

Hence the $X\langle j_1,\dots,j_p\rangle$ form a complete graph minor
in $P_n^d$ of order
$n^{d/2-2}\floor{\tfrac{n}{2}}^2=\tfrac{1}{4}n^{d/2}-\Oh{n^{d/2-1}}$.
\end{proof}

Note that for particular values of $n$, the lower bound in
\thmref{EvenGrid} is improved by a constant factor in
\corref{SquareGrid} below.

\section{Odd-Dimensional Grids and Pseudoachromatic Colourings}
\seclabel{Pseudo}

The `dimension pairing' technique used in \secref{Grids} to construct
large clique minors in even-dimensional grids does not give tight
bounds for odd-dimensional grids. To construct large clique minors in
odd-dimensional grids we use the following idea.

A \emph{pseudoachromatic $k$-colouring} of a graph $G$ is a function
$f:V(G)\rightarrow[k]$ such that for all distinct $i,j\in[k]$ there is
an edge $vw\in E(G)$ with $f(v)=i$ and $f(w)=j$. The
\emph{pseudoachromatic number} of $G$, denoted by $\psi(G)$, is the
maximum integer $k$ such that there is a pseudoachromatic
$k$-colouring of $G$. Pseudoachromatic colourings were introduced by
\citet{Gupta69} in 1969, and have since been widely studied.
For example, many authors \citep{Yeg-SEABM00, MR-DM01, HM-DM76,
  GK-FM74} have proved\footnote{For completeness, we prove that
  $\psi(P_n)>\sqrt{2n-2}-2$. Let $P_n=(x_1,\dots,x_n)$.  Let $t$ be
  the maximum odd integer such that $\binom{t}{2}\leq n-1$.  Then
  $t>\sqrt{2n-2}-2$.  Denote $V(K_t)$ by $\{v_1,\dots,v_t\}$.  Since
  $t$ is odd, $K_t$ is Eulerian.  Orient the edges of $K_t$ by
  following an Eulerian cycle $C=(e_1,e_2,\dots,e_{\binom{t}{2}})$.
  For $\ell\in\binom{t}{2}$, let $f(x_\ell)=i$, where
  $e_\ell=(v_i,v_j)$.  For $\ell\in[\binom{t}{2}+1,n]$, let
  $f(x_\ell)=1$.  Consider distinct colours $i,j\in[t]$.  Thus for
  some edge $e_\ell$ of $K_t$, without loss of generality,
  $e_\ell=(v_i,v_j)$.  Say $e_{\ell+1}=(v_j,v_k)$ is the next edge in
  $C$, where $e_{\ell+1}$ means $e_1$ if $\ell=\binom{t}{2}$.  Since
  $\ell\leq\binom{t}{2}\leq n-1$, we have $\ell+1\in[n]$.  By
  construction, $f(x_\ell)=i$ and $f(x_{\ell+1})=j$.  Thus $f$ is a
  pseudoachromatic colouring.}  that
\begin{equation}
  \eqnlabel{PseudoPath}
  \psi(P_n)>\sqrt{2n-2}-2.
\end{equation}

Note that the only difference between a pseudoachromatic colouring and
a clique minor is that each colour class is not necessarily
connected. We now show that the colour classes in a pseudoachromatic
colouring can be made connected in a three-dimensional product.

\begin{theorem}
  \thmlabel{PseudoAchromatic} Let $G$, $H$ and $I$ be connected
  graphs.  Let $A$, $B$ and $C$ be connected minors of $G$, $H$ and $I$ respectively,
  such that each branch set in each minor has at least two vertices. 
If $\verts{B}\geq \verts{C}$  then
$$\eta(\TCP{G}{H}{I})\geq \min\{\psi(A),\verts{B}\}\cdot \verts{C}\enspace.$$
\end{theorem}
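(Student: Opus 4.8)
The plan is to build a clique minor of \TCP{G}{H}{I} with $k\cdot m$ branch sets $\chi_{i,\gamma}$, where $k:=\min\{\psi(A),\verts{B}\}$ and $m:=\verts{C}$, the index $i\in[k]$ running over colour classes of a pseudoachromatic colouring of $A$ and $\gamma$ over $V(C)$. Two preliminary normalisations. First, since $\psi(A)\geq k$, merging all but $k-1$ of the classes of a $\psi(A)$-colouring of $A$ gives a pseudoachromatic colouring $f$ of $A$ with exactly $k$ classes $V_1,\dots,V_k$; for distinct $i,j$ fix an edge $e_{ij}$ of $A$ with one end in $V_i$ and the other in $V_j$. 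Second, replacing $G$ by the (connected) subgraph formed by the branch sets of the given $A$-minor together with one joining edge per edge of $A$, and similarly for $H$ and $I$, we may assume $A$ is a minor of $G$ with branch sets $\SET{W_a:a\in V(A)}$ covering $G$, each with $\verts{W_a}\geq 2$, and likewise $B$ in $H$ with bags $Y_\beta$ and $C$ in $I$ with bags $Z_\gamma$, each of order at least $2$. (If $\verts{C}=1$ the claimed bound is just $\eta\geq k$, which follows from the colour part of the construction alone, so assume $\verts{C}\geq2$; then $H$ and $I$ each contain an edge.)

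The only real difficulty is that a pseudoachromatic colouring need not have connected classes: $G[\bigcup_{a\in V_i}W_a]$ is typically disconnected, and, worse, to make $\chi_{i,\gamma}$ adjacent to all branch sets of every other colour $j$ it must reach the bag containing the $V_i$-endpoint of $e_{ij}$, hence must span the (disconnected) union of up to $k-1$ of the $V_i$-bags. The repair is done inside the product. Because $\verts{B}\geq k$ we may assign to colour $i$ a private vertex $b_i\in V(B)$, hence a private bag $Y_{b_i}$ in $H$ with $\verts{Y_{b_i}}\geq2$, and route the paths that stitch the $V_i$-bags together through the copy of $I$ (and $H$) lying over $Y_{b_i}$; splitting $Y_{b_i}$ into two nonempty connected parts (possible as $\verts{Y_{b_i}}\geq2$) keeps colour $i$'s stitching disjoint from the bulk material of the other colours, and the bags of different colours are automatically disjoint in the $G$-coordinate.

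Within the region available to colour $i$ we realise $m=\verts{C}$ pairwise-adjacent connected branch sets by a construction in the spirit of \propref{DoubleGridLike}: an edge inside a bag $W_a$ plays the role that $P_2$ plays there, while $H$ (of order $\geq\verts{B}$) and $I$ (of order $\geq\verts{C}$) play the two large factors, so that — using the hypothesis $\verts{B}\geq\verts{C}$ — a clique minor of order at least $\min\{\verts{B},\verts{C}\}=m$ appears; the slack in $H$ and $I$ is what pays for stitching the $V_i$-bags together, which is why we obtain order $\approx\verts{C}$ rather than more. Cross-colour adjacency then comes for free: for distinct $i,j$ the edge $e_{ij}$ lifts to an edge of $G$ between a $V_i$-bag and a $V_j$-bag, and since each $\chi_{i,\gamma}$ meets every $V_i$-bag that carries some $e_{ij}$, every colour-$i$ branch set is adjacent to every colour-$j$ branch set.

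It then remains to check the three standard conditions: each $\chi_{i,\gamma}$ is connected (each constituent piece is a product of connected graphs, joined up by the private-bag stitching); the $\chi_{i,\gamma}$ are pairwise disjoint (distinct colours are separated in the $G$-coordinate and by their private $H$-bags; within a fixed colour the index $\gamma$ is separated by the bags $Z_\gamma$ and by the double-grid bookkeeping); and the $\chi_{i,\gamma}$ are pairwise adjacent (same colour by the \propref{DoubleGridLike}-style construction, different colours via $e_{ij}$). This gives $\eta(\TCP{G}{H}{I})\geq km=\min\{\psi(A),\verts{B}\}\cdot\verts{C}$. I expect the main obstacle to be the repair step — carrying out the stitching of the disconnected colour classes simultaneously for all $k$ colours while keeping the stitching paths, the private bags, and the $m$ within-colour branch sets pairwise disjoint and preserving every cross-colour adjacency; this is precisely where $\verts{B}\geq k$ and the size-$\geq2$ hypotheses on the bags are used.
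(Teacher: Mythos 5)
Your outline follows the same strategy as the paper's proof (branch sets indexed by a colour class of a pseudoachromatic colouring of $A$ and a vertex of $C$; a double-grid-style structure within each colour; connectivity of the disconnected colour classes restored through a private region; cross-colour adjacency through witness edges), but as written it has a genuine gap: the two steps you defer are exactly the content of the proof, and one of them is asserted as ``free'' when it is not. First, the stitching. To reconnect the bags of a colour class $i$ you must move in the $G$-coordinate, so the repair is not ``paths routed through the copy of $I$ (and $H$) lying over $Y_{b_i}$'' but an entire copy of $G$ placed at a fixed $(H,I)$-coordinate pair that is private to the individual branch set, not merely to the colour. In the paper each bag of $B$ and of $C$ is contracted to two vertices $\A{y_i},\B{y_i}$ and $\A{z_j},\B{z_j}$; the bulk material (the $I$-sweeps $P\langle v,j\rangle$ at $(\A{v},\A{y_j},\cdot)$ and the $H$-sweeps $Q\langle v,j\rangle$ at $(\B{v},\cdot,\A{z_j})$) uses only ``$+$'' labels, and the stitching copy of $G$ for the branch set $X\langle i,j\rangle$ sits at $(\cdot,\B{y_i},\B{z_j})$; privacy in \emph{both} coordinates is what keeps the $k\cdot\verts{C}$ stitching copies disjoint from one another and from all bulk. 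Splitting the single bag $Y_{b_i}$ gives one private slot per colour, not one per branch set, and you never specify where in the $I$-coordinate the stitching lives; ``the bags $Z_\gamma$ and double-grid bookkeeping'' is precisely the construction you have not supplied.

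Second, cross-colour adjacency does not come for free. Each $\chi_{i,\gamma}$ occupies only specific $(H,I)$-coordinates over each $V_i$-bag, so for a lift of the witness edge $e_{ij}$ to join $\chi_{i,\gamma}$ to $\chi_{j,\gamma'}$, the $G$-edge realising $e_{ij}$ must join the particular vertex of the $V_i$-bag over which $\chi_{i,\gamma}$ runs its $I$-sweep to the particular vertex of the $V_j$-bag over which $\chi_{j,\gamma'}$ runs its $H$-sweep, with matching free coordinates. This is why the paper labels the two vertices of each $G$-bag $\A{v},\B{v}$ along a spanning tree of $A$ so that the realising edge is ``mixed'' ($\A{v}$ to $\B{w}$ or $\B{v}$ to $\A{w}$), and why $P$ sweeps all of $I$ while $Q$ sweeps all of $H$, so that the two branch sets cross at $(\cdot,\A{y_j},\A{z_{j'}})$ for every pair of $C$-indices; in this set-up an edge joining two ``$+$'' vertices yields adjacency only when the $C$-indices coincide. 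So the adjacency claim is exactly where the labelling and sweep structure must be built, and your proposal, which itself concedes that the simultaneous disjointness and adjacency bookkeeping is unresolved, stops short of a proof at precisely this point.
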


\begin{proof}
  (The reader should keep the example of $G=H=I=P_n$ and
  $A=B=C=P_{\floor{n/2}}$ in mind.)\
 
  Let $V(B)=\{y_1,\dots,y_{\verts{B}}\}$ and
  $V(C)=\{z_1,\dots,z_{\verts{C}}\}$. By contracting edges in $H$, we
  may assume that there are exactly two vertices of $H$ in each branch
  set of $B$. Label the two vertices of $H$ in the branch set
  corresponding to each $y_j$ by \A{y_j} and \B{y_j}. By contracting
  edges in $I$, we may assume that there are exactly two vertices of
  $I$ in each branch set of $C$. Label the two vertices of $I$ in the
  branch set corresponding to each $z_j$ by \A{z_j} and \B{z_j}.

  Let $k:=\min\{\psi(A),\verts{B}\}$. Let $f:V(A)\rightarrow[k]$ be a
  pseudoachromatic colouring of $A$. Our goal is to prove that
  $\eta(\TCP{G}{H}{I})\geq k\cdot \verts{C}$.

  By contracting edges in $G$, we may assume that there are exactly
  two vertices of $G$ in each branch set of $A$. Now label the two
  vertices of $G$ in the branch set corresponding to each vertex $v$
  of $A$ by \A{v} and \B{v} as follows. Let $T$ be a spanning tree of
  $A$. Orient the edges of $T$ away from some root vertex
  $r$. Arbitrarily label the vertices \A{r} and \B{r} of $G$. Let $w$
  be a non-root leaf of $T$. Label each vertex of $T-w$ by
  induction. Now $w$ has one incoming arc $(v,w)$. Some vertex in the
  branch set of $G$ corresponding to $v$ is adjacent to some vertex of
  $G$ in the  branch set corresponding to $w$. Label \A{w} and \B{w} so that there
  is an edge in $G$ between \A{v} and \B{w}, or between \B{v} and
  \A{w}.

  For $v\in V(A)$ and $j\in[\verts{C}]$ (and thus $j\in[\verts{B}]$), 
let $P\langle{v,j}\rangle$ be
  the subgraph of \TCP{G}{H}{I} induced by
$$\{(\A{v},\A{y_j},z):z\in V(I)\}\enspace,$$
and let $Q\langle{v,j}\rangle$ be the subgraph of \TCP{G}{H}{I}
induced by
$$\{(\B{v},y,\A{z_j}):y\in V(H)\}\enspace.$$
For $i\in[k]$ (and thus $i\leq \verts{B}$) and $j\in[\verts{C}]$, let
$R\langle{i,j}\rangle$ be the subgraph of \TCP{G}{H}{I} induced by
$$\{(v,\B{y_i},\B{z_j}):v\in V(G)\}\enspace,$$
and let $X\langle{i,j}\rangle$ be the subgraph of \TCP{G}{H}{I}
induced by
$$\cup\{
P\langle{v,j}\rangle \cup Q\langle{v,j}\rangle \cup
R\langle{i,j}\rangle:v\in f^{-1}(i)\}\enspace,$$ as illustrated in
\figref{ThreeDimGrid}. We now prove that the $X\langle{i,j}\rangle$
are the branch sets of a clique minor in \TCP{G}{H}{I}.

\Figure{ThreeDimGrid}{The branch set $X\langle i,j\rangle$ in
  \propref{ThreeDimGrid}, where $f^{-1}(i)=\{u,v,w\}$.}

First we prove that each $X\langle{i,j}\rangle$ is connected.  Observe
that each $P\langle{v,j}\rangle$ is a copy of $I$ and each
$Q\langle{i,j}\rangle$ is a copy of $H$, and are thus connected.
Moreover, the vertex $(\A{v},\A{y_j},\A{z_j})$ in
$P\langle{v,j}\rangle$ is adjacent to the vertex
$(\B{v},\A{y_j},\A{z_j})$ in $Q\langle{v,j}\rangle$. Thus
$P\langle{v,j}\rangle\cup Q\langle{v,j}\rangle$ is connected.  Now
each $R\langle{i,j}\rangle$ is a copy of $G$, and is thus connected.
For each $v\in f^{-1}(i)$, the vertex $(\B{v},\B{y_i},\A{z_j})$ which
is in $Q\langle{v,j}\rangle$, is adjacent to $(\B{v},\B{y_i},\B{z_j})$
which is in $R\langle{i,j}\rangle$. Thus $X\langle{i,j}\rangle$ is
connected.

Now consider distinct subgraphs $X\langle{i,j}\rangle$ and
$X\langle{i',j'}\rangle$.  We first prove that $X\langle{i,j}\rangle$
and $X\langle{i',j'}\rangle$ are disjoint.  Distinct subgraphs
$P\langle{v,j}\rangle$ and $P\langle{w,j'}\rangle$ are disjoint since
the first two coordinates of every vertex in $P\langle{v,j}\rangle$
are $(\A{v},\A{y_j})$, which are unique to $(v,j)$.  Similarly,
distinct subgraphs $Q\langle{v,j}\rangle$ and $Q\langle{w,j'}\rangle$
are disjoint since the first and third coordinates of every vertex in
$Q\langle{v,j}\rangle$ are $(\B{v},\A{z_j})$, which are unique to
$(v,j)$.  Every $P\langle{v,j}\rangle$ is disjoint from every
$Q\langle{w,j'}\rangle$ since the first coordinate of every vertex in
$P\langle{v,j}\rangle$ is positive, while the first coordinate of
every vertex in $Q\langle{w,j}\rangle$ is negative.  Observe that
$R\langle{i,j}\rangle$ and $R\langle{i',j'}\rangle$ are disjoint since
the second and third coordinates of every vertex in
$R\langle{i,j}\rangle$ are $(\B{y_i},\B{z_j})$, which are unique to
$(i,j)$. Also $R\langle{i,j}\rangle$ is disjoint from every
$P\langle{v,j'}\rangle\cup Q\langle{v,j'}\rangle$ since the second and
third coordinate of every vertex in $R\langle{i,j}\rangle$ are
negative, while every vertex in $P\langle{v,j'}\rangle\cup
Q\langle{v,j'}\rangle$ has a positive second or third
coordinate. Therefore distinct $X\langle{i,j}\rangle$ and
$X\langle{i',j'}\rangle$ are disjoint.

It remains to prove that distinct subgraphs $X\langle{i,j}\rangle$ and
$X\langle{i',j'}\rangle$ are adjacent.  If $i=i'$ then the vertex
$(\A{v},\A{y_j},\A{z_{j'}})$, which is in $P\langle{i,j}\rangle$, is
adjacent to the vertex $(\B{v},\A{y_j},\A{z_{j'}})$, which is in
$Q\langle{i',j'}\rangle$.  Now assume that $i\ne i'$. Then $f(v)=i$
and $f(w)=i'$ for some edge $vw$ of $A$.  By the labelling of vertices
in $G$, without loss of generality, there is an edge in $G$ between
\A{v} and \B{w}.  Thus the vertex $(\A{v},\A{y_j},\A{z_{j'}})$, which
is in $P\langle{v,j}\rangle\subset X\langle{i,j}\rangle$, is adjacent
to the vertex $(\B{w},\A{y_j},\A{z_{j'}})$, which is in
$Q\langle{w,j}\rangle\subset X\langle{i',j'}\rangle$.  In both cases,
$X\langle{i,j}\rangle$ and $X\langle{i',j'}\rangle$ are adjacent.

Hence the $X\langle{i,j}\rangle$ are the branch sets of a clique minor
in \TCP{G}{H}{I}. Thus $\eta(\TCP{G}{H}{I})\geq k\cdot \verts{C}$.
\end{proof}

Now consider the Hadwiger number of the three-dimensional grid graph
\TCP{P_n}{P_n}{P_n}. Prior to this work the best lower and upper
bounds on $\eta(\TCP{P_n}{P_n}{P_n})$ were $\Omega(n)$ and
\Oh{n^{3/2}} respectively \citep{CS-DM07,Raju06,CR05}. The next result
improves this lower bound by a $\Theta(\sqrt{n})$ factor, thus
determining $\eta(\TCP{P_n}{P_n}{P_n})$ to within a factor of $4$
(ignoring lower order terms).

\begin{proposition}
  \proplabel{ThreeDimGrid} For all integers $n\geq m\geq1$,
$$\half n\sqrt{m} -\Oh{n+\sqrt{m}}
< \eta(\TCP{P_n}{P_n}{P_m})\leq 2n\sqrt{m}+3\enspace.$$
\end{proposition}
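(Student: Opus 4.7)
The proposition contains an upper bound and a lower bound, to be handled separately.

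For the upper bound, I would simply invoke \lemref{UpperBound}. The graph \TCP{P_n}{P_n}{P_m} has $n^2m$ vertices and maximum degree at most $6$ (at most two neighbours along each of the three coordinate directions), so its average degree is at most $6$; \lemref{UpperBound} applied with $\delta=6$ then yields $\eta\leq\sqrt{4\cdot n^2 m}+3=2n\sqrt{m}+3$.

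For the lower bound, my plan is to apply \thmref{PseudoAchromatic} to the factorisation $\TCP{P_m}{P_n}{P_n}$ (using commutativity of the product), so that the smallest factor $P_m$ supplies the pseudoachromatic colouring. I would take $A$ to be the minor of $P_m$ obtained by contracting $\FLOOR{m/2}$ consecutive disjoint pairs of vertices, so that $A\cong P_{\FLOOR{m/2}}$ with every branch set of size exactly $2$. Similarly I would take $B$ and $C$ to be the minors of the two copies of $P_n$ obtained the same way, so $B\cong C\cong P_{\FLOOR{n/2}}$ with branch sets of size $2$ and trivially $\verts{B}\geq\verts{C}$. By \eqnref{PseudoPath}, $\psi(A)>\sqrt{2\FLOOR{m/2}-2}-2=\sqrt{m}-\Oh{1}$. \thmref{PseudoAchromatic} then gives
$$\eta(\TCP{P_m}{P_n}{P_n})\;\geq\;\min\{\psi(A),\verts{B}\}\cdot\verts{C}.$$

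To simplify this to the claimed lower bound, I would argue that $\verts{B}=\FLOOR{n/2}$ dominates $\psi(A)=\Oh{\sqrt{m}}$ once $n$ exceeds a small absolute constant, using $n\geq m$; hence the minimum equals $\psi(A)$, giving
$$\eta(\TCP{P_n}{P_n}{P_m})\;\geq\;\bigl(\sqrt{m}-\Oh{1}\bigr)\cdot\FLOOR{n/2}\;\geq\;\half n\sqrt{m}-\Oh{n+\sqrt{m}}.$$
The main obstacle here is merely bookkeeping: absorbing the floor functions, the constant slack in \eqnref{PseudoPath}, and the handful of small exceptional pairs $(n,m)$ (for which $\verts{B}\leq\psi(A)$) into the stated \Oh{n+\sqrt{m}} error term. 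There is no conceptual difficulty, since \thmref{PseudoAchromatic} and \eqnref{PseudoPath} already do all the heavy lifting.
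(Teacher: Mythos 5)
Your proposal is correct and takes essentially the same route as the paper: \lemref{UpperBound} with $\delta=6$ for the upper bound, and \thmref{PseudoAchromatic} with $G=P_m$, $A=P_{\floor{m/2}}$, $H=I=P_n$, $B=C=P_{\floor{n/2}}$ together with \eqnref{PseudoPath} for the lower bound. The only cosmetic difference is how the minimum is resolved: the paper notes the cleaner fact $\psi(P_{\floor{m/2}})\leq\floor{m/2}\leq\floor{n/2}=\verts{B}$ (valid for all $n\geq m$, since the pseudoachromatic number of a graph is at most its order), whereas you wave the small cases into the $\Oh{}$ term; both are fine.
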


\begin{proof}
  The upper bound follows from \lemref{UpperBound} since
  \TCP{P_n}{P_n}{P_m} has $n^2m$ vertices and maximum degree $6$.  Now
  we prove the lower bound. $P_m$ has a $P_{\floor{m/2}}$-minor with
  two vertices in each branch set. By \eqnref{PseudoPath},
  $\psi(P_{\floor{m/2}})>\sqrt{m-3}-2$. By \thmref{PseudoAchromatic}
  with $G=P_m$, $A=P_{\floor{m/2}}$, $H=I=P_n$, and
  $B=C=P_{\floor{n/2}}$ (and since
  $\psi(P_{\floor{m/2}})\leq\floor{\tfrac{m}{2}}\leq\floor{\tfrac{n}{2}}=\verts{B}$),
$$\eta(\TCP{P_n}{P_n}{P_m})\geq (\sqrt{m-3}-2)\floor{\tfrac{n}{2}}
=\half n\sqrt{m} -\Oh{n+\sqrt{m}}\enspace,$$ as desired.
\end{proof}

Here is another scenario when tight bounds for three-dimensional grids
can be obtained.

\begin{proposition}
  \proplabel{ThreeDimGridAgain} For all integers $n\geq m\geq1$ such
  that $n\leq\quarter m^2$,
$$\half m\sqrt{n} -\Oh{m+\sqrt{n}}
< \eta(\TCP{P_n}{P_m}{P_m})\leq 2m\sqrt{n}+3\enspace.$$
\end{proposition}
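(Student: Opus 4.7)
The proof plan is to mirror \propref{ThreeDimGrid}, but with the roles of the long dimension and the short dimensions interchanged: the single long path $P_n$ will supply the pseudoachromatic colouring via \eqnref{PseudoPath}, while the two short paths $P_m$ will supply the linear branch-set multiplier.

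The upper bound is immediate from \lemref{UpperBound}: the graph $\TCP{P_n}{P_m}{P_m}$ has $nm^2$ vertices and maximum degree at most $6$, so taking $\delta=6$ gives $\eta(\TCP{P_n}{P_m}{P_m})\leq\sqrt{(6-2)nm^2}+3=2m\sqrt{n}+3$.

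For the lower bound I would apply \thmref{PseudoAchromatic} with $G=P_n$, $H=I=P_m$, the connected minor $A=P_{\floor{n/2}}$ of $G$ (pairing consecutive vertices so that each branch set has exactly two vertices), and the connected minors $B=C=P_{\floor{m/2}}$ of $H$ and $I$ respectively (constructed likewise). Then $\verts{B}=\verts{C}=\floor{m/2}$, so the ordering hypothesis $\verts{B}\geq\verts{C}$ holds trivially. Inequality \eqnref{PseudoPath} yields
$$\psi(A)>\sqrt{2\FLOOR{\tfrac{n}{2}}-2}-2\geq\sqrt{n-3}-2.$$
The hypothesis $n\leq\quarter m^2$ gives $\sqrt{n}\leq\half m$, and hence $\sqrt{n-3}-2\leq\floor{m/2}=\verts{B}$ for all but a handful of small cases where the claim is vacuous. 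Therefore $\min\{\psi(A),\verts{B}\}>\sqrt{n-3}-2$, and \thmref{PseudoAchromatic} gives
$$\eta(\TCP{P_n}{P_m}{P_m})\geq(\sqrt{n-3}-2)\FLOOR{\tfrac{m}{2}}=\half m\sqrt{n}-\Oh{m+\sqrt{n}},$$
as required.

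There is no real obstacle: the heavy lifting is done by \thmref{PseudoAchromatic}, and the hypothesis $n\leq\quarter m^2$ is precisely the condition that makes $\psi(A)\approx\sqrt{n}$ the effective bottleneck in $\min\{\psi(A),\verts{B}\}$ rather than $\verts{B}\approx m/2$. The only minor care needed is bookkeeping the lower-order terms, all of which are absorbed into $\Oh{m+\sqrt{n}}$.
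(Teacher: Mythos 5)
Your proof is correct and follows essentially the same route as the paper: the upper bound from \lemref{UpperBound}, and the lower bound by applying \thmref{PseudoAchromatic} with $G=P_n$, $H=I=P_m$, $A=P_{\floor{n/2}}$, $B=C=P_{\floor{m/2}}$, then using $n\leq\quarter m^2$ to conclude the minimum is governed by $\psi(A)$. Your handling of the $\min$ is slightly more explicit than the paper's, but the content is the same.
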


\begin{proof}
  The upper bound follows from \lemref{UpperBound} since
  \TCP{P_n}{P_m}{P_m} has $m^2n$ vertices and maximum degree $6$.  For
  the lower bound, apply \thmref{PseudoAchromatic} with $G=P_n$,
  $A=P_{\floor{n/2}}$, $H=I=P_m$, and $B=C=P_{\floor{m/2}}$. By
  \eqnref{PseudoPath}, $\psi(P_{\floor{n/2}})>\sqrt{n-3}-2$. Thus
$$\eta(\TCP{P_n}{P_n}{P_n})\geq 
\min\{(\sqrt{n-3}-2),\floor{\tfrac{m}{2}}\}\cdot
\floor{\tfrac{m}{2}}\enspace.$$ Since $n\leq\quarter m^2$,
$$\eta(\TCP{P_n}{P_n}{P_n})\geq 
(\sqrt{n-3}-2)\cdot \floor{\tfrac{m}{2}} =\half m\sqrt{n}
-\Oh{m+\sqrt{n}}\enspace,$$ as desired.\end{proof}

Now consider the Hadwiger number of $P_n^d$ for odd $d$. Prior to this
work the best lower and upper bounds on $\eta(P_n^d)$ were
$\Omega(n^{(d-1)/2})$ and \Oh{\sqrt{d}\,n^{d/2}} respectively
\citep{CS-DM07,CR05,Raju06}. The next result improves this lower bound
by a $\Theta(\sqrt{n})$ factor, thus determining $\eta(P_n^d)$ to
within a factor of $2\sqrt{2(d-1)}$ (ignoring lower order terms).
\begin{theorem}
  \thmlabel{OddDimGrid} For every integer $n\geq1$ and odd integer
  $d\geq3$,
$$\half n^{d/2}-\Oh{n^{(d-1)/2}}
< \eta(P_n^d)\leq \sqrt{2(d-1)}\,n^{d/2}+3\enspace.$$
\end{theorem}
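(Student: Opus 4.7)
For the upper bound, I would apply \lemref{UpperBound} directly: since $\verts{P_n^d}=n^d$ and $\Delta(P_n^d)=2d$, taking $\delta=2d$ gives $\eta(P_n^d)\leq\sqrt{(2d-2)n^d}+3=\sqrt{2(d-1)}\,n^{d/2}+3$.

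For the lower bound, the plan is to apply \thmref{PseudoAchromatic} to the three-factor decomposition
$$P_n^d \;=\; P_n \,\CartProd\, P_n^p \,\CartProd\, P_n^p,$$
where $p:=(d-1)/2$ is a positive integer because $d\geq 3$ is odd. Associativity of the Cartesian product makes this regrouping valid, and the three factors will play the roles of $G$, $H$, and $I$.

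In each factor I would pair up vertices along a single dimension to produce a minor whose branch sets are single edges. Let $A$ be the minor of $P_n$ obtained by contracting the matching $\{1,2\},\{3,4\},\dots$; then $A\cong P_{\floor{n/2}}$, and every branch set has exactly two vertices. In each of the two copies of $P_n^p$, contract each pair $\{(2i-1,j_2,\dots,j_p),(2i,j_2,\dots,j_p)\}$ (varying $i$ and the other coordinates) to obtain a connected minor $B=C$ of $P_n^p$ isomorphic to $P_{\floor{n/2}}\CartProd P_n^{p-1}$, with $\verts{B}=\verts{C}=\floor{n/2}\cdot n^{p-1}$ and, again, every branch set of size two.

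All hypotheses of \thmref{PseudoAchromatic} are now met: the factors and their chosen minors are connected, every branch set has at least two vertices, and $\verts{B}=\verts{C}$. Since $\psi(A)\leq\verts{A}=\floor{n/2}\leq\verts{B}$, the minimum appearing in \thmref{PseudoAchromatic} simplifies to $\psi(A)$, and by \eqnref{PseudoPath} this exceeds $\sqrt{n-3}-2$. Therefore
$$\eta(P_n^d) \;\geq\; \psi(A)\cdot \verts{C} \;>\; (\sqrt{n-3}-2)\FLOOR{\tfrac{n}{2}}n^{p-1} \;=\; \half n^{d/2}-\Oh{n^{(d-1)/2}},$$
using $p+\tfrac12=d/2$, with the error term harmlessly absorbing the small-$n$ cases. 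No step is especially delicate; the only thing to check is that the stated pairings really consist of adjacent vertices of $P_n^p$, which is immediate because in each pair the two vertices differ by $1$ in only the first coordinate, so these are honest edge contractions and the resulting minors are products of paths, hence connected.
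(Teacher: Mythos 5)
Your argument is correct and takes essentially the same route as the paper's proof: the upper bound is \lemref{UpperBound} with $\delta=2d$, and the lower bound applies \thmref{PseudoAchromatic} to $P_n\,\CartProd\,P_n^{(d-1)/2}\,\CartProd\,P_n^{(d-1)/2}$ with $G=P_n$, $A=P_{\floor{n/2}}$, $H=I=P_n^{(d-1)/2}$, and $B=C$ a matching-minor of $P_n^{(d-1)/2}$. The only cosmetic difference is that you build $B$ by matching explicitly along the first coordinate, getting $\floor{n/2}\cdot n^{(d-3)/2}$ branch sets, whereas the paper appeals to a near-perfect matching of $\floor{\tfrac12 n^{(d-1)/2}}$ edges in the grid; the leading term $\tfrac12 n^{d/2}$ is the same either way.
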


\begin{proof}
  The upper bound follows from \lemref{UpperBound} since
  $\verts{P_n^d}=n^d$ and $\Delta(P_n^d)=2d$.  Now we prove the lower
  bound. Let $G:=P_n$ and $A:=P_{\floor{n/2}}$. By
  \eqnref{PseudoPath}, $\psi(A)>\sqrt{n-3}-2$. Let
  $H:=P_n^{(d-1)/2}$. Every grid graph with $m$ vertices has a
  matching of $\floor{\tfrac{m}{2}}$ edges. (\emph{Proof}: induction
  on the number of dimensions.)\ Thus $H$ has a minor $B$ with
  $\floor{\half n^{(d-1)/2}}$ vertices, and two vertices in each
  branch set. By \thmref{PseudoAchromatic} with $I=H$ and $C=B$ (and
  since $\psi(P_{\floor{n/2}})\leq\floor{\tfrac{n}{2}}\leq\floor{\half
    n^{(d-1)/2}}$),
$$\eta(P_n^d)\geq (\sqrt{n-3}-2)\floor{\half n^{(d-1)/2}}
=\half n^{d/2}-\Oh{n^{(d-1)/2}}\enspace,$$ as desired.\end{proof}

\section{Star Minors and Dominating Sets} \seclabel{StarMinors}

Recall that $S_t$ is the star graph with $t$ leaves. Consider the
Hadwiger number of the product of $S_t$ with a general graph.

\begin{lemma}
  \lemlabel{GraphStar} For every connected graph $G$ and for every
  integer $t\geq 1$,
$$\eta(\CP{G}{S_t})\geq \min\{\verts{G},t+1\}\enspace.$$
\end{lemma}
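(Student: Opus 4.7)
The plan is to exhibit $k := \min\{\verts{G},t+1\}$ branch sets of a $K_k$-minor in $\CP{G}{S_t}$ explicitly. Let $n := \verts{G}$ and denote the center of $S_t$ by $c$ and its leaves by $\ell_1,\dots,\ell_t$. The first step is to select $k-1$ vertices $v_1,\dots,v_{k-1}$ of $G$ whose removal leaves a connected, non-empty induced subgraph $H$ of $G$. Since $n-(k-1)=n-k+1\geq 1$ (because $k\leq n$), such a choice is possible by the standard observation that any connected graph on $n$ vertices admits a connected induced subgraph of every size in $[1,n]$ (take the first $n-k+1$ vertices of a BFS/DFS traversal). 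Also, $k-1\leq t$, so the leaves $\ell_1,\dots,\ell_{k-1}$ are available.

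Next I would define the branch sets. Let $X_0 := V(H)\times\{c\}$, a subgraph of the center row. For each $i\in[k-1]$, let
$$X_i := \big(V(G)\times\{\ell_i\}\big)\,\cup\,\{(v_i,c)\},$$
that is, the entire $\ell_i$-leaf row together with a single ``anchor'' vertex $(v_i,c)$ in the center row. The verification splits into three parts. (i) Each branch set is connected: $X_0$ is isomorphic to $H$; for $i\geq1$, the leaf row $V(G)\times\{\ell_i\}$ is isomorphic to $G$, and the anchor is glued on by the edge $(v_i,\ell_i)(v_i,c)$, which exists since $\ell_ic\in E(S_t)$. (ii) Disjointness: distinct leaf rows are disjoint; the anchors $(v_i,c)$ are distinct and lie outside $X_0$ since $v_i\notin V(H)$. (iii) Pairwise adjacency: for distinct $i,j\in[k-1]$ the vertex $(v_j,\ell_i)\in X_i$ is adjacent to the anchor $(v_j,c)\in X_j$; and for $X_0$ and $X_i$ with $i\geq 1$, picking any $u\in V(H)$ (non-empty by construction) gives an edge $(u,\ell_i)(u,c)$ with $(u,\ell_i)\in X_i$ and $(u,c)\in X_0$. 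Thus $X_0,X_1,\dots,X_{k-1}$ witness a $K_k$-minor.

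There is no serious obstacle here; the proof is elementary once one notices that every leaf row of $\CP{G}{S_t}$ is adjacent via dimension-$2$ edges to \emph{every} vertex of the center row, so attaching a distinct center-row anchor to each full leaf row makes the resulting branch sets pairwise adjacent automatically. The only subtle point is the choice of anchors $v_1,\dots,v_{k-1}$: we need their complement in $V(G)$ to induce a connected, non-empty subgraph, which is exactly what the BFS/DFS argument supplies whenever $k-1\leq n-1$.
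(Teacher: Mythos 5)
Your construction is correct and essentially the same as the paper's: $k-1$ branch sets each consisting of a full leaf row plus a distinct center-row anchor, with one extra branch set living in the center row. The only difference is that the paper takes the final branch set to be the single vertex $(v_k,r)$, whereas you take the induced connected subgraph on the remaining center-row vertices, which adds a small (correct but avoidable) step requiring a connected complement of the anchors.
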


\begin{proof}
  Let $k:= \min\{\verts{G},t+1\}$.  Let $V(G):=\{v_1,v_2,\dots,v_n\}$
  and $V(S_t):=\{r\}\cup[t]$, where $r$ is the root.  For $i\in[k-1]$,
  let $X_i$ be the subgraph of \CP{G}{S_t} induced by
  $\{(v_i,r)\}\cup\{(v_j,i):j\in[n]\}$, Since $G$ is connected, and
  $(v_i,r)$ is adjacent to $(v_i,i)$, each $X_i$ is connected.  Let
  $X_k$ be the subgraph consisting of the vertex $(v_k,r)$.  For
  distinct $i,j\in[k]$ with $i<j$ (and thus $i\in[k-1]\subseteq[t]$),
  the subgraphs $X_i$ and $X_j$ are disjoint, and vertex $(v_j,i)$,
  which is in $X_i$, is adjacent to $(v_j,r)$, which is in $X_j$.
  Thus $X_1,\dots,X_k$ are the branch sets of a $K_k$-minor in
  \CP{G}{S_t}, as illustrated in \figref{PathStar} when $G$ is a path.
\end{proof}

\Figure{PathStar}{A $K_5$-minor in \CP{P_5}{S_4}.}

Note that the lower bound in \lemref{GraphStar} is within a constant
factor of the upper bound in \lemref{UpperBound} whenever
$\edges{G}=\Theta(\verts{G})=\Theta(t)$. In particular, if $G$ is a
tree with at least $t+1$ vertices, then
$$t+1\leq\eta(\CP{G}{S_t})\leq\eta(\CP{G}{K_{t+1}})\leq t+2\enspace,$$
where the upper bound is proved in \thmref{TreeComplete} below.  When
$G$ is another star, \citet{Ivanco88} determined the Hadwiger number
precisely.  We include the proof for completeness.

\begin{lemma}[\citep{Ivanco88}]
  \lemlabel{StarStar} For all integers $n \geq m\geq 2$,
$$\eta(\CP{S_n}{S_m})=m+2\enspace.$$
\end{lemma}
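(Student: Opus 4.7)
My plan is to establish $\eta(\CP{S_n}{S_m}) = m+2$ by proving the two inequalities separately. For the lower bound, I will exhibit a $K_{m+2}$-minor explicitly. Write $V(S_n) = \{r, u_1, \ldots, u_n\}$ and $V(S_m) = \{s, w_1, \ldots, w_m\}$ with roots $r, s$. The $m+2$ branch sets are: the singleton $\{(r,s)\}$; the ``first row'' $\{(u_1, s), (u_1, w_1), \ldots, (u_1, w_m)\}$; and for each $j \in [m]$ a ``column'' set $B_j$ consisting of $(r, w_j)$ together with every interior vertex $(u_i, w_j)$ for $2 \le i \le n$, to which I attach the row header $(u_{j+1}, s)$ whenever $j \in [m-1]$. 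Disjointness follows from the coordinates, and each $B_j$ is a star at $(r, w_j)$ possibly with a pendant edge $(u_{j+1}, w_j)(u_{j+1}, s)$, hence connected. The adjacencies of the first two branch sets with every other one are immediate. The key non-obvious adjacency is $B_j \sim B_{j'}$ for $j \ne j'$, say with $j \in [m-1]$: the row header $(u_{j+1}, s) \in B_j$ is adjacent to the interior vertex $(u_{j+1}, w_{j'}) \in B_{j'}$, and the required index $j+1 \le n$ uses $n \ge m$.

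For the upper bound $\eta(\CP{S_n}{S_m}) \le m+2$, I consider an arbitrary $K_k$-minor with branch sets $X_1, \ldots, X_k$ and classify each branch set by which of the ``header'' vertices---the center $(r,s)$, the row headers $(u_i, s)$, and the column headers $(r, w_j)$---it meets. Type A contains $(r,s)$; type B contains at least one column header but not $(r, s)$; type C contains some row header but neither the center nor any column header; type D contains only interior vertices. Every interior vertex has degree $2$ in $\CP{S_n}{S_m}$, its sole neighbors being its row and column headers, so any type-D branch set is a singleton and can be adjacent to at most two other branch sets; hence type D is impossible once $k \ge 4$. Clearly there is at most one type-A branch set, and at most $m$ type-B branch sets since each of the $m$ column headers lies in a distinct branch set.

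The crux of the argument---and the main obstacle---is to show that there is at most one type-C branch set. A type-C set $X$ containing $(u_i, s)$ cannot contain a second row header $(u_{i'}, s)$: any connecting path in $X$ must leave $(u_i, s)$ via an edge to $(r, s)$ or some $(u_i, w_j)$, but $(r, s)$ is forbidden and from $(u_i, w_j)$ the only onward step is to $(r, w_j)$, also forbidden. The same neighborhood analysis forces every vertex of $X$ to have first coordinate $u_i$, so $X$ lies inside the row $\{(u_i, s)\} \cup \{(u_i, w_j) : j \in [m]\}$. Two distinct such rows admit no edge between them in $\CP{S_n}{S_m}$, since for distinct leaves $u_i, u_{i'}$ the pair $u_i u_{i'}$ is not an edge of $S_n$. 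Hence two type-C branch sets cannot be adjacent, and combining the counts gives $k \le 1 + m + 1 = m+2$ (with $k \le 3 \le m+2$ being the trivial case, using $m \ge 2$).
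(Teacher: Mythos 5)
Your proof is correct, but it takes a genuinely different route from the paper's. The paper's argument is a reduction: it observes that every interior vertex $(u_i,w_j)$ has degree~$2$, contracts each such vertex (which preserves the Hadwiger number away from $K_3$), and recognises the resulting graph as the complete tripartite graph $K_{1,m,n}$, at which point it cites Ivan\v{c}o's determination $\eta(K_{1,m,n})=m+2$. Your proof is self-contained on both sides: a hands-on $K_{m+2}$-minor for the lower bound, and for the upper bound a case analysis classifying branch sets by which of the center $(r,s)$, column headers $(r,w_j)$, and row headers $(u_i,s)$ they contain. The key steps -- that interior-only branch sets must be singletons of degree~$2$ and hence vanish once $k\geq 4$, and that a type-C set is trapped inside a single row so two of them can never be adjacent -- are sound, and the final count $1+m+1$ does give $k\leq m+2$. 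The trade-off is the usual one: the paper's proof is shorter but leans on an external result about complete multipartite graphs, while yours is longer but requires no outside input and exposes the structural reason the bound holds.
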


\begin{proof}
  Let $V(S_n):=\{r\}\cup[n]$, where $r$ is the root vertex. Observe
  that for all $i\in[n]$ and $j\in[m]$, vertex $(i,j)$ has degree $2$;
  it is adjacent to $(r,j)$ and $(i,r)$. In every graph except $K_3$,
  contracting an edge incident to a degree-$2$ vertex does not change
  the Hadwiger number. Thus replacing the path $(r,j)(i,j)(i,r)$ by
  the edge $(r,j)(i,r)$ does not change the Hadwiger number. Doing so
  gives $K_{1,m,n}$. \citet{Ivanco88} proved that
  $\eta(K_{1,m,n})=m+2$. Thus $\eta(\CP{S_n}{S_m})=m+2$. In fact,
  \citet{Ivanco88} determined the Hadwiger number of every complete
  multipartite graph (a result rediscovered by the author
  \citep{Wood-GC07}).
\end{proof}

For every graph $G$, let \STAR{G} be the maximum integer $t$ for which
$S_t$ is minor of $G$. Since a vertex and its neighbours form a star,
$\STAR{G}\geq\Delta(G)$.  Thus \twolemref{GraphStar}{StarStar} imply:

\begin{corollary}
  \corlabel{GraphStarMinor} For all connected graphs $G$ and $H$,
  \begin{align*}
    \eta(\CP{G}{H})\geq&\min\{\STAR{G}+1,\verts{H}\}\geq\min\{\Delta(G)+1,\verts{H}\},\text{ and }\\
    \eta(\CP{G}{H})\geq& \min\{\STAR{G},\STAR{H}\}+2\geq
    \min\{\Delta(G),\Delta(H)\}+2\enspace.
  \end{align*}
\end{corollary}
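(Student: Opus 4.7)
The plan is to combine \twolemref{GraphStar}{StarStar} with the elementary monotonicity of the Cartesian product under the minor relation: whenever $G'$ is a minor of $G$ and $H'$ is a minor of $H$, the product $\CP{G'}{H'}$ is a minor of $\CP{G}{H}$, with branch sets given by the pairwise products $B_u\times C_x$ of the branch sets in the two factors. I would state and verify this monotonicity first; the verification is entirely routine, requiring only that each $B_u\times C_x$ be connected (step along one factor, then the other) and that adjacent pairs in $\CP{G'}{H'}$ produce adjacent branch sets in $\CP{G}{H}$.

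With monotonicity in hand, both inequalities become one-line applications. For the first, set $t:=\STAR{G}$ so that $S_t$ is a minor of $G$. Monotonicity gives that $\CP{S_t}{H}$ is a minor of $\CP{G}{H}$, and \lemref{GraphStar} applied to $H$ (using commutativity of the product) yields
\[
\eta(\CP{G}{H})\geq\eta(\CP{H}{S_t})\geq\min\{\verts{H},\,t+1\}=\min\{\STAR{G}+1,\,\verts{H}\}.
\]
For the second inequality, take $n:=\max\{\STAR{G},\STAR{H}\}$ and $m:=\min\{\STAR{G},\STAR{H}\}$; without loss of generality $G$ has an $S_n$-minor and $H$ has an $S_m$-minor, so monotonicity gives $\CP{S_n}{S_m}$ as a minor of $\CP{G}{H}$, and (when $m\geq 2$) \lemref{StarStar} supplies $\eta(\CP{G}{H})\geq\eta(\CP{S_n}{S_m})=m+2$. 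In both inequalities the bound with $\Delta$ in place of \STAR{} follows from the observation that any vertex together with its neighbours spans a copy of $S_{\Delta(G)}$ in $G$, so $\STAR{G}\geq\Delta(G)$ (and similarly for $H$).

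The only genuine obstacle is formulating the monotonicity lemma cleanly; after that, there is essentially nothing to prove. The degenerate cases $\min\{\STAR{G},\STAR{H}\}\leq1$ in the second inequality are handled separately, or absorbed into the standing assumption of the surrounding discussion that $G$ and $H$ each contain an edge (so each admits an $S_1$-minor and the bound $m+2\leq 3$ is trivially attained).
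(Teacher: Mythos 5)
Your proof is correct and matches the paper's intent exactly; the paper simply asserts that Lemmas~\ref{lem:GraphStar} and~\ref{lem:StarStar} together with the observation $\STAR{G}\geq\Delta(G)$ imply the corollary, and your write-up supplies the missing glue, namely the standard fact that the Cartesian product respects the minor order in each factor (with branch sets $B_u\times C_x$). Your separate treatment of the degenerate cases $\min\{\STAR{G},\STAR{H}\}\leq 1$ is appropriate, since Lemma~\ref{lem:StarStar} is only stated for $n\geq m\geq 2$ and the surrounding discussion (and indeed the truth of the second inequality) requires both factors to have at least one edge.
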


As an aside, we now show that star minors are related to radius and
bandwidth.  Let $G$ be a connected graph. The \emph{radius} of $G$,
denoted by \rad{G}, is the minimum, taken over all vertices $v$ of
$G$, of the maximum distance between $v$ and some other vertex of
$G$. Each vertex $v$ that minimises this maximum distance is a
\emph{centre} of $G$.

\begin{lemma}
  \lemlabel{RadiusBandwidth} For every connected graph $G$ with at
  least one edge,
  \begin{align*}
    \textup{(a)}\quad&\verts{G}\leq \STAR{G}\cdot \rad{G}+1\\
    \textup{(b)}\quad&\bw{G}\leq 2\cdot\STAR{G}-1\enspace.
  \end{align*}
\end{lemma}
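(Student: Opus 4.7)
The plan for both bounds is to extract them from a single structural fact about breadth-first search layerings: every non-root layer of a BFS from any vertex already realises a star minor of $G$, and hence has size at most $\STAR{G}$.

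First I would fix a BFS root and let $L_0,L_1,\dots,L_k$ denote the resulting distance classes. For part~(a) the root will be a centre of $G$, so $k=\rad{G}$; for part~(b) any vertex will do. The central claim is that $|L_i|\le\STAR{G}$ for every $i\ge 1$. To prove this, I would take the set $U_{i-1}:=L_0\cup L_1\cup\cdots\cup L_{i-1}$ as a single branch set: it is connected because the first $i$ levels of the BFS tree span a connected subgraph rooted at the chosen root. Every vertex $v\in L_i$ has a neighbour in $L_{i-1}\subseteq U_{i-1}$ (its BFS parent), so letting $\{v\}$ be a branch set for each $v\in L_i$ exhibits a minor of $G$ isomorphic to $S_{|L_i|}$, yielding $|L_i|\le\STAR{G}$.

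From this, part~(a) is immediate:
$$\verts{G}=|L_0|+\sum_{i=1}^{\rad{G}}|L_i|\le 1+\rad{G}\cdot\STAR{G}.$$
For part~(b) I would list the vertices of $G$ in the order $L_0,L_1,\dots,L_k$, breaking ties arbitrarily inside each layer. Since BFS layers are independent of edges only between consecutive layers, any edge $vw\in E(G)$ has its endpoints either in a common $L_i$ or in $L_{i-1}$ and $L_i$. In the first case the position gap is at most $|L_i|-1\le\STAR{G}-1$; in the second it is at most $|L_{i-1}|+|L_i|-1\le 2\,\STAR{G}-1$. Either way the gap is bounded by $2\,\STAR{G}-1$, so this linear ordering witnesses $\bw{G}\le 2\,\STAR{G}-1$.

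There is no serious obstacle here; the argument is essentially a one-line observation about BFS, and the role of the hypothesis that $G$ has at least one edge is only to guarantee $\STAR{G}\ge 1$, so that the additive constant in~(a) and the treatment of the trivial layer $L_0$ in~(b) both go through without incident.
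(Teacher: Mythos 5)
Your proof is correct and takes essentially the same approach as the paper: both root a distance layering at a centre for part~(a), both obtain $|L_i|\le\STAR{G}$ by contracting the union of inner layers into one branch set with each vertex of $L_i$ as a singleton branch set, and both order the vertices layer-by-layer for part~(b). The only cosmetic differences are that you phrase the layering as a BFS and note explicitly that any root works for~(b).
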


\begin{proof}
  First we prove (a). Let $v$ be a centre of $G$. For
  $i\in[0,\rad{G}]$, let $V_i$ be the set of vertices at distance $i$
  from $v$. Thus the $V_i$ are a partition of $V(G)$, and
  $|V_0|=1$. For each $i\in[\rad{G}]$, contracting $V_0,\dots,V_{i-1}$
  into a single vertex and deleting $V_{i+1},\dots,V_{\rad{G}}$ gives
  a $S_{|V_i|}$-minor. Thus $\STAR{G}\geq|V_i|$. Hence
  $\verts{G}=\sum_i|V_i|\leq 1+\rad{G}\cdot \STAR{G}$.

  Now we prove (b). Let $(v_1,\dots,v_n)$ be a linear ordering of
  $V(G)$ such that if $v_a\in V_i$ and $v_b\in V_j$ with $i<j$, then
  $a<b$. Consider an edge $v_av_b\in E(G)$. Say $v_a\in V_i$ and
  $v_b\in V_j$. Without loss of generality, $i\leq j$. By
  construction, $j\leq i+1$. Thus $b-a\leq
  |V_i|+|V_{i+1}|-1\leq2\cdot\STAR{G}-1$. Hence
  $\bw{G}\leq2\cdot\STAR{G}-1$.
\end{proof}

Note that \lemref{RadiusBandwidth}(a) is best possible for
$G=P_{2n+1}$, which has $\STAR{G}=2$ and $\rad{G}=n$, or for $G=K_n$,
which has $\STAR{G}=n-1$ and $\rad{G}=1$. \corref{StarNumber} and
\lemref{RadiusBandwidth} imply that the product of graphs with small
radii or large bandwidth has large Hadwiger number; we omit the
details.

Star minors are related to connected dominating sets (first defined by
\citet{SW-JMPS79}). Let $G$ be a graph. A set of vertices $S\subseteq
V(G)$ is \emph{dominating} if each vertex in $V(G)-S$ is adjacent to a
vertex in $S$. The \emph{domination number} of $G$, denoted by
$\gamma(G)$, is the minimum cardinality of a dominating set of $G$. If
$S$ is dominating and $G[S]$ is connected, then $S$ and $G[S]$ are
\emph{connected dominating}. Only connected graphs have connected
dominating sets. The \emph{connected domination number} of a connected
graph $G$, denoted by \cd{G}, is the minimum cardinality of a
connected dominating set of $G$. Finally, let $\ell(G)$ be the maximum
number of leaves in a spanning tree of $G$ (where $K_1$ is considered
to have no leaves, and $K_2$ is considered to have one
leaf). \citet{HL84} proved that $\cd{G}=\verts{G}-\ell(G)$. We extend
this result as follows.

\begin{lemma}
  \lemlabel{StarMinorLeavesSpanningTree} \lemlabel{StarMinorConnDom}
  For every connected graph $G$,
$$\STAR{G}=\verts{G}-\cd{G}=\ell(G)\enspace.$$
\end{lemma}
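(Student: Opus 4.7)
The plan is to establish the two equalities separately. The second identity $\verts{G}-\cd{G}=\ell(G)$ is due to Hedetniemi and Laskar~\citep{HL84} and has just been quoted in the paragraph preceding the lemma, so I would simply invoke it and devote the real work to showing $\STAR{G}=\ell(G)$.

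For the lower bound $\STAR{G}\geq\ell(G)$, I would take a spanning tree $T$ of $G$ that realises the maximum $\ell(G)$. Deleting the leaves of $T$ leaves a subtree on the internal vertices, so contracting all internal vertices of $T$ to a single vertex produces $S_{\ell(G)}$ as a minor of $T$, and hence of $G$. (The degenerate cases $\verts{G}\leq 2$ are immediate from the conventions $\ell(K_1)=0$ and $\ell(K_2)=1$.)

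For the upper bound $\STAR{G}\leq\ell(G)$, set $t:=\STAR{G}$ and fix an $S_t$-minor of $G$ with root branch set $X_0$ and leaf branch sets $X_1,\dots,X_t$. Using \lemref{TotalMinor}, I may assume every vertex of $G$ lies in some $X_i$. For each $i\in[t]$, choose one edge $e_i$ of $G$ joining $X_0$ to $X_i$, and for each $i\in[0,t]$ choose a spanning tree $T_i$ of $G[X_i]$. Then $T:=T_0\cup T_1\cup\dots\cup T_t\cup\{e_1,\dots,e_t\}$ is a spanning tree of $G$: it is connected since each $X_i$ hangs off $X_0$ through $e_i$, and acyclic since exactly one cross-edge attaches each $X_i$ to $X_0$. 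The key claim is that each $X_i$ with $i\in[t]$ contains at least one leaf of $T$: if $|X_i|=1$, the single vertex of $X_i$ has degree $1$ in $T$ (from $e_i$); if $|X_i|\geq2$, the tree $T_i$ has at least two leaves, and at most one of them is the endpoint of $e_i$ inside $X_i$, so at least one remains a leaf of $T$. Therefore $T$ has at least $t$ leaves, giving $\ell(G)\geq t=\STAR{G}$.

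The main (and only) subtlety is the leaf-counting in the upper-bound direction — specifically, the fact that the single attaching edge $e_i$ can ``absorb'' at most one leaf of $T_i$. Everything else is bookkeeping, and no ingredient beyond \lemref{TotalMinor} and the Hedetniemi--Laskar identity is required.
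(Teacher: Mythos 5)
Your proof is correct and uses essentially the same ideas as the paper: the upper bound $\STAR{G}\leq\ell(G)$ is the paper's argument verbatim, and your lower bound (contract the internal vertices of an $\ell(G)$-leaf spanning tree to obtain an $S_{\ell(G)}$-minor) is just the composition of the paper's two steps (spanning tree $\Rightarrow$ connected dominating set $\Rightarrow$ star minor). The only organizational difference is that you invoke the Hedetniemi--Laskar identity $\cd{G}=\verts{G}-\ell(G)$ as a black box, whereas the paper re-derives it inside a cyclic chain $\ell(G)\leq\verts{G}-\cd{G}\leq\STAR{G}\leq\ell(G)$ that makes the lemma self-contained.
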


\begin{proof}
  Let $T$ be a spanning tree of $G$ with $\ell(G)$ leaves.  Let $S$ be
  the set of non-leaf vertices of $T$.  Thus $S$ is a connected
  dominating set of $G$, implying $\cd{G}\leq\verts{G}-\ell(G)$ and
  $\ell(G)\leq\verts{G}-\cd{G}$.

  Say $S$ is a connected dominating set in $G$ of order $\cd{G}$.
  Contracting $G[S]$ gives a $S_{\verts{G}-\cd{G}}$-minor in $G$.
  Thus $\STAR{G}\geq \verts{G}-\cd{G}$.

  Now suppose that $X_0,X_1,\dots,X_{\STAR{G}}$ are the branch sets of
  a $S_{\STAR{G}}$-minor in $G$, where $X_0$ is the root. By
  \lemref{TotalMinor}, we may assume that every vertex of $G$ is in
  some $X_i$. Let $T_i$ be a spanning tree of each $G[X_i]$. Each
  $T_i$ has at least two leaves, unless $\verts{T_i}\leq 2$. Let $T$
  be the tree obtained from $\cup_iT_i$ by adding one edge between
  $T_0$ and $T_i$ for each $i\in[\STAR{G}]$. Each such $T_i$
  contributes at least one leaf to $T$. Thus $T$ has at least
  $\STAR{G}$ leaves, implying $\ell(G)\geq\STAR{G}$.
\end{proof}

\begin{corollary}
  \corlabel{StarMinorTree} For every tree $T\ne K_2$, $\STAR{T}$
  equals the number of leaves in $T$.
\end{corollary}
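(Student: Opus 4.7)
The plan is to deduce this directly from \lemref{StarMinorLeavesSpanningTree}, which already establishes that $\STAR{G} = \ell(G)$ for every connected graph $G$. So the entire task reduces to evaluating $\ell(T)$ when $T$ is a tree.

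First I would observe that a tree has itself as its unique spanning tree, so the maximum over all spanning trees in the definition of $\ell(T)$ collapses to $\ell(T)$ being the number of leaves of $T$ itself, under the convention stated just before \lemref{StarMinorLeavesSpanningTree} (namely $\ell(K_1)=0$ and $\ell(K_2)=1$). For any tree $T$ with at least three vertices there is no ambiguity: every degree-$1$ vertex of $T$ is a leaf of the spanning tree $T$, and conversely, so $\ell(T)$ equals the actual number of leaves of $T$. The only case where the general convention disagrees with the naive leaf count is $T=K_2$, which has two degree-$1$ vertices but $\ell(K_2)=1$; this is exactly why the hypothesis $T\ne K_2$ appears.

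Combining the two facts, for every tree $T\ne K_2$ we have $\STAR{T}=\ell(T)=$ the number of leaves of $T$, which is the desired conclusion. The only mild subtlety is checking the boundary case $T=K_1$: here $T$ has no leaves, $\ell(K_1)=0$, and $\STAR{K_1}=0$ since $K_1$ contains no star minor (indeed $S_t$ is only defined for $t\geq1$, and under any reasonable reading $\STAR{K_1}=0$), so the equality still holds. There is no real obstacle here; the work was done in \lemref{StarMinorLeavesSpanningTree}, and the corollary is essentially a restatement specialized to trees, with $K_2$ excluded to reconcile the spanning-tree leaf convention with the graph-theoretic leaf count.
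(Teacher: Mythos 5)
Your proof is correct and follows exactly the route the paper intends: the corollary is an immediate consequence of \lemref{StarMinorConnDom} together with the observation that a tree is its own unique spanning tree, so $\ell(T)$ equals the actual leaf count except for $T=K_2$, where the convention $\ell(K_2)=1$ diverges from the two degree-one vertices. Your handling of the $K_1$ and $K_2$ boundary cases is accurate and makes the role of the hypothesis $T\ne K_2$ explicit.
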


\corref{GraphStarMinor} and \lemref{StarMinorConnDom} imply:

\begin{corollary}
  \corlabel{StarNumber} For all connected graphs $G$ and $H$,
  \begin{align*}
    \eta(\CP{G}{H})\geq & \min\{\verts{G}-\cd{G}+1,\verts{H}\}\text{ and }\\
    \eta(\CP{G}{H})\geq &
    \min\{\verts{G}-\cd{G},\verts{H}-\cd{H}\}+2\enspace.
  \end{align*}
\end{corollary}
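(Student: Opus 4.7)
The plan is to simply substitute the identity from \lemref{StarMinorConnDom} into the two inequalities of \corref{GraphStarMinor}. Since both $G$ and $H$ are connected, \lemref{StarMinorConnDom} gives $\STAR{G}=\verts{G}-\cd{G}$ and $\STAR{H}=\verts{H}-\cd{H}$. Nothing further is needed: plugging these equalities into the first inequality of \corref{GraphStarMinor} immediately yields
\begin{align*}
\eta(\CP{G}{H})\geq\min\{\STAR{G}+1,\verts{H}\}=\min\{\verts{G}-\cd{G}+1,\verts{H}\},
\end{align*}
and plugging them into the second yields
\begin{align*}
\eta(\CP{G}{H})\geq\min\{\STAR{G},\STAR{H}\}+2=\min\{\verts{G}-\cd{G},\verts{H}-\cd{H}\}+2.
\end{align*}

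There is no genuine obstacle here; the substance of the corollary is carried entirely by the two results it is quoting. The nontrivial content is (i) the construction of a $K_k$-minor in \CP{G}{S_t} in \lemref{GraphStar}, which produces the inequalities in \corref{GraphStarMinor} after observing that any $S_t$-minor of $H$ can be used in place of $S_t$ itself, and (ii) the equivalence between $\STAR{G}$, maximum-leaf spanning trees, and connected dominating sets established in \lemref{StarMinorConnDom}. The corollary merely repackages (i) in the vocabulary of (ii), which is a more standard and well-studied graph parameter. Hence the proof reduces to a single-line chain of (in)equalities for each bound, and can be presented as ``immediate from \corref{GraphStarMinor} and \lemref{StarMinorConnDom}'' without further argument.
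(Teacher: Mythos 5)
Your proof is correct and is exactly the paper's approach: the paper states \corref{StarNumber} with the one-line justification that it follows from \corref{GraphStarMinor} and \lemref{StarMinorConnDom}, which is precisely the substitution $\STAR{G}=\verts{G}-\cd{G}$ (and likewise for $H$) that you carry out.
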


We now show that a connected dominating set in a product can be
constructed from a dominating set in one of its factors.

\begin{lemma}
  \lemlabel{ConDomProd} For all connected graphs $G$ and $H$,
$$\gamma_c(\CP{G}{H})\leq (\verts{G}-1)\cdot\gamma(H)+\verts{H}\enspace.$$
\end{lemma}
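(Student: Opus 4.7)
The plan is to exhibit an explicit connected dominating set of \CP{G}{H} whose size matches the bound. Fix a minimum dominating set $D\subseteq V(H)$, so $|D|=\gamma(H)$, and fix any vertex $v_0\in V(G)$. The candidate set is
$$S\,:=\,\bigl(\{v_0\}\times V(H)\bigr)\,\cup\,\bigl((V(G)\setminus\{v_0\})\times D\bigr).$$
The two parts are disjoint, so $|S|=\verts{H}+(\verts{G}-1)\gamma(H)$, which is exactly the claimed upper bound.

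Next I would verify domination. Any vertex of \CP{G}{H} outside $S$ has the form $(u,w)$ with $u\ne v_0$ and $w\notin D$. Because $D$ dominates $H$, there is some $w'\in D$ with $ww'\in E(H)$; then $(u,w')$ lies in $(V(G)\setminus\{v_0\})\times D\subseteq S$ and is adjacent to $(u,w)$ in \CP{G}{H}.

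For connectedness, observe first that $\{v_0\}\times V(H)$ induces a copy of $H$ and is therefore connected. Every remaining vertex of $S$ has the form $(u,w)$ with $u\ne v_0$ and $w\in D$, and I connect it to $(v_0,w)\in\{v_0\}\times V(H)$ by lifting a simple $v_0 u$-path in $G$ to a path in \CP{G}{H} at constant second coordinate $w$. Because the path in $G$ is simple, no internal vertex equals $v_0$, so every internal vertex of the lift lies in $(V(G)\setminus\{v_0\})\times D\subseteq S$; its endpoints lie in $S$ by construction. Hence $S$ induces a connected subgraph of \CP{G}{H}.

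The only real step is guessing the set $S$: take one full ``column'' above $v_0$ to provide connectivity backbone, and take only the rows corresponding to $D$ in all other columns to provide domination. Once $S$ is written down, both properties are immediate, so I do not anticipate any substantive obstacle.
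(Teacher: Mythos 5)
Your proof is correct and uses the same connected dominating set as the paper: one full ``$H$-fibre'' over a fixed vertex of $G$, together with the fibres over a minimum dominating set of $H$ in every other $G$-column. The paper's connectivity argument joins two arbitrary vertices of the set by a three-segment path through the backbone fibre, whereas you route each vertex to the backbone and invoke connectivity of the backbone itself; these are merely cosmetic variants of the same argument.
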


\begin{proof}
  Let $S$ be a minimum dominating set in $H$. Let $v$ be an arbitrary
  vertex in $G$. Consider the set of vertices in \CP{G}{H},
$$T:=\{(x,y):x\in V(G)-\{v\},y\in S\}\cup\{(v,y):y\in V(H)\}\enspace.$$
Then $|T|=(\verts{G}-1)\cdot\gamma(H)+\verts{H}$. First we prove that
$T$ is dominating. Consider a vertex $(x,y)$ of \CP{G}{H}. If $y\in S$
then $(x,y)\in T$. Otherwise, $y$ is adjacent to some vertex $z\in S$,
in which case $(x,y)$ is adjacent to $(x,z)$, which is in $T$. Thus
$T$ is dominating. Now we prove that $T$ is connected. Since $G$ is
connected, for each vertex $x$ of $G$, there is a path $P(x,v)$
between $x$ and $v$ in $G$. Consider distinct vertices $(x,y)$ and
$(x',y')$ in \CP{G}{H}. Since $H$ is connected there is a path
$Q(y,y')$ between $y$ and $y'$ in $H$. Thus $$\{(s,y):s\in
P(x,v)\}\cup\{(v,t):t\in Q(y,y')\}\cup \{(s,y'):s\in P(x',v)\}$$ is a
path between $(x,y)$ and $(x',y')$ in \CP{G}{H}, all of whose vertices
are in $T$. Thus $T$ is a connected dominating set.
\end{proof}

\corref{StarNumber} (with $G=\CP{A}{B}$ and $H=C$) and
\lemref{ConDomProd} (with $G=A$ and $H=B$) imply:

\begin{corollary}
  \corlabel{TripleCartProd} For all connected graphs $A,B,C$,
$$\eta(\TCP{A}{B}{C})\geq
\min\{\verts{A}\cdot\verts{B}-(\verts{A}-1)\cdot\gamma(B)-\verts{B}+1,
\verts{C}\}\enspace.$$
\end{corollary}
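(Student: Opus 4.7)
The plan is to simply chain the two cited results. The product $\TCP{A}{B}{C}$ is, by associativity of the Cartesian product, the same as $\CP{(\CP{A}{B})}{C}$. So I would first apply \corref{StarNumber} with $G:=\CP{A}{B}$ and $H:=C$ to obtain
\begin{align*}
\eta(\TCP{A}{B}{C}) \;\geq\; \min\{\verts{\CP{A}{B}} - \cd{\CP{A}{B}} + 1,\ \verts{C}\}.
\end{align*}

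Next I would substitute $\verts{\CP{A}{B}} = \verts{A}\cdot\verts{B}$ and upper-bound the connected domination term using \lemref{ConDomProd} applied with $G:=A$ and $H:=B$, which gives $\cd{\CP{A}{B}} \leq (\verts{A}-1)\cdot\gamma(B) + \verts{B}$. Plugging this in yields
\begin{align*}
\verts{\CP{A}{B}} - \cd{\CP{A}{B}} + 1 \;\geq\; \verts{A}\cdot\verts{B} - (\verts{A}-1)\cdot\gamma(B) - \verts{B} + 1,
\end{align*}
and the claimed inequality follows immediately.

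There is no real obstacle here, since associativity of the Cartesian product means the two lemmas plug together directly; the only thing to be slightly careful about is that both $A$ and $B$ must be connected for \lemref{ConDomProd} to apply, which is part of the hypothesis.
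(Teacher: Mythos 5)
Your proof is correct and is exactly the paper's argument: apply \corref{StarNumber} with $G=\CP{A}{B}$ and $H=C$, then bound $\cd{\CP{A}{B}}$ via \lemref{ConDomProd} with $G=A$ and $H=B$, and use associativity of the Cartesian product plus monotonicity of $\min$ to conclude. Nothing further is needed.
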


There are hundreds of theorems about domination in graphs that may be
used in conjunction with the above results to construct clique minors
in products; see the monograph \citep{Domination}. Instead, we now
apply perhaps the most simple known bound on the order of dominating
sets to conclude tight bounds on $\eta(G^d)$ whenever $d\geq4$ is even
and $G$ has bounded average degree.


\begin{theorem}
  Let $G\neq K_1$ be a connected graph with $n$ vertices and average
  degree $\delta$.  Then for every even integer $d\geq4$,
$$\half n^{d/2}-n^{d/2-1}+2\leq \eta(G^d) \leq \sqrt{\delta d-2}\,n^{d/2}+3\enspace.$$
\end{theorem}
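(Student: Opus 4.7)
The plan is to handle the two bounds separately. For the upper bound, I would first observe that a vertex of $G^d$ has degree equal to the sum of the degrees of its coordinates, so $\edges{G^d}=d\cdot n^{d-1}\cdot\edges{G}$, giving that $G^d$ has $n^d$ vertices and average degree $d\delta$. Applying \lemref{UpperBound} to $G^d$ then yields
$$\eta(G^d)\;\leq\;\sqrt{(d\delta-2)\,n^d}+3\;=\;\sqrt{d\delta-2}\,n^{d/2}+3,$$
which is the claimed upper bound.

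For the lower bound, the key move is to exploit that $d$ is even and write $G^d=\CP{G^{d/2}}{G^{d/2}}$. The second inequality of \corref{StarNumber}, applied with both factors equal to $G^{d/2}$, gives
$$\eta(G^d)\;\geq\;\verts{G^{d/2}}-\cd{G^{d/2}}+2\;=\;n^{d/2}-\cd{G^{d/2}}+2.$$
It therefore suffices to show $\cd{G^{d/2}}\leq\half n^{d/2}+n^{d/2-1}$.

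To bound the connected domination number of $G^{d/2}$, I would apply \lemref{ConDomProd} once, with first factor $G^{d/2-1}$ and second factor $G$, to obtain
$$\cd{G^{d/2}}\;\leq\;(n^{d/2-1}-1)\,\gamma(G)+n.$$
The required input on $\gamma(G)$ is the elementary bound of Ore (1962): every graph with no isolated vertices satisfies $\gamma(G)\leq n/2$. Since $G$ is connected and $G\neq K_1$, it has no isolated vertices, so this bound applies. Substituting yields
$$\cd{G^{d/2}}\;\leq\;(n^{d/2-1}-1)\cdot\tfrac{n}{2}+n\;=\;\half n^{d/2}+\tfrac{n}{2}\;\leq\;\half n^{d/2}+n^{d/2-1},$$
where the last inequality uses $n/2\leq n^{d/2-1}$, valid for $d\geq 4$ and $n\geq 2$. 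Combining with the previous display gives the claimed lower bound.

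I do not anticipate any real obstacle: the proof is a direct combination of \corref{StarNumber} and \lemref{ConDomProd} (both just established in this section) with the textbook Ore bound $\gamma(G)\leq n/2$, exactly as foreshadowed by the paragraph immediately preceding the theorem statement (\emph{``the most simple known bound on the order of dominating sets''}). The only mild subtlety is to match the precise form $\half n^{d/2}-n^{d/2-1}+2$ given in the theorem, which is arranged by absorbing the additive $n/2$ error term into $n^{d/2-1}$ using the hypothesis $d\geq 4$.
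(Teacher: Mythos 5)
Your proof is correct and follows essentially the same route as the paper: both reduce the lower bound to bounding $\cd{G^{d/2}}$ via \corref{StarNumber}, apply \lemref{ConDomProd}, and plug in Ore's $\gamma\leq\half\verts{\cdot}$ bound. The only (cosmetic) difference is which factor of $G^{d/2}=\CP{G}{G^{d/2-1}}$ you feed into each slot of \lemref{ConDomProd}: you apply Ore's bound to $G$ itself via $\cd{\CP{G^{d/2-1}}{G}}\leq(n^{d/2-1}-1)\gamma(G)+n$, while the paper applies it to $G^{d/2-1}$ via $\cd{\CP{G}{G^{d/2-1}}}\leq(n-1)\gamma(G^{d/2-1})+n^{d/2-1}$; both yield the stated bound.
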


\begin{proof}
  The upper bound follows from \lemref{UpperBound} since $G^d$ has
  average degree $\delta d\geq 4$.  Now we prove the lower
  bound. \citet{Ore} observed that for every connected graph $H\neq
  K_1$, the smaller colour class in a $2$-colouring of a spanning tree
  of $H$ is dominating in $H$. Thus $\gamma(H)\leq\half
  \verts{H}$. This observation with $H=G^{d/2-1}$ gives
$$\gamma(G^{d/2-1})\leq\half\verts{G^{d/2-1}}=\half n^{d/2-1}\enspace.$$ 
By \lemref{ConDomProd},
\begin{align*}
  \gamma_c(G^{d/2}) \leq
  (\verts{G}-1)\cdot\gamma(G^{d/2-1})+\verts{G^{d/2-1}} \leq\;&
  (n-1)(\half n^{d/2-1})+n^{d/2-1} \\
  = \;& \half n^{d/2}+\half n^{d/2-1}\enspace.
\end{align*}
By \corref{StarNumber},
\begin{align*}
  \eta(G^d) \geq \verts{G^{d/2}}-\cd{G^{d/2}}+2 \geq\;&
  n^{d/2}-\big(\half n^{d/2}+\half n^{d/2-1}\big) \\
  =\;& \half n^{d/2}-\half n^{d/2-1}+2\enspace,
\end{align*}
as desired.
\end{proof}

\section{Dominating Sets and Clique Minors in Even-Dimensional Grids}
\seclabel{Domination}

The results in \secref{StarMinors} motivate studying dominating sets
in grid graphs. First consider the one-dimensional case of $P_n$. It
is well known and easily proved that
$\gamma(P_n)=\ceil{\frac{n}{3}}$. Thus, by \lemref{ConDomProd}, for
every connected graph $G$,
$$\gamma_c(\CP{G}{P_n})\leq (\verts{G}-1)\ceil{\tfrac{n}{3}}+n\enspace.$$ 
In particular,
$$\gamma_c(\CP{P_m}{P_n})
\leq (m-1)\ceil{\tfrac{n}{3}}+n \leq (m-1)(\tfrac{n+2}{3})+n
=\third(nm+2m+2n-2)\enspace.$$ Hence \corref{StarNumber} with
$G=\CP{P_n}{P_m}$ implies the following bound on the Hadwiger number
of the $4$-dimensional grid:
$$\eta(\CP{P_n}{P_m}\square\CP{P_n}{P_m})
\geq nm-\third(nm+2m+2n-2)+2 =\tfrac{2}{3}(nm-m-n+4)\enspace.$$ This
result improves upon the bound in \thmref{EvenGrid} by a constant
factor.


Dominating sets in 2-dimensional grid graphs are well studied
\citep{CGZ-AC01, VW-DM07, CCEH-AC94, HR-DMGT04, HR-DMGT03, CC-JGT93,
  JK-JGT86, JK-AC84, ES-JEMS02, Shaheen-CN00, ES-JEMS99}. Using the
above technique, these results imply bounds on the Hadwiger number of
the $6$-dimensional grid. We omit the details, and jump to the general
case.

We first construct a dominating set in a general grid graph.

\begin{lemma}
  \lemlabel{DomGenGrid} Fix integers $d\geq1$ and
  $n_1,n_2,\dots,n_d\geq1$.  Let $S$ be the set of vertices
$$S:=\{(x_1,x_2,\dots,x_d):x_i\in[n_i],i\in[d],
\sum_{i\in[d]}i\cdot x_i\equiv 0\pmod{2d+1}\}\enspace.$$ For
$j\in[d]$, let $B_j$ be the set of vertices
\begin{align*}
  B_j:=\big\{(x_1,\dots,x_{j-1},1,x_{j+1},\dots,x_d)\;:\;&
  x_i\in[n_i],i\in[d]-\{j\},\\
  & \sum_{i\in[d]-\{j\}}\!\!\!i\cdot x_i\equiv
  0\pmod{2d+1}\big\}\enspace,
\end{align*}
and let $C_j$ be the set of vertices
\begin{align*}
  C_j:=\big\{(x_1,\dots,x_{j-1},n_j,x_{j+1},&\dots,x_d)\;:\; x_i\in[n_i],i\in[d]-\{j\},\\
  & \sum_{i\in[d]-\{j\}}\!\!\!i\cdot x_i\equiv
  -j(n_j+1)\pmod{2d+1}\big\}\enspace,
\end{align*}
Let $T:=\cup_j(S\cup B_j\cup C_j)$. Then $T$ is dominating in
\CCP{P_{n_1}}{P_{n_2}}{P_{n_d}}.
\end{lemma}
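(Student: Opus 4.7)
The idea is to use the ``weight'' function $s(v):=\sum_{i=1}^{d}i\cdot v_i\pmod{2d+1}$, whose residues partition the grid into $2d+1$ classes. The set $S$ is exactly the class of weight $0$. The key observation is that shifting the $k$-th coordinate of a vertex $v$ by $\pm 1$ changes $s(v)$ by $\pm k$, so for any residue $r\in\{1,2,\dots,d\}$, an interior vertex of weight $r$ has a neighbour in $S$ (decrease the $k$-th coordinate where $k=r$), and an interior vertex of weight $-r$ has a neighbour in $S$ (increase the $k$-th coordinate where $k=r$). Since the $2d+1$ possible residues are $\{0,\pm 1,\pm 2,\dots,\pm d\}$, this covers every residue, and away from the boundary every vertex is either in $S$ or adjacent to $S$.

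The only remaining issue is boundary vertices where the required coordinate shift leaves the grid. First I would handle the ``small'' boundary: if $s(v)\equiv j\pmod{2d+1}$ for some $j\in[d]$ and $v_j=1$, then the neighbour obtained by decreasing $v_j$ does not exist. But in that case
\[
\sum_{i\in[d]-\{j\}}i\cdot v_i \;=\; s(v)-j\cdot v_j \;=\; j-j \;\equiv\; 0\pmod{2d+1},
\]
so $v\in B_j$. Symmetrically, if $s(v)\equiv -j\pmod{2d+1}$ for some $j\in[d]$ and $v_j=n_j$, the neighbour obtained by increasing $v_j$ does not exist; but then
\[
\sum_{i\in[d]-\{j\}}i\cdot v_i \;=\; s(v)-j\cdot n_j \;\equiv\; -j-j\cdot n_j \;\equiv\; -j(n_j+1)\pmod{2d+1},
\]
so $v\in C_j$.

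Putting this together, I would carry out a single case analysis on the residue $r:=s(v)\in\{0,\pm 1,\dots,\pm d\}$: the case $r=0$ gives $v\in S$; each case $r=j>0$ gives either a neighbour $v-e_j\in S$ (when $v_j\geq 2$) or $v\in B_j$ (when $v_j=1$); each case $r=-j<0$ gives either a neighbour $v+e_j\in S$ (when $v_j\leq n_j-1$) or $v\in C_j$ (when $v_j=n_j$). In every case $v\in T$ or $v$ has a neighbour in $T$, so $T$ is dominating. There is no real obstacle; the main thing to keep straight is that the coefficients $i=1,\dots,d$ match the coordinate indices so that an $\pm e_k$ shift adjusts the weight by exactly $\pm k$, and that the residues $\{0,\pm 1,\dots,\pm d\}$ exhaust $\mathbb{Z}/(2d+1)$.
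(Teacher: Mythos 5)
Your proof is correct and uses essentially the same argument as the paper: split on the residue of $\sum_i i\cdot v_i$ modulo $2d+1$, shift the matching coordinate toward $S$ when possible, and check membership in $B_j$ or $C_j$ at the boundary. The only cosmetic difference is that you index residues by $\{0,\pm1,\dots,\pm d\}$ while the paper uses $[0,2d]$ with $j:=2d+1-r$; the content is the same.
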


\begin{proof}
  Consider a vertex $x=(x_1,x_2,\dots,x_d)$ not in $S$.  We now prove
  that $x$ has neighbour in $S$, or $x$ is in some $B_j\cup C_j$.  Now
  $x_i\in[n_i]$ for each $i\in[d]$, and for some $r\in[2d]$,
$$\sum_{i=1}^di\cdot x_i\equiv r\pmod{2d+1}\enspace.$$

First suppose that $r\in[d]$. Let $j:=r$. Thus
$$j\cdot (x_j-1)+\sum_{i\in[d]-\{j\}}i\cdot x_i\equiv 0\pmod{2d+1}\enspace.$$
Hence, if $x_j\ne 1$ then
$(x_1,\dots,x_{j-1},x_j-1,x_{j+1},\dots,x_d)$ is a neighbour of $x$ in
$S$, and $x$ is dominated.  If $x_j=1$ then $x$ is in $B_j\subset T$.

Now assume that $r\in[d+1,2d]$. Let $j:=2d+1-r\in[d]$.  Thus $r\equiv
-j\pmod{2d+1}$, and
$$j\cdot (x_j+1)+\sum_{i\in[d]-\{j\}}i\cdot x_i\equiv 0\pmod{2d+1}\enspace.$$
Hence, if $x_j\ne n_j$ then
$(x_1,\dots,x_{j-1},x_j+1,x_{j+1},\dots,x_d)$ is a neighbour of $x$ in
$S$, and $x$ is dominated.  If $x_j=n_j$ then $x$ is in $C_j\subset
T$.

Thus every vertex not in $T$ has a neighbour in $S\subset T$, and $T$
is dominating.
\end{proof}

We now determine the size of the dominating set in
\lemref{DomGenGrid}.

\begin{lemma}
  \lemlabel{EasyCounting} For integers $r\geq2$, $d\geq 1$, $c$, and
  $n_1,\dots,n_d\geq 1$, define
  \begin{align*}
    Q(n_1,\dots,n_d; c; r):=\{(x_1,x_2,\dots,x_d):&x_i\in[n_i],i\in[d],\\
    &\sum_{i\in[d]}i\cdot x_i\equiv c\pmod{r}\}\enspace.
  \end{align*}
  If each $n_i\equiv 0\pmod{r}$ then for every integer $c$,
$$|Q(n_1,\dots,n_d; c; r)|=\frac{1}{r}\prod_{i\in[d]}n_i\enspace.$$
\end{lemma}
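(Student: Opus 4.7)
The plan is to prove this by a simple counting argument that only relies on $n_1$ (in fact, any single $n_i$) being a multiple of $r$. The key observation is that the coefficient of $x_1$ in the sum $\sum_{i} i \cdot x_i$ is $1$, which is a unit mod $r$, so as $x_1$ varies and the other coordinates are fixed, the sum varies over all residues mod $r$ uniformly.

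More precisely, I would argue as follows. Fix an arbitrary tuple $(x_2, x_3, \dots, x_d) \in [n_2]\times[n_3]\times\cdots\times[n_d]$ and set $s := \sum_{i=2}^{d} i \cdot x_i$. Then a tuple $(x_1, x_2, \dots, x_d)$ lies in $Q(n_1,\dots,n_d; c; r)$ if and only if $x_1 \equiv c - s \pmod{r}$. Since $n_1$ is a positive multiple of $r$, the set $[n_1] = \{1, 2, \dots, n_1\}$ contains exactly $n_1/r$ elements congruent to any prescribed residue modulo $r$. Therefore, for each of the $\prod_{i=2}^{d} n_i$ choices of $(x_2, \dots, x_d)$, there are exactly $n_1/r$ valid choices of $x_1$.

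Multiplying gives
\begin{align*}
  |Q(n_1,\dots,n_d; c; r)| \;=\; \frac{n_1}{r} \cdot \prod_{i=2}^{d} n_i \;=\; \frac{1}{r}\prod_{i\in[d]} n_i,
\end{align*}
as required. There is no genuine obstacle here: the statement is a uniform-distribution lemma for linear forms modulo $r$, and the hypothesis that each $n_i \equiv 0 \pmod{r}$ is in fact stronger than needed (the divisibility of any one $n_i$ with a coefficient coprime to $r$ suffices). If one preferred a more symmetric exposition, induction on $d$ would work equally well, with the base case $d=1$ counting elements of $[n_1]$ congruent to $c$ mod $r$, and the inductive step summing over the last coordinate; but the direct argument above is the shortest.
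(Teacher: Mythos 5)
Your argument is correct, and it takes a genuinely different route from the paper. The paper proves \lemref{EasyCounting} by induction on $d$: the base case $d=1$ counts the elements of $[n_1]$ in a fixed residue class, and the inductive step peels off the last coordinate $x_d$ and sums the $(d-1)$-dimensional counts over $x_d\in[n_d]$. You instead isolate $x_1$, whose coefficient in the linear form is $1$ (a unit mod $r$), fix the remaining coordinates, and observe that the congruence uniquely pins down the residue class of $x_1$; since $n_1$ is a multiple of $r$, exactly $n_1/r$ values of $x_1\in[n_1]$ satisfy it. The two proofs are comparably elementary, but your direct argument is shorter and makes explicit something the paper's induction keeps implicit: the hypothesis that \emph{every} $n_i$ is a multiple of $r$ is not needed — it suffices that a single $n_i$ with $\gcd(i,r)=1$ is divisible by $r$ (and indeed the paper's own inductive proof never uses the divisibility of $n_d$). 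You also correctly identify the paper's inductive approach as the alternative. This is a minor lemma, so neither approach buys much over the other, but your observation that the hypothesis can be weakened is a nice aside.
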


\begin{proof}
  We proceed by induction on $d$. First suppose that $d=1$. Without
  loss of generality, $c\in[r]$. Then
  \begin{align*}
    Q(n_1;c;r) =\{x\in[n_1]:x\equiv c\pmod{r}\} =\{r\cdot
    y+c:y\in[0,\tfrac{n_1}{r}-1]\}\enspace.
  \end{align*}
  Thus $|Q(n_1;c;r)|=\tfrac{n_1}{r}$, as desired. Now assume that
  $d\geq2$. Thus
  \begin{align*}
    &|Q(n_1,\dots,n_d;c;r)|\\
    =\,& |\{(x_1,x_2,\dots,x_d):x_i\in[n_i],i\in[d],
    \sum_{i\in[d]}i\cdot x_i\equiv c\pmod{r}\}|\\
    =\,&
    \sum_{x_d\in[n_d]}|\{(x_1,x_2,\dots,x_d):x_i\in[n_i],i\in[d-1],\\
    &\hspace*{40mm}    \sum_{i\in[d-1]}i\cdot x_i\equiv (c-d\cdot x_d)\pmod{r}\}|\\
    =\,& \sum_{x_d\in[n_d]}|Q(n_1,\dots,n_{d-1};c-d\cdot
    x_d;r)|\enspace.
  \end{align*}
  By induction,
  \begin{align*}
    |Q(n_1,\dots,n_d;c;r)| =
    \sum_{x_d\in[n_d]}\frac{1}{r}\prod_{i\in[d-1]}n_i =
    \frac{1}{r}\prod_{i\in[d]}n_i\enspace,
  \end{align*}
  as desired.
\end{proof}

\begin{lemma}
  \lemlabel{EasyDomGenGrid} Let $G:=\CCP{P_{n_1}}{P_{n_2}}{P_{n_d}}$
  for some integers $d\geq1$ and $n_1,n_2,\dots,n_d\geq1$, where each
  $n_i\equiv0\pmod{2d+1}$. Then
$$\gamma(G)\leq
\frac{\verts{G}}{2d+1}\bigg(1+\sum_{j\in[d]}\frac{2}{n_j}\bigg)\enspace.$$
\end{lemma}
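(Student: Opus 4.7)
The plan is to bound $\gamma(G)$ by the cardinality of the dominating set
$$T := S \cup \bigcup_{j\in[d]}\big(B_j\cup C_j\big)$$
constructed in \lemref{DomGenGrid}, and then compute the pieces via (a mild extension of) \lemref{EasyCounting}. The union bound gives
$$\gamma(G) \;\leq\; |T| \;\leq\; |S| + \sum_{j\in[d]}\big(|B_j|+|C_j|\big),$$
so it suffices to show $|S|=\frac{\verts{G}}{2d+1}$ and $|B_j|=|C_j|=\frac{\verts{G}}{(2d+1)\,n_j}$; summing these immediately yields the claimed inequality.

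The first count is immediate: since each $n_i\equiv 0\pmod{2d+1}$, applying \lemref{EasyCounting} with $c=0$ and $r=2d+1$ gives $|S|=\frac{1}{2d+1}\prod_{i\in[d]}n_i=\frac{\verts{G}}{2d+1}$. The sets $B_j$ and $C_j$ each count $(d-1)$-tuples in $\prod_{i\neq j}[n_i]$ satisfying a congruence $\sum_{i\neq j}i\cdot x_i\equiv c\pmod{2d+1}$ (with $c=0$ for $B_j$ and $c=-j(n_j+1)$ for $C_j$). These differ from $Q$ in \lemref{EasyCounting} only in that the coefficient set is $[d]\setminus\{j\}$ rather than $[d-1]$, so the lemma does not apply verbatim. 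However, its inductive proof uses only the property that at least one coefficient is a unit modulo $r$: if $a$ is such a coefficient, then for each fixing of the remaining $d-2$ coordinates there are exactly $n_a/r$ choices of $x_a$ solving the congruence (using $r\mid n_a$), and summing over the other coordinates yields $\frac{1}{r}\prod_{i\neq j}n_i$. Since $r=2d+1$ is odd, both $1$ and $2$ are coprime to $r$, and at least one of them lies in $[d]\setminus\{j\}$ whenever $d\geq 2$. Thus $|B_j|=|C_j|=\frac{1}{2d+1}\prod_{i\neq j}n_i=\frac{\verts{G}}{(2d+1)\,n_j}$.

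Substituting and collecting yields
$$\gamma(G) \;\leq\; \frac{\verts{G}}{2d+1}+\sum_{j\in[d]}\frac{2\verts{G}}{(2d+1)\,n_j} \;=\; \frac{\verts{G}}{2d+1}\bigg(1+\sum_{j\in[d]}\frac{2}{n_j}\bigg),$$
as required. The only non-mechanical step is the mild generalization of \lemref{EasyCounting} to coefficient sets of the form $[d]\setminus\{j\}$; this is the main (minor) obstacle and is handled as above by isolating a coefficient that is a unit modulo $2d+1$. (The degenerate case $d=1$ is not directly covered by this construction, but there the claim reduces to the classical bound $\gamma(P_n)=\lceil n/3\rceil$.)
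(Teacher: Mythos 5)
The proposal is correct and follows the paper's strategy exactly: bound $\gamma(G)$ by the size of the dominating set $T$ from \lemref{DomGenGrid} and compute $|S|$, $|B_j|$, $|C_j|$ via the counting lemma. You are, however, more careful than the paper on one point. The paper's proof simply cites \lemref{EasyCounting} ``applied three times,'' but as you correctly observe, the congruences defining $B_j$ and $C_j$ range over the coefficient set $[d]\setminus\{j\}$ rather than $[d-1]$, so that lemma does not apply verbatim. Your repair --- that the counting argument requires only one coefficient invertible modulo $2d+1$, and since $2d+1$ is odd at least one of $1,2$ always lies in $[d]\setminus\{j\}$ once $d\geq2$ --- is exactly the right fix and is what the paper's argument implicitly relies on. Your separate treatment of $d=1$ is also warranted: there the formula $|B_1|=|C_1|=\verts{G}/((2d+1)n_1)$ fails outright (in fact $|B_1|=1$ and $|C_1|=0$, so $|T|=n_1/3+1$ exceeds the claimed bound $n_1/3+2/3$), and the inequality must instead come from $\gamma(P_n)=n/3$ for $3\mid n$, just as you say.
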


\begin{proof}
  Using the notation in \lemref{DomGenGrid}, by \lemref{EasyCounting}
  applied three times,
$$|S|=\frac{1}{2d+1}\prod_{i\in[d]}n_i=\frac{\verts{G}}{2d+1}\enspace,$$ 
and for each $j\in[d]$,
$$|B_j|,|C_j|
= \frac{1}{2d+1}\prod_{i\in[d]-\{j\}}\!\!\!n_i =
\frac{\verts{G}}{(2d+1)n_j}\enspace.$$ Thus
$$|T|\leq|S|+\sum_{j\in[d]}|B_j|+|C_j|=
\frac{\verts{G}}{2d+1}\bigg(1+\sum_{j\in[d]}\frac{2}{n_j}\bigg)\enspace,$$
as desired.\end{proof}

If $n_1,\dots,n_d$ are large compared to $d$, then
\lemref{EasyDomGenGrid} says that $\gamma(G)\leq\frac{\verts{G}}{2d+1}
+ o(\verts{G})$.  This bound is best possible since $\gamma(H)\geq
\frac{\verts{H}}{\Delta(H)+1}$ for every graph $H$ (and $G$ has
maximum degree $2d$).


From the dominating set given in \lemref{EasyDomGenGrid} we construct
a connected dominating set as follows.

\begin{lemma}
  \lemlabel{EasyConDomGenGrid} Let
  $G:=\CCP{P_{n_1}}{P_{n_2}}{P_{n_d}}$ for some integers $d\geq1$ and
  $n_1\geq n_2\geq\dots\geq n_d\geq1$, where each
  $n_i\equiv0\pmod{2d+1}$. Then
$$\cd{G}<\frac{\verts{G}}{2d-1}\bigg(1+\frac{2d-2}{n_d}+\sum_{j\in[d-1]}\frac{2}{n_j}\bigg)\enspace.$$
\end{lemma}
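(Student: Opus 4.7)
The plan is to prove the lemma by induction on $d$, constructing $U$ as the union of pillars in dimension $d$ over a dominating set of the $(d-1)$-dimensional projection $G' := P_{n_1}\square\cdots\square P_{n_{d-1}}$, together with a connected dominating set of $G'$ placed inside the boundary slice $\{x_d=1\}$.

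For the base case $d=1$, the interior $\{2,\ldots,n_1-1\}$ of $P_{n_1}$ is a connected dominating set of size $n_1-2<n_1$, matching the (degenerate) bound. For the inductive step $d\geq 2$, I would apply \lemref{DomGenGrid} to $G'$ with modulus $2(d-1)+1=2d-1$ to obtain a dominating set $D_0$ of $G'$, and apply the inductive hypothesis to $G'$ to obtain a connected dominating set $C$ of $G'$. With a mild adjustment one can arrange $D_0\subseteq C$; then set
\[
U \,:=\, (D_0\times[n_d])\,\cup\,(C\times\{1\}).
\]
The set $U$ is dominating because pillars above $D_0$ dominate $G$, and connected because $C\times\{1\}$ is a connected subgraph of the slice $\{x_d=1\}$ and every pillar meets this subgraph at a vertex of $D_0\subseteq C$. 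The size is $|U|=(n_d-1)|D_0|+|C|$.

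Bounding $|D_0|$ via a ceiling-variant of \lemref{EasyCounting} (since the hypothesis $n_i\equiv 0\pmod{2d+1}$ does not imply $n_i\equiv 0\pmod{2d-1}$) gives $|D_0|\leq\frac{\verts{G'}}{2d-1}\bigl(1+\sum_{j\in[d-1]}\tfrac{2}{n_j}\bigr)$, and the inductive hypothesis gives $|C|\leq\frac{\verts{G'}}{2d-3}\bigl(1+\tfrac{2d-4}{n_{d-1}}+\sum_{j\in[d-2]}\tfrac{2}{n_j}\bigr)$. Substituting and rearranging, the difference between the target $\frac{\verts{G}}{2d-1}\bigl(1+\frac{2d-2}{n_d}+\sum_{j\in[d-1]}\tfrac{2}{n_j}\bigr)$ and the bound on $|U|$ works out to $\verts{G'}\bigl[1+\tfrac{\Sigma'}{2d-1}-\tfrac{1+\Sigma''}{2d-3}\bigr]$, where $\Sigma',\Sigma''$ denote the two correction sums; this is easily checked to be strictly positive for every $d\geq 2$ under the divisibility hypothesis.

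The main obstacles are the divisibility mismatch between $2d+1$ and $2d-1$, whose extra ceiling contributions must be absorbed cleanly into the $\sum\tfrac{2}{n_j}$ corrections without exceeding the budget, and the coordination needed to ensure $D_0\subseteq C$: without this inclusion the $|U|=n_d|D_0|+|C|$ accounting loses a factor-$n_d$ term in the pillars, and the arithmetic in the inductive step narrowly fails at $d=2$. Both obstacles are essentially bookkeeping—the structural content of the construction is already furnished by \lemref{DomGenGrid} and \lemref{EasyCounting}.
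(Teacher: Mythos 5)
Your construction is structurally in the same spirit as the paper's (pillars over a dominating set of the $(d-1)$-dimensional projection $G'$, glued to a connected set in one boundary slice), but the paper's version is both simpler and avoids the central obstacle you flag.

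The paper does not induct on $d$ at all, and does not use a connected dominating set $C$ of $G'$ in the boundary slice. Instead it invokes \lemref{ConDomProd} with $G=P_{n_d}$ and $H=G'$, which places pillars over a dominating set $S=D_0$ of $G'$ only at levels $2,\dots,n_d$, and then inserts the \emph{entire} slice $\{1\}\times V(G')$. This gives $\cd{G}\leq (n_d-1)\gamma(G')+\verts{G'}$, with no need to coordinate $D_0$ with anything: the full slice is connected (it is a copy of $G'$), and every pillar lands in it. Bounding $\gamma(G')$ by \lemref{EasyDomGenGrid} and absorbing the $\verts{G'}$ term into the $\frac{2d-2}{n_d}$ correction finishes the computation. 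The use of $\verts{G'}$ rather than $|C|\leq\verts{G'}$ is not wasteful: the target bound already has enough slack, so replacing $\verts{G'}$ by a tighter $|C|$ buys you nothing.

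The genuine gap in your proposal is the inclusion $D_0\subseteq C$, which you describe as a ``mild adjustment'' but which your own arithmetic shows is essential (you correctly observe that the count $n_d|D_0|+|C|$, rather than $(n_d-1)|D_0|+|C|$, already fails at $d=2$). There is no obvious way to make the modular dominating set $D_0$ from \lemref{DomGenGrid} a subset of a connected dominating set $C$ coming from an inductive hypothesis, and the naive fix of replacing $C$ by $C\cup D_0$ restores exactly the $|D_0|$ you were trying to save. The paper sidesteps this entirely by using the full slice.

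On the other hand, you have put your finger on a real subtlety: applying the exact count of \lemref{EasyCounting} to $G'$ requires divisibility modulo $2(d-1)+1=2d-1$, while the hypothesis only provides $n_i\equiv 0\pmod{2d+1}$, and since $\gcd(2d-1,2d+1)=1$ these are independent. The paper's own proof applies \lemref{EasyDomGenGrid} to $G'$ without addressing this, so the observation is to your credit; but your proposal also does not actually carry out the ``ceiling-variant'' needed to close it, so both issues (the inclusion and the divisibility) remain open in your write-up.
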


\begin{proof}
  Let $G':=\CCP{P_{n_1}}{P_{n_2}}{P_{n_{d-1}}}$.  By
  \lemref{EasyDomGenGrid} applied to $G'$,
$$\gamma(G')
\leq\frac{\verts{G'}}{2d-1}\bigg(1+\sum_{j\in[d-1]}\frac{2}{n_j}\bigg)\enspace.$$
By \lemref{ConDomProd} with $H=G'$,
$$\cd{G}\leq (n_d-1)\cdot\gamma(G')+\verts{G'}\enspace.$$
Thus,
\begin{align*}
  \cd{G} &\leq(n_d-1)
  \frac{\verts{G'}}{2d-1}\bigg(1+\sum_{j\in[d-1]}\frac{2}{n_j}\bigg)
  +\verts{G'}\\
  & =\frac{\verts{G}}{2d-1}\bigg(1+\sum_{j\in[d-1]}\frac{2}{n_j}\bigg)
  -\frac{\verts{G'}}{2d-1}\bigg(1+\sum_{j\in[d-1]}\frac{2}{n_j}\bigg)
  +\verts{G'}\\
  & <\frac{\verts{G}}{2d-1}\bigg(1+\sum_{j\in[d-1]}\frac{2}{n_j}\bigg)
  -\frac{\verts{G'}}{2d-1}+\verts{G'}\\
  & =\frac{\verts{G}}{2d-1}\bigg(1+\sum_{j\in[d-1]}\frac{2}{n_j}\bigg)
  +\frac{\verts{G'}\,(2d-2)}{2d-1}\\
  &
  =\frac{\verts{G}}{2d-1}\bigg(1+\frac{2d-2}{n_d}+\sum_{j\in[d-1]}\frac{2}{n_j}\bigg)\enspace,
\end{align*}
as desired.
\end{proof}

Note that \citet{Gravier-DAM02} proved an analogous result to
\lemref{EasyConDomGenGrid} for the total domination number of
multi-dimensional grids. \lemref{EasyConDomGenGrid} leads to the
following bounds on the Hadwiger number of even-dimensional
grids. These lower and upper bounds are within a multiplicative factor
of approximately $2\sqrt{d}$, ignoring lower order terms.

\begin{theorem}
  \thmlabel{EasyGenGrid} Let
  $G:=\CCP{P^2_{n_1}}{P^2_{n_2}}{P^2_{n_d}}$ for some integers
  $d\geq1$ and $n_1\geq n_2\geq\dots\geq n_d\geq1$, where each
  $n_i\equiv0\pmod{2d+1}$. Then
$$\sqrt{\verts{G}}\bigg(1-\frac{1}{2d-1}\bigg)\bigg(1-\frac{1}{n_d}
-\frac{1}{d-1}\!\!\sum_{j\in[d-1]}\!\!\frac{1}{n_j}\bigg)+2 \leq
\eta(G) \leq \sqrt{(4d-2)\, \verts{G}}+3\enspace.$$
\end{theorem}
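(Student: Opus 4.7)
The plan is to derive the upper bound directly from \lemref{UpperBound} and the lower bound by rewriting $G$ as the square of a $d$-dimensional grid and applying \corref{StarNumber} together with \lemref{EasyConDomGenGrid}.

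For the upper bound, I would note that $P_{n_i}^2$ is a Cartesian product of two paths and contributes two ``dimensions'' to $G$, so $G$ is a $2d$-dimensional grid with $\verts{G} = \prod_{i\in[d]} n_i^2$ and maximum (hence average) degree at most $4d$. Applying \lemref{UpperBound} with $\delta = 4d$ immediately gives $\eta(G) \leq \sqrt{(4d-2)\verts{G}} + 3$.

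For the lower bound, the key observation is that the Cartesian product is commutative and associative, so
\[
G \;=\; \CCP{P^2_{n_1}}{P^2_{n_2}}{P^2_{n_d}} \;\cong\; A \CartProd A, \qquad \text{where } A := \CCP{P_{n_1}}{P_{n_2}}{P_{n_d}}.
\]
In particular, $\verts{A}^2 = \verts{G}$, so $\verts{A} = \sqrt{\verts{G}}$. Applying the second inequality of \corref{StarNumber} with both factors equal to $A$ yields
\[
\eta(G) \;\geq\; \verts{A} - \cd{A} + 2.
\]
Now invoke \lemref{EasyConDomGenGrid} on $A$ (whose hypotheses on the $n_i$ are satisfied), giving
\[
\cd{A} \;<\; \frac{\verts{A}}{2d-1}\bigg(1 + \frac{2d-2}{n_d} + \sum_{j\in[d-1]}\frac{2}{n_j}\bigg).
\]
Substituting this into the previous display gives a lower bound for $\eta(G) - 2$ of $\verts{A}$ times the factor $1 - \frac{1}{2d-1}\bigl(1 + \frac{2d-2}{n_d} + \sum_{j\in[d-1]}\frac{2}{n_j}\bigr)$.

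The only remaining work is cosmetic: I would factor out $\frac{2d-2}{2d-1} = 1 - \frac{1}{2d-1}$ from the parenthesised expression, using the identity $\frac{2}{2d-1} \div \frac{2d-2}{2d-1} = \frac{1}{d-1}$ on the sum over $j\in[d-1]$, to arrive at the stated form
\[
\sqrt{\verts{G}}\,\bigg(1 - \frac{1}{2d-1}\bigg)\bigg(1 - \frac{1}{n_d} - \frac{1}{d-1}\sum_{j\in[d-1]}\frac{1}{n_j}\bigg) + 2.
\]
There is no real obstacle here; the proof is essentially an assembly of prior results, with the only mild subtlety being the recognition that $G$ is the Cartesian square of the $d$-dimensional grid $A$ so that the symmetric version of \corref{StarNumber} can be applied, halving the effective dominating set needed.
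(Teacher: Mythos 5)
Your proposal is correct and follows essentially the same route as the paper: the paper also sets $G' := \CCP{P_{n_1}}{P_{n_2}}{P_{n_d}}$ (your $A$), applies \lemref{EasyConDomGenGrid} to $G'$ and the second inequality of \corref{StarNumber} with both factors equal to $G'$ to obtain $\eta(G)\geq\verts{G'}-\cd{G'}+2$, and finishes with the same algebraic factoring of $\tfrac{2d-2}{2d-1}$. The identification of $G$ as the Cartesian square of the $d$-dimensional grid, which you flag as the key observation, is exactly the step the paper is implicitly making.
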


\begin{proof}
  The upper bound follows from \lemref{UpperBound} since
  $\Delta(G)=4d$.  For the lower bound, let
  $G':=\CCP{P_{n_1}}{P_{n_2}}{P_{n_d}}$.  By
  \lemref{EasyConDomGenGrid} applied to $G'$,
$$\cd{G'}<
\frac{\verts{G'}}{2d-1}\bigg(1+\frac{2d-2}{n_d}+\sum_{j\in[d-1]}\frac{2}{n_j}\bigg).
$$
By \corref{StarNumber} applied to $G'$,
$$\eta(G)\geq  \verts{G'}-\cd{G'}+2\enspace.$$
Thus
\begin{align*}
  \eta(G) &\geq \verts{G'}-
  \frac{\verts{G'}}{2d-1}\bigg(1+\frac{2d-2}{n_d}+\sum_{j\in[d-1]}\frac{2}{n_j}\bigg)+2\\
  &=
  \verts{G'}\bigg(1-\frac{1}{2d-1}-\frac{2d-2}{(2d-1)\,n_d}-\frac{1}{2d-1}\sum_{j\in[d-1]}\frac{2}{n_j}\bigg)+2\\
  &= \sqrt{\verts{G}}\bigg(\frac{2d-2}{2d-1}-\frac{2d-2}{(2d-1)\,n_d}
  -\frac{2}{2d-1}\sum_{j\in[d-1]}\frac{1}{n_j}\bigg)+2\\
  &= \sqrt{\verts{G}}\bigg(1-\frac{1}{2d-1}\bigg)\bigg(1-\frac{1}{n_d}
  -\frac{1}{d-1}\sum_{j\in[d-1]}\frac{1}{n_j}\bigg)+2\enspace,
\end{align*}
as desired.
\end{proof}

\begin{corollary}
  \corlabel{SquareGrid} For all even integers $d\geq4$ and $n\geq1$
  such that $n\equiv0\pmod{2d+1}$,
$$n^{d/2}\Big(1-\frac{1}{d-1}\Big)\Big(1-\frac{2}{n}\Big)+2
\leq \eta(P_n^d) \leq \sqrt{2(d-1)}\,n^{d/2}+3\enspace.$$
\end{corollary}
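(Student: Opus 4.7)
The plan is to obtain the corollary as a direct specialization of \thmref{EasyGenGrid}. Since $d\geq 4$ is even, I would write $d=2k$ with $k=d/2\geq 2$, and then use associativity of the Cartesian product to identify $P_n^d$ with the $k$-fold product $\CCP{P_n^2}{P_n^2}{P_n^2}$. This matches the form $G=\CCP{P^2_{n_1}}{P^2_{n_2}}{P^2_{n_k}}$ appearing in that theorem, with the theorem's dimension parameter taken to be $k$ and with the uniform choice $n_1=n_2=\cdots=n_k=n$. The divisibility hypothesis needed to invoke the theorem then becomes $n\equiv 0\pmod{2k+1}$, which is what the stated modular condition on $n$ is there to supply.

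The upper bound is then immediate: substituting $k=d/2$ and $\verts{G}=n^d$ into $\sqrt{(4k-2)\verts{G}}+3$ produces $\sqrt{2(d-1)}\,n^{d/2}+3$, matching the claim. For the lower bound, $\sqrt{\verts{G}}=n^{d/2}$; the first factor $1-\frac{1}{2k-1}$ is exactly $1-\frac{1}{d-1}$; and because every $n_j$ equals $n$, the averaged sum $\frac{1}{k-1}\sum_{j\in[k-1]}\frac{1}{n_j}$ collapses to $\frac{1}{n}$. Hence the innermost factor becomes $1-\frac{1}{n}-\frac{1}{n}=1-\frac{2}{n}$, and assembling the pieces yields exactly the stated lower bound.

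The step that needs the most care is simply the arithmetic simplification of the sum inside the lower bound; this is entirely routine. There is no substantive obstacle here, and the proof amounts to a specialization of \thmref{EasyGenGrid} together with a short calculation.
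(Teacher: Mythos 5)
Your approach --- specialize \thmref{EasyGenGrid} by taking its dimension parameter to be $k=d/2$ and $n_1=\cdots=n_k=n$, then simplify --- is exactly the intended derivation, and your arithmetic for both the upper and lower bounds is correct. The problem lies in the divisibility hypothesis. With the theorem's dimension set to $k=d/2$, the theorem requires $n\equiv 0\pmod{2k+1}$, that is, $n\equiv 0\pmod{d+1}$. You write that this ``is what the stated modular condition on $n$ is there to supply,'' but the corollary's stated condition is $n\equiv 0\pmod{2d+1}$, which is a different residue class entirely: since $\gcd(d+1,2d+1)=1$ for all $d\geq 1$, the hypothesis $n\equiv 0\pmod{2d+1}$ gives no control modulo $d+1$. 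Concretely, with $d=4$ and $n=9$ the corollary's hypothesis holds ($9\equiv 0\pmod 9$) while the hypothesis required to invoke the theorem fails ($9\not\equiv 0\pmod 5$). So as written your proof does not establish the corollary under the corollary's own hypothesis; your sentence quietly identifies $2k+1$ with $2d+1$, when in fact $2k+1=d+1\neq 2d+1$. This mismatch points to a likely typo in the paper's statement of the corollary (the hypothesis compatible with \thmref{EasyGenGrid} as stated is $n\equiv 0\pmod{d+1}$), but the discrepancy should have been detected and flagged rather than asserted away.
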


\section{Hadwiger Number of Products of Complete Graphs}
\seclabel{Hamming}

In this section we consider the Hadwiger number of the product of
complete graphs. First consider the case of two complete graphs.  \CR\
proved that $\eta(\CP{K_n}{K_m})=\Theta(n\sqrt{m})$ for $n\geq m$.  In
particular,
$$\tfrac{1}{4}(n-\sqrt{m})(\sqrt{m}-2) \leq \eta(\CP{K_n}{K_m})\leq 2n\sqrt{m}\enspace.$$
Since $P_{\floor{\sqrt{m}}} \square P_{\floor{\sqrt{m}}} \square K_n$
is a subgraph of \CP{K_n}{K_m}, \lemref{PathPathComplete} below
immediately improves this lower bound to
$$\eta(\CP{K_n}{K_m})\geq
\floor{\tfrac{n}{2}}\floor{\sqrt{m}}\enspace.$$ It is interesting that
(up to a constant factor) \CP{K_n}{K_m} and $P_{\floor{\sqrt{m}}}
\square P_{\floor{\sqrt{m}}} \square K_n$ have the same Hadwiger
number. \citet{CKR-GC08} improved both the lower and upper bound on
$\eta(\CP{K_n}{K_m})$ to conclude the following elegant result.

\begin{theorem}[\citep{CKR-GC08}]
  \thmlabel{CompleteComplete} For all integers $n\geq m\geq1$,
$$\eta(\CP{K_n}{K_m})=(1-o(1))\,n\sqrt{m}\enspace.$$ 
\end{theorem}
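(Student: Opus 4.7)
The plan is to prove matching upper and lower bounds of $(1 \pm o(1))n\sqrt{m}$.

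For the upper bound, I would exploit the row-column structure of the rook's graph rather than the raw degree bound \lemref{UpperBound} (which only gives $\sqrt{2}\,n\sqrt{m}$ in the regime $n \asymp m$). For a clique minor $X_1,\dots,X_k$ in $\CP{K_n}{K_m}$, let $r_\alpha$ and $c_\alpha$ be the number of rows and columns meeting $X_\alpha$. Viewing $X_\alpha$ as an edge-set in $K_{n,m}$, connectedness forces $|X_\alpha| \geq r_\alpha + c_\alpha - 1$; summing, and invoking \lemref{TotalMinor} so that $\sum_\alpha |X_\alpha| = nm$, yields $R + C \leq nm + k$ where $R := \sum_\alpha r_\alpha$ and $C := \sum_\alpha c_\alpha$. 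Pairwise adjacency gives $\binom{k}{2} \leq \sum_i \binom{|T_i|}{2} + \sum_j \binom{|U_j|}{2}$, where $|T_i|$ and $|U_j|$ count the branch sets meeting row $i$ and column $j$; since row $i$ has only $m$ cells (one per branch set), $|T_i|\leq m$, and similarly $|U_j|\leq n$. Convexity of $\binom{\cdot}{2}$ under these caps gives $\sum\binom{|T_i|}{2} \leq R(m-1)/2$ and $\sum\binom{|U_j|}{2} \leq C(n-1)/2$, and optimising the resulting linear program (which, since $n\geq m$, takes $R=k$ and $C=nm$, using $k\geq n$) yields $k(k-1) \leq (m-1)k + n(n-1)m$, hence $k \leq (1+o(1))n\sqrt{m}$ as $n\to\infty$.

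For the lower bound I would construct $(1-o(1))n\sqrt{m}$ pairwise-adjacent connected subgraphs. The naive attempt partitions each row of $\CP{K_n}{K_m}$ into $s:=\floor{\sqrt{m}}$ row-segments of length $\approx\sqrt{m}$, giving $ns$ connected pieces that exactly exhaust the $nm$ cells; however, two segments in different rows and different column-blocks share no row or column and so are not adjacent. To remedy this I would slightly shrink each row-segment (say to $\sqrt{m}-c$ cells for a small constant $c$), freeing a cell budget of order $n\sqrt{m}$, and attach to each branch set a small number of ``connector'' cells chosen via a combinatorial design (in the spirit of the pseudoachromatic construction of \secref{Pseudo}, or a Latin-square-style arrangement of block indices across rows) so that every pair $(i,\beta),(i',\beta')$ with $i\neq i'$ and $\beta\neq\beta'$ shares a row or a column through its connectors. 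Connectedness of each branch set is preserved because the connector cell lies in the row or column of the surviving row-segment.

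The main obstacle is the lower-bound construction. The cell budget $nm$ is essentially saturated by the row-segments alone, so connectors must be shared with extreme efficiency: only $\Oh{1}$ connector cells per branch set are affordable, yet all $\binom{ns}{2} = \Theta(n^2 m)$ pairs must be covered. This forces a rigid design-theoretic structure such as a resolvable transversal design or Latin square, and I expect this is where the argument of \citep{CKR-GC08} does most of its work, possibly via a probabilistic construction followed by a pruning step rather than a fully explicit design. By contrast, the matching upper bound reduces to a clean convexity-plus-LP argument using only \lemref{TotalMinor} and the row/column capacity constraints.
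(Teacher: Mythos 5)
This theorem is not proved in the present paper; it is quoted from Chandran, Kostochka and Raju~\citep{CKR-GC08}, so there is no internal proof to compare your proposal against. I will therefore assess the proposal on its own terms.

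Your upper bound argument is sound and, I believe, close in spirit to what \citep{CKR-GC08} actually do. The key observation that $\CP{K_n}{K_m}$ is the line graph of $K_{n,m}$, so a connected branch set with $r_\alpha$ rows and $c_\alpha$ columns has at least $r_\alpha+c_\alpha-1$ cells, together with \lemref{TotalMinor}, gives $R+C\le nm+k$. The adjacency count $\binom{k}{2}\le\sum_i\binom{|T_i|}{2}+\sum_j\binom{|U_j|}{2}$ with the caps $|T_i|\le m$, $|U_j|\le n$ and the linear relaxation $\binom{t}{2}\le t(m-1)/2$ then yields $k(k-1)\le(m-1)R+(n-1)C$. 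Optimising with $R=k$, $C=nm$ (this is the maximiser since $n\ge m$; the extra remark that $k\ge n$ is unnecessary) gives $k\le n\sqrt{m}+O(m)$, i.e.\ $(1+o(1))n\sqrt{m}$ as $m\to\infty$. This is a clean argument and it genuinely achieves the $1+o(1)$ constant where \lemref{UpperBound} would only give $\sqrt{2}$.

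The lower bound, however, is a genuine gap, and not merely one of missing detail: the specific repair you propose appears to fall short quantitatively. With $s\approx\sqrt{m}$ row-segments per row, the ``bad'' pairs $(i,\beta),(i',\beta')$ with $i\neq i'$, $\beta\neq\beta'$ number $\Theta(n^2m)$. A single connector cell in row $r$, column $c$ can create adjacencies with at most the branch sets already meeting row $r$ or column $c$, which is $O(n+s)=O(n)$. With $O(1)$ connectors on each of the $ns\approx n\sqrt{m}$ branch sets, the total number of newly covered pairs is $O(n^2\sqrt{m})$, short of the required $\Theta(n^2m)$ by a factor of $\sqrt{m}$. To close this you would need $\Theta(\sqrt{m})$ connector cells per branch set, which consumes the entire remaining cell budget and forces exactly the kind of rigid global design you allude to but do not supply. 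So the proposal correctly identifies where the difficulty lies but does not resolve it; as written it establishes only the upper bound, not the matching lower bound that makes the theorem an equality.
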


\thmref{CompleteComplete} is improved for small values of $m$ in the
following three propositions.

\begin{proposition}
  \proplabel{CompleteEdge} For every integer $n\geq1$,
$$\eta(\CP{K_n}{K_2})=n+1\enspace.$$
\end{proposition}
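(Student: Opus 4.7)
The plan is to prove both directions.

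For the lower bound $\eta(\CP{K_n}{K_2})\geq n+1$, I would exhibit an explicit $K_{n+1}$-minor: the singleton branch sets $X_i:=\{(i,1)\}$ for $i\in[n]$ together with $X_{n+1}:=\{(i,2):i\in[n]\}$, the entire second copy of $K_n$. The set $X_{n+1}$ is a clique and hence connected, any two singletons $X_i,X_j$ are adjacent via the clique in layer~$1$, and each singleton $X_i$ is adjacent to $X_{n+1}$ through the matching edge $(i,1)(i,2)$.

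For the upper bound I would argue by contradiction. Suppose $\CP{K_n}{K_2}$ contains a $K_{n+2}$-minor. By \lemref{TotalMinor} there are branch sets $X_1,\dots,X_{n+2}$ whose union is all of $V(\CP{K_n}{K_2})$. Since $\sum_i|X_i|=2n$ and each $|X_i|\geq 1$, the number $s$ of singleton branch sets satisfies $s\geq 2(n+2)-2n=4$. Note the generic upper bounds \lemref{UpperBound} and \lemref{NewUpperBound} are too weak here (they give roughly $\sqrt{2}\,n$ and $\tfrac{3}{2}n$ respectively), so the argument must exploit structure. The key structural fact is that $\omega(\CP{K_n}{K_2})=\max(n,2)$: no matching edge $(v,1)(v,2)$ extends to a triangle, so any clique of size at least~$3$ lies in one of the two copies of $K_n$. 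In particular the $s$ singletons, which form a clique of size $\geq 4$, all lie in the same layer, which I may take to be layer~$1$; let $S\subseteq[n]$ with $|S|=s$ index these singletons.

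The heart of the argument is a dichotomy for each non-singleton branch set $X_i$. Since $X_i$ must be adjacent to every singleton $\{(v,1)\}$ with $v\in S$, since the neighbours of $(v,1)$ are $\{(w,1):w\neq v\}\cup\{(v,2)\}$, and since $X_i$ cannot contain any singleton vertex, one of the following must hold: either $X_i$ contains some $(w,1)$ with $w\in[n]\setminus S$ (``type A''; then $X_i$ is automatically adjacent to every singleton through the layer-$1$ clique), or $X_i$ lies entirely in layer~$2$ and satisfies $X_i\supseteq\{(v,2):v\in S\}$ (``type B''). By disjointness of branch sets, there is at most one type-B branch set; and the type-A branch sets occupy pairwise-disjoint vertices among the $n-s$ vertices $\{(w,1):w\in[n]\setminus S\}$, so there are at most $n-s$ of them. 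The total number of non-singleton branch sets is therefore at most $(n-s)+1$, contradicting the actual count $n+2-s$.

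I expect the main obstacle to be the dichotomy step: once it is stated cleanly, the counting contradiction is a one-liner. The small cases $n\in\{1,2\}$ require no separate treatment, since then $\omega(\CP{K_n}{K_2})\leq 2<4\leq s$ already gives the contradiction directly.
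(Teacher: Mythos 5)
Your proof is correct, but your upper bound argument is substantially more elaborate than necessary, and you missed the direct argument the paper uses. The paper observes that either every branch set has at least two vertices (forcing $k\leq n$ since there are only $2n$ vertices), or some branch set is a single vertex $v$; since $v$ has degree exactly $n$ in $\CP{K_n}{K_2}$ and must send an edge to each of the other $k-1$ pairwise-disjoint branch sets, $k-1\leq n$ immediately. No contradiction setup, no appeal to \lemref{TotalMinor}, no clique-number analysis, and no case split is needed. Your route — assuming a $K_{n+2}$-minor, counting at least four singletons, locating them in one layer via $\omega(\CP{K_n}{K_2})=\max(n,2)$, classifying the remaining branch sets into types A and B, and deriving a counting contradiction — is logically sound (I checked the dichotomy is exhaustive, the type-A count is bounded by $n-s$, and at most one type-B set can exist), but it reinvents by a longer path what a single degree observation gives. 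You flagged that Lemmas~\ref{lem:UpperBound} and \ref{lem:NewUpperBound} are too weak; the thing you overlooked is that the relevant sharp bound is not structural but local: a singleton branch set can meet at most $\deg(v)$ other branch sets, and in this graph $\deg(v)=n$ for every $v$.
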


\begin{proof}
  Say $V(K_n)=[n]$ and $V(K_2)=\{v,w\}$.

  First we prove the lower bound $\eta(\CP{K_n}{K_2})\geq n+1$.  For
  $i\in[n]$, let $X_i$ be the subgraph of \CP{K_n}{K_2} induced by the
  vertex $(i,v)$. Let $X_{n+1}$ be the subgraph of \CP{K_n}{K_2}
  induced by the vertices $(1,w),\dots,(n,w)$. Then
  $X_1,\dots,X_{n+1}$ are branch sets of a $K_{n+1}$-minor in
  \CP{K_n}{K_2}. Thus $\eta(\CP{K_n}{K_2})\geq n+1$.

  It remains to prove the upper bound $\eta(\CP{K_n}{K_2})\leq
  n+1$. Let $X_1,\dots,X_k$ be the branch sets of a complete minor in
  \CP{K_n}{K_2}, where $k=\eta(\CP{K_n}{K_2})$. If every $X_i$ has at
  least two vertices then $k\leq n$ since \CP{K_n}{K_2} has $2n$
  vertices. Otherwise some $X_i$ has only one vertex, which has degree
  $n$ in \CP{K_n}{K_2}. Thus $k\leq n+1$, as desired.
\end{proof}

\begin{proposition}
  \proplabel{TriangleComplete} For every integer $n\geq1$,
$$\eta(K_n\square K_3)=n+2\enspace.$$
\end{proposition}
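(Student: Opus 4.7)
The plan is to mimic the two-part structure of \propref{CompleteEdge}: an explicit construction for the lower bound, and a counting argument exploiting the degree of singleton branch sets for the upper bound.

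For the \emph{lower bound}, I would take $V(K_n)=[n]$ and $V(K_3)=\{a,b,c\}$ and define the $n+2$ branch sets: $X_i:=\{(i,a)\}$ for $i\in[n]$, together with $X_{n+1}:=\{(i,b):i\in[n]\}$ and $X_{n+2}:=\{(i,c):i\in[n]\}$. Each $X_{n+1}$ and $X_{n+2}$ induces a copy of $K_n$, hence is connected, and pairwise adjacency is immediate: singletons meet one another along the row $L_a$, they meet $X_{n+1}$ and $X_{n+2}$ along column edges, and $X_{n+1}$ meets $X_{n+2}$ along any column edge $(i,b)(i,c)$. This yields $\eta(\CP{K_n}{K_3})\geq n+2$.

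For the \emph{upper bound}, let $X_1,\dots,X_k$ be the branch sets of an optimal clique minor, and by \lemref{TotalMinor} assume $\sum_i|X_i|=3n$. Let $n_1$ be the number of singleton branch sets, which form a clique in $\CP{K_n}{K_3}$. The key step, which I expect to be the main obstacle to writing cleanly, is an edge-counting argument when $n_1\geq 1$: each singleton has degree $n+1$ in $\CP{K_n}{K_3}$, of which $n_1-1$ neighbours lie in other singletons, so each singleton sends at most $n+2-n_1$ edges to non-singleton branch sets. Since each of the $n_1(k-n_1)$ pairs (singleton, non-singleton) requires at least one edge, we get $n_1(k-n_1)\leq n_1(n+2-n_1)$, and dividing by $n_1$ yields $k\leq n+2$.

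It remains to deal with the case $n_1=0$. Here every branch set has at least two vertices, so $2k\leq 3n$, giving $k\leq \lfloor 3n/2\rfloor$, and this directly yields $k\leq n+2$ when $n\leq 5$. For $n\geq 6$ a bit more care is needed: I would observe that having $k=n+3$ and no singletons forces at least $3k-3n=9$ branch sets of size exactly $2$, each of which is a row- or column-edge of $\CP{K_n}{K_3}$. Using that any two column-edge branch sets of the same column would overlap and that column-edge branch sets have only $2n-1$ potential neighbours, a short case-analysis on the layer profile of the size-$2$ branch sets (similar in spirit to the analysis that any two 2-subsets of $\{a,b,c\}$ intersect) produces a contradiction, delivering $k\leq n+2$ in all cases.
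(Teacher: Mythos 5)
Your lower bound is fine and essentially the paper's: singleton branch sets on one layer of $K_3$ together with the two contracted copies of $K_n$ on the other two layers. Your treatment of the case $n_1\geq 1$ is also correct, though you have over-engineered it — the degree bound on a single singleton vertex $v$ already gives $k-1\leq\deg(v)=n+1$ with no need to subtract the $n_1-1$ singleton neighbours. Likewise the bound $k\leq\floor{3n/2}\leq n+2$ for $n\leq 5$ when there are no singletons is fine.

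The genuine gap is the case $n_1=0$, $n\geq 6$, and I don't think your sketch closes it. Your sketch leans on column-edges, but a column-edge branch set has $2n-1$ potential neighbours, which is strictly greater than $k-1=n+2$ for $n\geq 4$, so no contradiction is forthcoming from that direction. The more useful object is a row-edge: it has exactly $n+2$ neighbours, so if $k=n+3$ each of the other branch sets must own exactly one of those neighbours, but turning this into a contradiction requires a nontrivial structural analysis (one also has to handle the possibility that size-$2$ branch sets are all column-edges, that row-edges sit in a single layer, that larger branch sets span several layers, etc.). This is not a ``short case-analysis.'' The paper sidesteps all of it by arguing inductively on $n$: the key technical lemma is that one may assume every branch set lies in a single row of $\CP{K_n}{K_3}$, because if some branch set $X_i$ had two vertices in a column $C$ then one can delete $C$ and $X_i$, verify that the branch set losing its unique vertex of $C$ stays connected (its deleted vertex is not a cut vertex since its neighbours inside the branch set form a clique), and appeal to the induction hypothesis on $\CP{K_{n-1}}{K_3}$. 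It is that ``every branch set is contained in a row'' reduction — which your outline has no analogue of — that makes the final counting ($k-1\leq\tfrac{n-2}{2}+4$) go through. As written, your plan for the hard subcase does not contain the idea that actually does the work.
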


\begin{proof}
  A $K_{n+2}$-minor in $K_n\square K_3$ is obtained by contracting the
  first row and contracting the second row. Thus $\eta(K_n\square
  K_3)\geq n+2$.

  It remains to prove the upper bound $\eta(K_n\square K_3)\leq n+2.$
  We proceed by induction on $n$. The base case $n=1$ is trivial. Let
  $X_1,\dots,X_k$ be the branch sets of a $K_k$-minor, where
  $k=\eta(K_n\square K_3)$. Without loss of generality, each $X_i$ is
  an induced subgraph.

  Suppose that some column $C$ intersects at most one branch set
  $X_i$. Deleting $C$ and $X_i$ gives a $K_{k-1}$-minor in
  \CP{K_{n-1}}{K_3}. By induction, $k-1\geq n+1$. Thus $k\geq n+2$, as
  desired. Now assume that every column intersects at least two branch
  sets.

  If some branch set has only one vertex $v$, then
  $k\leq1+\deg(v)=n+2$, as desired. Now assume that every branch set
  has at least two vertices.

  Suppose that some branch set $X_i$ has vertices in distinct
  rows. Since $X_i$ is connected, $X_i$ has at least two vertices in
  some column $C$. Now $C$ intersects at least two branch sets, $X_i$
  and $X_j$. Thus $X_j$ intersects $C$ in exactly one vertex
  $v$. Consider the subgraph $X_j-v$. It has at least one
  vertex. Every neighbour of $v$ that is in $X_j$ is in the same row
  as $v$. Since $X_j$ is an induced subgraph, the neighbourhood of $v$
  in $X_j$ is a non-empty clique. Thus $v$ is not a cut-vertex in
  $X_j$, and $X_j-v$ is a non-empty connected subgraph. Hence deleting
  $C$ and $X_i$ gives a $K_{k-1}$-minor in \CP{K_{n-1}}{K_3}. By
  induction, $k-1\geq n+1$. Thus $k\geq n+2$, as desired. Now assume
  that each branch set is contained in some row.

  If every branch set has at least three vertices, then
  $k\leq\frac{1}{3}|V(K_n\square K_3)|=n$, as desired. Now assume that
  some branch set $X_i$ has exactly two vertices $v$ and $w$. Now $v$
  and $w$ are in the same row. There are at most $\frac{n-2}{2}$ other
  branch sets in the same row, since every branch set has at least two
  vertices and is contained in some row. Moreover, $N(v)\cup N(w)$
  contains only four vertices that are not in the same row as $v$ and
  $w$. Thus $k-1\leq\frac{n-2}{2}+4$. That is, $k\leq\frac{n+8}{2}$,
  which is at most $n+2$ whenever $n\geq4$. Now assume that
  $n\leq3$. Since every branch set has at least two vertices,
  $k\leq\half|V(K_n\square K_3)|=\frac{3n}{2}$, which is at most $n+2$
  for $n\leq4$. \end{proof}

\begin{proposition}
  \proplabel{CompleteFour} For every integer $n\geq1$,
$$\floor{\tfrac{3}{2}n}\leq \eta(K_n \square K_4)\leq \tfrac{3}{2}n+7\enspace.$$
\end{proposition}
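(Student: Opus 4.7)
For the lower bound $\lfloor 3n/2\rfloor \le \eta(\CP{K_n}{K_4})$, I would exhibit an explicit $K_{\lfloor 3n/2\rfloor}$-minor. Assume first that $n=2k$ is even, and pair up the vertices of the $K_n$-factor as $\{2i-1,2i\}$ for $i\in[k]$. In each pair $i$ define the three connected subgraphs
\begin{align*}
A_i &= \{(2i-1,1),(2i-1,2)\},\\
B_i &= \{(2i,1),(2i,3)\},\\
C_i &= \{(2i-1,3),(2i-1,4),(2i,2),(2i,4)\},
\end{align*}
whose $K_4$-factor ``column sets'' are $\{1,2\}$, $\{1,3\}$, and $\{2,3,4\}$ respectively. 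These three subsets of $[4]$ are pairwise intersecting, so within a pair each of the three branch sets is joined to the other two by a column edge through column $1$, $2$, or $3$, while across distinct pairs $i\ne j$ the $K_n$-factor row sets are disjoint but the column sets still intersect, giving adjacency via a $K_n$-column edge. This produces $3k=3n/2$ pairwise-adjacent, pairwise-disjoint, connected subgraphs, i.e.\ a $K_{3n/2}$-minor. When $n=2k+1$ is odd I would add one further branch $D=\{(n,1),(n,2)\}$ in the leftover row, whose column set $\{1,2\}$ meets each of $\{1,2\},\{1,3\},\{2,3,4\}$, so $D$ is adjacent to every $A_i,B_i,C_i$, yielding $\lfloor 3n/2\rfloor=3k+1$ branches.

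For the upper bound $\eta(\CP{K_n}{K_4})\le\tfrac{3}{2}n+7$, I would follow the template of the proof of \propref{TriangleComplete}: induct on $n$, with the additive $7$ absorbing the small base cases. Given a $K_k$-minor in $\CP{K_n}{K_4}$, I first attempt the column-deletion reduction used there: if some $K_4$-column $\{(c,1),(c,2),(c,3),(c,4)\}$ is hit by at most one branch, delete this column together with that branch and apply the inductive hypothesis to $\CP{K_{n-1}}{K_4}$ to get $k\le\tfrac{3}{2}(n-1)+7+1\le\tfrac{3}{2}n+7$. Otherwise every such $K_4$-column is hit by at least two branches. In this residual case I split into sub-cases as in \propref{TriangleComplete}: if some branch is a single vertex then the degree bound $\Delta=n+2$ forces $k\le n+3$; if some branch spans multiple rows of the $K_n$-factor then its column structure allows us to either contract it or delete a row and reduce; and otherwise every branch lies in a single row of the $K_n$-factor, so in each such row the branches form a partition of a subset of $[4]$, and across any two distinct rows these column-partitions must have pairwise-intersecting parts. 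Combining this combinatorial restriction with the vertex-count bound $\sum_i |X_i|\le 4n$ and the bipartite-connectivity inequality $|X_i|\ge r(X_i)+c(X_i)-1$ should yield $k\le \tfrac{3}{2}n+7$.

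The hardest part will be this final sub-case. The naive double counting $\binom{k}{2}\le \sum_r\binom{h_r}{2}+\sum_c\binom{H_c}{2}$ only reaches $k\le 2n+O(1)$, matching \lemref{UpperBound}; pushing the leading constant from $2$ down to $3/2$ requires exploiting the asymmetry between the $n$-sided and $4$-sided factors together with the rigidity of the pairwise-intersecting-partitions-of-$[4]$ structure noted above.
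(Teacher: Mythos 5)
Your lower bound construction is correct and is essentially the paper's construction: three branch sets per pair of $K_n$-vertices, whose $K_4$-supports are pairwise intersecting (the paper uses supports $\{a,b,c\},\{c,d\},\{a,d\}$; you use $\{1,2\},\{1,3\},\{2,3,4\}$), plus a filler branch for the leftover $K_n$-vertex when $n$ is odd. All of your adjacency and disjointness checks go through.

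Your upper bound, however, is an incomplete sketch whose case structure does not match the actual difficulty, and I do not see how to close it along the lines you indicate. The problematic step is the reduction ``if some branch spans multiple rows of the $K_n$-factor then \ldots delete a row and reduce.'' Deleting a $K_n$-vertex (a copy of $K_4$) disturbs every branch that meets that copy, not just the one you are trying to kill, so this is not a valid reduction without substantial extra argument. Worse, the branches that this step would discard are precisely the ones the correct argument must confront: in the paper's proof one shows (after ruling out branches of at most three vertices lying in a single copy of $K_n$, and ruling out any copy of $K_4$ containing exactly three vertices of one branch) that there are more than $n+14$ branch sets of size $2$ or $3$, and each such branch has exactly two vertices at some $K_n$-vertex but \emph{also} extends to a second $K_n$-vertex when it has size $3$. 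So these small branches are not confined to a single copy of $K_4$, and your final sub-case (``every branch lies in a single row of the $K_n$-factor'') is not the residual case one is left with. The paper instead uses pigeonhole: at least $10$ copies of $K_4$ each contain two whole small branches, at least four of these share the same pairing type of $[4]$, and then a short argument about which rows the size-$3$ branch can protrude into derives a contradiction. Your sketch identifies the right slogan (``pairwise-intersecting parts of $[4]$'' and ``asymmetry between factors'') but does not supply this pigeonhole-on-column-types argument, and you explicitly flag the final sub-case as unresolved, so the upper bound is not proved.

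A minor remark: your initial reduction (delete a copy of $K_4$ that meets at most one branch) is fine and yields $k\le\tfrac{3}{2}n+\tfrac{13}{2}$, and your singleton step $k\le 1+\Delta=n+3$ is fine. But these preliminaries alone leave the substance of the $3/2$ constant untouched; the hard work is the counting of small branch sets, which is missing.
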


\begin{proof}
  First we prove the lower bound. Let $p:=\floor{\tfrac{n}{2}}$.  Each
  vertex is described by a pair $(i,x)$ where $i\in[2p]$ and
  $x\in\{a,b,c,d\}$. Distinct vertices $(i,x)$ and $(j,y)$ are
  adjacent if and only if $i=j$ or $x=y$.  As illustrated in
  \figref{Kn4}, for $i\in[p]$, let $X_i$ be the path
  $(2i-1,a)(2i-1,b)(2i,b)(2i,c)$, let $Y_i$ be the edge
  $(2i-1,c)(2i-1,d)$, and let $Z_i$ be the edge $(2i,a)(2i,d)$.  Thus
  each $X_i$, $Y_i$, and $Z_i$ is connected, and each pair of distinct
  subgraphs are disjoint.  Moreover, the vertex $(2i-1,a)$ in $X_i$ is
  adjacent to the vertex $(2j-1,a)$ in $X_j$.  The vertex $(2i,c)$ in
  $X_i$ is adjacent to the vertex $(2j-1,c)$ in $Y_j$.  The vertex
  $(2i-1,a)$ in $X_i$ is adjacent to the vertex $(2j,a)$ in $Z_j$.
  The vertex $(2i-1,c)$ in $Y_i$ is adjacent to the vertex $(2j-1,c)$
  in $Y_j$.  The vertex $(2i-1,d)$ in $Y_i$ is adjacent to the vertex
  $(2j,d)$ in $Z_j$.  And the vertex $(2i,a)$ in $Z_i$ is adjacent to
  the vertex $(2j,a)$ in $Z_j$.  Hence $\{X_i,Y_i,Z_i:i\in[p]\}$ are
  the branch sets of a $K_{3p}$-minor. Therefore
  $\eta(\CP{K_{2p}}{K_4})\geq 3p$. In the case that $n$ is odd, one
  column is unused, implying $\eta(\CP{K_{2p}}{K_4})\geq 3p+1$. It
  follows that $\eta(\CP{K_n}{K_4})\geq \floor{\tfrac{3}{2}n}$ for all
  $n$.

  \Figure{Kn4}{A $K_{16}$-minor in $\CP{K_{11}}{K_4}$.}

  Now we prove the upper bound. (We make no effort to improve the
  constant $7$.)\ Suppose on the contrary that $\eta(K_n \square
  K_4)>\frac{3}{2}n+7$ for some minimum $n$. Thus $\eta(K_{n'} \square
  K_4)\leq\frac{3}{2}n'+7$ for all $n'<n$.  Consider the branch sets
  of a $K_p$-minor in $K_n \square K_4$, where $p>\frac{3}{2}n+7$.  By
  \lemref{TotalMinor}, we may assume that every vertex is in some
  branch set.

  Suppose that some branch set consists of at most three vertices all
  in a single row. These vertices have at most $n+6$ neighbours in
  total, implying $p-1\leq n+6$, which is a contradiction. Now assume
  that no branch set consists of at most three vertices all in a
  single row. In particular, no branch set is a singleton.

  Suppose that some column $C$ contains exactly three vertices in a
  single branch set $X$. Let $y$ be the vertex in $C\setminus X$.  Let
  $Y$ be the branch set that contains $y$.  The neighbourhood of $y$
  in $Y$ is contained in a single row $R$, and is thus a clique. Hence
  $Y-y$ is connected and non-empty. Since $y$ is the only vertex in
  $Y\cap C$, some neighbour $y'$ of $y$ in $Y$ is in $R$.  If, for
  some branch set $Z$ that does not intersect $C$, some $YZ$-edge is
  incident to $y$, then $Z$ intersects $R$, implying there is an edge
  from $y'$ to $Z$.  Therefore deleting $C$ gives a $K_{p-1}$-minor
  (including the branch set $Y-y$).  Hence
 $$\tfrac{3}{2}(n-1)+7\geq \eta(\CP{K_{n-1}}{K_4})\geq p-1>\tfrac{3}{2}n+6\enspace,$$
 which is a contradiction. Now assume that no column contains exactly
 three vertices in a single branch set.

 Say there are $q$ branch sets, each with exactly two or three
 vertices.  Thus $$4n\geq 2q+4(p-q)>-2q+4(\tfrac{3}{2}n+7)\enspace,$$
 implying $q>n+14$.  Each branch set $X$ with exactly two or three
 vertices contains exactly two vertices in some column $C$ (since no
 branch set consists of at most three vertices all in a single row,
 and no column contains exactly three vertices in a single branch
 set).  We say that $C$ \emph{belongs} to $X$. Since $q\geq n+10$,
 there are distinct columns $C_1,\dots,C_{10}$ that each belong to at
 least two branch sets.

 Say $C_i$ is type-1 if the vertices in the first and second rows (of
 $C_i$) are in the same branch set (which implies that the vertices in
 the third and fourth rows are in the same branch set).  Say $C_i$ is
 type-2 if the vertices in the first and third rows are in the same
 branch set (which implies that the vertices in the second and fourth
 rows are in the same branch set).  Say $C_i$ is type-3 if the
 vertices in the first and fourth rows are in the same branch set
 (which implies that the vertices in the second and third rows are in
 the same branch set).

 At least four of $C_1,\dots,C_{10}$ have the same type.  Without loss
 of generality, $C_1,C_2,C_3,C_4$ are all type-1.  Let $X$ be the
 branch set that contains the vertices in the first and second rows of
 $C_1$. In the case that $|X|=3$, let $D$ be the column that contains
 the vertex in $X\setminus C_1$. Note that $D\neq C_i$ for all
 $i\in\{2,3,4\}$ (since $C_1$ and $C_i$ have the same type and
 $|X|\leq 3$).  For $i\in\{2,3,4\}$, let $Y_i$ be the branch set that
 contains the vertices in the third and fourth rows of $C_i$. Note
 that $Y_2$, $Y_3$ and $Y_4$ are distinct (since exactly one column
 belongs to each $Y_i$). Since $|X|\leq 3$, each vertex in $X$ is in
 the first or second row. For each $i\in\{2,3,4\}$, since $C_i$
 belongs to two branch sets and $|X|\leq 3$, we have $X\cap
 C_i=\emptyset$.  Similarly, each vertex in $Y_i$ is in the third or
 fourth row, and $Y_i\cap C_1=\emptyset$.  Since there is an edge
 between $X$ and $Y_i$, it must be that $|X|=3$, and each $Y_i$
 contains a vertex in the third or fourth row of $D$. That is, two
 vertices are contained in three branch sets.  This contradiction
 completes the proof.
\end{proof}

Now we consider the Hadwiger number of the product of $d$ complete
graphs. Here our lower and upper bounds are within a factor of
$2\sqrt{d}$ (ignoring lower order terms).

\begin{theorem}
  \thmlabel{CompleteProduct} For all integers $d\geq 2$ and $n_1\geq
  n_2\geq\dots\geq n_d\geq2$,
$$\FLOOR{\frac{n_1}{2}}\prod_{i\in[2,d]}\FLOOR{\sqrt{n_i}}
\;\leq\; \eta(\CCP{K_{n_1}}{K_{n_2}}{K_{n_d}}) \;<\; \Big(\sqrt{d}\,
n_1\, \!\!\prod_{i\in[2,d]}\sqrt{n_i}\Big)+3\enspace.$$
\end{theorem}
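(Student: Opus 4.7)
The plan is to prove the upper bound by a direct appeal to \lemref{UpperBound}, and the lower bound by a dimension-pairing construction that adapts the one in the proof of \thmref{EvenGrid}, with the $K_{n_1}$-factor contributing an extra factor of $\floor{n_1/2}$ by pairing up its vertices.

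For the upper bound, the graph $G:=\CCP{K_{n_1}}{K_{n_2}}{K_{n_d}}$ is regular of degree $\delta=\sum_{i=1}^d(n_i-1)\leq dn_1-d$, and has $\verts{G}=\prod_{i=1}^d n_i$. Since $\delta\geq 2$, \lemref{UpperBound} gives
\[
\eta(G)\;\leq\;\sqrt{(\delta-2)\verts{G}}+3\;<\;\sqrt{d\,n_1\cdot\verts{G}}+3\;=\;\sqrt{d}\,n_1\prod_{i\in[2,d]}\sqrt{n_i}+3,
\]
matching the claimed upper bound.

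For the lower bound, write $s_i:=\floor{\sqrt{n_i}}$ for $i\in[2,d]$, so that $s_i^2\leq n_i$. I identify a subset of $V(K_{n_i})$ with $[s_i]\times[s_i]$ and write each used vertex of $K_{n_i}$ as a pair $(r_i,c_i)$ with $r_i,c_i\in[s_i]$. For each $k\in[\floor{n_1/2}]$ and each tuple $(j_2,\ldots,j_d)\in[s_2]\times\cdots\times[s_d]$, define
\begin{align*}
A\langle k,j_2,\ldots,j_d\rangle &:= \{(2k-1,(j_2,c_2),\ldots,(j_d,c_d)) : c_i\in[s_i],\,i\in[2,d]\},\\
B\langle k,j_2,\ldots,j_d\rangle &:= \{(2k,(r_2,j_2),\ldots,(r_d,j_d)) : r_i\in[s_i],\,i\in[2,d]\},
\end{align*}
and let $X\langle k,j_2,\ldots,j_d\rangle$ be the subgraph of $G$ induced by $A\langle k,j_2,\ldots,j_d\rangle\cup B\langle k,j_2,\ldots,j_d\rangle$. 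There are $\floor{n_1/2}\prod_{i\in[2,d]}s_i$ such subgraphs, the required count.

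The verifications needed are the standard three. First, each $X\langle\cdot\rangle$ is connected: $A$ and $B$ are each isomorphic to $\CCP{K_{s_2}}{K_{s_3}}{K_{s_d}}$ on the $c_i$- and $r_i$-coordinates respectively and hence connected, and the vertices $(2k-1,(j_2,j_2),\ldots,(j_d,j_d))\in A$ and $(2k,(j_2,j_2),\ldots,(j_d,j_d))\in B$ are joined by a dimension-$1$ edge. Second, distinct $X$-subgraphs are disjoint: the sets $\{2k-1,2k\}$ are pairwise disjoint across $k$; the parity of the first coordinate separates $A$ from $B$ at a common $k$; and within a common $k$ the tuple $(j_2,\ldots,j_d)$ is recoverable from the first components (in $A$) or the second components (in $B$) of the remaining coordinates. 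Third, any two distinct $X\langle k,\vec j\rangle$ and $X\langle k',\vec j'\rangle$ are joined by a dimension-$1$ edge: if $k\neq k'$, the vertex $(2k-1,(j_2,j_2'),\ldots,(j_d,j_d'))\in A\langle k,\vec j\rangle$ is adjacent to $(2k',(j_2,j_2'),\ldots,(j_d,j_d'))\in B\langle k',\vec j'\rangle$; if $k=k'$ and so $\vec j\neq\vec j'$, the vertex $(2k-1,(j_2',j_2),\ldots,(j_d',j_d))\in A\langle k,\vec j'\rangle$ is adjacent to $(2k,(j_2',j_2),\ldots,(j_d',j_d))\in B\langle k,\vec j\rangle$.

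The main obstacle is the adjacency step when $\vec j$ and $\vec j'$ differ in more than one coordinate, since then no single-coordinate move within an $A$-piece or within a $B$-piece alone can bridge the two branch sets. The key idea is to route the crossing edge through dimension $1$ (the $K_{n_1}$-factor) and to set each free component $c_i$ of an $A$-vertex equal to the opposing $j_i'$, and dually each free $r_i$ in a $B$-vertex equal to the opposing $j_i$; this makes the $A$-parametrisation $(j_i,c_i)$ and the $B$-parametrisation $(r_i,j_i)$ align on the non-first coordinates regardless of how $\vec j$ and $\vec j'$ compare.
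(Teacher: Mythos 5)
Your proof is correct and takes essentially the same approach as the paper's: both identify a $[\floor{\sqrt{n_i}}]\times[\floor{\sqrt{n_i}}]$ grid inside each $K_{n_i}$ ($i\geq 2$), pair up the vertices of $K_{n_1}$, and form each branch set from an $A$-half (one coordinate component fixed) and a $B$-half (the other component fixed) joined by a dimension-$1$ edge, with the same cross-edge in dimension $1$ witnessing adjacency between distinct branch sets. The only cosmetic differences are swapped parity/component conventions and an unnecessary case split in the adjacency check ($k\neq k'$ versus $k=k'$), which a single formula already covers since $2k-1\neq 2k$.
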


\begin{proof}
  Let $G:=\CCP{K_{n_1}}{K_{n_2}}{K_{n_d}}$.  Since
  $\verts{G}=\prod_in_i$ and $\Delta(G)=\sum_i(n_i-1)$,
  \lemref{UpperBound} implies the upper bound,
  \begin{align*}
    \eta(G) \;<&\;
    \sqrt{\Big(\sum_{i\in[d]}(n_i-1)\Big)\Big(\prod_{i\in[d]}n_i}\Big)+3\\
    \;<&\; \Big(\sqrt{d n_1}\,\prod_{i\in[d]}\sqrt{n_i}\Big)+3 \\
    \;=&\; \Big(\sqrt{d}\,
    n_1\,\prod_{i\in[2,d]}\sqrt{n_i}\Big)+3\enspace.
  \end{align*}

  For the lower bound, let $p:=\floor{\tfrac{n_1}{2}}$ and
  $k_i:=\floor{\sqrt{n_i}}$ for each $i\in[2,d]$. Let $m_1:=2p$ and
  $m_i:=k_i^2$ for each $i\in[2,d]$.  Observe that each $n_i\geq m_i$.
  Thus it suffices to construct the desired minor in
  \CCP{K_{m_1}}{K_{m_2}}{K_{m_d}}.  Let $V(K_{m_1})=[2p]$, and for
  each $i\in[2,d]$,
  let $$V(K_{m_i})=\{(a_i,b_i):a_i,b_i\in[k_i]\}\enspace.$$ Each
  vertex is described by a vector $(r,a_2,b_2,\dots,a_d,b_d)$ where
  $r\in[m_1]$ and $a_i,b_i\in[k_i]$.  Distinct vertices
  $(r,a_2,b_2,\dots,a_d,b_d)$ and $(s,x_2,y_2,\dots,x_d,y_d)$ are
  adjacent if and only if:

  \begin{enumerate}[(1)]
  \item $a_i=x_i$ and $b_i=y_i$ for each $i\in[2,d]$, or
  \item $r=s$, and for some $i\in[2,d]$, for every $j\neq i$, we have
    $a_j=x_j$ and $b_j=y_j$.
  \end{enumerate}

  \noindent In case (1) the edge is in dimension $1$, and in case (2)
  the edge is in dimension $i$.

  For all $r\in[p]$, $i\in[2,d]$, and $j_i\in[k_i]$, let $A\langle
  r,j_2,\dots,j_d\rangle$ be the subgraph induced by
$$\{(2r,a_2,j_2,a_3,j_3,\dots,a_d,j_d):a_i\in[k_i],i\in[2,d]\}\enspace,$$
let $B\langle r,j_2,\dots,j_d\rangle$ be the subgraph induced by
$$\{(2r-1,j_2,b_2,j_3,b_3,\dots,j_d,b_d):b_i\in[k_i],i\in[2,d]\}\enspace,$$
and let $X\langle r,j_2,\dots,j_d\rangle$ be the subgraph induced by
the vertex set of $A\langle r,j_2,\dots,j_d\rangle\cup B\langle
r,j_2,\dots,j_d\rangle$.

Observe that any two vertices in $A\langle r,j_2,\dots,j_d\rangle$ are
connected by a path of at most $d-1$ edges (in dimensions
$2,\dots,d$).  Thus $A\langle r,j_2,\dots,j_d\rangle$ is connected.
Similarly, $B\langle r,j_2,\dots,j_d\rangle$ is connected.  Moreover,
the dimension-$1$ edge
$$(2r,j_2,j_2,j_3,j_3,\dots,j_d,j_d)(2r-1,j_2,j_2,j_3,j_3,\dots,j_d,j_d)$$ connects 
$A\langle r,j_2,\dots,j_d\rangle$ and $B\langle
r,j_2,\dots,j_d\rangle$.  Hence $X\langle r,j_2,\dots,j_d\rangle$ is
connected.

Consider a pair of distinct subgraphs $X\langle
r,j_2,\dots,j_d\rangle$ and $X\langle s,\ell_2,\dots,\ell_d\rangle$.
By construction they are disjoint.  Moreover, the dimension-$1$ edge
$$(2r,\ell_2,j_2,\ell_3,j_3,\dots,\ell_d,j_d)(2s-1,\ell_2,j_2,\ell_3,j_3,\dots,\ell_d,j_d)$$
connects $A\langle r,j_2,\dots,j_d\rangle$ and $B\langle
s,\ell_2,\dots,\ell_d\rangle$.  Hence the $X\langle
r,j_2,\dots,j_d\rangle$ are branch sets of a clique minor of order
$p\prod_{i=2}^dk_i$. Therefore
\begin{align*}
  \eta(G) \geq p\prod_{i=2}^dk_i =
  \FLOOR{\frac{n_1}{2}}\prod_{i=2}^d\floor{\sqrt{n_i}}\enspace,
\end{align*}
as desired.
\end{proof}


The $d$-dimensional \emph{Hamming} graph is the
product $$H_n^d:=\underbrace{K_n\square K_n\square\dots\square
  K_n}_d\enspace.$$ \citet{CS-DM07} proved the following bounds on the
Hadwiger number of $H_n^d$:
$$n^{\floor{(d-1)/2}}\;\leq\;\eta(H_n^d)\;\leq\;1+\sqrt{d}\,n^{(d+1)/2}\enspace.$$
\thmref{CompleteProduct} improves this lower bound by a $\Theta(n)$
factor; thus determining $\eta(H_n^d)$ to within a $2\sqrt{d}$ factor
(ignoring lower order terms):
$$\half n^{(d+1)/2} - \Oh{n^{d/2}}
\;\leq\; \eta(H_n^d) \;<\;1+\sqrt{d}\,n^{(d+1)/2}\enspace.$$

\section{Hypercubes and Lexicographic Products}

The \emph{$d$-dimensional hypercube} is the graph
$$Q_d:=\underbrace{K_2\square K_2\square\cdots\square K_2}_d\enspace.$$
Hypercubes are both grid graphs and Hamming graphs.  The Hadwiger
number of $Q_d$ was first studied by \citet{CS-GRACO05}.  The best
bounds on $\eta(Q_d)$ are due to \citet{Kotlov-EuJC01}, who proved
that
\begin{equation}
  \eqnlabel{HypercubeLower}
  \eta(Q_d)\,\geq\,
  \begin{cases}
    2^{(d+1)/2} &, d \text{ odd} \\
    3\cdot 2^{(d-2)/2} &, d \text{ even}
  \end{cases}
\end{equation}
and
\begin{equation*}
  \eta(Q_d)\leq 
  \tfrac{5}{2}+\sqrt{2^d(d-3)+\tfrac{25}{4}}\enspace.
\end{equation*}
\citet{Kotlov-EuJC01} actually proved the following more general
result which readily implies \eqnref{HypercubeLower} by induction:

\begin{proposition}[\citep{Kotlov-EuJC01}]
  \proplabel{Kotlov} For every bipartite graph $G$, the strong product
  \SP{G}{K_2} is a minor of \TCP{G}{K_2}{K_2}.
\end{proposition}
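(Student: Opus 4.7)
My approach is an explicit branch-set construction. View $\CP{K_2}{K_2}$ as the $4$-cycle $C_4$ with cyclically labelled vertices $0,1,2,3$ (so $c$ is adjacent to $c\pm 1$ modulo $4$), and write vertices of $\TCP{G}{K_2}{K_2}$ as pairs $(w,c)$ with $w\in V(G)$ and $c\in\{0,1,2,3\}$. Fix a bipartition $V(G)=V_0\cup V_1$. For $u\in V_0$ I take the two branch sets corresponding to the two copies of $u$ in $\SP{G}{K_2}$ to be $\{(u,0),(u,1)\}$ and $\{(u,2),(u,3)\}$; for $v\in V_1$ I rotate the assignment by one position around $C_4$, using $\{(v,1),(v,2)\}$ and $\{(v,3),(v,0)\}$.

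Each branch set consists of two $C_4$-consecutive vertices sitting above a single vertex of $G$, so is connected, and the collection partitions $V(\TCP{G}{K_2}{K_2})$. It remains to verify that every edge of $\SP{G}{K_2}$ is realised between the corresponding pair of branch sets. The two branch sets above a single vertex $w$ of $G$ always share a $C_4$-edge (between positions $1$ and $2$ if $w\in V_0$, between positions $3$ and $0$ if $w\in V_1$), which takes care of the edges of $\SP{G}{K_2}$ in the $K_2$ direction.

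The crucial check is for a $G$-edge $uv\in E(G)$, which by bipartiteness has, say, $u\in V_0$ and $v\in V_1$. For each of the four pairs $(i,j)\in\{0,1\}^2$ I must exhibit a $\TCP{G}{K_2}{K_2}$-edge between the branch set for $(u,i)$ and the branch set for $(v,j)$. Because the $V_1$-assignment is the $V_0$-assignment rotated by one position around $C_4$, each of the four pairs of two-arc subsets of $C_4$ involved shares exactly one position $c$, and at that common position the edge of $\TCP{G}{K_2}{K_2}$ in the $G$-direction connects $(u,c)$ to $(v,c)$. Checking the four cases in turn gives all required adjacencies.

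The construction is short and the only real idea is the one-step rotation of the assignment on $V_1$; the role of the hypothesis is precisely that bipartiteness lets this two-colouring of $V(G)$ be defined consistently, so no single vertex of $G$ is forced into incompatible $C_4$-assignments from its neighbours. There is no serious obstacle once the branch sets are written down, and the computation $\eta(Q_d)\geq 2^{(d+1)/2}$ for odd $d$ and $3\cdot 2^{(d-2)/2}$ for even $d$ will then follow by a routine induction, using $Q_{d+2}=\TCP{Q_d}{K_2}{K_2}$ together with the elementary fact that $\eta(\SP{H}{K_2})\geq 2\eta(H)$ for any graph $H$ with at least one edge.
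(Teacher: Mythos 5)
Your construction is correct, and it is essentially the same as the paper's: the paper cites Kotlov for this statement but proves the generalisation (\propref{LexMinor}, $G\cdot H$ a minor of \TCP{G}{H}{H} for bipartite $G$), which at $H=K_2$ gives exactly your branch sets --- one colour class slices by the second $K_2$ coordinate, the other by the third. Your $C_4$/rotation-by-one reformulation is an attractive way to package the same idea, and your final aside (that $\eta(\SP{H}{K_2})\geq 2\eta(H)$ plus $Q_{d+2}=\TCP{Q_d}{K_2}{K_2}$ recovers \eqnref{HypercubeLower} by induction) is also correct.
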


\propref{Kotlov} is generalised by the following result with $H=K_2$
(since $G\cdot K_2\cong\SP{G}{K_2}$). See \citep{WYY10} for a
different generalisation.

\begin{proposition}
  \proplabel{LexMinor} For every bipartite graph $G$ and every
  connected graph $H$, the lexicographic product $G\cdot H$ is a minor
  of \TCP{G}{H}{H}.
\end{proposition}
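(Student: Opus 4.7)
The plan is to exhibit $|V(G)|\cdot|V(H)|$ connected, pairwise disjoint subgraphs of $\TCP{G}{H}{H}$, indexed by $V(G)\times V(H)$, so that whenever $(v,x)(w,y)$ is an edge of $G\cdot H$ the corresponding two subgraphs are adjacent. In fact the branch sets will partition $V(\TCP{G}{H}{H})$, which matches on the count $|V(G)|\cdot|V(H)|^2$.

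Fix a bipartition $(A,B)$ of $V(G)$. For $v\in A$ and $x\in V(H)$ let
$$X(v,x):=\{(v,x,z):z\in V(H)\},$$
and for $v\in B$ and $x\in V(H)$ let
$$X(v,x):=\{(v,z,x):z\in V(H)\}.$$
Each $X(v,x)$ induces a copy of $H$ in $\TCP{G}{H}{H}$ (via the third or second factor, respectively), so each branch set is connected. Disjointness is immediate: two branch sets with different first coordinate $v$ are separated by dimension $1$; two with the same $v\in A$ are separated by the second coordinate; two with the same $v\in B$ are separated by the third coordinate.

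For the edge condition, consider an edge $(v,x)(w,y)$ of $G\cdot H$, so either (i) $v=w$ and $xy\in E(H)$, or (ii) $vw\in E(G)$. In case (i), say $v\in A$ (the case $v\in B$ is symmetric); then $(v,x,z)\in X(v,x)$ and $(v,y,z)\in X(v,y)$ are adjacent in dimension $2$ for any $z\in V(H)$. In case (ii), since $G$ is bipartite we may assume $v\in A$ and $w\in B$; then $(v,x,y)\in X(v,x)$ and $(w,x,y)\in X(w,y)$ differ only in the first coordinate and $vw\in E(G)$, so they are adjacent in dimension $1$. Thus the $X(v,x)$ are the branch sets of a $G\cdot H$ minor in $\TCP{G}{H}{H}$.

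The main subtlety—really the only place bipartiteness is used—is case (ii): the matching of the last two coordinates of the adjacent vertices $(v,x,y)$ and $(w,x,y)$ relies on the two endpoints of the $G$-edge lying on opposite sides of the bipartition, because only then does $X(v,x)$ employ its third coordinate freely while $X(w,y)$ employs its second coordinate freely, so both can simultaneously contain a vertex with second coordinate $x$ and third coordinate $y$. Without bipartiteness this coordinate alignment breaks and the construction fails on $G$-edges internal to a color class, which is exactly why the hypothesis is needed.
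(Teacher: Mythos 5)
Your construction is the same as the paper's: assign to each vertex $(v,x)$ of $G\cdot H$ the $H$-fibre of $\TCP{G}{H}{H}$ in the third coordinate when $v$ is on one side of the bipartition and in the second coordinate when $v$ is on the other, then check connectivity, disjointness, and adjacency exactly as you do. Correct and essentially identical to the paper's proof.
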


\begin{proof}
  Properly colour the vertices of $G$ \emph{black} and
  \emph{white}. For all vertices $(v,p)$ of $G\cdot H$, let $X\langle
  v,p\rangle$ be the subgraph of \TCP{G}{H}{H} induced by the set
$$\begin{cases}
  \{(v,p,q):q\in V(H)\}	& \text{, if $v$ is black}\\
  \{(v,q,p):q\in V(H)\} & \text{, if $v$ is white.}
\end{cases}$$ We claim that the $X\langle v,p\rangle$ form the branch
sets of a $G\cdot H$-minor in \TCP{G}{H}{H}. First observe that each
$X\langle v,p\rangle$ is isomorphic to $H$, and is thus connected.

Consider distinct vertices $(v,p)$ and $(v',p')$ of $G\cdot H$, where
$v$ is black.

Suppose on the contrary that some vertex $(w,a,b)$ of \TCP{G}{H}{H} is
in both $X\langle v,p\rangle$ and $X\langle v',p'\rangle$. Since
$(w,a,b)$ is in $X\langle v,p\rangle$, we have $a=p$. By construction,
$w=v=v'$. Thus $v'$ is also black, and since $(w,a,b)$ is in $X\langle
v',p'\rangle$, we have $a=p'$. Hence $p=p'$, which contradicts that
$(v,p)$ and $(v',p')$ are distinct. Hence $X\langle v,p\rangle$ and
$X\langle v',p'\rangle$ are disjoint.

Suppose that $(v,p)$ and $(v',p')$ are adjacent in $G\cdot H$. It
remains to prove that $X\langle v,p\rangle$ and $X\langle
v',p'\rangle$ are adjacent in \TCP{G}{H}{H}. By definition, $vv'\in
E(G)$, or $v=v'$ and $pp'\in E(H)$. If $vv'\in E(G)$, then without
loss of generality, $v$ is black and $v'$ is white, implying that
$(v,p,p')$, which is in $X\langle v,p\rangle$, is adjacent to
$(v',p,p')$, which is in $X\langle v',p'\rangle$. If $v=v'$ and
$pp'\in E(H)$, then for every $q\in V(H)$, the vertex $(v,p,q)$, which
is in $X\langle v,p\rangle$, is adjacent to $(v,p',q)$, which is in
$X\langle v',p'\rangle$.
\end{proof}

\propref{LexMinor} motivates studying $\eta(G\cdot H)$. We now show
that when $G$ is a complete graph, $\eta(G\cdot H)$ can be be
determined precisely.

\begin{proposition}
  \proplabel{LexCliqueMinor} For every graph $H$,
$$\eta(K_n\cdot H)=\FLOOR{\frac{n}{2}\big(\verts{H}+\omega(H)\big)}\enspace.$$
\end{proposition}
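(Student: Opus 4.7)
The plan is to prove the upper and lower bounds separately; write $m := \verts{H}$ and $\omega := \omega(H)$, and assume $n \geq 2$.

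For the upper bound, I will apply \lemref{NewUpperBound} to $G = K_n \cdot H$. Since distinct copies of $H$ inside $K_n \cdot H$ are completely joined, a clique of $K_n \cdot H$ is exactly the union of a clique of $H$ from each copy, giving $\omega(K_n \cdot H) = n \omega$; together with $\verts{K_n \cdot H} = n m$, the lemma yields $\eta(K_n \cdot H) \leq \FLOOR{(n m + n \omega)/2}$.

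For the lower bound, my key structural observation is that in $K_n \cdot H$ the vertices $(i, u)$ and $(j, v)$ are adjacent iff $i \neq j$, or ($i = j$ and $uv \in E(H)$). Consequently any vertex set meeting at least two distinct copies is connected in $K_n \cdot H$, and any two such \emph{multi-copy} sets are automatically adjacent (and any singleton is automatically adjacent to any multi-copy set). Fix a maximum clique $Q$ of $H$. The construction has two kinds of branch sets: (i) the $n \omega$ singletons $\{(i, v)\}$ for $i \in [n]$ and $v \in Q$, which are pairwise adjacent because $Q$ is a clique within each copy and distinct copies are fully joined; and (ii) the edges of a maximum matching in the complete $n$-partite graph on parts $R_i := \{(i, r) : r \in V(H) \setminus Q\}$, each matching edge becoming a size-$2$ branch set spanning two copies. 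By the structural observation every pair of branch sets is adjacent, and each is connected.

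Each part $R_i$ has size $s := m - \omega$, and when $n \geq 2$ no part exceeds the sum of the others, so the complete $n$-partite graph $K_{s, s, \dots, s}$ admits a near-perfect matching of size $\FLOOR{n s / 2}$ (either directly, by distributing matching edges roughly equally among the $\binom{n}{2}$ part-pairs, or via Hall's theorem). This gives a total of $n \omega + \FLOOR{n(m - \omega)/2} = \FLOOR{n(m + \omega)/2}$ branch sets, matching the upper bound. I expect the only technical nuisance to be the parity check for the matching: $n(m - \omega)$ is odd precisely when both $n$ and $m + \omega$ are odd, which is exactly when $\FLOOR{n(m + \omega)/2} = (n(m + \omega) - 1)/2$, so the floor in the statement is still achieved.
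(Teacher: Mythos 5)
Your proof is correct and follows essentially the same approach as the paper: compute $\omega(K_n \cdot H) = n\,\omega(H)$ and invoke \lemref{NewUpperBound} for the upper bound; for the lower bound, take $n\,\omega(H)$ singleton branch sets from a maximum clique together with the edges of a near-perfect matching in the balanced complete $n$-partite graph on the leftover vertices. The only cosmetic difference is that the paper first deletes the edges of $H$ outside the maximum clique (so that the leftover vertices literally induce a balanced complete $n$-partite graph, to which it applies Sitton's matching theorem), whereas you skip that deletion and instead observe directly that any branch set meeting two copies of $H$ is automatically connected and adjacent to every other branch set; both routes lead to the same matching and the same count. You are also right to restrict to $n\geq 2$: the matching step (and the proposition itself) fails for $n=1$, an assumption the paper leaves implicit.
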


\begin{proof}
  Let $C$ be a maximum clique in $H$.  Consider the set of
  vertices $$X:=\{(u,y):u\in V(K_n),y\in C\}$$ in $K_n\cdot H$.  Thus
  $|X|=n\cdot\omega(H)$ and $X$ is a clique in $K_n\cdot H$. In fact,
  $X$ is a maximum clique, since every set of $n\cdot\omega(H)+1$
  vertices in $K_n\cdot H$ contains $\omega(H)+1$ vertices in a single
  copy of $H$. Thus $\omega(K_n\cdot H)=n\cdot\omega(H)$. Hence the
  upper bound on $\eta(K_n\cdot H)$ follows from
  \lemref{NewUpperBound}.

  It remains to prove the lower bound on $\eta(K_n\cdot H)$.  Delete
  the edges of $H$ that are not in $C$.  This operation is allowed
  since it does not increase $\eta(K_n\cdot H)$.  So $H$ now consists
  of $C$ and some isolated vertices.  Observe that $(K_n\cdot H)-X$ is
  isomorphic to the balanced complete $n$-partite graph with
  $\verts{H}-\omega(H)$ vertices in each colour class.  Every balanced
  complete multipartite graph with $r$ vertices has a matching of
  \floor{\frac{r}{2}} edges \citep{Sitton-EJUM}.  Thus $(K_n\cdot
  H)-X$ has a matching $M$ of \floor{\frac{n}{2}(\verts{H}-\omega(H))}
  edges.  No edge in $M$ is incident to a vertex in $X$.  For every
  edge $(v,x)(v',x')$ in $M$ and vertex $(u,y)$ of $K_n\cdot H$, since
  $v\neq v'$, without loss of generality, $v\neq u$. Thus $vu\in
  E(K_n)$ and $(v,x)$ is adjacent to $(u,y)$ in $K_n\cdot H$. Hence
  contracting each edge in $M$ gives a $K_{|X|+|M|}$-minor in
  $K_n\cdot H$. Therefore
$$\eta(K_n\cdot H) 
\geq|X|+|M|
=n\cdot\omega(H)+\FLOOR{\frac{n}{2}\big(\verts{H}-\omega(H)\big)}
=\FLOOR{\frac{n}{2}\big(\verts{H}+\omega(H)\big)}.$$
\end{proof}

We now show that \twopropref{LexMinor}{LexCliqueMinor} are closely
related to some previous results in the paper.

\propref{LexMinor} with $G=K_2$ implies that $K_2\cdot H$ is a minor
of \TCP{H}{H}{K_2}. \propref{LexCliqueMinor} implies that
$\eta(K_2\cdot H) =\verts{H}+\omega(H)$. Thus
$\eta(\TCP{H}{H}{K_2})\geq \verts{H}+\omega(H)$, which is only
slightly weaker than \propref{DoubleGridLike}.

\propref{LexMinor} with $G=K_{n,n}$ implies that $K_{n,n}\cdot H$ is a
minor of \TCP{K_{n,n}}{H}{H} for every connected graph $H$. Since
$K_{n+1}$ is a minor of $K_{n,n}$, we have $K_{n+1}\cdot H$ is a minor
of \TCP{K_{n,n}}{H}{H}. \propref{LexCliqueMinor} implies that
$$\eta(\TCP{K_{n,n}}{H}{H})
\geq\FLOOR{\frac{n+1}{2}\big(\verts{H}+\omega(H)\big)}\enspace.$$
Since $K_{n,n}\subset K_{2n}$,
$$\eta(\TCP{K_{2n}}{H}{H}) \geq\FLOOR{\frac{n+1}{2}\big(\verts{H}+\omega(H)\big)}\enspace.$$
With $H=K^d_{m}$ we have
$$\eta(\CP{K_{2n}}{K_m^{2d}}) \geq\FLOOR{\frac{n+1}{2}\big(m^d+m\big)},$$
which is equivalent to \thmref{CompleteProduct} with $n_1=n$ and
$n_2=\dots=n_{2d+1}=m$ (ignoring lower order terms).  In fact for
small values of $m$, this bound is stronger than
\thmref{CompleteProduct}. For example, with $n=1$ and $m=3$ we have
$$\eta(\CP{K_{2}}{K_3^{2d}}) \geq 3^d+3,$$
whereas \thmref{CompleteProduct} gives no non-trivial bound on
$\eta(\CP{K_{2}}{K_3^{2d}})$.

\section{Rough Structural Characterisation Theorem for Trees}
\seclabel{TwoTrees}

In this section we characterise when the product of two trees has a
large clique minor. \secref{StarMinors} gives such an example:
\twocorref{GraphStarMinor}{StarMinorTree} imply that if one tree has
many leaves and the other has many vertices then their product has a
large clique minor. Now we give a different example. As illustrated in
\figref{Balance}(a), let $B_n$ be the tree obtained from the path
$P_{2n+1}$ by adding one leaf adjacent to the vertex in the middle of
$P_{2n+1}$.

\Figure{Balance}{(a) The tree $B_n$. (b) A balance of order
  $\min\{|A|,|B|\}$.}

Now $B_n$ only has three leaves, but \citet{SW-JCTB89} implicitly
proved that the product of $B_n$ and a long path (which only has two
leaves) has a large clique minor\footnote{\citet{SW-JCTB89} observed
  that since the complete graph has a drawing in the plane with all
  the crossings collinear, \CP{B_n}{P_m} contains clique subdivisions
  of unbounded order, and thus $\eta(\CP{B_n}{P_m})$ is unbounded. On
  the other hand, \citet{SW-JCTB89} proved that if $T$ is the tree
  obtained from the path $(v_1,\dots,v_m)$ by adding one leaf adjacent
  to $v_2$, then $\eta(\CP{P_n}{T})\leq 7$. This observation disproved
  an early conjecture by Robertson and Seymour about the structure of
  graphs with an excluded minor, and lead to the development of
  vortices in Robertson and Seymour's theory; see
  \citep{RS-GraphMinorsXVI-JCTB03,KM-GC07}.}. In \thmref{BigBalance}
below we prove an explicit bound of
$\eta(\CP{P_m}{B_n})\geq\min\{n,\sqrt{m}\}$, which is illustrated in
\figref{TennisCourt}. In fact, this bound holds for a more general
class of trees than $B_n$, which we now introduce.

\Figure{TennisCourt}{$K_n$ is a minor of \CP{P_{n^2}}{B_n}.}

As illustrated in \figref{Balance}(b), a \emph{balance of order} $n$
is a tree $T$ that has an edge $rs$, and disjoint sets $A,B\subseteq
V(T)-\{r,s\}$, each with at least $n$ vertices, such that $A\cup\{r\}$
and $B\cup\{r\}$ induce connected subtrees in $T$. We say $A$ and $B$
are the \emph{branches}, $r$ is the \emph{root}, and $s$ is the
\emph{support} of $T$. For example, the star $S_t$ is a balance of
order $\floor{\tfrac{t-1}{2}}$, and $B_n$ is a balance of order
$n$. \thmref{BigBalance} below implies that
$\eta(\CP{P_m}{T})\geq\min\{\sqrt{m},n\}$ for every balance $T$ of
order $n$. The critical property of a long path is that it has many
large disjoint subpaths.

\begin{theorem}
  \thmlabel{BigBalance} Let $G$ be a tree that has $n$ disjoint
  subtrees each of order at least $n$. Then for every balance $T$ of
  order $n$, $$\eta(\CP{G}{T})\geq n\enspace.$$
\end{theorem}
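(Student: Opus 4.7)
The plan is to construct $n$ pairwise-disjoint, pairwise-adjacent connected subgraphs $X_1,\dots,X_n$ of $\CP{G}{T}$ serving as branch sets of a $K_n$-minor. First, I would fix working orderings. In each subtree $G_i$, pick a sub-subtree on $n$ vertices and enumerate its vertices $v_{i,1},\dots,v_{i,n}$ so that every prefix induces a connected subtree of $G_i$ (a DFS of any order-$n$ spanning subtree of $G_i$, which exists since $|V(G_i)|\ge n$). In the balance $T$, order $A=\{a_1,\dots,a_n\}$ and $B=\{b_1,\dots,b_n\}$ so that each prefix, together with the root $r$, induces a connected subtree of $T$ (a BFS or DFS from $r$ inside each branch).

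Next, I would assemble each $X_i$ out of four kinds of pieces: a row-$r$ ``base'' $\{(v_{i,k},r):k\in[n]\}$ (connected by the prefix property on $G_i$), an ``upward spine'' $\{(v_{i,1},t):t\in A\cup\{r\}\}$ and a ``downward spine'' $\{(v_{i,2},t):t\in B\cup\{r\}\}$ (each glued to the base at the appropriate row-$r$ vertex), and, for each $j\ne i$, a short horizontal ``arm'' at an assigned row of $A$ or $B$ that traces the unique $G$-path from $G_i$ towards $G_j$. The arm for the pair $(i,j)$ is built to end at a vertex of $\CP{G}{T}$ that is adjacent (in the product) to a vertex of $X_j$'s spine or base, thereby witnessing the $X_i$--$X_j$ adjacency.

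Verification splits into three checks. Connectivity of each $X_i$ is immediate from how the pieces are glued at the base. Disjointness between $X_i$ and $X_j$ follows because the bases live in disjoint $G_i$-column sets (the subtrees $G_i$ are pairwise disjoint), the spines live in distinct columns within their respective $G_i$'s, and the arm rows and columns are placed to avoid the other branch sets by design. Pairwise adjacency is built into the arm construction.

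The main obstacle is carrying out the arm routing consistently: there are $\binom{n}{2}$ arms to place but only $2n$ rows in $A\cup B$ to host them, so several arms must share a row. The key observation---the Seymour--Wollin fact recalled just above this theorem---is that $K_n$ admits a plane drawing with all crossings on a single line. In our product setting that line becomes the row $r$; the two branches $A$ and $B$ provide the ``above'' and ``below'' sides on which the non-crossing portions of the arcs live; and the support $s$ adjacent to $r$ serves as the pendant that lets two arcs cross by swapping sides at the crossing column, without any two of them sharing a vertex. The balance-of-order-$n$ hypothesis gives $|A|,|B|\ge n$, which is exactly enough vertical room, and the assumption that $G$ has $n$ disjoint subtrees each of order at least $n$ gives enough horizontal room inside each $G_i$ to host the spine columns and arm endpoints simultaneously, as illustrated for $G=P_{n^2}$ and $T=B_n$ in \figref{TennisCourt}.
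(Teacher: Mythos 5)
Your construction is genuinely different from the paper's, and it has a gap that the paper's construction is precisely designed to avoid. The paper does not build each branch set as a ``star'' centred in $G_i$ with arms reaching toward the other $G_j$'s. Instead, each branch set $X_i$ is a single \emph{zigzag} that threads through \emph{all} of $G_1,\dots,G_n$: in $G_j$ it occupies exactly the column $v_{j,i}$ (note the index order: column $i$ of $G_j$, not column $1$), runs the vertical piece $T_{i,j}$ along the path $T_i$ from $a_i$ to $b_i$, and uses a horizontal connector $U_{i,j}$ at row $a_i$ or $b_i$ (chosen by a $2$-colouring of the contracted tree $G^*$) to step from $G_{j'}$ to $G_j$. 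The base $H_i$ lives at row $s$, not row $r$. Because $X_i$ and $X_{i'}$ occupy \emph{different} columns $v_{j,i}\neq v_{j,i'}$ inside every $G_j$, and their connectors live at \emph{different} rows $a_i,b_i$ versus $a_{i'},b_{i'}$, disjointness falls out of the labelling. Adjacency comes from $H_i$ (spanning $G_i\times\{s\}$) meeting $T_{i',i}$ (which passes through column $v_{i,i'}$ of $G_i$) across the edge $rs$.

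Your plan instead puts $X_i$'s whole spine at two fixed columns $v_{i,1},v_{i,2}$ of $G_i$ and sends arms to the other $G_j$'s. This breaks down for the canonical case $G=P_{n^2}$ with the $G_j$ being consecutive blocks of $n$ vertices: any arm from $X_i$ passing ``through'' $G_j$ at row $a_\ell\in A$ necessarily traverses every vertex of $G_j$, in particular $(v_{j,1},a_\ell)$ --- which already belongs to $X_j$'s spine since $a_\ell\in A\cup\{r\}$. Trying to stop the arm before $v_{j,1}$ forces one arm per target, i.e.\ $n-1$ rows per $X_i$, hence $n(n-1)$ rows total, far exceeding the available $2n$. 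Appealing to the Seese--Wessel ``collinear crossings'' picture gestures at the right intuition (that picture motivated this theorem), but it does not by itself resolve which row each crossing uses, how arms avoid every other $X_j$'s spine column, or how the $\binom{n}{2}$ adjacencies are packed into $2n$ rows. The paper sidesteps all of this by assigning to each $X_i$ its own private column in every $G_j$ and its own private pair of rows $\{a_i,b_i\}$; that is the missing idea.
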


\begin{proof}
  Let $A$ and $B$ be the branches, let $r$ be the root, and let $s$ be
  the support of $T$. Contract edges in $T$ until $A$ and $B$ each
  have exactly $n$ vertices. Orient the edges of $T$ away from
  $r$. Label the vertices of $A$ by $\{a_1,a_2,\dots,a_n\}$ such that
  if $\overrightarrow{a_ia_j}\in E(A)$ then $i<j$. Label the vertices
  of $B$ by $\{b_1,b_2,\dots,b_n\}$ such that if
  $\overrightarrow{b_ib_j}\in E(B)$ then $j<i$. For each $i\in[n]$,
  let $T_i$ be the path between $a_i$ and $b_i$ in $T$ (which thus
  includes $r$).

  Contract edges in $G$ until it is the union of $n$ disjoint subtrees
  $G_1,\dots,G_n$, each with exactly $n$ vertices. For every pair of
  vertices $v,w\in V(G)$, let $G\langle v,w\rangle$ be the path
  between $v$ and $w$ in $G$. Orient the edges of $G$ away from an
  arbitrary vertex in $G_1$. Let $G^*$ be the oriented tree obtained
  from $G$ by contracting each $G_i$ into a single vertex $z_i$. Note
  that for each $i\in[2,n]$, each vertex $z_i$ has exactly one
  incoming arc in $G^*$. Fix a proper $2$-colouring of $G^*$ with
  colours \emph{black} and \emph{white}, where $z_1$ is coloured
  white. Label the vertices in each subtree $G_i$ by
  $\{v_{i,1},\dots,v_{i,n}\}$, such that for each arc
  $(v_{i,j},v_{i,\ell})$ in $G_i$, we have $\ell<j$ if $z_i$ is black,
  and $j<\ell$ if $z_i$ is white.

  For each $i\in[n]$, let $H_i$ be the subgraph of \CP{G}{T} induced
  by $$\{(v_{i,j},s):j\in[n]\}\enspace.$$ For all $i,j\in[n]$, let
  $T_{i,j}$ be the subgraph of \CP{G}{T} induced
  by $$\{(v_{j,i},y):y\in V(T_i)\}\enspace.$$

  For all $i,j\in[n]$, let $c_{i,j}$ be the vertex $a_i$ if $z_j$ is
  white, and $b_i$ if $z_j$ is black.  For all $i\in[n]$ and
  $j\in[2,n]$, let $U_{i,j}$ be the subgraph of \CP{G}{T} induced by
  \begin{align*}
    \{(x,c_{i,j}):x\in G\langle v_{k,i},v_{j,i}\rangle\},
  \end{align*}
  where $(z_k,z_j)$ is the incoming arc at $z_j$ in $G^*$.

  For each $i\in[n]$, let $X_i$ be the subgraph of \CP{G}{T} induced
  by
$$\bigcup_{j\in[n]}(T_{i,j}\cup U_{i,j}\cup H_i)\enspace.$$
We now prove that $X_1,\dots,X_n$ are the branch sets of a
$K_n$-minor.

First we prove that each $X_i$ is connected.  Observe that each $H_i$
is isomorphic to $G_i$, and is thus connected.  Each $T_{i,j}$ is
isomorphic to the path $T_i$, and is thus connected.  Moreover, the
endpoints of $T_{i,j}$ are $(v_{j,i},a_i)$ and $(v_{j,i},b_i)$.  Each
$U_{i,j}$ is isomorphic to the path $G\langle
v_{j',i},v_{j,i}\rangle$, and is thus connected.  Moreover, the
endpoints of $U_{i,j}$ are $(v_{j',i},c_{i,j})$ and
$(v_{j,i},c_{i,j})$.  Thus if $z_j$ is white (and thus $z_{j'}$ is
black), then the endpoints of $U_{i,j}$ are $(v_{j',i},a_i)$ and
$(v_{j,i},a_i)$.  And if $z_j$ is black (and thus $z_{j'}$ is white),
then the endpoints of $U_{i,j}$ are $(v_{j',i},b_i)$ and
$(v_{j,i},b_i)$.  Hence the induced subgraph
$$T_{i,1}\cup U_{i,1}\cup T_{i,2}\cup U_{i,2}\cup T_{i,3}\cup U_{i,3}\cup \dots\cup T_{i,n}\cup U_{i,n}$$
is a path (illustrated in \figref{TennisCourt} by alternating vertical
and horizontal segments).  Furthermore, the vertex $(v_{i,i},s)$ in
$H_i$ is adjacent to the vertex $(v_{i,i},r)$ in $T_{i,i}$. Thus $X_i$
is connected.

Now we prove that the subgraphs $X_i$ and $X_{i'}$ are disjoint for
all distinct $i,i'\in[n]$.  First observe that $H_i$ and $H_{i'}$ are
disjoint since the first coordinate of every vertex in $H_i$ is some
$v_{i,j}$.  Similarly, for all $j,j'\in[n]$, the subgraphs $T_{i,j}$
and $T_{i',j'}$ are disjoint since the first coordinate of every
vertex in $T_{i,j}$ is $v_{j,i}$.  For all $j,j'\in[n]$, the subgraphs
$U_{i,j}$ and $U_{i',j'}$ are disjoint since the second coordinate of
every vertex in $U_{i,j}$ is $a_i$ or $b_i$.  For all $j\in[n]$, $H_i$
is disjoint from $T_{i',j}\cup U_{i',j}$ since the second coordinate
of $H_i$ is $s$. It remains to prove that $T_{i,j}$ and $U_{i',j'}$
are disjoint. Suppose on the contrary that for some $j,j'\in[n]$, some
vertex $(x,y)$ is in $T_{i,j}\cap U_{i',j'}$. Without loss of
generality, $z_{j'}$ is black.  Say $(z_k,z_{j'})$ is the incoming arc
at $z_{j'}$ in $G^*$. So $z_k$ is white. Since $(x,y)\in T_{i,j}$, we
have $x=v_{j,i}$ and $y\in V(T_i)$. Since $(x,y)\in U_{i',j'}$, $x$ is
in the path $G\langle v_{k,i'},v_{j',i'}\rangle$. Since $z_{j'}$ is
black, $y=b_{i'}$. Now $v_{j,i}$ (which equals $x$) is in the path
$G\langle v_{k,i'},v_{j',i'}\rangle$. Thus by the labelling of
vertices in $G_k$ and $G_{j'}$, we have $i'<i\leq n$. By comparing the
second coordinates, observe that $b_{i'}\in V(T_i)$. Thus $i'>i$ by
the labelling of the vertices in $B$. This contradiction proves that
$T_{i,j}$ and $U_{i',j'}$ are disjoint for all $j,j'\in[n]$. Hence
$X_i$ and $X_{i'}$ are disjoint.

Finally, observe that for distinct $i,i'\in[n]$, the vertex
$(v_{i,i'},s)$ in $H_i\subset X_i$ is adjacent to the vertex
$(v_{i,i'},r)$ in $T_{i',i}\subset X_{i'}$. Therefore the $X_i$ are
branch sets of a $K_n$-minor.
\end{proof}

We conjecture that the construction in \thmref{BigBalance} is within a
constant factor of optimal; that is,
$\CP{P_m}{B_n}=\Theta(\min\{\sqrt{m},n\})$.


\thmref{BigBalance} motivates studying large disjoint subtrees in a
given tree.  Observe that a star does not have two disjoint subtrees,
both with at least two vertices. Thus a star cannot be used as the
tree $G$ in \thmref{BigBalance} with $n\geq2$. On the other hand, a
path on $n^2$ vertices has $n$ disjoint subpaths, each with $n$
vertices. Of the trees with the same number of vertices, the star has
the most leaves and the path has the least. We now prove that the
every tree with few leaves has many large disjoint
subtrees\footnote{As an aside we now describe a polynomial-time
  algorithm that for a given tree $T$, finds the maximum number of
  disjoint subtrees in $T$, each with at least $n$ vertices. It is
  convenient to consider a generalisation of this problem, where each
  vertex $v$ is assigned a positive weight $w(v)$, and each subtree is
  required to have total weight at least $n$. Let $v$ be a leaf of
  $T$. Let $T':=T-v$. Define a new weight function $w'(z):=w(z)$ for
  every vertex $z$ of $T-v$. First suppose that $w(v)\geq n$. Then $T$
  has $k$ disjoint subtrees each of total $w$-weight at least $n$ if
  and only if $T'$ has $k-1$ disjoint subtrees each of total
  $w'$-weight at least $n$. (In which case $T[\{v\}]$ becomes one of
  the $k$ subtrees.)\ Now assume that $w(v)<n$. Let $x$ be the
  neighbour of $v$ in $T$. Redefine $w'(x):=w(x)+w(v)$. Then $T$ has
  $k$ disjoint subtrees each of total $w$-weight at least $n$ if and
  only if $T'$ has $k$ disjoint subtrees each of total $w'$-weight at
  least $n$. (A subtree $X$ of $T'$ containing $x$ is replaced by the
  subtree $T[V(X)\cup\{v\}]$.)\ Thus in each case, from an inductively
  computed optimal solution in $T'$ (for the weight function $w'$), we
  can compute an optimal solution in $T$ (for the weight function
  $w$).}.

\begin{theorem}
  \thmlabel{TreePartition} Let $T$ be a tree with at least one edge,
  and let $n$ be a positive integer, such that $$\verts{T}\geq
  n^2+(\STAR{T}-2)(n-1)+1\enspace.$$ Then $T$ has $n$ disjoint
  subtrees, each with at least $n$ vertices.
\end{theorem}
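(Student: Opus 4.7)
The plan is to prove a stronger statement by induction on $\verts{T}$: for every tree $T$ with at least one edge, every positive integer $n$, and every $k\in[n]$, if $\verts{T}\geq kn+(\ell-2)(n-1)+1$ where $\ell=\STAR{T}$, then $T$ contains $k$ pairwise disjoint subtrees each of order at least $n$. Setting $k=n$ recovers the theorem, and by \corref{StarMinorTree} we may assume $T\neq K_2$, so $\ell$ equals the number of leaves of $T$. The base cases are $k=1$, where $T$ itself serves as the single required subtree, and $\ell=2$, where $T$ is a path and the hypothesis $\verts{T}\geq kn+1$ lets us chop $T$ into $k$ consecutive subpaths of $n$ vertices.

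For the inductive step I assume $k\geq 2$ and $\ell\geq 3$, so some vertex of $T$ has degree at least $3$. Fix a leaf $v$, let $u$ be the degree-$\geq 3$ vertex closest to $v$, let $P$ be the $v$-to-$u$ path, and let $q$ be the number of vertices of $P$ other than $u$. If $q\geq n$ (Case A), extract the $n$ vertices of $P$ nearest to $v$ as one of the required subtrees and set $T':=T-S$. When $q>n$ the tree $T'$ has the same number $\ell$ of leaves, and the inductive hypothesis applied to $T'$ with parameter $k-1$ requires $\verts{T'}\geq (k-1)n+(\ell-2)(n-1)+1$, which is exactly our hypothesis on $\verts{T}$ minus $n$. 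When $q=n$, the vertex $u$ loses a neighbour but retains degree at least $2$, so $T'$ has $\ell-1$ leaves; the required inequality $\verts{T'}\geq (k-1)n+(\ell-3)(n-1)+1$ is weaker by $n-1$ and still follows.

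If instead $q<n$ (Case B), I do not extract a subtree: I simply delete the $q$ vertices of $P\setminus\{u\}$ to obtain $T'$. Again $u$ keeps degree at least $2$, so $T'$ has $\ell-1$ leaves, and $\verts{T'}=\verts{T}-q\geq kn+(\ell-2)(n-1)+1-(n-1)=kn+(\ell-3)(n-1)+1$, which is exactly what is needed to apply the inductive hypothesis to $T'$ with the same parameter $k$ and the reduced leaf count $\ell-1$. The main obstacle is choosing the right stronger statement: the parameter $k$ must be separated from $n$ so that both extracting a long pendant (Case A, decreasing $k$) and trimming a short pendant (Case B, decreasing $\ell$) reduce to valid smaller instances, and the term $(\ell-2)(n-1)$ must exactly absorb the drop of one leaf that occurs whenever a pendant is removed.
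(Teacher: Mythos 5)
Your proof is correct and takes a genuinely different route from the paper's. The paper establishes a weighted lemma (\lemref{WeightedTreePartition}): it assigns a weight $\leq n$ to each leaf, inducts on the floor formula $f(T)$, and handles a short pendant by absorbing its weight into the parent vertex (reassigning $w(v):=1+\sum_iw(x_i)$); the theorem then follows by specialising all weights to $1$. You avoid the weight machinery by separating the target count $k$ from the size threshold $n$ and inducting on $\verts{T}$: locate a pendant path from a leaf $v$ to the nearest branching vertex $u$, peel off a subtree of $n$ vertices (dropping $k$ by one) if the pendant is long, or simply delete the pendant (dropping the leaf count by one) if it is short, with the $(\STAR{T}-2)(n-1)$ term exactly absorbing the wasted $q\leq n-1$ vertices. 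Both proofs confront the same combinatorial difficulty, namely short pendants that waste vertices, but your bookkeeping is more elementary; the paper's weighted formulation is marginally stronger in that it yields a vertex \emph{partition}, whereas you produce only disjoint subtrees, which is all the theorem needs. One small wrinkle worth smoothing: since $\STAR{K_2}=1$ rather than $2$, the stronger statement as literally phrased (with $\ell=\STAR{T}$) would fail for $T=K_2$, $k=1$, $n\geq 3$; you should replace $\STAR{T}$ by the number of leaves of $T$ (they coincide for $T\neq K_2$), or exclude $K_2$ outright, which is harmless since the theorem's hypothesis forces $n=1$ when $T=K_2$. Your inductive step never encounters $T'=K_2$ (in every sub-case $T'$ retains $u$ with degree $\geq2$ plus at least two of its neighbours, so $\verts{T'}\geq3$), so the remainder of the argument is sound as written.
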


The proof of \thmref{TreePartition} is based on the following lemma.

\begin{lemma}
  \lemlabel{WeightedTreePartition} Fix a positive integer $n$.  Let
  $T$ be a tree with at least one edge, where each vertex of $T$ is
  assigned a weight $w(v)\in\mathbb{Z}^+$, such that every leaf has
  weight at most $n$ and every other vertex has weight $1$.  Let
  $W(T):=\sum_{v\in V(T)}w(v)$ be the total weight.  Then there is a
  vertex-partition of $T$ into at least
$$f(T):=\FLOOR{\frac{W(T)-(\STAR{T}-2)(n-1)}{n}}$$ disjoint subtrees, 
each with total weight at least $n$.
\end{lemma}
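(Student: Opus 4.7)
My plan is induction on $\verts{T}$, with the base case being when $T$ is a path (equivalently $\STAR{T}=2$). In that case a left-to-right greedy scan, which commits a subtree as soon as the accumulated weight reaches $n$, yields exactly $\lfloor W(T)/n\rfloor=f(T)$ subtrees: the constraint that internal vertices have weight $1$ and that leaf weights lie in $[1,n]$ forces every committed subtree to have weight exactly $n$ (except possibly the last, whose weight lies in $[n,2n-1]$), with any terminal residue of weight less than $n$ left uncovered.

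For the inductive step, assume $\STAR{T}\geq 3$, so $T$ contains a branch vertex. Pick a leaf $v$ and follow unique neighbours until reaching the nearest branch vertex $b$, producing a pendant path $P=(v,u_1,\dots,u_k,b)$ in which each $u_i$ is internal in $T$ and hence has weight $1$. Let $W_P:=w(v)+k$ be the weight of $V(P)\setminus\{b\}$. I split on whether $W_P\geq n$.

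If $W_P\geq n$, then since $w(v)\leq n$ and each $u_i$ contributes $1$, there is a prefix $S$ of $P$ disjoint from $\{b\}$ with $W(S)=n$ exactly. Apply the induction hypothesis to $T':=T-S$. In the subcase $S\subsetneq V(P)\setminus\{b\}$, the next vertex $u_{j+1}$ becomes a new leaf of weight $1$ in $T'$, so $\STAR{T'}=\STAR{T}$ and $W(T')=W(T)-n$, yielding $f(T')=f(T)-1$. In the subcase $S=V(P)\setminus\{b\}$ (which forces $W_P=n$), the branch vertex $b$ loses one neighbour but still has degree $\geq 2$, so $\STAR{T'}=\STAR{T}-1$, and a short floor calculation gives $f(T')\geq f(T)-1$. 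Either way, the inductive partition of $T'$ together with $S$ produces at least $f(T)$ subtrees, each of weight $\geq n$. If instead $W_P<n$, the pendant path cannot contribute a subtree of weight $\geq n$ on its own, so I simply discard $V(P)\setminus\{b\}$ and recurse on $T':=T-(V(P)\setminus\{b\})$. Now $\STAR{T'}=\STAR{T}-1$ and $W(T')=W(T)-W_P$ with $W_P\leq n-1$, so the gain of $n-1$ from the correction $(\STAR{T}-2)(n-1)$ at least absorbs the weight loss, giving $f(T')\geq f(T)$, and the inductive partition of $T'$ alone suffices.

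The main subtlety, and the step I expect to be delicate, is verifying that the hypothesised weight structure (leaves have weight in $[1,n]$, all other vertices have weight exactly $1$) is preserved in every $T'$ so that the induction hypothesis actually applies. This works because any internal vertex of $T$ retains weight $1$ in $T'$ whether it stays internal or becomes a leaf (and $1\leq n$), and any leaf of $T$ that survives keeps its weight in $[1,n]$. The remaining bookkeeping—tracking how $\STAR{\cdot}$, $W(\cdot)$, and the floor in the definition of $f$ interact—is routine but must be done carefully to confirm $f(T')\geq f(T)-1$ in the cutting case and $f(T')\geq f(T)$ in the discarding case; this is where the exact form $\lfloor (W(T)-(\STAR{T}-2)(n-1))/n\rfloor$ is precisely what is needed for the induction to close.
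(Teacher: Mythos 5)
Your proof is correct and takes a genuinely different route from the paper's. The paper inducts on $f(T)$: after dispatching $f(T)\le 1$, $T=K_2$, leaves of weight exactly $n$, and stars as separate cases, it peels a \emph{pendant star} (a vertex $v$ of $T$ minus its leaves that is a leaf of that pruned tree, together with the leaves $x_1,\dots,x_d$ of $T$ hanging off $v$), merging $\{x_1,\dots,x_d\}$ into $v$ when $\sum_i w(x_i)\le n-1$ and splitting off $\{v,x_1,\dots,x_d\}$ as its own part when $\sum_i w(x_i)\ge n$. You instead induct on $\verts{T}$, dispatch all paths (including $K_2$) at once via the greedy scan, and peel a pendant \emph{path} from a leaf to the nearest branch vertex, either cutting a weight-exactly-$n$ prefix or discarding a light tail; the pendant-path peel automatically subsumes the paper's $K_2$ and weight-$n$-leaf special cases, and the floor arithmetic ($f(T')\ge f(T)-1$ in the two cutting subcases, $f(T')\ge f(T)$ in the discarding case) closes exactly as you indicate. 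The one divergence worth flagging is that the lemma is phrased as a vertex-\emph{partition}: the paper's induction maintains that every vertex lies in some part of weight at least $n$, whereas your greedy base case leaves a terminal residue uncovered and your discard branch leaves $V(P)\setminus\{b\}$ uncovered. For the downstream applications (\lemref{TreePartition}, \thmref{TreePartition}) only the number of disjoint subtrees of weight at least $n$ matters, so this is harmless, and it is in any case easily repaired by merging the residue into the neighbouring committed part whenever $f(T)\ge 1$ — but you should make that merge explicit if you want the statement verbatim.
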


\begin{proof}
  We proceed by induction on $f(T)$. If $f(T)\leq 0$ then there is
  nothing to prove. First suppose that $f(T)=1$. Then $W(T)\geq
  (\STAR{T}-2)(n-1)+n\geq n$ since $\STAR{T}\geq2$. Thus $T$ itself
  has total weight at least $n$, and we are done.  Now assume that
  $f(T)\geq 2$.

  Suppose that $T=K_2$ with vertices $x$ and $y$, both of which are
  leaves. Thus $\STAR{T}=2$ and
  $f(T)=\floor{\frac{w(x)+w(y)}{n}}$. Now $f(T)\geq 2$ and
  $w(x),w(y)\leq n$. Thus $f(T)=2$ and $w(x)=w(y)=n$. Hence $T[\{x\}]$
  and $T[\{y\}]$ is a vertex-partition of $T$ into two disjoint
  subtrees, each with weight at least $n$, and we are done. Now assume
  that $\verts{T}\geq3$.

  Suppose that $w(v)=n$ for some leaf $v$. Let $T':= T-v$. By
  induction, there is a vertex-partition of $T'$ into $f(T')$ disjoint
  subtrees, each with total weight at least $n$. These subtrees plus
  $T[\{v\}]$ are a vertex-partition of $T$ into $1+f(T')$ disjoint
  subtrees, each with total weight at least $n$.  Now $W(T')=W(T)-n$,
  and $s(T')\leq \STAR{T}$ since $v$ is not a leaf in $T'$, and the
  neighbour of $v$ is the only potential leaf in $T'$ that is not a
  leaf in $T$. Thus
  \begin{align*}
    1+f(T')   =\;&\FLOOR{\frac{n+W(T')-(s(T')-2)(n-1)}{n}} \\
    \geq\;&  \FLOOR{\frac{W(T)-(\STAR{T}-2)(n-1)}{n}} \\
    =\;&f(T)\enspace,
  \end{align*}
  and we are done. Now assume that $w(v)\leq n-1$ for every leaf $v$.

  Let $T'$ be the tree obtained from $T$ by deleting each leaf. Since
  $T\ne K_2$, $T'$ has at least one vertex. Suppose that $T'$ has
  exactly one vertex. That is, $T$ is a star. Thus $W(T)\leq
  1+\STAR{T}\cdot(n-1)$ since every leaf has weight at most
  $n-1$. Thus $$f(T)=\FLOOR{\frac{W(T)-(\STAR{T}-2)(n-1)}{n}}\leq\FLOOR{\frac{2n-1}{n}}=1\enspace,$$
  which is a contradiction.

  Now assume that $T'$ has at least two vertices. In particular, $T'$
  has a leaf $v$. Note that the neighbour of $v$ in $T'$ is not a leaf
  in $T$, as otherwise $T'$ would only have one vertex. Now $v$ is not
  a leaf in $T$ (since it is in $T'$). Thus $v$ is adjacent to at
  least one leaf in $T$. Let $x_1,x_2,\dots,x_d$ be the leaves of $T$
  that are adjacent to $v$.

  First suppose that $\sum_iw(x_i)\leq n-1$. Let
  $T'':=T-\{x_1,\dots,x_d\}$. Then $v$ is a leaf in $T''$. Redefine
  $w(v):=1+\sum_iw(x_i)$. By induction, there is a vertex-partition of
  $T''$ into $f(T'')$ disjoint subtrees, each of total weight at least
  $n$. Observe that $W(T)=W(T'')$ and $s(T'')=\STAR{T}-d+1\leq
  \STAR{T}$. Thus
  \begin{align*}
    f(T'') =\FLOOR{\frac{W(T'')-(s(T'')-2)(n-1)}{n}}
    \geq\;&\FLOOR{\frac{W(T)-(\STAR{T}-2)(n-1)}{n}}\\
    =\;&f(T)\enspace.\end{align*} Adding $x_1,\dots,x_d$ to the
  subtree of $T''$ containing $v$ gives a vertex-partition of $T$ into
  at least $f(T)$ disjoint subtrees, each with total weight at least
  $n$.

  Now assume that $\sum_iw(x_i)\geq n$. Let
  $T''':=T-\{v,x_1,\dots,x_d\}$. Now $s(T''')\leq \STAR{T}-d+1$, since
  $w_1,\dots,w_d$ are leaves in $T$ that are not in $T'''$, and the
  neighbour of $v$ in $T''$ is the only vertex in $T'''$ that possibly
  is a leaf in $T'''$ but not in $T$. Observe that $W(T''')\geq
  W(T)-1-d(n-1)$ since $v$ has weight $1$ and each $w_i$ has weight at
  most $n-1$. By induction, there is partition of $V(T''')$ into at
  least $f(T''')$ disjoint subtrees, each with total weight at least
  $n$. These subtrees plus $T[\{v,x_1,\dots,x_d\}]$ are a
  vertex-partition of $T$ into at least $1+f(T''')$ disjoint subtrees,
  each with total weight at least $n$. Now
  \begin{align*}
    1+f(T''')
    &=1+\FLOOR{\frac{W(T''')-(s(T''')-2)(n-1)}{n}}\\
    &\geq\FLOOR{\frac{n+W(T)-1-d(n-1)-(\STAR{T}-d+1-2)(n-1)}{n}}\\
    &=\FLOOR{\frac{W(T)-(\STAR{T}-2)(n-1)}{n}}\\
    &=f(T)\enspace.
  \end{align*}
  This completes the proof.
\end{proof}

\begin{lemma}
  \lemlabel{TreePartition} For every positive integer $n$, every tree
  $T$ with at least one edge has
$$\FLOOR{\frac{\verts{T}-(\STAR{T}-2)(n-1)}{n}}$$ disjoint subtrees, 
each with at least $n$ vertices. Moreover, for all integers $s,n\geq2$
and $N\geq sn$, there is a tree $T$ with $\verts{T}=N$ and
$\STAR{T}=s$, such that $T$ has at most
$$\FLOOR{\frac{\verts{T}-(\STAR{T}-2)(n-1)}{n}}$$ disjoint subtrees, each with at least $n$ vertices. 
\end{lemma}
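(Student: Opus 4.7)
The first assertion is an immediate specialisation of \lemref{WeightedTreePartition}. I would apply it with the uniform weight $w(v)=1$ for every $v\in V(T)$: then $W(T)=\verts{T}$, the hypothesis that every leaf has weight at most $n$ is vacuous, and any subtree of total weight $\geq n$ is just a subtree with $\geq n$ vertices. The lemma then delivers exactly $\FLOOR{(\verts{T}-(\STAR{T}-2)(n-1))/n}$ parts, as required.

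For the tightness, the natural candidate is a spider whose short legs are engineered to be individually useless. Given $s,n\geq 2$ and $N\geq sn$, let $T$ be the tree obtained from a central vertex $c$ by attaching $s-1$ pendant paths on $n-1$ vertices each (the \emph{short legs}) together with one pendant path on $\ell:=N-1-(s-1)(n-1)$ vertices (the \emph{long leg}). A short calculation using $N\geq sn$ yields $\ell\geq s+n-2\geq 2$, so every leg is non-empty; hence $\verts{T}=N$, the leaves of $T$ are exactly the $s$ outer endpoints of the legs, and \corref{StarMinorTree} gives $\STAR{T}=s$.

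The key observation for the upper bound on the number of parts is that each short leg has only $n-1<n$ vertices and is attached to $T$ only through $c$, so any subtree of a valid partition meeting a short leg must also contain $c$. Consequently all $(s-1)(n-1)$ short-leg vertices, together with $c$, lie in the single part $T^{*}$ containing $c$. If $k\in[0,\ell]$ of the long leg's vertices also lie in $T^{*}$, then the long leg outside $T^{*}$ is a path on $\ell-k$ vertices, contributing at most $\FLOOR{(\ell-k)/n}$ further parts. The partition therefore has at most
\[
1+\FLOOR{\ell/n}\;=\;\FLOOR{(\ell+n)/n}\;=\;\FLOOR{(N-(s-2)(n-1))/n}
\]
parts, where the last equality is the identity $\ell+n=N-(s-2)(n-1)$. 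There is essentially no obstacle: the existence half is packaged in \lemref{WeightedTreePartition}, and the tightness construction just forces $(s-2)(n-1)$ vertices to be unavoidably absorbed into the central part alongside $c$.
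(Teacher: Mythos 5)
The existence half of your proof is identical to the paper's: set all weights to $1$ in \lemref{WeightedTreePartition}. For the tightness construction, you use a single-hub spider ($s-1$ short legs of $n-1$ vertices and one long leg of $\ell=N-1-(s-1)(n-1)$ vertices, all attached at a centre $c$), whereas the paper uses a two-hub caterpillar: a path $P$ on $N-s(n-1)$ vertices with $\ceil{s/2}$ pendant $(n-1)$-vertex paths at one end $v$ and $\floor{s/2}$ at the other end $w$. The analyses run in parallel: in your tree at most one part can contain $c$ (since each short leg is too small to host a part by itself), so all other parts must sit inside the long leg, giving $t\leq 1+\floor{\ell/n}$; in the paper's tree at most one part can contain $v$ and at most one can contain $w$, so the rest must sit inside the interior of $P$, giving $t\leq 2+\floor{(N-s(n-1)-2)/n}$. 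Both bounds simplify to $\floor{(N-(s-2)(n-1))/n}$, and both trees have exactly $s$ leaves, whence $\STAR{T}=s$ via \corref{StarMinorTree}. In fact the two families coincide for $s\leq 3$ and diverge only for $s\geq 4$; your one-hub version is marginally simpler to state and analyse, but the underlying mechanism --- pendant $(n-1)$-vertex paths that cannot support a part of size $n$ on their own and hence force absorption into a hub --- is the same in both. One small caution: you speak of a ``partition'' and assert that all short-leg vertices lie in the part $T^*$ containing $c$; the lemma concerns disjoint subtrees rather than a partition of $V(T)$, and it is possible that no part contains $c$ or that some short-leg vertices are uncovered, but since the bound $t\leq 1+\floor{\ell/n}$ holds a fortiori in those cases, this imprecision does not affect the correctness of your argument.
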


\begin{proof}
  \lemref{WeightedTreePartition} with each leaf assigned a weight of
  $1$ implies the first claim. It remains to construct the tree
  $T$. Fix a path $P$ with $N-s(n-1)$ vertices. Let $v$ and $w$ be the
  endpoints of $P$. As illustrated in \figref{PartitionExample}, let
  $T$ be the tree obtained from $P$ by attaching $\ceil{\frac{s}{2}}$
  pendant paths to $v$, each with $n-1$ vertices, and by attaching
  $\floor{\frac{s}{2}}$ pendant paths to $w$, each with $n-1$
  vertices. Thus $T$ has $N$ vertices and $s$ leaves.

  \Figure{PartitionExample}{The tree $T$ in \lemref{TreePartition}.}

  Let $A$ be the subtree of $T$ induced by the union of $v$ and the
  pendant paths attached at $v$. Let $B$ be the subtree of $T$ induced
  by the union of $w$ and the pendant paths attached at $w$. Let
  $X_1,\dots,X_t$ be a set of disjoint subtrees in $T$, each with at
  least $n$ vertices. If some $X_i$ intersects $A$ then $v$ is in
  $X_i$. At most one subtree $X_i$ contains $v$. Thus at most one
  subtree $X_i$ intersects $A$. Similarly, at most one subtree $X_j$
  intersects $B$. The remaining $t-2$ subtrees are contained in
  $P-\{v,w\}$. Thus $$t-2\leq\frac{N-s(n-1)-2}{n}\enspace.$$ It
  follows that $tn\leq N-(s-2)(n-1)$, as desired.
\end{proof}

\begin{proof}[Proof of \thmref{TreePartition}]
  The assumption in \thmref{TreePartition} implies that
$$\frac{\verts{T}-(\STAR{T}-2)(n-1)}{n}\geq n+\frac{1}{n}\enspace.$$
Hence
$$\FLOOR{\frac{\verts{T}-(\STAR{T}-2)(n-1)}{n}}\geq n\enspace.$$
The result thus follows from \lemref{TreePartition}.
\end{proof}

For every tree $T$, let \bal{T} be the maximum order of a balance
subtree in $T$. The next lemma shows how to construct a balance of
large order. For each vertex $r$ of $T$, let $T_r$ be the component of
$T-r$ with the maximum number of vertices.

\begin{lemma}
  \lemlabel{BigBalance} Let $r$ be a vertex in a tree $T$ with degree
  $d\geq3$.  Then every balance in $T$ rooted at $r$ has order at most
  $\verts{T}-\verts{T_r}-2$.  On the other hand, there is a balance in
  $T$ rooted at $r$ of order at least
  $\third(\verts{T}-\verts{T_r}-1)$.
\end{lemma}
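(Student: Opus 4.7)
The plan is to set up notation once and then prove each inequality by case analysis on the location of the support $s$ relative to the components of $T-r$. Let $C_1,\dots,C_d$ be the components of $T-r$ indexed so that $n_i:=|C_i|$ satisfies $n_1\le n_2\le\dots\le n_d$; then $T_r=C_d$ and $\verts{T_r}=n_d$. Write $S:=n_1+\dots+n_{d-1}=\verts{T}-\verts{T_r}-1$. I would open with two structural observations, both consequences of $A\cup\{r\}$ and $B\cup\{r\}$ being connected subtrees of $T$: (i) for every component $C_i$, at most one of $A,B$ meets $C_i$, since if $v\in A\cap C_i$ then the $T$-path from $v$ to $r$ lies in $A\cup\{r\}$, forcing the neighbour of $r$ in $C_i$ into $A$, and a symmetric statement for $B$ would violate $A\cap B=\emptyset$; and (ii) if $s$ is the neighbour of $r$ in $C_j$, then no vertex of $C_j-\{s\}$ lies in $A\cup B$, since its path to $r$ passes through $s\notin A\cup B$.

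For the upper bound, split on whether $s\in T_r$. If $s\in T_r$, observation (ii) with $C_j=T_r$ forces $A\cup B\subseteq V(T)-\{r\}-V(T_r)$, so
\[
\min(|A|,|B|)\le \tfrac{1}{2}(|A|+|B|)\le \tfrac{1}{2}(\verts{T}-1-\verts{T_r})\le \verts{T}-\verts{T_r}-2,
\]
where the last inequality is equivalent to $\verts{T}-\verts{T_r}\ge 3$, which holds because the $d-1\ge 2$ components of $T-r$ other than $T_r$ each contribute at least one vertex. If $s\notin T_r$, observation (i) with $C_i=T_r$ allows me to assume (after possibly swapping $A$ and $B$) that $B\cap T_r=\emptyset$, whence $B\subseteq V(T)-V(T_r)-\{r,s\}$ gives $|B|\le \verts{T}-\verts{T_r}-2$, and the same bound follows for $\min(|A|,|B|)$.

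For the lower bound I would pick $s$ to be the neighbour of $r$ in the \emph{smallest} component $C_1$, freeing all of $C_2,\dots,C_d$ to be partitioned between $A$ and $B$ as unions of whole components. Note $n_1\le S/(d-1)\le S/2$ because $n_1$ is the minimum of the $d-1$ summands of $S$. I would then split into two regimes. If $n_d\ge S/3$, set $A:=C_d=T_r$ and $B:=C_2\cup\dots\cup C_{d-1}$, so $|A|=n_d\ge S/3$ and $|B|=S-n_1\ge S/2\ge S/3$. If $n_d<S/3$, every available component has size below $S/3$, and the standard greedy partition of $C_2,\dots,C_d$ (processed in decreasing order of size, each added to the currently smaller side) finishes with imbalance at most the largest item $n_d<S/3$; therefore
\[
\min(|A|,|B|)\ge \tfrac{1}{2}\bigl((n_2+\dots+n_d)-n_d\bigr)=\tfrac{1}{2}(S-n_1)>\tfrac{1}{2}(S-S/3)=S/3,
\]
using $n_1\le n_d<S/3$. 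Either way the constructed balance has order at least $S/3=\third(\verts{T}-\verts{T_r}-1)$.

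The main obstacle is the lower bound in the regime where $T_r$ itself has fewer than $S/3$ vertices: the natural move of putting $T_r$ alone into one branch no longer suffices, so one must balance several components simultaneously. Choosing $s$ in the smallest component (to maximise the material available to $A\cup B$) and then running a greedy packing bridges the two regimes, because in the small-$T_r$ case every available component is small relative to $S$ and greedy achieves near-perfect balance.
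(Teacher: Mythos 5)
Your proof is correct, and it follows the same core strategy as the paper's: for the upper bound, observe that the largest component $T_r$ meets at most one branch; for the lower bound, place $s$ in the smallest component of $T-r$ so that the remaining components can be distributed between the two branches. Where you diverge is in the organisation. For the upper bound, the paper gives a one-sentence argument (``the other branch has at most $\verts{T}-\verts{T_r}-2$ vertices'') that silently assumes $s\notin T_r$; your explicit split on whether $s\in T_r$, with the averaging estimate $\min(|A|,|B|)\le\tfrac12(|A|+|B|)$ and the observation $\verts{T}-\verts{T_r}\ge3$ from $d\ge3$, actually fills in a small gap in the paper's argument. For the lower bound, the paper cases on $d=3$ versus $d\ge4$, handling $d=3$ by a direct computation and $d\ge4$ by a greedy partition of $Y_1,\dots,Y_{d-1}$ with a bound of $\tfrac{d-2}{2(d-1)}(\verts{T}-\verts{T_r}-1)$; you instead case on whether $n_d\ge S/3$, using the trivial two-set partition $\{C_d\}\cup\{C_2,\dots,C_{d-1}\}$ in the large-$T_r$ regime and greedy packing in the small-$T_r$ regime. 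Both case splits work; yours avoids the explicit $d$-dependent constant $\tfrac{d-2}{2(d-1)}$ and treats $d=3$ uniformly (where in fact your second case is vacuous, since $n_3<S/3$ forces $3n_3<n_1+n_2\le2n_3$). The one trade-off is that the paper's greedy footnote, applied to the $d-1$ \emph{largest} components, gives a sharper $d$-dependent bound, whereas your threshold argument is tuned specifically to the constant $\tfrac13$.
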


\begin{proof}
  Consider a balance rooted at $r$. The largest component of $T-r$ is
  contained in at most one branch. Thus the other branch has at most
  $\verts{T}-\verts{T_r}-2$ vertices. Hence the order of the balance
  is at most $\verts{T}-\verts{T_r}-2$.

  Now we prove the second claim. Let $Y_1,\dots,Y_d$ be the components
  of $T-r$. Say $\verts{Y_1}\geq\dots\geq \verts{Y_d}$. Then
  $\verts{T_r}=\verts{Y_1}$. Let $s$ be the neighbour of $r$ in $Y_d$.

  First suppose that $d=3$. Then $Y_1$ and $Y_2$ are the branches of a
  balance rooted at $r$ with support $s$, and order \verts{Y_2}. Now
  $2\verts{Y_2}\geq
  \verts{Y_2}+\verts{Y_3}=\verts{T}-\verts{Y_1}-1$. Thus the order of
  the balance, \verts{Y_2}, is at least
  $\half(\verts{T}-\verts{Y_1}-1)>\third(\verts{T}-\verts{Y_1}-1)$.

  Now assume that $d\geq 4$. If $A$ and $B$ are a partition of
  $[d-1]$, then $\cup\{Y_i:i\in A\}$ and $\cup\{Y_i:i\in B\}$ are the
  branches of a balance rooted at $r$ with support $s$. The order of
  the balance is
$$\min\SET{\sum_{i\in B}\verts{Y_i},\sum_{i\in A}\verts{Y_i}}\enspace.$$
A greedy algorithm\footnote{Given integers $m_1\geq m_2\geq\dots\geq
  m_t\geq 1$ that sum to $m$, construct a partition of $[t]$ into sets
  $A$ and $B$ as follows. For $i\in[t]$, let $A_i:=\sum_{j\in[i]\cap
    A}m_j$ and $B_i:=\sum_{j\in[i]\cap B}m_j$. Initialise $A:=\{m_1\}$
  and $B:=\emptyset$. Then for $i=2,3,\dots,t$, if $A_{i-1}\leq
  B_{i-1}$ then add $i$ to $A$; otherwise add $i$ to $B$. Thus
  $|A_1-B_1|=m_1$ and by induction,
  $|A_i-B_i|\leq\max\{|A_{i-1}-B_{i-1}|-m_i,m_i\}\leq\max\{m_1-m_i,m_i\}\leq
  m_1$.  Thus $|A_t-B_t|\leq m_1$. Hence $A_t$ and $B_t$ are both at
  least $\half(m-m_1)$.} gives such a partition with
\begin{align*}
  \min\SET{\sum_{i\in B}\verts{Y_i},\sum_{i\in A}\verts{Y_i}}
  \geq\;&\half\big(-\verts{Y_1}+\sum_{i\in[d-1]}\verts{Y_i}\big)\\
  =\;&\half(\verts{T}-\verts{Y_1}-\verts{Y_d}-1)\enspace.
\end{align*}
Since $Y_d$ is the smallest of $Y_2,\dots,Y_d$, the order of the
balance is at least
\begin{align*}
  \half(\verts{T}-\verts{Y_1}-\tfrac{\verts{T}-\verts{Y_1}-1}{d-1}-1)
  =\;&\tfrac{d-2}{2(d-1)}(\verts{T}-\verts{Y_1}-1)\\
  \geq\;&\third(\verts{T}-\verts{Y_1}-1)\enspace,
\end{align*}
as desired.
\end{proof}

Which trees $T$ have small \bal{T}? First note that $\bal{P}=0$ for
every path $P$. A path $P$ in a tree $T$ is \emph{clean} if every
internal vertex of $P$ has degree $2$ in $T$. Let $p(T)$ be the
maximum number of vertices in a clean path in $T$. The \emph{hangover}
of $T$, denoted by \hang{T}, is the minimum, taken over all clean
paths $P$ in $T$, of the maximum number of vertices in a component of
$T-E(P)$. We now prove that \bal{T} and \hang{T} are tied.

\begin{lemma}
  \lemlabel{BalanceHang} For every tree $T$,
$$\bal{T}+1\leq \hang{T}\leq 3\,\bal{T}+1\enspace.$$
\end{lemma}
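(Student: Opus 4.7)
For the lower bound $\bal{T}+1\leq\hang{T}$, the plan is to take a balance $(r,s,A,B)$ of order $n:=\bal{T}$ and show that every clean path $P$ leaves behind a component of $T-E(P)$ with at least $n+1$ vertices. Since $A\cup\{r\}$ and $B\cup\{r\}$ are connected, with $A$ and $B$ disjoint and both avoiding $s$, a short argument shows that $\deg_T(r)\geq 3$ and that $A$, $B$, and the component of $T-r$ containing $s$ are three pairwise distinct components of $T-r$. Because internal vertices of a clean path have degree $2$ in $T$, the vertex $r$ is either absent from $V(P)$ or an endpoint of $P$; in either subcase at most one component of $T-r$ is touched by $P$ (either $P$ lies entirely inside that component, or the single $P$-edge at $r$ enters it), so at least one of $A$, $B$---say $B$---is disjoint from it. Then $B\cup\{r\}$ is preserved intact in $T-E(P)$, providing a component of size $\geq n+1$.

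For the upper bound $\hang{T}\leq 3\bal{T}+1$, the plan is to feed \lemref{BigBalance}---which says $\verts{T}-\verts{T_r}\leq 3\bal{T}+1$ for every vertex $r$ of degree $\geq 3$---into a careful choice of clean path. If $T$ has no vertex of degree $\geq 3$, then $T$ is a path or $K_1$, and the clean path $P:=T$ already gives $\hang{T}\leq 1=3\cdot 0+1$. Otherwise, I aim to find a clean path $P$ from some $u$ to some $v$ in $T$ whose two endpoint components in $T-E(P)$ are exactly $T-T_u$ and $T-T_v$ (or $T-T_u$ and $\{v\}$ when $v$ is a leaf); since the internal components along a clean path are singletons, such a $P$ immediately yields the bound.

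To locate such a $P$, I would contract each maximal internal degree-$2$ path of $T$ to a single edge, forming a tree $T^*$ whose vertices are precisely the leaves and branching vertices of $T$ and whose edges correspond bijectively to the clean paths of $T$ between such vertices. For each branching vertex $r$, let $N^*(r)$ be the $T^*$-neighbour of $r$ reached by following the clean path leaving $r$ in the direction of $T_r$. Starting at any branching $r_1$, iterate $r_{i+1}:=N^*(r_i)$; this is a deterministic walk in the finite tree $T^*$, so it must either terminate at a leaf $L$ (and the clean path realising the last edge $r_{k-1}\to L$ has endpoint components $T-T_{r_{k-1}}$ and $\{L\}$) or revisit a previously-visited vertex. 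The main technical point I expect is verifying that any first revisit in such a deterministic walk on a tree must be by backtracking, forcing the walk to lock into a $2$-cycle $u\leftrightarrow v$ with $N^*(u)=v$ and $N^*(v)=u$ simultaneously; the clean path of $T$ realising the edge $uv$ of $T^*$ then has endpoint components $T-T_u$ and $T-T_v$, both bounded by $3\bal{T}+1$ via \lemref{BigBalance}, which finishes the proof.
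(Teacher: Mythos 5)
Your proof is correct, and for both inequalities it reaches the conclusion by a genuinely different route from the paper, though both rest on the same two observations: a balance root has degree at least~$3$ and hence can only appear as an endpoint (never an internal vertex) of a clean path, and \lemref{BigBalance} converts the quantity $\verts{T}-\verts{T_r}$ into a balance of order roughly one third as large.

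For the lower bound, the paper fixes a clean path $P$ witnessing $\hang{T}$ and, for an arbitrary degree-$\geq 3$ vertex $r$ (which must lie in an endpoint component of $T-E(P)$), derives $\verts{T_r}\geq\verts{T}-\hang{T}-1$ and then invokes the \emph{upper} half of \lemref{BigBalance} to bound the order of any balance at $r$. You instead fix a maximum balance $(r,s,A,B)$ and show directly that for \emph{every} clean path $P$, since $r$ is not internal to $P$, the edges of $P$ miss $T[B\cup\{r\}]$ (or $T[A\cup\{r\}]$), which therefore survives intact in $T-E(P)$ and yields a component of size $\geq\bal{T}+1$. This is more elementary in that it avoids \lemref{BigBalance} entirely for this direction, and it also sidesteps the question of which clean path realises $\hang{T}$ (the paper takes a longest one). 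One small omission: when $\bal{T}=0$ your argument about $\deg_T(r)\geq 3$ degenerates (a balance of order $0$ with $A=B=\emptyset$ has no degree constraint on $r$), but the inequality $\hang{T}\geq 1$ is immediate in that case, so this is easily patched.

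For the upper bound, the paper simply selects $r$ of degree $\geq 3$ minimising $\verts{T_r}$, follows the clean path into $T_r$ to the next branching-or-leaf vertex $x$, and observes $\verts{T_x}\geq\verts{T_r}$ so that only one application of \lemref{BigBalance} (at $r$) is needed. You instead build the topological tree $T^*$, define a deterministic pointer $N^*$ sending each branching vertex towards its heavy side $T_r$, and iterate until the walk terminates at a leaf or locks into a $2$-cycle $\{u,v\}$; you then apply \lemref{BigBalance} separately at both $u$ and $v$. Your key technical claim — that a deterministic walk in a tree must first revisit a vertex by backtracking, hence locking into a $2$-cycle — is correct: if $r_1,\dots,r_m$ are distinct and $r_{m+1}=r_i$ with $i<m-1$, the closed walk $r_i,\dots,r_m,r_i$ of length $\geq 3$ in a tree would have to backtrack at some interior step, contradicting distinctness, so $r_{m+1}=r_{m-1}$. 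So your argument works, but it is noticeably more machinery than the paper's one-line extremal choice of $r$: minimising $\verts{T_r}$ hands you a vertex whose outward clean path automatically hits an $x$ with $\verts{T_x}\geq\verts{T_r}$, with no iteration or $T^*$ needed.
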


\begin{proof}
  First we prove the lower bound. Let $P$ be a longest clean path in
  $T$. Since every internal vertex in $P$ has degree $2$ in $T$, every
  balance in $T$ is rooted at a vertex $r$ in one of the components of
  the forest obtained by deleting the internal vertices and edges of
  $P$ from $T$. For every such vertex $r$, $T-r$ has a component of at
  least $\verts{T}-\hang{T}-1$ vertices. That is, $\verts{T_r}\geq
  \verts{T}-\hang{T}-1$. By \lemref{BigBalance}, every balance rooted
  at $r$ has order at most
  $\verts{T}-\verts{T_r}-2\leq\hang{T}-1$. Thus
  $\bal{T}\leq\hang{T}-1$.

  Now we prove the upper bound. If $T$ is a path then
  $\bal{T}=\hang{T}=0$, and we are done. Now assume that $T$ has a
  vertex of degree at least $3$. Let $r$ be a vertex of degree at
  least $3$ in $T$ such that $\verts{T_r}$ is minimised. Let $x$ be
  the closest vertex in $T_r$ to $r$ such that $\deg(x)\neq 2$. Let
  $P$ be the path between $r$ and $x$ in $T$. Thus $P$ is clean. If
  $x$ is a leaf, then $\verts{T_x}=\verts{T}-1\geq \verts{T_r}$. If
  $\deg(x)\geq3$ then $\verts{T_x}\geq \verts{T_r}$ by the choice of
  $r$. In both cases $\verts{T_x}\geq \verts{T_r}$, which implies that
  $r$ is in $T_x$. Thus deleting the internal vertices and edges of
  $P$ gives a forest with two components, one with
  $\verts{T}-\verts{T_r}$ vertices, and the other with
  $\verts{T}-\verts{T_x}$ vertices. Hence
  $\hang{T}\leq\max\{\verts{T}-\verts{T_r},\verts{T}-\verts{T_x}\}=\verts{T}-\verts{T_r}$. By
  \lemref{BigBalance}, there is a balance in $T$ rooted at $r$ of
  order at least
  $\third(\verts{T}-\verts{T_r}-1)\geq\third(\hang{T}-1)$. Hence
  $\bal{T}\geq\third(\hang{T}-1)$.
\end{proof}

We now prove that if the product of sufficiently large trees has
bounded Hadwiger number then both trees have bounded hangover.

\begin{lemma}
  \lemlabel{BigTrees} Fix an integer $c\geq1$. Let $T_1$ and $T_2$ be
  trees, such that $\verts{T_1}\geq2c^2-c+2$, $\verts{T_2}\geq c+1$,
  and $\eta(\CP{T_1}{T_2})\leq c$. Then $$\hang{T_2}\leq
  3c+1\enspace.$$ By symmetry, if in addition $\verts{T_2}\geq
  2c^2-c+2$ then
$$\hang{T_1}\leq 3c+1\enspace.$$
\end{lemma}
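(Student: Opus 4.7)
The plan is to show that $\bal{T_2}\leq c$; \lemref{BalanceHang} then immediately yields $\hang{T_2}\leq 3\,\bal{T_2}+1\leq 3c+1$. The key engine is \thmref{BigBalance}, which converts ``$T_1$ has many large disjoint subtrees'' plus ``$T_2$ contains a large balance'' into a large clique minor of \CP{T_1}{T_2}. So the main work is to verify that $T_1$ really does contain enough large disjoint subtrees under our hypotheses, and for this we need a quantitative input about $\STAR{T_1}$.

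To this end, I would first invoke \corref{GraphStarMinor}: $\eta(\CP{T_1}{T_2})\geq \min\{\STAR{T_1}+1,\verts{T_2}\}$. Since this is at most $c$ by hypothesis while $\verts{T_2}\geq c+1$, the minimum must be achieved by $\STAR{T_1}+1$, giving $\STAR{T_1}\leq c-1$. I then apply \thmref{TreePartition} to $T_1$ with $n:=c+1$: its numerical hypothesis $\verts{T_1}\geq (c+1)^2+(\STAR{T_1}-2)c+1$ is, using $\STAR{T_1}\leq c-1$, implied by $\verts{T_1}\geq (c+1)^2+(c-3)c+1=2c^2-c+2$, which is precisely what we are given. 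Hence $T_1$ contains $c+1$ pairwise disjoint subtrees, each of order at least $c+1$.

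Suppose now for contradiction that $\bal{T_2}\geq c+1$, and let $T\subseteq T_2$ be a balance subtree of order $c+1$. Then \thmref{BigBalance}, applied with $G:=T_1$ and parameter $n:=c+1$, produces $\eta(\CP{T_1}{T})\geq c+1$; and since $T\subseteq T_2$ makes \CP{T_1}{T} a subgraph of \CP{T_1}{T_2}, we obtain $\eta(\CP{T_1}{T_2})\geq c+1$, contradicting the hypothesis. Hence $\bal{T_2}\leq c$, as required. The only delicate point is the arithmetic identity $(c+1)^2+(c-1-2)c+1=2c^2-c+2$, which is exactly what ensures that the exponent $2c^2-c+2$ appearing in the statement of the lemma matches the threshold delivered by \thmref{TreePartition} once the bound $\STAR{T_1}\leq c-1$ is in hand; this explains (and justifies) the otherwise mysterious shape of the hypothesis on $\verts{T_1}$.
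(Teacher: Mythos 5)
Your proof is correct and takes essentially the same route as the paper's: bound $\STAR{T_1}\leq c-1$ via Corollary~\ref{cor:GraphStarMinor}, feed this into Theorem~\ref{thm:TreePartition} with $n=c+1$ to get $c+1$ disjoint subtrees of order $c+1$ in $T_1$, rule out $\bal{T_2}\geq c+1$ via Theorem~\ref{thm:BigBalance}, and finish with Lemma~\ref{lem:BalanceHang}. The only cosmetic difference is that you extract $\STAR{T_1}\leq c-1$ directly from the minimum rather than by an explicit contradiction, and you restrict to a balance subtree $T\subseteq T_2$ before invoking Theorem~\ref{thm:BigBalance}, whereas the paper folds the subgraph monotonicity into a $\min\{c+1,\bal{T_2}\}$ expression; both are equivalent.
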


\begin{proof}
  If $\STAR{T_1}\geq c$, then by \corref{GraphStarMinor},
  $\eta(\CP{T_1}{T_2})\geq \min\{\verts{T_2},\STAR{T_1}+1\}\geq c+1$,
  which contradicts the assumption. Now assume that $\STAR{T_1}\leq
  c-1$.

  Let $n:=c+1$. Then $\verts{T_1}\geq (c+1)^2+(c-3)c+1\geq
  n^2+(\STAR{T_1}-2)(n-1)+1$. Thus \thmref{TreePartition} is
  applicable to $T_1$ with $n=c+1$.  Hence $T_1$ has $c+1$ disjoint
  subtrees, each with at least $c+1$ vertices.

  If $\bal{T_2}\geq c+1$, then by \thmref{BigBalance},
  $\eta(\CP{T_1}{T_2})\geq\min\{c+1,\bal{T_2}\}=c+1$, which
  contradicts the assumption.  Thus $\bal{T_2}\leq c$, and by
  \lemref{BalanceHang}, $\hang{T_2}\leq 3c+1$.
\end{proof}

We now prove a converse result to \lemref{BigTrees}. It says that the
product of two trees has small Hadwiger number whenever one of the
trees is small or both trees have small hangover.

\begin{lemma}
  \lemlabel{RoughTree} Let $T_1$ and $T_2$ be trees, such that for
  some integer $c\geq1$,
  \begin{itemize}
  \item $\verts{T_1}\leq c$ or $\verts{T_2}\leq c$, or
  \item $\hang{T_1}\leq c$ and $\hang{T_2}\leq c$.
  \end{itemize}
  Then $\eta(\CP{T_1}{T_2})\leq c'$ for some $c'$ depending only on
  $c$.
\end{lemma}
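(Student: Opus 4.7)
The proof splits into the two cases of the hypothesis.

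\textbf{First case.} Suppose without loss of generality that $\verts{T_2}\le c$. Apply the standard Cartesian product treewidth bound $\tw{\CP{G}{H}}\le(\tw{G}+1)\verts{H}-1$, obtained from a tree decomposition of $G$ with bags $\{B_x\}$ by taking $\{B_x\times V(H)\}$ in $\CP{G}{H}$. Applied with $G=T_1$, a tree (so $\tw{T_1}\le 1$), this gives $\tw{\CP{T_1}{T_2}}\le 2c-1$. Then \eqnref{TreePathBand} yields $\eta(\CP{T_1}{T_2})\le 2c$, so $c'=2c$ works in this case.

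\textbf{Second case.} Suppose $\hang{T_1},\hang{T_2}\le c$. Unwrapping the definition of hangover, for each $i$ there is a clean path $P_i$ in $T_i$ with endpoints $x_i,y_i$ such that the components $L_i\ni x_i$ and $R_i\ni y_i$ of $T_i-E(P_i)$ satisfy $|L_i|,|R_i|\le c$, and every other component of $T_i-E(P_i)$ is a single internal vertex of $P_i$ (which has degree $2$ in $T_i$). Equivalently, $T_i$ is obtained from a tree of order at most $2c$ by subdividing the single distinguished edge $x_iy_i$ into the path $P_i$. This structural description exhibits $\CP{T_1}{T_2}$ as the planar grid $\CP{P_1}{P_2}$ (which has $\eta\le 4$ by planarity) together with four ``fringe'' subgraphs $\CP{(L_i\setminus\{x_i\})}{T_j}$ and $\CP{T_i}{(L_j\setminus\{x_j\})}$ attached along its four sides; each fringe is a product of a graph of order at most $c-1$ with a tree, so by the first case each fringe has treewidth at most $2c-1$ and Hadwiger number at most $2c$.

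The principal obstacle is to combine the bounded planar core with the four bounded-treewidth fringes into a global bound, since the attachment between the core and each fringe is a full row or column of the grid rather than a small separator, ruling out naive clique-sum arguments. The plan is to induct on the fringe excess $\sigma:=(|L_1|-1)+(|R_1|-1)+(|L_2|-1)+(|R_2|-1)\le 4c-4$. The base case $\sigma=0$ has both $T_i$ as paths, so $\CP{T_1}{T_2}$ is a planar grid with $\eta\le 4$. For the inductive step, pick a leaf $v$ of one of the four bounded subtrees other than its designated endpoint (say $v\in L_1\setminus\{x_1\}$ with neighbor $u\in T_1$); then $T_1-v$ still has hangover at most $c$, and $\CP{T_1}{T_2}$ is obtained from $\CP{(T_1-v)}{T_2}$ by adding one new row $\{v\}\times V(T_2)$, a copy of $T_2$ joined to $\{u\}\times V(T_2)$ by a perfect matching. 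The technical heart is showing that this row addition increases $\eta$ by at most a function of $c$: the subgraph induced by the new row together with its matched partner is isomorphic to $\CP{K_2}{T_2}$, which by the first case has treewidth at most $3$, so any clique-minor branch sets essentially supported there contribute at most $4$, while the remaining branch sets of any $K_k$-minor in $\CP{T_1}{T_2}$ pull back to a clique minor in $\CP{(T_1-v)}{T_2}$ (bounded by the inductive hypothesis) after a careful accounting of the matching. Summing over the at most $4c-4$ inductive steps then yields the desired bound $c'=O(c^2)$.
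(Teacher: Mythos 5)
Your first case is correct and close to the paper's: the paper bounds $\eta(\CP{T_1}{T_2})\le\eta(\CP{K_c}{T_2})=c+1$ via \thmref{TreeComplete}, whereas you bound the treewidth of the product directly; both are valid.

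In the second case you correctly identify the same structure the paper uses: a planar grid $\CP{P_1}{P_2}$ with bounded-order subtrees hanging off the boundary rows and columns. The paper's proof then observes that the hanging pieces form a vortex of width at most $2c^2$ attached along the outerface of the grid, and appeals to the known result that a graph embedded in a bounded-genus surface plus a bounded-width vortex has bounded Hadwiger number. You instead propose an elementary induction on the ``fringe excess.'' This is where there is a genuine gap. The inductive step asserts, but does not prove, that deleting a single row $\{v\}\times V(T_2)$ decreases the Hadwiger number by at most a bounded amount, by splitting branch sets into those ``essentially supported'' in $\{u,v\}\times V(T_2)$ (at most $4$) and the rest, which supposedly ``pull back'' to a clique minor of $\CP{(T_1-v)}{T_2}$. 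A branch set $X$ that uses a few vertices of $\{v\}\times V(T_2)$ together with vertices elsewhere may become disconnected when those vertices are removed, and re-connecting it would require routing through $\{u\}\times V(T_2)$, which may already be occupied by other branch sets. Many branch sets can each touch the new row in just one or two vertices, so there is no obvious bound on how many branch sets are affected, and no rule for re-connecting them without collisions. You flag this yourself as ``the technical heart,'' but the sketch given (``after a careful accounting of the matching'') is not an argument. This is exactly the difficulty that vortices were designed to handle — the interface between the planar core and the attached piece is an entire row, not a small separator — and the footnote in \secref{TwoTrees} on \citet{SW-JCTB89} explains that it is precisely this phenomenon that defeated earlier, more naive, structural conjectures. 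Until the inductive step is actually established, the second case is incomplete; the paper's route via the vortex structure theorem (or the explicit bound of \citet{CliqueMinors}) is not avoidable by the hand-waving offered here.
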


\begin{proof}
  First suppose that $\verts{T_1}\leq c$. Then
  $\eta(\CP{T_1}{T_2})\leq\eta(\CP{K_{c}}{T_2})=c+1$ by
  \thmref{TreeComplete} below. Similarly, if $\verts{T_2}\leq c$ then
  $\eta(\CP{T_1}{T_2})\leq c+1$.

  Otherwise, by assumption, $\hang{T_1}\leq c$ and $\hang{T_2}\leq c$.
  For $i\in[2]$, let $P_i$ be a clean path with $p(T_i)$ vertices in
  $T_i$.  Now \CP{P_1}{P_2} is a planar $p(T_1)\times p(T_2)$ grid
  subgraph $H$ in \CP{T_1}{T_2}. We now show that \CP{T_1}{T_2} can be
  obtained from $H$ by adding a vortex in the outerface of $H$.

  Let $F$ be the set of vertices on the outerface of $H$ in clockwise
  order.  Consider a vertex $v=(v_1,v_2)\in F$, where $v_1\in V(P_1)$
  and $v_2\in V(P_2)$.  For $i\in[2]$, let $A_i(v)$ be the component
  of $T_i-E(P_i)$ that contains $v_i$.  As illustrated in
  \figref{GridVortex}, define $S(v)$ to be the set $\{(a,b):a\in
  A_1(v),b\in A_2(v)\}$ of vertices in \CP{T_1}{T_2}.  Every vertex of
  $G-H$ is in $S(v)$ for some vertex $v\in F$.  In addition, each
  vertex $v\in F$ is in $S(v)$.  For every edge $e$ of \CP{T_1}{T_2}
  where both endpoints of $e$ are in $\cup\{S(v):v\in F\}$, the
  endpoints of $e$ are in one bag or in bags corresponding to
  consecutive vertices in $F$.  For each vertex $v\in F$, if $w$ is
  clockwise from $v$ in $F$, then define $S'(v):=S(v)\cup S(w)$.
  Hence for every edge $xy$ of \CP{T_1}{T_2} where both endpoints of
  $e$ are in $\cup\{S(v):v\in F\}$, the endpoints of $e$ are both in
  $S'(v)$ for some vertex $v\in F$.  Now
  $|S(v)|=|A_1(v)|\cdot|A_2(v)|\leq\hang{T_1}^2\leq c^2$.  Thus
  $\{S'(v):v\in F\}$ is a vortex of width at most $2c^2$.

  It is well known that every graph obtained from a graph embedded in
  a surface of bounded genus by adding a vortex of bounded width has
  bounded Hadwiger number.   Thus $\eta(\CP{T_1}{T_2})$ is at most some constant depending only
  on $c$. Recently,  \citet{CliqueMinors} proved a tight bound on the
  Hadwiger number of such a graph; it implies that
  $\eta(\CP{T_1}{T_2})\leq \Oh{c^2}$. \end{proof}

\Figure{GridVortex}{The sets $S(v)$ in the proof of
  \lemref{RoughTree}.}


\twolemref{BigTrees}{RoughTree} imply the following rough structural
characterisation theorem for the products of trees.

\begin{theorem}
  \thmlabel{RoughTree} For trees $T_1$ and $T_2$ each with at least
  one edge, the function $\eta(\CP{T_1}{T_2})$ is tied to $\min\big\{
  \verts{T_1},\, \verts{T_2},\, \max\{\hang{T_1},\hang{T_2}\} \,\big\}
  \enspace.$
\end{theorem}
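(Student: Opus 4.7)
The plan is to derive the theorem by directly assembling the two preceding lemmas, \lemref{BigTrees} and \lemref{RoughTree}, which already encapsulate the substantive content. Writing $\mu(T_1,T_2) := \min\{\verts{T_1},\verts{T_2},\max\{\hang{T_1},\hang{T_2}\}\}$, I need to exhibit a single function $f$ such that $\eta(\CP{T_1}{T_2}) \leq f(\mu(T_1,T_2))$ and $\mu(T_1,T_2) \leq f(\eta(\CP{T_1}{T_2}))$, and both inequalities will come essentially for free.

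For the bound $\eta(\CP{T_1}{T_2}) \leq f(\mu)$, I would assume $\mu \leq c$ and observe that at least one of $\verts{T_1} \leq c$, $\verts{T_2} \leq c$, or $\max\{\hang{T_1}, \hang{T_2}\} \leq c$ holds; in the last case both hangovers are at most $c$ simultaneously. Each of these three situations is exactly one of the hypotheses of \lemref{RoughTree}, which then gives $\eta(\CP{T_1}{T_2}) \leq c'$ for some $c' = c'(c)$.

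For the reverse bound $\mu \leq f(\eta)$, I would assume $\eta(\CP{T_1}{T_2}) \leq c$ and split on the sizes of the trees. If $\min\{\verts{T_1},\verts{T_2}\} < 2c^2 - c + 2$, then $\mu \leq \min\{\verts{T_1},\verts{T_2}\} \leq 2c^2 - c + 1$ immediately. Otherwise both $\verts{T_i} \geq 2c^2 - c + 2 \geq c+1$, which is exactly the hypothesis required to apply \lemref{BigTrees} to each ordering of the pair, yielding $\hang{T_1}, \hang{T_2} \leq 3c+1$ and hence $\mu \leq \max\{\hang{T_1},\hang{T_2}\} \leq 3c+1$. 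Taking $f(k) := \max\{c'(k),\, 2k^2-k+1,\, 3k+1\}$ then gives the tying function. I do not anticipate any real obstacle in this assembly: the hard work was carried out in \lemref{BigTrees} (via \thmref{TreePartition}, \lemref{BigBalance}, and \corref{GraphStarMinor}) and in \lemref{RoughTree} (via the realisation of \CP{T_1}{T_2} as a planar grid plus a bounded-width vortex in the outerface, combined with the standard bound on the Hadwiger number of such graphs). The present theorem is just the case analysis that glues these two bounds together.
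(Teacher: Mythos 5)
Your proposal is correct and matches the paper's approach: the paper states \thmref{RoughTree} as an immediate consequence of \twolemref{BigTrees}{RoughTree}, and your case analysis is exactly the assembly that turns those two lemmas into the pair of inequalities defining ``tied''. The only detail you gloss over is that \lemref{RoughTree} is stated for integers $c\geq1$, so when $\mu=0$ (both trees are paths) one should apply it with $c=1$, but this is harmless since the hypotheses of that lemma are monotone in $c$.
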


\thmref{RoughTree} can be informally stated as: $\eta(\CP{T_1}{T_2})$
is bounded if and only if:
\begin{itemize}
\item $\verts{T_1}$ or $\verts{T_2}$ is bounded, or
\item $\hang{T_1}$ and $\hang{T_2}$ are bounded.
\end{itemize}
\thmref{RoughTree} is generalised for products of arbitrary graphs in
\thmref{RoughGraph} below.

\section{Product of a General Graph and a Complete Graph}
\seclabel{GeneralComplete}

This section studies the Hadwiger number of the product of a general
graph and a complete graph. \citet{Miller-DM78} stated without proof
that $\eta(\CP{T}{K_n})=n+1$ for every tree $T$ and integer
$n\geq2$. We now prove this claim.

\begin{theorem}
  \thmlabel{TreeComplete} For every tree $T$ with at least one edge
  and integer $n\geq1$,
$$\eta(\CP{T}{K_n})=n+1\enspace.$$
\end{theorem}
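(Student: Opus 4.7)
My plan is to prove the two bounds separately. For the lower bound, since $T$ has at least one edge, $\CP{K_2}{K_n}$ is a subgraph of $\CP{T}{K_n}$, and \propref{CompleteEdge} gives $\eta(\CP{T}{K_n})\geq \eta(\CP{K_n}{K_2})=n+1$.

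For the upper bound I would induct on $\verts{T}$, with base case $\verts{T}=2$ again handled by \propref{CompleteEdge}. For the inductive step, assume $\verts{T}\geq 3$, pick a leaf $v$ of $T$ with unique neighbour $u$, and set $G:=\CP{T}{K_n}$, $G':=\CP{T-v}{K_n}$, $C(v):=\{(v,i):i\in[n]\}$, and $C(u):=\{(u,i):i\in[n]\}$; note that $C(v)$ is a clique in $G$ whose ``outside'' neighbourhood in $G$ is exactly $C(u)$. Suppose for contradiction that $X_1,\dots,X_{n+2}$ are the branch sets of a $K_{n+2}$-minor in $G$; by \lemref{TotalMinor} we may assume every vertex of $G$ lies in some $X_j$. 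The plan is to exhibit a $K_{n+2}$-minor in $G'$, contradicting induction.

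The argument splits on whether some branch set lies entirely inside $C(v)$. The driving observation is that if $X_j\subseteq C(v)$, then
\[
N_G(X_j)=(C(v)\setminus X_j)\cup\{(u,i):(v,i)\in X_j\},
\]
which has exactly $n$ vertices; these lie in at most $n$ distinct other branch sets, so $X_j$ is adjacent to at most $n$ others, contradicting the requirement of $n+1$ adjacencies in a $K_{n+2}$-minor. This tight count is where the bound $n+1$ comes from; treewidth-based arguments alone give only the weaker bound of roughly $2n$.

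In the remaining case each $Y_j:=X_j\setminus C(v)$ is nonempty, and I claim $Y_1,\dots,Y_{n+2}$ form a $K_{n+2}$-minor in $G'$. They are pairwise disjoint, and each is connected: any path in $G[X_j]$ between vertices of $Y_j$ that detours through $C(v)$ must enter via some edge $(u,i)(v,i)$ and exit via some edge $(u,i')(v,i')$, and since $(u,i),(u,i')\in Y_j\cap C(u)$ lie in a common clique, the detour can be shortcut inside $Y_j$. This same ``escape'' argument shows that whenever $X_j\cap C(v)\neq\emptyset$, some vertex $(u,i^*)\in X_j$ belongs to $Y_j$. To verify that $Y_j$ and $Y_k$ are adjacent in $G'$, the easy subcase is when the $G$-witness edge between $X_j$ and $X_k$ has both endpoints outside $C(v)$; otherwise, taking escape vertices $(u,a)\in Y_j$ and $(u,b)\in Y_k$ (or, if one branch set avoids $C(v)$ entirely, the appropriate endpoint of the witness edge, which is forced to lie in $C(u)$) yields two vertices of $C(u)$ at distinct indices, distinct because $Y_j$ and $Y_k$ are disjoint, which are adjacent in $G'$ via the clique $C(u)$. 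The main obstacle is exactly this last adjacency check when the original witness edge lies inside $C(v)$; the escape-vertex construction handles all such cases uniformly.
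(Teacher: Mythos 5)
Your proof is correct, and it takes a genuinely different route from the paper's. The paper defines, for each branch set $X_i$, the subtree $T_i$ of edges of $T$ incident to a vertex of $T$ appearing in $X_i$, argues that the $T_i$ pairwise share an edge (using the fact that every edge of $\CP{T}{K_n}$ lies over a vertex or edge of $T$), and then invokes the Helly property of subtrees to extract a single edge $vw$ of $T$ contained in every $T_i$; the counting is then done on the $2n$ vertices lying over $v$ and $w$. You instead induct on $\verts{T}$ by pruning a leaf $v$, splitting on whether some branch set is confined to the fibre $C(v)$. Your Case 1 (a branch set inside $C(v)$ has only $n$ external neighbours, one too few) and Case 2 (deleting $C(v)$ from each branch set preserves connectivity and adjacency, via the ``escape vertex'' in $C(u)$, yielding a $K_{n+2}$-minor in $\CP{(T-v)}{K_n}$) are both sound; the crucial observation that detours through $C(v)$ can be shortcut inside the clique $C(u)$ is exactly what makes the deletion work. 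Your approach is more elementary in that it avoids the Helly property entirely and substitutes a leaf-pruning induction; the paper's approach is more global in that it pinpoints a single critical edge of $T$ that all branch sets must use, which makes the final count very transparent. Both give the exact constant $n+1$ rather than the factor-of-two loss that a naive treewidth bound would give, as you note.
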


\begin{proof}
  Since \CP{K_2}{K_n} is a subgraph of \CP{T}{K_n}, the lower bound
  $\eta(\CP{T}{K_n})\geq n+1$ follows from \propref{CompleteEdge}. It
  remains to prove the upper bound $\eta(\CP{T}{K_n})\leq n+1$. Let
  $X_1,\dots,X_k$ be the branch sets of a complete minor in
  \CP{T}{K_n}, where $k=\eta(\CP{T}{K_n})$. For each $i\in[k]$, let
  $T_i$ be the subtree of $T$ consisting of the edges $vw\in E(T)$
  such that $(v,j)$ or $(w,j)$ is in $X_i$ for some $j\in[n]$. Since
  $X_i$ is connected, $T_i$ is connected. Since $X_i$ and $X_j$ are
  adjacent, $T_i$ and $T_j$ share an edge in common. By the Helly
  property of trees, there is an edge $vw$ of $T$ in every subtree
  $T_i$. Let $Y$ be the set of vertices
  $\{(v,j),(w,j):j\in[n]\}$. Thus, by construction, every $X_i$
  contains a vertex in $Y$. Since $|Y|=2n$, if every $X_i$ has at
  least two vertices in $Y$, then $k\leq n$, and we are done. Now
  assume that some $X_i$ has only one vertex in $Y$. Say $(v,j)$ is
  the vertex in $X_i\cap Y$. Let $T_v$ and $T_w$ be the subtrees of
  $T$ obtained by deleting the edge $vw$, where $v\in V(T_v)$ and
  $w\in V(T_w)$. Thus $X_i$ is contained in \CP{T_v}{K_n}. Let $Z$ be
  the set of neighbours of $(v,j)$ in $Y$. That is,
  $Z=\{(v,\ell):\ell\in[n]-\{j\}\}\cup\{(w,j)\}$. Suppose on the
  contrary that some branch set $X_p$ ($p\ne i$) has no vertex in
  $Z$. Then $X_p$ is contained in \CP{T_w}{K_n} minus the vertex
  $(w,j)$. Thus $X_i$ and $X_p$ are not adjacent. This contradiction
  proves that every branch set $X_p$ ($p\ne i$) has a vertex in
  $Z$. Since $|Z|=n$, $k\leq n+1$, as desired.
\end{proof}

\thmref{TreeComplete} is generalised through the notion of treewidth.

\begin{theorem}
  \thmlabel{TreewidthComplete} For every graph $G$ and integer
  $n\geq1$,
$$\eta(\CP{G}{K_n})\leq\tw{\CP{G}{K_n}}+1\leq n(\tw{G}+1)\enspace.$$
Moreover, for all integers $k\geq2$ and $n\geq2$ there is a graph $G$
with $\tw{G}=k$
and $$\eta(\CP{G}{K_n})=\tw{\CP{G}{K_n}}+1=n(k+1)\enspace.$$
\end{theorem}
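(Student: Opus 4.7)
The plan is to establish the upper‐bound chain and then exhibit a tight example.

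The first inequality $\eta(\CP{G}{K_n})\le\tw{\CP{G}{K_n}}+1$ is immediate from \eqnref{TreePathBand} applied to $\CP{G}{K_n}$. For the second inequality $\tw{\CP{G}{K_n}}+1\le n(\tw{G}+1)$, I would start with an optimal tree decomposition $(T,\{B_x:x\in V(T)\})$ of $G$ (so each $|B_x|\le\tw{G}+1$) and lift it to a tree decomposition of $\CP{G}{K_n}$ over the same tree $T$ by defining $B'_x:=B_x\times V(K_n)$. Each lifted bag satisfies $|B'_x|=n|B_x|\le n(\tw{G}+1)$. The verification is routine: every edge of $\CP{G}{K_n}$ is either of the ``horizontal'' form $(u,x)(v,x)$ with $uv\in E(G)$ (covered by any $B'_z$ with $\{u,v\}\subseteq B_z$) or of the ``vertical'' form $(v,x)(v,y)$ with $xy\in E(K_n)$ (covered by any $B'_z$ with $v\in B_z$); and the subtree condition $\{z:(v,x)\in B'_z\}=\{z:v\in B_z\}$ holds by the corresponding property in $(T,\{B_x\})$.

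For the tight ``moreover'' claim, it suffices to exhibit, for each $k\ge2$ and $n\ge2$, a graph $G$ with $\tw{G}=k$ such that $\CP{G}{K_n}$ contains a $K_{n(k+1)}$-minor; the upper-bound chain will then pin every quantity at $n(k+1)$. My approach is to take $G$ to be a sufficiently dense graph of treewidth exactly $k$ (for instance, a $k$-tree built from many $K_{k+1}$-cliques glued along common $K_k$-faces with additional peripheral vertices) and to construct an explicit family $\{X_{i,j}:(i,j)\in[k+1]\times[n]\}$ of branch sets: each $X_{i,j}$ anchored on $(v_i,j)$ for some distinguished $K_{k+1}=\{v_1,\dots,v_{k+1}\}\subseteq G$, augmented by carefully chosen ``bridge'' vertices in other layers.

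The main obstacle is verifying pairwise adjacency among the $n(k+1)$ branch sets. Because edges of a Cartesian product are only horizontal or vertical, two branch sets $X_{i,j}$ and $X_{i',j'}$ with $i\ne i'$ and $j\ne j'$ must share some first coordinate in $G$ across the two layers $j$ and $j'$, so the anchor $K_{k+1}$ alone cannot produce the needed $\binom{n(k+1)}{2}$ adjacencies. The construction therefore must equip $G$ with enough ``peripheral'' vertices outside the anchor and distribute their $n$-layer copies among the branches so that for every cross-pair some bridge vertex is shared, while keeping every $X_{i,j}$ connected. I expect the bookkeeping of these bridges, rather than the tree-decomposition computation, to be the heart of the proof.
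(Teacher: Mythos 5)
Your proof of the upper-bound chain is correct and is essentially the paper's proof: lift an optimal tree decomposition of $G$ by replacing each bag $B_x$ with $B_x\times V(K_n)$, check the two types of product edges and the subtree condition, and invoke \eqnref{TreePathBand}. Nothing to add there.

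The tightness claim, however, has a genuine gap. You set up the right framework (a graph $G$ with a distinguished $K_{k+1}=\{v_1,\dots,v_{k+1}\}$, one branch set $X_{i,p}$ anchored at each $(v_i,p)$, and peripheral ``bridge'' vertices to create the cross-pair adjacencies), and you correctly diagnose why the bare clique is not enough in a Cartesian product. But you do not actually produce the graph $G$ or the branch sets, and you explicitly defer the essential bookkeeping; as you say yourself, that bookkeeping \emph{is} the proof. As written, the ``moreover'' part is a plan, not an argument. For comparison, the paper takes $V(G)=\{v_1,\dots,v_{k+1}\}\cup\{x_{i,j,p}:i,j\in[k+1],\ p\in[n]\}$, with $\{v_1,\dots,v_{k+1}\}$ a clique and each $x_{i,j,p}$ adjacent only to $v_i$ and $v_j$; a star-shaped tree decomposition with central bag $\{v_1,\dots,v_{k+1}\}$ and leaf bags $\{x_{i,j,p},v_i,v_j\}$ certifies $\tw{G}=k$. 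The branch set $X\langle i,p\rangle$ consists of $(v_i,p)$ together with the full $K_n$-column $\{(x_{i,j,p},q):q\in[n]\}$ for every $j\in[k+1]$. Disjointness is forced by the third index $p$ of the peripheral vertices; connectivity follows because $(v_i,p)$ is adjacent to $(x_{i,j,p},p)$; and adjacency between $X\langle i,p\rangle$ and $X\langle j,q\rangle$ is witnessed by the horizontal product edge $(x_{i,j,p},q)(v_j,q)$, using that $x_{i,j,p}$ is adjacent to $v_j$ in $G$. The crucial device is that each bridge vertex carries its \emph{entire} $K_n$-fibre inside a single branch set, so it can reach the anchor of any other branch in that branch's own layer. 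Note also that the paper's $G$ is not a $k$-tree (the $x_{i,j,p}$ have degree $2$), so the $k$-tree instance you gesture at would require its own verification that treewidth does not exceed $k$ while still supplying enough bridges; you would need to check both.
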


\begin{proof}
  First we prove the upper bound\footnote{This upper bound even holds
    for strong products.}. Let $(T,\{T_x\subseteq V(G):x\in V(T)\})$
  be a tree decomposition of $G$ with at most $\tw{G}+1$ vertices in
  each bag. Replace each bag $T_x$ by $\{(v,i):v\in T_x,i\in[n]\}$. We
  obtain a tree decomposition of \CP{G}{K_n} with at most
  $n(\tw{G}+1)$ vertices in each bag. Thus $\tw{\CP{G}{K_n}}\leq
  n(\tw{G}+1)-1$. Every graph $H$ satisfies $\eta(H)\leq\tw{H}+1$. The
  result follows.

  Now we prove the lower bound. Let $G$ be the graph with vertex
  set $$V(G):=\{v_1,\dots,v_{k+1}\}\cup\{x_{i,j,p}:i,j\in[k+1],p\in[n]\}\enspace,$$
  where $\{v_1,\dots,v_{k+1}\}$ is a clique, and each $x_{i,j,p}$ is
  adjacent to $v_i$ and $v_j$. A tree decomposition $T$ of $G$ is
  constructed as follows. Let $T_r:=\{v_1,\dots,v_{k+1}\}$, and for
  all $i,j\in[k+1]$ and $p\in[n]$, let
  $T_{i,j,p}:=\{x_{i,j,p},v_i,v_j\}$, where $T_r$ is adjacent to every
  $T_{i,j,p}$ and there are no other edges in $T$. Thus $T$ is a star
  with $n(k+1)^2$ leaves, and $(T,\{T_x:x\in V(T)\})$ is a tree
  decomposition of $G$ with at most $k+1$ vertices in each bag. Thus
  $\tw{G}\leq k$. Since $G$ contains a clique of $k+1$ vertices,
  $\tw{G}=k$.

  Now consider \CP{G}{K_n}. For $i,j\in[k+1]$ and $p\in[n]$, let
  $A\langle i,j,p\rangle$ be the subgraph of \CP{G}{K_n} induced by
  $\{(x_{i,j,p},q):q\in[n]\}$. Thus $A\langle{i,j,p}\rangle$ is a copy
  of $K_n$.  For $i\in[k+1]$ and $p\in[n]$, let $X\langle{i,p}\rangle$
  be the subgraph induced by $\cup\{A\langle i,j,p\rangle:j\in[k+1]\}$
  plus the vertex $(v_i,p)$.  We claim that the
  $X\langle{i,p}\rangle$ are the branch set of clique minor in
  \CP{G}{K_n}. First we prove that each $X\langle{i,p}\rangle$ is
  connected.  For all $j\in[k+1]$, $v_i$ is adjacent to $x_{i,j,p}$ in
  $G$. Thus $(v_i,p)$ is adjacent to $(x_{i,j,p},p)$, which is in
  $A\langle{i,j,p}\rangle\subset X\langle{i,p}\rangle$. Thus $X\langle
  i,p\rangle$ consists of $k+1$ copies of $K_n$ plus one vertex
  adjacent to each copy. In particular, $X\langle i,p\rangle$ is
  connected. Now consider distinct subgraphs $X\langle{i,p}\rangle$
  and $X\langle{j,q}\rangle$. The first coordinate of every vertex in
  $X\langle{i,p}\rangle$ is either $v_{i,p}$ or $x_{i,i',p}$ for some
  $i'\in[k+1]$. Thus $X\langle{i,p}\rangle$ and $X\langle{j,q}\rangle$
  are disjoint.  Now the vertex $x_{i,j,p}$ is adjacent to the vertex
  $v_{j,q}$ in $G$. Thus the vertex $(x_{i,j,p},q)$, which is in
  $A\langle i,j,p\rangle\subset X\langle i,p\rangle$, is adjacent to
  the vertex $(v_{j,q},q)$, which is in $X\langle j,q\rangle$. Thus
  $X\langle{i,p}\rangle$ and $X\langle{j,q}\rangle$ are
  adjacent. Hence the $X\langle{i,p}\rangle$ are the branch set of
  clique minor in \CP{G}{K_n}, and $\eta(\CP{G}{K_n})\geq n(k+1)$. We
  have equality because of the above upper bound.
\end{proof}

We have the following similar upper bound for the bandwidth of a
product.

\begin{lemma}
  \lemlabel{BandwidthProduct} For every graph $G$ and integer
  $n\geq1$,
 $$\bw{\CP{G}{K_n}}\leq n\cdot\bw{G}\enspace.$$
\end{lemma}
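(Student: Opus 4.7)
The plan is to take an optimal bandwidth ordering of $G$ and inflate it to an ordering of \CP{G}{K_n} by replacing each vertex $v$ of $G$ with the block of $n$ copies of $v$, placed consecutively. The hope is that edges within a block (corresponding to the $K_n$ factor) span at most $n-1$ positions, while edges across blocks (corresponding to the $G$ factor) span exactly $n$ times the distance they spanned in $G$.

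More precisely, let $b:=\bw{G}$, and fix a linear ordering $(v_1,v_2,\ldots,v_m)$ of $V(G)$ achieving this bandwidth, so that $|i-j|\leq b$ whenever $v_iv_j\in E(G)$. Assign to each vertex $(v_i,k)\in V(\CP{G}{K_n})$ the position $\pi(v_i,k):=(i-1)n+k$, which is a bijection onto $[mn]$ and defines a linear ordering of $V(\CP{G}{K_n})$. I will then verify the bandwidth condition by splitting edges of \CP{G}{K_n} into two types according to the product structure.

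For an edge of the form $(v_i,k)(v_i,k')$ with $kk'\in E(K_n)$ we have $|\pi(v_i,k)-\pi(v_i,k')|=|k-k'|\leq n-1$. For an edge of the form $(v_i,k)(v_j,k)$ with $v_iv_j\in E(G)$ we have $|\pi(v_i,k)-\pi(v_j,k)|=n|i-j|\leq nb$. Since $G$ is assumed to admit edges in the interesting case (otherwise the statement is vacuous after the trivial case $n=1$, where $\CP{G}{K_1}=G$), we have $b\geq 1$, and hence $n-1<n\leq nb$; thus every edge of $\CP{G}{K_n}$ is placed within $nb$ positions, giving $\bw{\CP{G}{K_n}}\leq nb=n\cdot\bw{G}$.

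There is no real obstacle here: the only subtlety is the bookkeeping that the ``block of size $n$'' ordering really does dominate both edge types simultaneously, which is immediate from the displayed inequalities above. No earlier results from the paper are needed beyond the definition of bandwidth.
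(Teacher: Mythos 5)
Your proof is essentially identical to the paper's: the paper uses the same block ordering $(v_1,w_1),\dots,(v_1,w_n);\,\dots;\,(v_p,w_1),\dots,(v_p,w_n)$ and the same two-case edge-length computation, concluding with the same parenthetical observation that $n\cdot\bw{G}\geq n-1$. The only minor difference is that you explicitly flag the need for $\bw{G}\geq 1$ (equivalently, $G$ has an edge), which the paper glosses over; note, though, that when $G$ is edgeless and $n\geq 2$ the inequality as stated is actually false rather than vacuous, so the lemma does implicitly assume $G$ has an edge.
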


\begin{proof}
  Say $V(K_n)=\{w_1,\dots,w_n\}$.  Let $(v_1,\dots,v_p)$ be a vertex
  ordering of $G$, such that $\max\{|i-j|:v_iv_j\in
  E(G)\}=\bw{G}$. Order the vertices of \CP{G}{K_n},
$$(v_1,w_1),\dots,(v_1,w_n);
(v_2,w_1),\dots,(v_2,w_n);\,\ldots;\,
(v_p,w_1),\dots,(v_p,w_n)\enspace.$$ In this ordering, an edge
$(v_i,w_j)(v_i,w_{\ell})$ of \CP{G}{K_n} has length at most $n-1$, and
an edge $(v_i,w_\ell)(v_j,w_\ell)$ of \CP{G}{K_n} has length
$n\cdot|i-j|\leq n\cdot\bw{G}$ (since $v_iv_j\in E(G)$). Thus
$\bw{\CP{G}{K_n}}\leq n\cdot\bw{G}$ (since $n\cdot\bw{G}\geq n-1$).
\end{proof}

We now set out to prove a lower bound on $\eta(\CP{G}{K_n})$ in terms
of the treewidth of $G$. We start by considering the case $n=2$, which
is of particular importance in \secref{HadwigerConjecture}
below. \citet{RS-GraphMinorsV-JCTB86} proved that every graph with
large treewidth has a large grid minor. The following explicit bound
was obtained by \citet{DJGT-JCTB99}; also see \citep[Theorem
12.4.4]{Diestel00}.

\begin{lemma}[\citep{DJGT-JCTB99}]
  \lemlabel{BigGrid} For all integers $k,m\geq1$ every graph with
  tree-width at least $k^{4m^2(k+2)}$ contains $P_k\square P_k$ or
  $K_m$ as a minor. In particular, every graph with tree-width at
  least $k^{4k^4(k+2)}$ contains a $P_k\square P_k$-minor.
\end{lemma}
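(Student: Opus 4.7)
The plan is to prove this via the \emph{bramble duality theorem} of Seymour and Thomas: a graph has treewidth at least $t$ if and only if it has a bramble of order at least $t+1$, where a bramble is a family $\mathcal{B}$ of pairwise touching connected subgraphs, and its order is the minimum size of a set of vertices meeting every member of $\mathcal{B}$. The strategy is then to extract from a bramble of very large order either many pairwise adjacent connected subgraphs (a $K_m$-minor) or a crossing pattern of long paths (a $P_k\CartProd P_k$-minor).

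First I would translate the treewidth hypothesis into a bramble $\mathcal{B}$ of order $N:=k^{4m^2(k+2)}$ inside $G$. Next, I would pick a small ``central'' collection of $k$ bramble elements and, using Menger's theorem together with the fact that no set of fewer than $N$ vertices hits all of $\mathcal{B}$, construct $k$ pairwise vertex-disjoint connected subgraphs $R_1,\dots,R_k$ that each meet many other bramble elements. These play the role of the horizontal ``rows''. The point is that the high order of $\mathcal{B}$ forces these rows to be long and to each see a huge supply of bramble elements still available.

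The main step is to extract the vertical lines. For each pair $(R_i,R_j)$ I would look for $k$ internally disjoint paths that visit $R_1,\dots,R_k$ in order; by iterated application of Menger's theorem to successive ``layers'' of the bramble one builds up $k$ such paths $C_1,\dots,C_k$. After contractions this gives a subdivision, and hence a minor, of $P_k\CartProd P_k$. The alternative, in which this extraction fails, is precisely that at some layer one finds $m$ pairwise adjacent large connected pieces: contracting each of them yields a $K_m$-minor. The double exponential bound $k^{4m^2(k+2)}$ is tuned so that at each of the $k+2$ inductive layers one can afford to lose a factor of $k^{4m^2}$ while rerouting.

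The main obstacle is the bookkeeping in the inductive extraction of vertical paths: at each stage one must guarantee that enough of $\mathcal{B}$ survives to continue, while simultaneously ensuring that the new path meets every already-constructed row in the correct cyclic order. Forcing this ``correct order'' is what requires either a Ramsey-type pigeonhole on the incidence pattern between rows and candidate paths, or an explicit $K_m$-minor when the pigeonhole fails; balancing these two alternatives is what produces the factor $m^2$ in the exponent and is the genuinely delicate part of the DJGT argument. The special case ``$P_k\CartProd P_k$ alone'' then follows by setting $m:=k^2$, since $P_k\CartProd P_k$ is a subgraph of $K_{k^2}$, so a $K_{k^2}$-minor contains a $P_k\CartProd P_k$-minor.
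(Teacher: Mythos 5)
This lemma is not proved in the paper; it is quoted from Diestel, Jensen, Gorbunov and Thomassen \citep{DJGT-JCTB99} (see also Theorem~12.4.4 of \citep{Diestel00}). The only derivation the paper actually needs to supply is the passage from the first sentence to the ``in particular'' clause, and your handling of that step is exactly right: taking $m:=k^2$, the hypothesis becomes treewidth at least $k^{4k^4(k+2)}$ and the conclusion is a $\CP{P_k}{P_k}$-minor or a $K_{k^2}$-minor, and since $\CP{P_k}{P_k}$ has $k^2$ vertices it is a subgraph, and hence a minor, of $K_{k^2}$, so in either case a $\CP{P_k}{P_k}$-minor exists.

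For the main statement, what you have written is a road map rather than a proof, and several of its central steps are themselves the whole difficulty. Assertions such as ``by iterated application of Menger's theorem to successive layers of the bramble one builds up $k$ such paths'' and ``the high order of $\mathcal{B}$ forces these rows to be long'' are promises, not arguments; nothing in the sketch explains why the Menger applications can be iterated without the path systems interfering with each other, nor why the pigeonhole at a failed layer must produce $m$ \emph{pairwise adjacent} connected pieces rather than merely $m$ large pieces. It is also not quite the route that \citet{DJGT-JCTB99} take: rather than working with a bramble directly, they first convert large treewidth into a large \emph{externally $\ell$-connected} vertex set (a set $X$ such that any two equal-sized subsets of $X$ of size at most $\ell$ are linked by that many disjoint paths with no internal vertex in $X$), and the row/column extraction and the $K_m$ alternative are carried out against such a set, not against the bramble. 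Your sketch captures the dichotomy (grid-or-clique) and the role of Menger, but as a stand-alone proof it has genuine gaps; as a high-level description of where the hard work lies, it is fair. Since the paper treats the lemma as a black box, there is no paper proof to match it against beyond the $m=k^2$ substitution.
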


In what follows all logarithms are binary.

\begin{lemma}
  \lemlabel{LowerBoundGraphEdge} For every graph $G$ with at least one
  edge,
$$\eta(\CP{G}{K_2})>(\tfrac14\log\tw{G})^{1/4}\enspace.$$
\end{lemma}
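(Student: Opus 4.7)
The plan is to combine \lemref{BigGrid} with the exact value $\eta(\CP{K_k}{K_2})=k+1$ from \propref{CompleteEdge} and the lower bound $\eta(\TCP{P_k}{P_k}{K_2})\geq k+2$ from \eqnref{DoubleGrid}. Write $t:=\tw{G}$. The small-$t$ case is immediate: if $t<2^{64}$ then $(\tfrac14\log t)^{1/4}<2$, and since $G$ has an edge, $\CP{G}{K_2}$ contains $\CP{K_2}{K_2}=C_4$, so $\eta(\CP{G}{K_2})\geq 3>(\tfrac14\log t)^{1/4}$.

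For the main case $t\geq 2^{64}$, I would let $k\geq 2$ be maximal with $k^{4k^2(k+2)}\leq t$. Applying \lemref{BigGrid} with $m=k$ gives that $G$ has either a $\CP{P_k}{P_k}$-minor or a $K_k$-minor. Since the minor relation lifts through the Cartesian product with $K_2$, $\CP{G}{K_2}$ then contains either a $\TCP{P_k}{P_k}{K_2}$-minor (with Hadwiger number at least $k+2$ by \eqnref{DoubleGrid}) or a $\CP{K_k}{K_2}$-minor (with Hadwiger number exactly $k+1$ by \propref{CompleteEdge}). Either way, $\eta(\CP{G}{K_2})\geq k+1$.

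It remains to show $k+1>(\tfrac14\log t)^{1/4}$. The maximality of $k$ gives $\log t<4(k+1)^2(k+3)\log(k+1)$. The elementary arithmetic inequality $(k+3)\log(k+1)\leq(k+1)^2$ for $k\geq 2$ then upgrades this to $\log t<4(k+1)^4$, which is equivalent to $(\tfrac14\log t)^{1/4}<k+1$. The arithmetic bound is verified at $k=2$ directly ($5\log 3<9$) and for $k\geq 3$ by combining $\log(k+1)\leq(k+1)/2$ with $k+3\leq 2(k+1)$.

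The only obstacle is the closing arithmetic estimate, which is routine. The main conceptual point is to apply \lemref{BigGrid} with $m=k$ rather than $m=k^2$ (the ``in particular'' clause used to force a pure grid minor): this drops the treewidth-threshold exponent from $\Theta(k^5)$ down to $\Theta(k^3)$, which is precisely what makes the modest $1/4$ exponent in the conclusion attainable.
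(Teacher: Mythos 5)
Your proof is correct and takes essentially the same route as the paper's: apply \lemref{BigGrid} with grid-size parameter $k$ and $m$ close to $k$, convert the resulting $\CP{P_k}{P_k}$- or $K_m$-minor of $G$ into a clique minor of $\CP{G}{K_2}$ via \eqnref{DoubleGrid} and \propref{CompleteEdge}, and close with an estimate of the form $\log\tw{G}<4(k+1)^4$. The only substantive difference is bookkeeping: you take $m=k$ and choose $k$ to be the largest integer with $k^{4k^2(k+2)}\leq\tw{G}$, whereas the paper takes $m=k+1$ and sets $k=\floor{\ell}$ where $\ell$ is the real solution of $\tw{G}=\ell^{4(\ell+1)^3}$. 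Your version is in fact the more careful of the two: the paper's derived bound $\tw{G}\geq k^{4(k+1)^3}$ does not quite clear the threshold $k^{4(k+1)^2(k+2)}$ that \lemref{BigGrid} requires when $m=k+1$ (since $(k+1)^3<(k+1)^2(k+2)$ for $k\geq2$), whereas your choice of $k$ meets the $m=k$ threshold exactly, and your closing arithmetic is verified cleanly. One small note on your final paragraph: the paper does not in fact invoke the ``in particular'' clause with $m=k^2$ either; it also takes $m=\Theta(k)$, so you and the paper agree on that design point.
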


\begin{proof}
  Let $\ell$ be the real-valued solution to
  $\tw{G}=\ell^{4(\ell+1)^3}$. Thus $\ell\geq1$, and
$$\log\tw{G}=4(\ell+1)^3(\log\ell)< 4(\ell+1)^4\enspace.$$
That is, $(\tfrac{1}{4}\log\tw{G})^{1/4}<\ell+1$.  Let
$k:=\floor{\ell}$. Thus $k\geq1$ and $\tw{G}\geq k^{4(k+1)^3}$.  Hence
\lemref{BigGrid} is applicable with $m=k+1$.  Thus $G$ contains
$P_k\square P_k$ or $K_{k+1}$ as a minor.  If $G$ contains a
$P_k\square P_k$-minor, then \CP{G}{K_2} contains a $K_{k+2}$-minor by
\eqnref{DoubleGrid}.  Otherwise $G$ contains a $K_{k+1}$ minor, and by
\propref{CompleteEdge}, \CP{G}{K_2} contains a $K_{k+2}$-minor.  In
both cases
$$\eta(\CP{G}{K_2})
\geq k+2 >\ell+1 >(\tfrac{1}{4}\log\tw{G})^{1/4}\enspace,$$ as
desired.
\end{proof}

\lemref{LowerBoundGraphEdge} and \thmref{TreewidthComplete} imply that
$\eta(\CP{G}{K_2})$ is tied to the treewidth of $G$. In particular,
\begin{equation}
  \eqnlabel{TreewdithTied}
  (\tfrac14\log\tw{G})^{1/4}< \eta(\CP{G}{K_2})\leq 2\,\tw{G}+2\enspace.
\end{equation}
This result is similar to a theorem by \citet{BM69}, who proved that
\CP{G}{K_2} is planar if and only if $G$ is outerplanar.
\Eqnref{TreewdithTied} says that \CP{G}{K_2} has bounded $\eta$ if and
only if $G$ has bounded treewidth.

We now extend \lemref{LowerBoundGraphEdge} for general complete
graphs.


\begin{lemma}
  \lemlabel{LowerBoundGraphComplete} For every graph $G$ with at least
  one edge and every integer $n\geq1$,
$$\eta(\CP{G}{K_n})>\floor{\tfrac{n}{2}}\big(\tfrac{1}{16}\log\tw{G}\big)^{1/6}\enspace.$$
\end{lemma}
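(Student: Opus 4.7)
Write $c:=(\tfrac{1}{16}\log\tw{G})^{1/6}$. The goal is a clique minor in \CP{G}{K_n} of order exceeding $\floor{n/2}\cdot c$, so the plan is to find one of order $\floor{n/2}\cdot k$ for a suitable integer $k>c$.

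First I would choose $k$ to be a small integer slightly exceeding $c$ (for concreteness the least integer with $k\geq c+1$) and verify the hypothesis $\tw{G}\geq k^{4k^4(k+2)}$ of \lemref{BigGrid} with $m:=k^2$. This reduces to $4k^4(k+2)\log k\leq 16c^6=\log\tw{G}$, which holds provided $c$ is above some absolute constant; the opposite regime is absorbed by the trivial estimate $\eta(\CP{G}{K_n})\geq\eta(\CP{K_2}{K_n})=n+1$ from \propref{CompleteEdge}, which already beats $\floor{n/2}\cdot c$ when $c$ is bounded.

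Applying \lemref{BigGrid} then leaves two cases. If $G$ has a $K_{k^2}$-minor, then $\CP{G}{K_n}\supseteq\CP{K_{k^2}}{K_n}$ (as a minor) and \thmref{CompleteProduct} with $d=2$ gives
\begin{align*}
\eta(\CP{K_{k^2}}{K_n})\;\geq\;\FLOOR{\tfrac{\max\{k^2,n\}}{2}}\FLOOR{\sqrt{\min\{k^2,n\}}}.
\end{align*}
A short case split on whether $n\geq k^2$ or $n<k^2$ (using $k\geq c+1$, hence $k(k-c)>1$) shows that the right-hand side exceeds $\floor{n/2}\cdot c$.

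The heart of the plan is the grid-minor case, where \CP{G}{K_n} contains \TCP{P_k}{P_k}{K_n} as a minor and I plan to build a $K_{\floor{n/2}\cdot k}$-minor explicitly, mirroring the construction in \thmref{CompleteProduct}. Index the vertices as triples $(a,b,r)\in[k]\times[k]\times[n]$, and for each $r\in[\floor{n/2}]$ and $j\in[k]$ put
\begin{align*}
A\langle r,j\rangle &:= \{(a,j,2r)\,:\,a\in[k]\}, \\
B\langle r,j\rangle &:= \{(j,b,2r-1)\,:\,b\in[k]\}, \\
X\langle r,j\rangle &:= A\langle r,j\rangle\cup B\langle r,j\rangle.
\end{align*}
Each of $A\langle r,j\rangle$ and $B\langle r,j\rangle$ induces a path in the grid factor, and the $K_n$-edge $(j,j,2r)(j,j,2r-1)$ joins them, so $X\langle r,j\rangle$ is connected. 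Distinct $X\langle r,j\rangle$ and $X\langle r',j'\rangle$ are disjoint: the parity of the third coordinate separates $A$-parts from $B$-parts, while within either type the pair consisting of the second and third coordinates uniquely encodes $(r,j)$. Pairwise adjacency is witnessed by the $K_n$-edge $(j',j,2r)(j',j,2r'-1)$, whose endpoints lie in $A\langle r,j\rangle$ and $B\langle r',j'\rangle$ respectively. This yields $\eta(\CP{G}{K_n})\geq\floor{n/2}\cdot k>\floor{n/2}\cdot c$.

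The main obstacle is the parameter balance in the first step: $k$ must be chosen just above $c$ so that \lemref{BigGrid} applies with the relatively large value $m=k^2$, and simultaneously both cases of the dichotomy must deliver a clique minor of order at least $\floor{n/2}\cdot k$. The explicit construction in the grid case is elementary once the indexing is arranged, but it requires careful verification of connectivity, disjointness, and pairwise adjacency of the branch sets; this is what justifies the exponent $k^2$ (rather than $k+1$ as in \lemref{LowerBoundGraphEdge}) in the application of \lemref{BigGrid}.
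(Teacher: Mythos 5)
Your outline matches the paper's: invoke \lemref{BigGrid} to extract a $P_k\square P_k$-minor from $G$, then build a clique minor of order $\Theta(nk)$ inside $\TCP{P_k}{P_k}{K_n}$. Your explicit construction of the $K_{k\lfloor n/2\rfloor}$-minor is correct, and is a simplified form of the paper's \lemref{PathPathComplete} (which gets $(k+1)\lfloor n/2\rfloor$ by splitting one of your branch sets into two per layer-pair). The case split on $P_k\square P_k$ versus $K_{k^2}$ is unnecessary, since $P_k\square P_k\subseteq K_{k^2}$ --- this is precisely why the ``In particular'' clause of \lemref{BigGrid} already states the grid-minor conclusion for $m=k^2$ --- and your alternate route via \thmref{CompleteProduct} only gives a non-strict inequality at the boundary (e.g.\ $k=11$, $c=10$, $n=120$ yields $600\not>600$).

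The substantive gap is in the parameter balance, which you correctly flagged as the main obstacle but did not resolve. Because your construction yields only $k\lfloor n/2\rfloor$, you need $k>c$ and so take $k:=\lceil c+1\rceil$; but then the hypothesis $\tw{G}\geq k^{4k^4(k+2)}$ of \lemref{BigGrid} is only met once $c$ exceeds roughly $5$, whereas the trivial fallback $\eta(\CP{G}{K_n})\geq n+1$ dominates $\lfloor n/2\rfloor c$ only for $c\leq 2$ (or very small $n$). These two regimes do not overlap. Concretely, if $\tw{G}=2^{11664}$ then $c=729^{1/6}=3$ exactly; your $k=4$ requires $\tw{G}\geq 4^{4\cdot 256\cdot 6}=2^{12288}$, which fails, and for $n=4$ the trivial bound gives $5<\lfloor n/2\rfloor c=6$. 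The paper avoids this trap by (i) obtaining the sharper $(k+1)\lfloor n/2\rfloor$ from \lemref{PathPathComplete} and (ii) defining $\ell\geq 1$ implicitly as the solution of $\tw{G}=\ell^{4\ell^4(\ell+2)}$ and taking $k:=\lfloor\ell\rfloor$, so that $\tw{G}\geq k^{4k^4(k+2)}$ holds automatically and $k+1>\ell\geq(\tfrac{1}{16}\log\tw{G})^{1/6}$ closes the estimate. To repair your proof you need both ingredients: the extra branch set to recover the $+1$, and the implicit (rather than the aggressive ceiling) choice of $k$.
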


\begin{proof}
  Let $\ell$ be the real-valued solution to
  $\tw{G}=\ell^{4\ell^4(\ell+2)}$. Thus $\ell\geq 1$.
  Thus $$\log\tw{G}=4\ell^4(\ell+2)(\log \ell)\leq 12\ell^6\enspace.$$
  That is, $(\tfrac{1}{16}\log\tw{G})^{1/6}\leq \ell$.  Let
  $k:=\floor{\ell}$.  Thus $\ell\geq k\geq1$ and $\tw{G}\geq
  k^{4k^4(k+2)}$.  By \lemref{BigGrid}, $G$ contains a $P_k\square
  P_k$-minor.  By \lemref{PathPathComplete} below,
$$\eta(\CP{G}{K_n})
\geq\floor{\tfrac{n}{2}}(k+1) >\floor{\tfrac{n}{2}}\ell
\geq\floor{\tfrac{n}{2}}\big(\tfrac{1}{16}\log\tw{G}\big)^{1/6}\enspace,$$
as desired.
\end{proof}

\lemref{LowerBoundGraphComplete} and \thmref{TreewidthComplete} imply
that $\eta(\CP{G}{K_n})/n$ is tied to the treewidth of $G$. In
particular,
$$\big(\tfrac{1}{16}\log\tw{G}\big)^{1/6} 
< \frac{\eta(\CP{G}{K_n})}{n} \leq \tw{G}+1\enspace.$$

It remains to prove \lemref{PathPathComplete}.

\begin{lemma}
  \lemlabel{PathPathComplete} For all integers $n\geq1$ and $k\geq1$
$$(k+1)\floor{\tfrac{n}{2}}\leq 
\eta(\TCP{P_k}{P_k}{K_n}) <k(n+\half)+3\enspace.$$
\end{lemma}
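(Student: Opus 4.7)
The plan is to prove the upper and lower bounds separately; the lower bound is the main content.

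For the upper bound I would apply \lemref{UpperBound} to $G := \TCP{P_k}{P_k}{K_n}$. Since average degree is additive under the Cartesian product, $G$ has $\verts{G} = k^2 n$ and average degree $\delta = 2(2 - \tfrac{2}{k}) + (n-1) = n + 3 - \tfrac{4}{k}$. A short calculation gives $(\delta-2)\verts{G} = k^2 n(n+1) - 4kn < k^2 (n + \tfrac{1}{2})^2$ (using $n(n+1) < (n + \tfrac{1}{2})^2$), and hence $\eta(G) \leq \sqrt{(\delta-2)\verts{G}} + 3 < k(n + \tfrac{1}{2}) + 3$.

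For the lower bound I set $p := \floor{n/2}$, pair the vertices of $K_n$ as $\{2j-1, 2j\}$ for $j \in [p]$, and construct $(k+1)p$ branch sets by adapting the double-grid construction from \eqnref{DoubleGrid} inside each pair. Within pair $j$ I use three kinds of branch sets: for each $i \in [k-1]$, the L-shape $X_{j,i} := \{(x,i,2j-1) : x \in [k]\} \cup \{(i,y,2j) : y \in [k]\}$, which is connected by the $K_n$-edge joining $(i,i,2j-1)$ and $(i,i,2j)$; the row $\A{X_{j,k}} := \{(x,k,2j-1) : x \in [k]\}$ in copy $2j-1$; and the column $\B{X_{j,k}} := \{(k,y,2j) : y \in [k]\}$ in copy $2j$. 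These contribute $(k-1)+2 = k+1$ branch sets per pair, for a total of $(k+1)p$. Each branch set is connected, and disjointness (within and across pairs) follows from a routine coordinate inspection.

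The main step is verifying pairwise adjacency of all $(k+1)p$ branch sets; in every case the adjacency will be realised by a single $K_n$-edge, i.e. by a pair of vertices agreeing in the first two coordinates but differing in the third. Specifically, allowing $j = j'$ throughout: two L-shapes $X_{j,i}$ and $X_{j',i'}$ are adjacent via the $K_n$-edge at $(i',i,\cdot)$, since $(i',i,2j-1) \in X_{j,i}$ and $(i',i,2j') \in X_{j',i'}$; an L-shape $X_{j,i}$ is adjacent to $\A{X_{j',k}}$ and $\B{X_{j',k}}$ via the edges at $(i,k,\cdot)$ and $(k,i,\cdot)$ respectively; the opposite corner sets $\A{X_{j,k}}$ and $\B{X_{j',k}}$ meet at $(k,k,\cdot)$; and two corner sets of the same type share a $K_n$-edge at any $(x,k,\cdot)$ or $(k,y,\cdot)$. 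The case analysis is systematic but must cover every combination of the three branch-set types; once it is complete, the $(k+1)p$ subgraphs form the branch sets of the required $K_{(k+1)\floor{n/2}}$-minor.
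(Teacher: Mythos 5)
Your proof is correct and follows essentially the same approach as the paper: the upper bound via \lemref{UpperBound}, and the lower bound by pairing the $n$ copies of $\CP{P_k}{P_k}$, using $k-1$ L-shaped branch sets per pair plus two single-strip branch sets (you split the L-shape at index $k$; the paper splits it at index $1$, an inessential relabelling). One small remark: your exact average-degree $\delta=n+3-\tfrac{4}{k}$ can dip below $2$ for tiny $k,n$ (e.g.\ $k=1$, $n\leq 2$), outside the stated hypothesis of \lemref{UpperBound}; the paper sidesteps this by simply using the maximum degree $n+3$ as the bound on $\delta$, which you may as well do too since the resulting inequality is the same.
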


\begin{proof}
  Since $\TCP{P_k}{P_k}{K_n}$ has $k^2n$ vertices and maximum degree
  $n+3$, \lemref{UpperBound} implies the upper bound,
  $$\eta(\TCP{P_k}{P_k}{K_n})\leq\sqrt{(n+1)k^2n}+3<k(n+\half)+3\enspace.$$

  Now we prove the lower bound. Let $p:=\floor{\frac{n}{2}}$.  Each
  vertex is described by a triple $(x,y,r)$ where $x,y\in[k]$ and
  $r\in[n]$. Distinct vertices $(x,y,r)$ and $(x',y',r')$ are adjacent
  if and only if $x=x'$ and $y=y'$, or $x=x'$ and $|y-y'|=1$ and
  $r=r'$, or $y=y'$ and $|x-x'|=1$ and $r=r'$.

  For $r\in[p]$, let $T_{0,r}$ be the subgraph induced by
  $\{(1,y,2r-1):y\in[k]\}$, and let $T_{1,r}$ be the subgraph induced
  by $\{(x,1,2r):x\in[k]\}$. For $i\in[2,k]$ and $r\in[p]$, let
  $T_{i,r}$ be the subgraph of $\TCP{P_k}{P_k}{K_n}$ induced
  by $$\{(i,y,2r-1):y\in[k]\}\cup\{(x,i,2r):x\in[k]\}\enspace.$$

  For all $r\in[p]$, both $T_{0,r}$ and $T_{1,r}$ are paths, and for
  $i\in[2,k]$, $T_{i,r}$ consists of two adjacent paths. In
  particular, each $T_{i,r}$ is connected. Observe that each pair of
  distinct subgraphs $T_{i,r}$ and $T_{j,s}$ are disjoint.

  There is an edge from $(1,1,2r-1)$ in $T_{0,r}$ to $(1,1,2s)$ in
  $T_{1,s}$.  For all $i\in[2,k]$, there is an edge from $(1,i,2r)$ in
  $T_{i,r}$ to $(1,i,2s-1)$ in $T_{0,s}$, there is an edge from
  $(i,1,2r-1)$ in $T_{i,r}$ to $(i,1,2s)$ in $T_{1,s}$, and for all
  $i,j\in[2,k]$, there is an edge from $(j,i,2r)$ in $T_{i,r}$ to
  $(j,i,2s-1)$ in $T_{j,s}$.

  Hence the $T_{i,j}$ are the branch sets of a
  $K_{(k+1)m}$-minor. Therefore
  \parbox{\textwidth-10mm}{$$\eta(\TCP{P_k}{P_k}{K_n})\geq(k+1)p=(k+1)\FLOOR{\tfrac{n}{2}}\enspace.$$}
\end{proof}

\section{Rough Structural Characterisation Theorem} \seclabel{RoughGraphs}

In this section we give a rough structural characterisation of pairs
of graphs whose product has bounded Hadwiger number. The proof is
based heavily on the corresponding result for trees in
\secref{TwoTrees}. Thus our first task is to extend a number of
definitions for trees to general graphs.

For a connected graph $G$, let \bal{G} be the maximum order of a
balance subgraph in $G$. A path $P$ in $G$ is \emph{semi-clean} if
every internal vertex of $P$ has degree $2$ in $G$. Let $p'(G)$ be the
maximum number of vertices in a semi-clean path in $G$. A path $P$ is
\emph{clean} if it is semi-clean, and every edge of $P$ is a cut in
$G$. Let $p(G)$ be the maximum number of vertices in a clean path in
$G$. Note that $p(G)\geq1$ since a single vertex is a clean path. In
fact, if $G$ is $2$-connected, then the only clean paths are single
vertices, and $p(G)=1$. On the other hand, since every edge in a tree
$T$ is a cut, our two definitions of a clean path are equivalent for
trees, and $p'(T)=p(T)$.

The \emph{hangover} of a connected graph $G$, denoted by \hang{G}, is
defined as follows. If $G$ is a path or a cycle then
$\hang{G}:=0$. Otherwise, \hang{G} is the minimum, taken over all
clean paths $P$ in $G$, of the maximum number of vertices in a
component of $G-E(P)$. First note the following trivial relationship
between \hang{G} and $p(G)$:
\begin{equation}
  \eqnlabel{HangTrivial}
  \half(\verts{G}-p(G)+2)\leq \hang{G}\leq\verts{G}-p(G)+1.
\end{equation}
To prove a relationship between \bal{G} and \hang{G} below, we reduce
the proof to the case of trees using the following lemma.

\begin{lemma}
  \lemlabel{GoodTree} Every connected graph $G$ has a spanning tree
  $T$ such that $$p(T)\leq p'(G)+6\enspace.$$
\end{lemma}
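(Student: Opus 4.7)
The plan is to construct $T$ by a contraction-expansion technique. Let $V' := \{v \in V(G) : \deg_G(v) \neq 2\}$ and let $G^*$ be the multigraph on $V'$ obtained from $G$ by replacing each \emph{branch} (a maximal path of degree-$2$ $G$-vertices between two vertices of $V'$) with a single edge weighted by the number of vertices in the branch. The case $V' = \emptyset$ is trivial: $G$ is then a cycle and any spanning tree $T$ is a Hamiltonian path with $p(T) = |V(G)| = p'(G)$; so assume $V' \neq \emptyset$.

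The next step is to pick $T^*$ as a minimum-weight spanning tree of $G^*$ (preferring trees with few internal degree-$2$ vertices), then expand $T^*$ to a spanning tree $T$ of $G$ by including every tree branch entirely and, for each non-tree branch $B$ of length $k$, removing one edge: an interior edge of $B$ (chosen to balance the resulting two pendant paths to at most $\lceil (k+1)/2 \rceil$ vertices each) when $k \geq 3$, or one edge by a suitable tiebreak when $k \in \{1,2\}$. Two key features emerge: removing an interior edge of a non-tree branch preserves the $T$-degrees of the branch's $V'$-endpoints, and the MST cycle property forces every non-tree branch to be the heaviest edge of its fundamental cycle in $T^*$.

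To bound $p(T)$, let $P$ be a clean path in $T$ and decompose $P$ at its internal $V'$-vertices of $T$-degree $2$, which I call \emph{type-B}. A type-B vertex $w$ satisfies $\deg_{T^*}(w) \leq 2$, and since no non-tree branch at $w$ of length $\geq 3$ can contribute (else $\deg_T(w) \geq 3$), every non-tree branch at $w$ has weight at most $2$; the MST cycle property then forces at least one of the two $P$-adjacent tree branches at $w$ to have weight at most $2$, i.e., at most $3$ vertices. Combined with a sub-lemma that $G^*$---having no degree-$2$ vertices---admits a spanning tree $T^*$ with no two adjacent degree-$2$ vertices in $T^*$ (so that $p(T^*) \leq 3$), this limits $P$ to at most one internal type-B vertex, whence $|V(P)| \leq p'(G) + 3 - 1 = p'(G) + 2 \leq p'(G) + 6$.

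The principal obstacle is establishing the sub-lemma above. I expect it can be proved by a local swap argument: two adjacent degree-$2$ $T^*$-vertices both have $G^*$-degree $\geq 3$, so a non-tree edge is available at one of them; swapping it in and removing a suitable edge of the created cycle should strictly decrease the number of adjacent degree-$2$ pairs in $T^*$. A secondary issue arises when a type-B vertex $w$ on $P$ is a leaf of $T^*$ whose one contributing non-tree branch is kept in $T$; then $P$ may continue into a partial-branch segment of up to $\lceil p'(G)/2 \rceil$ vertices. This case is handled by a refined tiebreak in the expansion---for instance, always removing the $w$-edge of a non-tree branch when $w$ is a leaf of $T^*$, forcing $w$ to be a leaf of $T$ as well and hence non-internal on any clean path. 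The slack $+6$ in the stated bound comfortably absorbs such boundary adjustments.
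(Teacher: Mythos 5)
Your approach is genuinely different from the paper's. The paper picks a spanning tree $T$ of $G$ directly, chosen to maximise the number of leaves and, as a tiebreak, the number of leaf-neighbours, and then shows by three short edge swaps that if $(v_1,\dots,v_k)$ is a clean $T$-path with $k\geq 8$ then $\deg_G(v_i)=2$ for all $i\in[4,k-3]$, so that $(v_4,\dots,v_{k-3})$ is already semi-clean in $G$. Your route through the branch-contracted multigraph $G^*$ and a minimum spanning tree $T^*$ is an interesting alternative, but it has a structural gap that the $+6$ slack does not absorb.

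The gap is that your argument needs $T^*$ to be \emph{simultaneously} a minimum-weight spanning tree of $G^*$ (you invoke the MST cycle property to force one of the two tree branches flanking a type-B vertex to have small weight) \emph{and} a spanning tree with no two adjacent degree-$2$ vertices (so that a clean $T$-path contains at most one internal type-B vertex). These two requirements conflict. Among MSTs, your tiebreak ``prefer few internal degree-$2$ vertices'' need not eliminate adjacent degree-$2$ pairs: if, say, $G^*$ is a cycle with heavy chords and light cycle edges, its unique MST is a Hamiltonian path, which is saturated with adjacent degree-$2$ vertices. And the local swap you sketch to prove the sub-lemma --- bring in a non-tree edge at one of two adjacent degree-$2$ vertices --- replaces a tree edge by a possibly heavier non-tree edge and so destroys minimality; since the cycle property \emph{characterises} minimum spanning trees, you also cannot fall back on a merely ``locally minimal'' tree. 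Until this tension is resolved, a clean $T$-path can contain two or more consecutive type-B vertices with all flanking tree branches long, and the bound $|V(P)|\leq p'(G)+O(1)$ does not follow from your argument. (There are further unexamined corner cases --- self-loops and parallel edges of $G^*$ at a type-B vertex, for which the fundamental-cycle argument is vacuous, and the compatibility of the ``remove an interior edge'' rule with the ``make $T^*$-leaves into $T$-leaves'' rule --- but the MST/degree-condition conflict is the principal obstacle.)
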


\begin{proof}
  Define a \emph{leaf-neighbour} in a tree to be a vertex of degree
  $2$ that is adjacent to a leaf (a vertex of degree $1$).

  Choose a spanning tree $T$ of $G$ that firstly maximises the number
  of leaves in $T$, and secondly maximises the number of
  leaf-neighbours in $T$.

  If $p(T)\leq 7$ then the claim is vacuous since $p'(G)\geq1$. Now
  assume that $p(T)\geq8$. Let $(v_1,\dots,v_k)$ be a clean path in
  $T$ with $k=p(T)$. Below we prove that $\deg_G(v_i)=2$ for each
  $i\in[4,k-3]$. This shows that the path $(v_4,\dots,v_{k-3})$ is
  semi-clean in $G$, implying $p'(G)\geq p(T)-6$, as desired.

  Suppose on the contrary that $\deg_G(v_i)\geq 3$ for some
  $i\in[4,k-3]$.  Let $w$ be a neighbour of $v_i$ in $G$ besides
  $v_{i-1}$ and $v_{i+1}$. Without loss of generality, the path
  between $v_i$ and $w$ in $T$ includes $v_{i+1}$.

  \textbf{Case 1.} $\deg_T(w)\geq2$: Let $T'$ be the spanning tree of
  $G$ obtained from $T$ by deleting the edge $v_iv_{i+1}$ and adding
  the edge $v_iw$.  Now $\deg_T(v_{i+1})=2$ (since $i+1\leq
  k-1$). Thus $v_{i+1}$ becomes a leaf in $T'$.  Since
  $\deg_{T}(v_i)=\deg_{T'}(v_i)=2$, $v_i$ is a leaf in neither $T$ nor
  $T'$.  Since $\deg_T(w)\geq2$ and $\deg_{T'}(w)\geq3$, $w$ is a leaf
  in neither $T$ nor $T'$.  The degree of every other vertex is
  unchanged.  Hence $T'$ has one more leaf than $T$.  This contradicts
  the choice of $T$.

  Now assume that $\deg_T(w)=1$. Let $x$ be the neighbour of $w$ in
  $T$.

  \textbf{Case 2.} $\deg_T(w)=1$ and $\deg_T(x)=2$: Let $T'$ be the
  spanning tree of $G$ obtained from $T$ by deleting the edge $wx$ and
  adding the edge $v_iw$.  Since $\deg_{T}(v_i)=2$ and
  $\deg_{T'}(v_i)=3$, $v_i$ is a leaf in neither $T$ nor $T'$.  Since
  $\deg_T(w)=\deg_{T'}(w)=1$, $w$ is a leaf in both $T$ and $T'$.
  Since $\deg_T(x)=2$ and $\deg_{T'}(x)=1$, $x$ becomes a leaf $T'$.
  The degree of every other vertex is unchanged.  Hence $T'$ has one
  more leaf than $T$.  This contradicts the choice of $T$.

  \textbf{Case 3.} $\deg_T(w)=1$ and $\deg_T(x)\geq 3$: Let $T'$ be
  the spanning tree of $G$ obtained from $T$ by deleting the edge
  $v_iv_{i+1}$ and adding the edge $v_iw$.  Since
  $\deg_{T}(v_i)=\deg_{T'}(v_i)=2$, $v_i$ is a leaf in neither $T$ nor
  $T'$.  Now $\deg_T(v_{i+1})=2$ (since $i+1\leq k-1$).  Thus
  $v_{i+1}$ is a leaf in $T'$ but not in $T$.  Since $\deg_T(w)=1$ and
  $\deg_{T'}(w)=2$, $w$ is a leaf in $T$ but not in $T'$.  The degree
  of every other vertex is unchanged.  Hence $T'$ has the same number
  of leaves as $T$.

  Suppose, for the sake of contradiction, that there is a
  leaf-neighbour $p$ in $T$ that is not a leaf-neighbour in $T'$.
  Since $v_{i+1}$ and $w$ are the only vertices with different degrees
  in $T$ and $T'$, $p$ is either $v_{i+1}$ or $w$, or $p$ is a
  neighbour of $v_{i+1}$ or $w$ in $T$ or $T'$.  That is,
  $p\in\{v_{i+1},w,x,v_i,v_{i+2}\}$.  Now $\deg_T(p)=2$ since $p$ is a
  leaf-neighbour in $T$.  Thus $p\neq w$ and $p\neq x$.  Every
  neighbour of $v_i$ and $v_{i+1}$ in $T$ has degree $2$ in $T$ (since
  $i-1\geq 2$ and $i+2\leq k-1$).  Thus $p\neq v_i$ and $p\neq
  v_{i+1}$.  Finally, $p\neq v_{i+2}$ since the neighbours of
  $v_{i+2}$ in $T$, namely $v_{i+1}$ and $v_{i+3}$, are both not
  leaves (since there is a path in $T$ from $v_{i+3}$ to $x$ that
  avoids $v_{i+2}$).  This contradiction proves that every
  leaf-neighbour in $T$ is also a leaf-neighbour in $T'$.

  Now consider the vertex $v_{i+2}$. In both $T$ and $T'$, the only
  neighbours of $v_{i+2}$ are $v_{i+1}$ and $v_{i+3}$ (since $i+2\leq
  k-1$). Both $v_{i+1}$ and $v_{i+3}$ have degree $2$ in $T$, but
  $v_{i+1}$ is a leaf in $T'$. Thus $v_{i+2}$ is a leaf-neighbour in
  $T'$, but not in $T$.

  Hence $T'$ has more leaf-neighbours than $T$. This contradicts the
  choice of $T$, and completes the proof.
\end{proof}

\lemref{BalanceHang} proves that \bal{T} and \hang{T} are tied for
trees.  We now prove an analogous result for general graphs.

\begin{lemma}
  \lemlabel{BalanceHangGraph} For every connected graph $G$,
$$\hang{G}\leq 8\,\bal{G}+9\enspace.$$
\end{lemma}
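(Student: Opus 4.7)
The plan is to reduce to the tree case via \lemref{GoodTree} and \lemref{BalanceHang}, and then bridge the gap between a \emph{semi-clean} path (which we get essentially for free) and a \emph{clean} path (which is what $\hang{G}$ needs). If $G$ is a path or cycle then $\hang{G}=0$ and the inequality is trivial, so assume otherwise. Let $T$ be the spanning tree of $G$ supplied by \lemref{GoodTree}, so $p(T)\le p'(G)+6$. Every balance subtree of $T$ is also a balance subtree of $G$ (since $T\subseteq G$), hence $\bal{T}\le\bal{G}$; applying \lemref{BalanceHang} to $T$ (the subcase ``$T$ a path'' giving $\hang{T}=\bal{T}=0$ trivially) yields $\hang{T}\le 3\bal{T}+1\le 3\bal{G}+1$. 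The lower bound in \eqnref{HangTrivial} applied to $T$ reads $\verts{T}-p(T)\le 2\hang{T}-2$, which together with $p(T)\le p'(G)+6$ and $\verts{T}=\verts{G}$ gives
\begin{align*}
p'(G)\;\ge\;\verts{G}-6\bal{G}-6.
\end{align*}

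Fix a semi-clean path $P=(u_1,\dots,u_m)$ in $G$ with $m=p'(G)$, set $c:=6\bal{G}+6$, and write $U:=V(G)\setminus V(P)$, so $|U|\le c$. Every internal vertex of $P$ has degree $2$ in $G$, hence becomes isolated in $G-E(P)$; consequently every connected component of $G-E(P)$ contains at most $|U|+2\le c+2=6\bal{G}+8$ vertices. If $P$ happens to be a \emph{clean} path of $G$, then $\hang{G}\le 6\bal{G}+8\le 8\bal{G}+9$ and we are done.

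The remaining case is when some edge of $P$ fails to be a bridge. Because the internal vertices of $P$ have degree $2$, any non-bridge edge of $P$ forces the existence of a $u_1$-to-$u_m$ path $Q$ lying entirely in $G[U\cup\{u_1,u_m\}]$, and then \emph{every} edge of $P$ sits on the common cycle $C:=P\cup Q$. Since $G$ itself is not a cycle, there is either an extra vertex off $C$ or an extra edge attached to $C$; in either event we obtain a vertex $v\in V(C)$ together with a neighbor $w$ that can serve as the support of a balance. The key step---and the main obstacle---is to turn $C$ and the pendant $w$ into a balance of order $\lfloor m/2\rfloor-O(1)$ in $G$: delete one carefully chosen edge of $P$ near the middle of $C$ so that $C$ becomes a spanning path of $V(C)$; use $w$ attached at $v$ as the support; and verify that the two resulting arcs of $C$, viewed as the two branches from $v$, have comparable sizes (a small casework handles $v$ being close to $u_1$ or $u_m$, and the possibility that $v$ lies off $C$ and must be reached through a short $U$-path). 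This yields $\bal{G}\ge \lfloor m/2\rfloor-O(1)$; combined with $m\ge\verts{G}-6\bal{G}-6$ we get $\verts{G}\le 8\bal{G}+O(1)$, and with the trivial bound $\hang{G}\le\verts{G}$ plus careful tracking of constants this fits inside $\hang{G}\le 8\bal{G}+9$, as required.
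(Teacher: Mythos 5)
Your high-level plan --- reduce to the tree case via \lemref{GoodTree} and \lemref{BalanceHang} to get $p'(G)\ge\verts{G}-6\bal{G}-6$, then split on whether a longest semi-clean path $P$ is clean --- is the same strategy the paper uses, and the clean case is handled cleanly and correctly (the paper reaches the same dichotomy slightly more indirectly, via the upper bound in \eqnref{HangTrivial}, but the content is identical).

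The gap is in the non-clean case: you explicitly defer the one quantitatively critical step, the extraction of a large balance from the cycle $C\supseteq P$. You state $\bal{G}\ge\lfloor m/2\rfloor-O(1)$ with an unspecified additive constant and then assert ``careful tracking of constants'' will finish, but the target constant $9$ is tight. One must actually establish $\bal{G}\ge\lfloor(p'(G)-1)/2\rfloor$, which together with $p'(G)\ge\verts{G}-6\bal{G}-6$ yields $\verts{G}\le 8\bal{G}+8$ and hence $\hang{G}\le\verts{G}\le 8\bal{G}+8$; an additive loss of $2$ or more in the balance bound would already overshoot $8\bal{G}+9$. Your sketch of this step also needs repair: the edge to delete should be chosen from all of $C$ (antipodal to $v$), not restricted to edges of $P$; the phrase ``the possibility that $v$ lies off $C$'' is a slip, since you chose $v\in V(C)$; and when $V(G)=V(C)$ (so the ``extra attachment'' is a chord of $C$ rather than a pendant vertex $w$ off $C$), the balance must instead be built inside one of the two subcycles that $C$ plus the chord creates, with the support coming from the opposite arc --- a case your sketch does not address. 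So the plan is right, but the quantitatively decisive construction is missing rather than merely deferred.
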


\begin{proof}
  If $G$ is a path or cycle, then $\hang{G}=0$ and the result is
  vacuous.  Now assume that $G$ is neither a path nor a cycle.  By
  \lemref{GoodTree}, $G$ has a spanning tree $T$ such that $p(T)\leq
  p'(G)+6$. By \lemref{BalanceHang},
$$\bal{G}\geq\bal{T}\geq\third(\hang{T}-1)\enspace.$$
By the lower bound in \eqnref{HangTrivial},
\begin{align*}
  \bal{G}\geq\third(\half(\verts{T}-p(T)+2)-1)
  =\;&\sixth(\verts{G}-p(T))\\
  \geq\;& \sixth(\verts{G}-p'(G)-6)\enspace.
\end{align*}
Thus we are done if
$\sixth(\verts{G}-p'(G)-6)\geq\eighth(\hang{G}-9)$.  Now assume that
$$\sixth(\verts{G}-p'(G)-6)\leq\eighth(\hang{G}-9)\enspace.$$
By the upper bound in \eqnref{HangTrivial},
$$\sixth(\verts{G}-p'(G)-6)\leq\eighth(\verts{G}-p(G)+1-9)\enspace.$$
That is,
\begin{align}
  \eqnlabel{SomeSomeEqn} \verts{G}+3p(G)\leq4p'(G)\enspace.
\end{align}
If $p(G)\geq p'(G)$, then $\verts{G}\leq p'(G)$, which implies that
$G$ is a path.  Now assume that $p(G)\leq p'(G)-1$. Thus there is a
non-clean semi-clean path $P$ in $G$ of length $p'(G)$. Since $P$ is
not clean and $G$ is connected and not a cycle, there is a cycle $C$
in $G$ with at least $p'(G)$ vertices, such that one vertex $r$ in $C$
is adjacent to a vertex $s$ not in $C$. It follows that $G$ has a
balance rooted at $r$ with support $s$, and with order at least
$\floor{\half(p'(G)-1)}$. Thus $\bal{G}\geq\half p'(G)-1$. That is,
$8\,\bal{G}+8\geq 4p'(G)$. By \eqnref{SomeSomeEqn},
$$8\,\bal{G}+8\geq \verts{G}+3p(G)
\geq \verts{G} \geq \hang{G}\enspace,$$ as desired.
\end{proof}


We now prove an analogue of \lemref{BigTrees} for general graphs.

\begin{lemma}
  \lemlabel{BigGraphs} Fix an integer $c\geq1$. Let $G$ and $H$ be
  graphs, such that $\verts{G}\geq2c^2-c+2$, $\verts{H}\geq c+1$, and
  $\eta(\CP{G}{H})\leq c$. Then $$\hang{H}\leq 8c+9\enspace.$$ By
  symmetry, if in addition $\verts{H}\geq 2c^2-c+2$ then
$$\hang{G}\leq 8c+9\enspace.$$
\end{lemma}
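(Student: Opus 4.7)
The plan is to argue by contradiction and reduce to the tree case, exploiting the fact that balances are by definition trees and that a max-leaf spanning tree preserves star-minor data. Suppose $\hang{H} > 8c+9$; I will produce a $K_{c+1}$-minor in \CP{G}{H}, contradicting $\eta(\CP{G}{H})\leq c$.

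First I would invoke \lemref{BalanceHangGraph}, which gives $\hang{H} \leq 8\,\bal{H}+9$, so $\bal{H} \geq c+1$. Let $T \subseteq H$ be a balance subgraph of order $c+1$; note that $T$ is a tree. Next I would control $\STAR{G}$. If $\STAR{G} \geq c$ then \corref{GraphStarMinor} gives $\eta(\CP{G}{H}) \geq \min\{\STAR{G}+1, \verts{H}\} \geq c+1$, contradicting the hypothesis. So assume $\STAR{G} \leq c-1$, and let $T_G$ be a spanning tree of $G$ maximising the number of leaves. By \lemref{StarMinorConnDom}, $T_G$ has exactly $\STAR{G}$ leaves, and since $\verts{T_G} = \verts{G} \geq 2c^2-c+2 \geq 3$ we have $T_G \neq K_2$, so \corref{StarMinorTree} yields $\STAR{T_G} = \STAR{G} \leq c-1$.

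Now I would apply \thmref{TreePartition} to $T_G$ with $n := c+1$. The required inequality $\verts{T_G} \geq n^2 + (\STAR{T_G}-2)(n-1)+1$ follows from
\begin{align*}
  \verts{T_G} \;\geq\; 2c^2-c+2 \;=\; (c+1)^2 + (c-3)c + 1 \;\geq\; (c+1)^2 + (\STAR{T_G}-2)c + 1.
\end{align*}
Thus $T_G$ has $c+1$ disjoint subtrees each of order at least $c+1$. Feeding these into \thmref{BigBalance} with the balance $T$ of order $c+1$ yields $\eta(\CP{T_G}{T}) \geq c+1$. Since $T_G \subseteq G$ and $T \subseteq H$, we have $\CP{T_G}{T} \subseteq \CP{G}{H}$, so $\eta(\CP{G}{H}) \geq c+1$, the desired contradiction. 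The symmetric claim follows by swapping the roles of $G$ and $H$ (and using the analogous size bound $\verts{H}\geq 2c^2-c+2$).

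The main point to get right is that the tree-based machinery from \secref{TwoTrees} transfers verbatim once we pass to a spanning tree: the crucial bookkeeping is the identity $\STAR{T_G} = \STAR{G}$ for a max-leaf spanning tree, which lets us apply \thmref{TreePartition} to $T_G$ with the same arithmetic used in the proof of \lemref{BigTrees}. Everything else is a straightforward substitution of constants, with the factor $3$ arising from \lemref{BalanceHang} replaced by the factor $8$ from \lemref{BalanceHangGraph}, and the additive constant $1$ replaced by $9$; this accounts for the weakened bound $\hang{H}\leq 8c+9$ rather than $3c+1$.
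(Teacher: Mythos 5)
Your proof is correct and follows essentially the same route as the paper's: bound $\STAR{G}\leq c-1$ via \corref{GraphStarMinor}, pass to a spanning tree, apply \thmref{TreePartition} with $n=c+1$, then use \thmref{BigBalance} and \lemref{BalanceHangGraph}. The only (harmless) overcomplication is insisting on a max-leaf spanning tree to get $\STAR{T_G}=\STAR{G}$; the paper takes an arbitrary spanning tree $T$ and just uses $\STAR{T}\leq\STAR{G}$ (since $T$ is a minor of $G$), which already suffices for the arithmetic.
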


\begin{proof}
  If $\STAR{G}\geq c$, then by \corref{GraphStarMinor},
  $$\eta(\CP{G}{H})\geq \min\{\verts{H},\STAR{G}+1\}\geq c+1\enspace,$$ which
  contradicts the assumption. Now assume that $\STAR{G}\leq c-1$.

  Let $T$ be a spanning tree of $G$.  Let $n:=c+1$. Then
  \begin{align*}
    \verts{T} =\verts{G} \geq (c+1)^2+(c-3)c+1
    &\geq n^2+(\STAR{G}-2)(n-1)+1\\
    &\geq n^2+(\STAR{T}-2)(n-1)+1\enspace.
  \end{align*}
  Thus \thmref{TreePartition} is applicable to $T$ with $n=c+1$.
  Hence $T$ has $c+1$ disjoint subtrees, each with at least $c+1$
  vertices.

  If $\bal{H}\geq c+1$, then by \thmref{BigBalance},
  $$\eta(\CP{G}{H})\geq\min\{c+1,\bal{H}\}=c+1\enspace,$$ which contradicts the
  assumption.  Thus $\bal{H}\leq c$.  Hence, by
  \lemref{BalanceHangGraph}, $\hang{H}\leq 8c+9$.
\end{proof}


We now prove that the product of graphs with bounded hangover have a
specific structure.

\begin{lemma}
  \lemlabel{BoundedHang} Fix an integer $c\geq1$. For all graphs $G$
  and $H$, if $\hang{G}\leq c$ and $\hang{H}\leq c$, then \CP{G}{H} is
  one of the following graphs:
  \begin{itemize}
  \item a planar grid (the product of two paths) with a vortex of
    width at most $2c^2$ in the outerface,
  \item a cylindrical grid (the product of a path and a cycle) with a
    vortex of width at most $2c$ in each of the two `big' faces, or
  \item a toroidal grid (the product of two cycles).
  \end{itemize}
\end{lemma}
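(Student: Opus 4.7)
The plan is to extract a sharp structural description of $G$ and $H$ from the bound on hangover, and then construct the promised embedding and vortex by a three-way case analysis. From the definition, $\hang{G}\leq c$ forces $G$ to be either a cycle, or else to admit a clean path $P=(v_1,\dots,v_k)$ whose removal leaves only components of order at most $c$. Since interior vertices of $P$ have degree $2$ in $G$, they are isolated in $G-E(P)$, so the only nontrivial components are two ``blobs'' $A,B$ of order at most $c$ containing $v_1$ and $v_k$ respectively, with no $G$-edges between them except along $P$. Pure paths are the degenerate subcase $|A|=|B|=1$. Apply the same dichotomy to $H$, with spine $Q=(w_1,\dots,w_m)$ and blobs $A_H,B_H$ when applicable.

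If both $G,H$ are cycles then $\CP{G}{H}$ is a toroidal grid and we are done. If exactly one is a cycle, say $G$ is a cycle and $H$ has spine $Q$ with blobs $A_H,B_H$, then $\CP{G}{Q}$ is a cylindrical grid whose two big faces are $G\square\{w_1\}$ and $G\square\{w_m\}$; for each $u\in V(G)$, define $S(u,w_1):=\{u\}\times A_H$ and $S(u,w_m):=\{u\}\times B_H$, and set $S'(u,\cdot):=S(u,\cdot)\cup S(u^+,\cdot)$, where $u^+$ is the clockwise successor of $u$ on $G$, giving a vortex of width at most $2c$ in each big face. If both $G,H$ are thickened paths with spines $P,Q$, then $P\square Q$ is a planar grid; for each outerface vertex $v=(v_i,w_j)$, define $S(v):=A_1(v)\times A_2(v)$, where $A_1(v)$ is the component of $G-E(P)$ containing $v_i$ and $A_2(v)$ the component of $H-E(Q)$ containing $w_j$, and then $S'(v):=S(v)\cup S(v^+)$ for the clockwise successor $v^+$ gives a vortex of width at most $2c^2$ in the outerface.

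In each nontrivial case it remains to verify the vortex axioms: every vertex of $\CP{G}{H}$ outside the base grid lies in some $S(v)$; for each such vertex $x$ the index set $R(x):=\{v:x\in S(v)\}$ is a contiguous arc around the face cycle; and every edge of $\CP{G}{H}$ outside the base grid has both endpoints in a single $S'(v)$. This verification follows the same template as the proof of \lemref{RoughTree}: each off-spine vertex $(a,b)$ lies in a unique corner- or side-blob-product by the definition of $A_1,A_2$, so $R(x)$ is always a singleton; and each off-grid edge is internal to one blob-product or joins blob-products at two adjacent face vertices. The main obstacle is the planar case: at each of the four corners where two blobs meet, one must argue that edges incident to a corner-product $A_G\times A_H$ either stay inside that product or cross to one of the two adjacent side-strips, and this uses the structural fact that in $G$ the blob $A_G$ has no edges to $V(G)\setminus A_G$ except through the spine endpoint $v_1$, and symmetrically for $H$.
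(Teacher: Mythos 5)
Your proof is correct and follows essentially the same route as the paper: a three-way case split (torus / cylinder / plane), with vortex bags built exactly as in the proof of \lemref{RoughTree} by taking the products $A_1(v)\times A_2(v)$ of the small components of $G-E(P)$ and $H-E(Q)$ and then merging cyclically consecutive bags into $S'(v)$. The paper's own proof of this lemma is terse, merely citing the argument from \lemref{RoughTree}, so your fuller account of the ``blob'' structure forced by bounded hangover and the corner analysis in the planar case is consistent with, and usefully elaborates, what the paper intends.
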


\begin{proof}
  If $G$ and $H$ are cycles then \CP{G}{H} is a toroidal grid.  If
  neither $G$ nor $H$ are cycles then by the same argument used in the
  proof of \lemref{RoughTree}, \CP{G}{H} is obtained from a planar
  $p(G)\times p(H)$ grid by adding a vortex in the outerface with
  width at most $2c^2$.  If $G$ is a cycle and $H$ is not a cycle,
  then by a similar argument used in the proof of \lemref{RoughTree},
  \CP{G}{H} is obtained from a cylindrical $\verts{C_n}\times p(H)$
  grid by adding a vortex in each of the two `big' faces with width at
  most $2c$.
\end{proof}


\twolemref{BigGraphs}{BoundedHang} imply the following
characterisation of large graphs with bounded Hadwiger number that was
described in \secref{Intro}.

\begin{theorem}
  \thmlabel{SomeTheorem} Fix an integer $c\geq1$. For all graphs $G$
  and $H$ with $\verts{G}\geq 2c^2-c+2$ and $\verts{H}\geq 2c^2-c+2$,
  if $\eta(\CP{G}{H})\leq c$ then \CP{G}{H} is one of the following
  graphs:
  \begin{itemize}
  \item a planar grid (the product of two paths) with a vortex of
    width at most $2(8c+9)^2$ in the outerface,
  \item a cylindrical grid (the product of a path and a cycle) with a
    vortex of width at most $16c+18$ in each of the two `big' faces,
    or
  \item a toroidal grid (the product of two cycles).
  \end{itemize}
\end{theorem}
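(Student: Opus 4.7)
The plan is to combine the two preceding lemmas in a direct way: first convert the Hadwiger bound into a hangover bound on each factor, and then feed these hangover bounds into the structural lemma.

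First I would verify that \lemref{BigGraphs} applies symmetrically to both $G$ and $H$. Since $c\geq 1$, we have $2c^2-c+2\geq c+1$, so each of the hypotheses $\verts{G}\geq 2c^2-c+2$ and $\verts{H}\geq 2c^2-c+2$ simultaneously supplies both the ``$\geq 2c^2-c+2$'' side hypothesis on one factor and the ``$\geq c+1$'' side hypothesis on the other. Applying \lemref{BigGraphs} once with $(G,H)$ and once with $(H,G)$, and using the assumption $\eta(\CP{G}{H})\leq c$, yields
\begin{equation*}
\hang{G}\leq 8c+9 \quad\text{and}\quad \hang{H}\leq 8c+9.
\end{equation*}

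Next I would invoke \lemref{BoundedHang} with the constant $c':=8c+9$ in place of $c$. That lemma immediately gives that $\CP{G}{H}$ is either a toroidal grid (when both $G$ and $H$ are cycles), a cylindrical grid with a vortex of width at most $2c'=16c+18$ in each of the two big faces (when exactly one of $G,H$ is a cycle), or a planar grid with a vortex of width at most $2(c')^2=2(8c+9)^2$ in the outerface (when neither $G$ nor $H$ is a cycle). These are precisely the three possibilities listed in the statement, with the advertised width bounds.

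There is no real obstacle here: the theorem is essentially the concatenation of \lemref{BigGraphs} (``small Hadwiger number forces small hangover in each factor, once both factors are big enough'') and \lemref{BoundedHang} (``small hangover in each factor forces the grid-plus-vortex structure''). The only mild point to check is the numerical propagation of the constants $8c+9$ through the width bounds $2c^2$ and $2c$ of \lemref{BoundedHang}, which gives $2(8c+9)^2$ and $16c+18$ respectively.
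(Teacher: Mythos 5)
Your proposal is correct and matches the paper's own approach: the paper explicitly states that \lemref{BigGraphs} and \lemref{BoundedHang} together imply \thmref{SomeTheorem}, which is exactly the two-step argument you give. The numerical checks ($2c^2-c+2\geq c+1$ for $c\geq 1$, and substituting $8c+9$ into the width bounds $2c^2$ and $2c$) are all accurate.
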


\thmref{SomeTheorem} is similar to a result by \citet{BM69}, who
proved that if $G$ and $H$ are connected graphs with at least $3$
vertices, then \CP{G}{H} is planar if and only if both $G$ and $H$ are
paths, or one is a path and the other is a cycle.


We now prove the first part of our rough structural characterisation
of graph products with bounded Hadwiger number.

\begin{lemma}
  \lemlabel{BoundedHadwigerImplies} Let $G$ and $H$ be connected
  graphs, each with at least one edge, such that $\eta(\CP{G}{H})\leq
  c$ for some integer $c$. Then for some integers $c_1,c_2,c_3$
  depending only on $c$:
  \begin{itemize}
  \item $\tw{G}\leq c_1$ and $\verts{H}\leq c_2$, or
  \item $\tw{H}\leq c_1$ and $\verts{G}\leq c_2$, or
  \item $\hang{G}\leq c_3$ and $\hang{H}\leq c_3$.
  \end{itemize}
\end{lemma}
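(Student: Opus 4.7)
The plan is to split into three cases according to which of $\verts{G}$ and $\verts{H}$ are ``large'' (meaning at least $2c^2-c+2$), and in each case to invoke an earlier result already in the paper.

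First, suppose both $\verts{G}\geq 2c^2-c+2$ and $\verts{H}\geq 2c^2-c+2$. Then \lemref{BigGraphs} applies in both of its forms (once to get $\hang{H}\leq 8c+9$, and once by symmetry to get $\hang{G}\leq 8c+9$). Setting $c_3:=8c+9$, the third bullet holds.

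Second, suppose $\verts{G}<2c^2-c+2$. Since $G$ is connected with at least one edge, $K_2$ is a subgraph of $G$, hence \CP{K_2}{H} is a subgraph of \CP{G}{H}, and so
\[
\eta(\CP{H}{K_2})=\eta(\CP{K_2}{H})\leq\eta(\CP{G}{H})\leq c.
\]
By \eqnref{TreewdithTied}, $(\tfrac14\log\tw{H})^{1/4}<c$, so $\tw{H}<2^{4c^4}$. Setting $c_1:=2^{4c^4}$ and $c_2:=2c^2-c+1$, the second bullet holds. The remaining case $\verts{H}<2c^2-c+2$ is symmetric and yields the first bullet with the same constants.

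Taking $c_1:=2^{4c^4}$, $c_2:=2c^2-c+1$, and $c_3:=8c+9$ therefore suffices. No step is an obstacle here, since all the substantive work has been done in \lemref{BigGraphs} and in \eqnref{TreewdithTied} (which rests on \lemref{LowerBoundGraphEdge}); the proof is just a careful assembly of these two tools with the $K_2$-subgraph trick to handle the ``small factor'' cases.
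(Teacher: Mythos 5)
Your proof is correct and takes essentially the same approach as the paper: both arguments rely on the same two tools, namely \lemref{BigGraphs} for the hangover bound and \lemref{LowerBoundGraphEdge} (via \eqnref{TreewdithTied}) for the treewidth bound, with the $K_2$-subgraph observation to reduce to the two-factor case. The only difference is a cosmetic reordering of the case analysis: the paper first rules out large treewidth by contradiction and then cases on the orders, whereas you case on the orders first and derive the treewidth bound directly when one factor is small.
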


\begin{proof}
  Let $c_1:=2^{4c^4}$, $c_2:=2c^2-c+1$, and $c_3:=8c+9$.

  First suppose that $\tw{G}>c_1$ or $\tw{H}>c_1$.  Without loss of
  generality, $\tw{G}>c_1$.  Then by \lemref{LowerBoundGraphEdge},
  $\eta(\CP{G}{H})\geq\eta(\CP{G}{K_2})>(\tfrac14\log c_1)^{1/4}=c,$
  which is a contradiction.  Now assume that $\tw{G}\leq c_1$ and
  $\tw{H}\leq c_1$.

  Thus, if $\verts{H}\leq c_2$ or $\verts{G}\leq c_2$, then the first
  or second condition is satisfied, and we are done.  Now assume that
  $\verts{H}>c_2$ and $\verts{G}> c_2$.  By \lemref{BigGraphs},
  \hang{G} and \hang{H} are both at most $8c+9=c_3$, as desired.
\end{proof}


Now we prove the converse of \lemref{BoundedHadwigerImplies}.

\begin{lemma}
  \lemlabel{ImpliesBoundedHadwiger} Let $G$ and $H$ be connected
  graphs, each with at least one edge, such that for some integers
  $c_1,c_2,c_3$,
  \begin{itemize}
  \item $\tw{G}\leq c_1$ and $\verts{H}\leq c_2$, or
  \item $\tw{H}\leq c_1$ and $\verts{G}\leq c_2$, or
  \item $\hang{G}\leq c_3$ and $\hang{H}\leq c_3$.
  \end{itemize}
  Then $\eta(\CP{G}{H})\leq c$ where $c$ depends only on
  $c_1,c_2,c_3$.
\end{lemma}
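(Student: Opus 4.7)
The plan is to handle the three cases separately, with the first two being essentially immediate from earlier results and the third reducing to \lemref{BoundedHang} combined with the Robertson--Seymour-style fact that graphs built from bounded-genus pieces with bounded-width vortices have bounded Hadwiger number.

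First I would dispatch the two treewidth cases together. Suppose $\tw{G}\leq c_1$ and $\verts{H}\leq c_2$. Then $H$ is a spanning subgraph of $K_{c_2}$, so \CP{G}{H} is a subgraph of \CP{G}{K_{c_2}}, and by \thmref{TreewidthComplete}
\[
\eta(\CP{G}{H})\leq\eta(\CP{G}{K_{c_2}})\leq c_2(\tw{G}+1)\leq c_2(c_1+1).
\]
The symmetric case $\tw{H}\leq c_1$ and $\verts{G}\leq c_2$ is identical by commutativity of the product. So in the first two cases we may take $c:=c_2(c_1+1)$.

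For the third case, assume $\hang{G}\leq c_3$ and $\hang{H}\leq c_3$. Here I would directly invoke \lemref{BoundedHang}, which asserts that \CP{G}{H} is either a planar grid with a single vortex of width at most $2c_3^2$ in the outerface, a cylindrical grid with vortices of width at most $2c_3$ in each of the two big faces, or a toroidal grid. In each of these three possibilities the graph is obtained from a graph embedded in a fixed surface (sphere or torus) by adding a bounded number of vortices, each of bounded width. It is a standard fact (used already in the proof of \lemref{RoughTree}, and attributed there to \citet{CliqueMinors}) that such graphs have Hadwiger number bounded by some function of the vortex width alone; call this bound $c''(c_3)$.

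Taking $c:=\max\{c_2(c_1+1),c''(c_3)\}$ then gives a uniform upper bound on $\eta(\CP{G}{H})$ in all three cases, depending only on $c_1,c_2,c_3$, which is what is claimed. The only nontrivial ingredient is the surface-plus-vortex bound used in case three, but since this has already been cited in the proof of \lemref{RoughTree} we do not need to reprove it here; the remainder of the argument is a short packaging of results proved earlier in the paper.
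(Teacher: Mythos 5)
Your proposal matches the paper's own proof essentially line for line: the two treewidth cases are dispatched via \thmref{TreewidthComplete} and the inclusion $\CP{G}{H}\subseteq\CP{G}{K_{c_2}}$, and the hangover case invokes \lemref{BoundedHang} followed by the surface-plus-bounded-vortex bound already used in \lemref{RoughTree}. No meaningful differences.
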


\begin{proof}
  Suppose that $\tw{G}\leq c_1$ and $\verts{H}\leq c_2$.  
  \thmref{TreewidthComplete} implies
  that $$\eta(\CP{G}{H})\leq\eta(\CP{G}{K_{c_2}})\leq
  c_2(\tw{G}+1)\leq c_2(c_1+1)\enspace,$$ and we are done.  Similarly,
  if $\tw{H}\leq c_1$ and $\verts{G}\leq c_2$, then
  $\eta(\CP{G}{H})\leq c_2(c_1+1)$, and we are done.  Otherwise,
  $\hang{G}\leq c_3$ and $\hang{H}\leq c_3$.  By \lemref{BoundedHang},
  \CP{G}{H} is either a toroidal grid (which has no $K_8$ minor), or
  \CP{G}{H} is a planar graph plus vortices of width at most $2c^2$ in
  one or two of the faces. As in the proof of   \lemref{RoughTree}, it follows that $\eta(\CP{G}{H})\leq\Oh{c^2}$.
\end{proof}


\twolemref{BoundedHadwigerImplies}{ImpliesBoundedHadwiger} imply the
following rough structural characterisation of graph products with
bounded Hadwiger number.

\begin{theorem}
  \thmlabel{RoughGraph} The function $\eta(\CP{G}{H})$ is tied to
$$\min\big\{
\max\{\tw{G},\verts{H}\},\, \max\{\verts{G},\tw{H}\},\,
\max\{\hang{G},\hang{H}\} \,\big\} \enspace.$$
\end{theorem}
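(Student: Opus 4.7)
The plan is to derive \thmref{RoughGraph} as a direct consequence of the two preceding lemmas, \lemref{BoundedHadwigerImplies} and \lemref{ImpliesBoundedHadwiger}, which together already perform both directions of the tie. Essentially all the real work has been done; what remains is to unpack the definition of ``tied'' and check that the three-way $\min$ in the theorem statement matches the three cases in those lemmas.

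For notational convenience I would introduce
\[
  \mu(G,H):=\min\big\{\max\{\tw{G},\verts{H}\},\,\max\{\verts{G},\tw{H}\},\,\max\{\hang{G},\hang{H}\}\big\}.
\]
For the first direction, I would show that $\eta(\CP{G}{H})\leq f(\mu(G,H))$ for some function $f$. Assume $\mu(G,H)\leq k$. By definition of $\mu$, at least one of the following holds: (i) $\tw{G}\leq k$ and $\verts{H}\leq k$; (ii) $\tw{H}\leq k$ and $\verts{G}\leq k$; or (iii) $\hang{G}\leq k$ and $\hang{H}\leq k$. In each case, \lemref{ImpliesBoundedHadwiger} applied with $c_1=c_2=c_3=k$ yields $\eta(\CP{G}{H})\leq c(k)$ for some $c$ depending only on $k$.

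For the reverse direction, assume $\eta(\CP{G}{H})\leq c$. By \lemref{BoundedHadwigerImplies}, there exist constants $c_1,c_2,c_3$ depending only on $c$ such that at least one of these holds: $\tw{G}\leq c_1$ and $\verts{H}\leq c_2$; or $\tw{H}\leq c_1$ and $\verts{G}\leq c_2$; or $\hang{G}\leq c_3$ and $\hang{H}\leq c_3$. In the respective cases, $\max\{\tw{G},\verts{H}\}$, $\max\{\verts{G},\tw{H}\}$, or $\max\{\hang{G},\hang{H}\}$ is bounded by $\max\{c_1,c_2,c_3\}$, hence $\mu(G,H)\leq \max\{c_1,c_2,c_3\}$, which depends only on $c$.

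There is no real obstacle here beyond what was already surmounted in proving the two lemmas; the only delicate point is bookkeeping, namely verifying that the case labels line up so that the three-way $\min$ on one side corresponds exactly to the three alternatives on the other, and that the bounding functions in both directions can be packaged as a single function $f$ as required by the definition of tied. Combining the two inequalities yields the theorem.
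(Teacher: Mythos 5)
Your proposal is correct and matches the paper's approach exactly: the paper also presents Theorem \ref{thm:RoughGraph} as an immediate consequence of Lemmas \ref{lem:BoundedHadwigerImplies} and \ref{lem:ImpliesBoundedHadwiger}, with the same unpacking of the definition of ``tied'' into the two inequalities you describe.
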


\thmref{RoughGraph} can be informally stated as: $\eta(\CP{G}{H})$ is
bounded if and only if:
\begin{itemize}
\item $\tw{G}$ and $\verts{H}$ is bounded, or
\item $\verts{G}$ and $\tw{H}$ is bounded, or
\item $\hang{G}$ and $\hang{H}$ are bounded.
\end{itemize}

\section{On Hadwiger's Conjecture for Cartesian Products}
\seclabel{HadwigerConjecture}

In 1943, \citet{Hadwiger43} made the following conjecture:

\medskip\noindent\textbf{Hadwiger's Conjecture}. For every graph
$G$, $$\chi(G)\leq\eta(G)\enspace.$$

This conjecture is widely considered to be one of the most significant
open problems in graph theory; see the survey by
\citet{Toft-HadwigerSurvey96}. Yet it is unknown whether Hadwiger's
conjecture holds for all non-trivial products. (We say \CP{G}{H} is
\emph{non-trivial} if both $G$ and $H$ are both connected and have at
least one edge.)\ The chromatic number of a product is well
understood. In particular, \citet{Sabidussi57} proved that
$\chi(\CP{G}{H})=\max\{\chi(G),\chi(H)\}$. Thus Hadwiger's Conjecture
for products asserts
that $$\max\{\chi(G),\chi(H)\}\leq\eta(\CP{G}{H})\enspace.$$

Hadwiger's Conjecture is known to hold for various classes of
products. For example, \citet{CS-DM07} proved that the product of
sufficiently many graphs (relative to their maximum chromatic number)
satisfies Hadwiger's Conjecture.
The best bounds are by \CR, who proved that for some constant $c$,
Hadwiger's Conjecture holds for the non-trivial product
\CCP{G_1}{G_2}{G_d} whenever
$$\max_i\chi(G_i)\leq 2^{2^{(d-c)/2}}\enspace.$$ 

In a different direction, \CR\ proved that if $\chi(G)\geq\chi(H)$ and
$\chi(H)$ is not too small relative to $\chi(G)$, then \CP{G}{H}
satisfies Hadwiger's Conjecture. In particular, there is a constant
$c$, such that if $\chi(G)\geq\chi(H)\geq c\log^{3/2}\chi(G)$ then
\CP{G}{H} satisfies Hadwiger's Conjecture. Similarly, they also
implicitly proved
that $$\min\{\chi(G),\chi(H)\}\leq\eta(\CP{G}{H})\enspace,$$ and
concluded that if $\chi(G)=\chi(H)$ then Hadwiger's Conjecture holds
for \CP{G}{H}. We make the following small improvement to this result.

\begin{lemma}
  \lemlabel{MinChi} For all connected graphs $G$ and $H$, both with at
  least one edge,
$$\min\{\chi(G),\chi(H)\}\leq\eta(\CP{G}{H})-1\enspace.$$
Moreover, if $G\neq K_2$ and $H\neq K_2$ then
$$\min\{\chi(G),\chi(H)\}\leq\eta(\CP{G}{H})-2\enspace.$$
\end{lemma}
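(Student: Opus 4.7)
Without loss of generality take $k := \chi(H) \leq \chi(G)$; since $H$ has an edge, $k \geq 2$. The plan is to show $\eta(\CP{G}{H}) \geq k+1$ in general, and $\eta(\CP{G}{H}) \geq k+2$ when both $G, H \neq K_2$. I would dispatch $k = 2$ directly: since $G$ and $H$ each contain $K_2$, $\CP{G}{H}$ contains $\CP{K_2}{K_2} = C_4$, giving $\eta \geq 3 = k+1$; if in addition $G \neq K_2$ and $H \neq K_2$, both are connected of order at least $3$ and contain $P_3$, so $\CP{G}{H}$ contains the $3 \times 3$ grid $\CP{P_3}{P_3}$, whose Hadwiger number equals $4 = k+2$.

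For $k \geq 3$ the hypotheses $\chi(G) \geq \chi(H) \geq 3$ automatically force both $G \neq K_2$ and $H \neq K_2$, so the two conclusions merge into the single statement $\eta(\CP{G}{H}) \geq k+2$. My strategy is to start from Chandran and Raju's $K_k$-minor construction in $\CP{G}{H}$ and augment it by two further branch sets. Concretely, I would take a $k$-critical subgraph $H' \subseteq H$ together with an edge $uv \in E(G)$, and work inside the prism $\CP{\{u,v\}}{H'}$ consisting of two copies of $H'$ joined by a perfect matching. The two extra branch sets would come from (i) the full `$v$-copy' $\{v\} \times V(H')$, which is connected (isomorphic to $H'$) and adjacent to every base branch set via a matching edge $(u,x)(v,x)$, and (ii) a singleton $\{(w,z)\}$ drawn from $w \in V(G)\setminus\{u,v\}$ (available since $G \neq K_2$) and either $z \in V(H) \setminus V(H')$ (if nonempty) or a cell dominating the existing branch sets through a $G$-edge at $w$ together with an $H$-edge at $z$.

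The hard part will be arranging disjointness while preserving pairwise adjacency of all $k+2$ branch sets. The Chandran-Raju construction typically spans both copies of $H'$, so reserving one full $v$-copy for the extra branch set forces me to localize the base $K_k$-minor inside the $u$-copy --- in general this would require Hadwiger's Conjecture for $H'$ itself, known only for $k \leq 6$. To sidestep this obstacle, I would exploit $k$-criticality of $H'$ directly: pick $r \in V(H')$, take a proper $(k{-}1)$-coloring of $H'-r$ with classes $U_1, \ldots, U_{k-1}$, and use that criticality forces $r$ to have a neighbor in each $U_i$. This `star-like' adjacency at $r$ can then be harnessed to build each base branch set as a connected subset of the prism that crosses between the two copies at a controlled site (the distinguished neighbor of $r$ in its color class), while leaving enough of the $v$-copy connected to serve as the $(k{+}1)$-st branch set and enough slack to place the final singleton. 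Verifying that the resulting $k+2$ branch sets are pairwise disjoint and pairwise adjacent, and in particular that the singleton $(w,z)$ indeed dominates every other branch set, is the core technical step.
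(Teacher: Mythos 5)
Your handling of $k=2$ is correct and actually a little more elementary than the paper's: observing that $\CP{G}{H}$ contains $\CP{K_2}{K_2}$, respectively $\CP{P_3}{P_3}$, and reading off the Hadwiger number, is perfectly valid. The reduction that $k\geq 3$ forces $G\neq K_2$ and $H\neq K_2$, so that the two conclusions collapse, is also correct.

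The problem is the $k\geq3$ case. What you offer there is not a proof but a plan, and you say so yourself (``the hard part will be arranging disjointness while preserving pairwise adjacency,'' ``verifying \ldots is the core technical step''). Moreover the plan has an unresolved internal tension: you want to reserve the entire $v$-copy $\{v\}\times V(H')$ as one branch set, yet to get a $K_k$-minor from the remaining $u$-copy you would need Hadwiger's Conjecture for $H'$; you acknowledge this and propose crossing into the $v$-copy at controlled sites near $r$, but once the base branch sets intersect the $v$-copy it is no longer clear that what remains of it is connected, adjacent to everything, and disjoint from the base sets, nor is it clear how the singleton $(w,z)$ is chosen so as to dominate all $k+1$ other branch sets. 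None of these steps is carried out, so the proof has a genuine gap for $k\geq3$.

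By contrast the paper's argument is short and avoids any direct $K_{k+2}$-minor construction. It uses $\chi\leq\Delta+1$ together with \corref{GraphStarMinor} (which gives $\eta(\CP{G}{H})\geq\min\{\Delta(G),\Delta(H)\}+2$, a consequence of the star-minor lower bound) to get the $-1$ bound outright. For the $-2$ bound it applies Brooks' Theorem: when neither factor is a complete graph nor a cycle, $\chi\leq\Delta$, and \corref{GraphStarMinor} finishes; the remaining cases (a factor that is a $K_n$ or a $C_n$) are dispatched by \twopropref{CompleteEdge}{TriangleComplete} and the known value $\eta(\CP{C_3}{C_3})=5$. If you want to salvage your approach, the lesson is to lean on the star-minor corollary rather than trying to build all $k+2$ branch sets by hand; the minor-construction route as sketched would require substantial extra work, with no guarantee that it goes through.
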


\begin{proof}
  We have $\chi(G)\leq\Delta(G)+1$ and $\chi(H)\leq\Delta(H)+1$. Thus
  by \corref{GraphStarMinor},
$$\min\{\chi(G),\chi(H)\}\leq\min\{\Delta(G),\Delta(H)\}+1\leq
\eta(\CP{G}{H})-1\enspace.$$ Now assume that $G\neq K_2$ and $H\neq
K_2$.

\textbf{Case 1. } $G\in\{C_n,K_n\}$ and $\eta(H)=2$ for some $n\geq3$:
Then $H$ is a tree and $\min\{\chi(G),\chi(H)\}=2$. On the other hand,
$\eta(\CP{G}{H})\geq\eta(\CP{K_3}{K_2})=4$ by \propref{CompleteEdge}.

\textbf{Case 2. } $G=K_n$ and $\eta(H)\geq 3$ for some $n\geq3$: Then
$\min\{\chi(G),\chi(H)\}\leq n$ and
$\eta(\CP{G}{H})\geq\eta(\CP{K_n}{K_3})=n+2$ by
\propref{TriangleComplete}.

\textbf{Case 3. } $G=C_n$ and $\eta(H)\geq3$ for some $n\geq3$: Then
$\min\{\chi(G),\chi(H)\}\leq 3$, and
$\eta(\CP{G}{H})\geq\eta(\CP{C_3}{C_3})=5$ by a result of
\citet{ABPS97}. (In fact, \citet{ABPS97} determined
$\eta(\CP{C_n}{C_m})$ for all values of $n$ and $m$, as described in
\tabref{Cycles}. \citet{Miller-DM78} had previously stated without
proof that $\eta(\CP{C_n}{K_2})=4$ for all $n\geq3$.)\

\textbf{Case 4. } Both $G$ and $H$ are neither complete graphs nor
cycles. Then by Brooks' Theorem \citep{Brooks41},
$\chi(G)\leq\Delta(G)$ and $\chi(H)\leq\Delta(H)$. Thus
$\min\{\chi(G),\chi(H)\}\leq\min\{\Delta(G),\Delta(H)\}$.  By
\corref{GraphStarMinor},
$\eta(\CP{G}{H})\geq\min\{\Delta(G),\Delta(H)\}+2$.  Thus
$\min\{\chi(G),\chi(H)\}\leq\eta(\CP{G}{H})-2$.
\end{proof}

\begin{table}[!htb]
  \caption{The Hadwiger number of \CP{C_n}{C_m}, where $C_2=K_2$; see \citep{ABPS97}.}
  \tablabel{Cycles}
  \vspace*{1ex}
  \begin{tabular}{c|ccccc}
    \hline
    & $n=2$		& $n=3$		& $n=4$		& $n=5$		& $n\geq 6$	\\\hline
    $m=2$		& $3$		& $4$		& $4$		& $4$		& $4$		\\
    $m=3$		& $4$		& $5$		& $5$		& $5$		& $6$		\\
    $m=4$		& $4$		& $5$		& $6$		& $6$		& $7$ 		\\
    $m=5$		& $4$		& $5$		& $6$		& $7$		& $7$ 		\\
    $m\geq6$	& $4$		& $6$		& $7$		& $7$		& $7$ 		\\\hline
  \end{tabular}
\end{table}

\begin{theorem}
  \thmlabel{HadwigerNearChi} Hadwiger's Conjecture holds for a
  non-trivial product \CP{G}{H} whenever $|\chi(G)-\chi(H)|\leq
  2$. Moreover, if $|\chi(G)-\chi(H)|\leq 1$ then
  $\chi(\CP{G}{H})\leq\eta(\CP{G}{H})-1$.
\end{theorem}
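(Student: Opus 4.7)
The plan is to reduce the theorem directly to \lemref{MinChi} together with Sabidussi's identity $\chi(\CP{G}{H})=\max\{\chi(G),\chi(H)\}$, paying separate attention to the boundary case when one factor is $K_2$. Since the roles of $G$ and $H$ are symmetric I assume throughout that $\chi(G)\geq\chi(H)$, so the target inequality becomes $\chi(G)\leq\eta(\CP{G}{H})$, or $\chi(G)\leq\eta(\CP{G}{H})-1$ under the stronger hypothesis $|\chi(G)-\chi(H)|\leq 1$.

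First I handle the generic case in which neither $G$ nor $H$ is $K_2$. The second part of \lemref{MinChi} then supplies the two-unit slack $\chi(H)\leq\eta(\CP{G}{H})-2$. Combined with $\chi(G)\leq\chi(H)+2$ this yields $\chi(G)\leq\eta(\CP{G}{H})$, and combined with $\chi(G)\leq\chi(H)+1$ it yields the sharper $\chi(G)\leq\eta(\CP{G}{H})-1$; this is essentially a one-line deduction.

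The remaining boundary case is when one factor equals $K_2$. Because the product is non-trivial both factors contain an edge and so have chromatic number at least $2$; together with $\chi(G)\geq\chi(H)$, after possibly swapping $G$ and $H$ it suffices to treat $H=K_2$ with $\chi(G)\in\{2,3,4\}$ (the value $4$ being relevant only for the $|\chi(G)-\chi(H)|\leq 2$ claim). For $\chi(G)=2$, the subgraph $\CP{K_2}{K_2}=C_4$ sitting inside $\CP{G}{K_2}$ already gives $\eta(\CP{G}{K_2})\geq 3$. For $\chi(G)=3$, $G$ contains an odd cycle $C_{2k+1}$, and the $n=2$ column of \tabref{Cycles} gives $\eta(\CP{C_{2k+1}}{K_2})=4$, which transfers to the supergraph $\CP{G}{K_2}$. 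For $\chi(G)=4$ I invoke the classical $k=4$ case of Hadwiger's Conjecture (Dirac, Wagner) to get $\eta(G)\geq 4$, which again passes to the supergraph.

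The main obstacle is precisely the last subcase $H=K_2$ with $\chi(G)=4$: here the two-unit slack of \lemref{MinChi} is unavailable, and none of the clique-minor constructions developed earlier in the paper manufactures a $K_4$-minor in $\CP{G}{K_2}$ from $\chi(G)=4$ alone, so one must appeal to an external ingredient, namely the already-established $k=4$ case of Hadwiger's Conjecture. Everything else is a routine combination of \lemref{MinChi}, Sabidussi's formula and the cycle-product table.
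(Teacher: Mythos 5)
Your proposal is correct and takes essentially the same route as the paper: reduce to \lemref{MinChi} via Sabidussi in the generic case, and treat the boundary case of a $K_2$ factor separately by invoking the known low-chromatic case of Hadwiger's Conjecture. The only stylistic difference is that the paper handles $H=K_2$ uniformly via Hadwiger--Dirac, obtaining $\eta(\CP{G}{H})\geq\eta(G)+1\geq\chi(G)+1$ for all $\chi(G)\leq 4$ (so the ``moreover'' claim drops out in that case too), whereas you split into $\chi(G)\in\{2,3,4\}$ and use explicit subgraphs for the first two; both variants are sound.
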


\begin{proof}
  Without loss of generality, $\chi(G)-2\leq\chi(H)\leq\chi(G)$.
  Thus, by Sabidussi's Theorem \citep{Sabidussi57}, it suffices to
  prove that $\chi(G)\leq\eta(\CP{G}{H})$.  If $H=K_2$ then
  $\chi(G)\leq 4$ by assumption.  \citet{Hadwiger43} and
  \citet{Dirac52} independently proved Hadwiger's Conjecture whenever
  $\chi(G)\leq 4$.  Thus $\eta(\CP{G}{H})\geq\eta(G)+1\geq\chi(G)+1$,
  as desired.  This proves the `moreover' claim in this case.  Now
  assume that $H\neq K_2$. Thus by \lemref{MinChi},
  $\chi(G)\leq\chi(H)+2\leq\eta(\CP{G}{H})$, as desired.  And if
  $\chi(G)\leq \chi(H)+1$ then $\chi(G)\leq\eta(\CP{G}{H})-1$, as
  desired.
\end{proof}


The following two theorems establish Hadwiger's Conjecture for new
classes of products. The first says that products satisfy Hadwiger's
Conjecture whenever one graph has large treewidth relative to its
chromatic number.

\begin{theorem}
  \thmlabel{HadwigerBigTW} Hadwiger's Conjecture is satisfied for the
  non-trivial product \CP{G}{H} whenever $\chi(G)\geq\chi(H)$ and $G$
  has treewidth $\tw{G}\geq2^{4\chi(G)^4}$.
\end{theorem}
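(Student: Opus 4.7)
The plan is to reduce to the case $H = K_2$ and then apply the treewidth-based lower bound that was established earlier in the paper. Since the product is non-trivial, $H$ contains at least one edge, so $G\square K_2$ is a subgraph of $G\square H$, giving $\eta(\CP{G}{H})\geq\eta(\CP{G}{K_2})$. Thus it suffices to show that $\eta(\CP{G}{K_2})\geq\chi(G)$, because Sabidussi's Theorem gives $\chi(\CP{G}{H})=\max\{\chi(G),\chi(H)\}=\chi(G)$.

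Next I would invoke \lemref{LowerBoundGraphEdge}, which states that for any graph $G$ with at least one edge,
$$\eta(\CP{G}{K_2})>\big(\tfrac14\log\tw{G}\big)^{1/4}\enspace.$$
Note that $G$ must have at least one edge, since otherwise $\chi(G)\leq 1$ and the treewidth hypothesis forces $\tw{G}\geq 2^{4}$, contradicting $\tw{G}=0$. Substituting the hypothesis $\tw{G}\geq 2^{4\chi(G)^4}$ gives $\log\tw{G}\geq 4\chi(G)^4$, hence $\tfrac14\log\tw{G}\geq\chi(G)^4$, and taking fourth roots yields
$$\eta(\CP{G}{K_2})>\chi(G)\enspace.$$
Since $\eta$ and $\chi$ are integers, this strict inequality implies $\eta(\CP{G}{K_2})\geq\chi(G)+1$, and in particular $\eta(\CP{G}{H})\geq\chi(G)=\chi(\CP{G}{H})$, establishing Hadwiger's Conjecture for \CP{G}{H}.

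There is essentially no obstacle once the earlier machinery is in place: the entire argument is a two-line computation combining the monotonicity of $\eta$ under taking subgraphs, Sabidussi's formula, and \lemref{LowerBoundGraphEdge}. The specific constant $2^{4\chi(G)^4}$ in the hypothesis is manifestly chosen so that the fourth-root bound from \lemref{LowerBoundGraphEdge} reaches $\chi(G)$; indeed, the whole point of \lemref{LowerBoundGraphEdge} (whose proof invokes the excluded-grid theorem of Robertson--Seymour via \lemref{BigGrid}) is to make a statement of this form immediate.
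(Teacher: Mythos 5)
Your proof is correct and follows essentially the same route as the paper: restrict to the subgraph $\CP{G}{K_2}$, apply \lemref{LowerBoundGraphEdge}, use the treewidth hypothesis to push the bound past $\chi(G)$, and close with Sabidussi's Theorem. The only addition beyond the paper's argument is the (harmless) observation that $G$ has at least one edge, which is indeed guaranteed by the treewidth hypothesis.
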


\begin{proof}
  Since $H$ has at least one edge,
  $\eta(\CP{G}{H})\geq\eta(\CP{G}{K_2})$.  By
  \lemref{LowerBoundGraphEdge},
  $\eta(\CP{G}{H})>(\tfrac14\log\tw{G})^{1/4}$, which is at least
  $\chi(G)$ by assumption.  Hence
  $\eta(\CP{G}{H})>\chi(G)=\chi(\CP{G}{H})$ by Sabidussi's Theorem
  \citep{Sabidussi57}.  That is, \CP{G}{H} satisfies Hadwiger's
  Conjecture.
\end{proof}

We now show that products satisfy (a slightly better bound than)
Hadwiger's Conjecture whenever the graph with smaller chromatic number
is relatively large.

\begin{theorem}
  \thmlabel{HadwigerBigH} Let $G$ and $H$ be connected graphs with
  $\verts{H}-1\geq\chi(G)\geq\chi(H)$.  Then $$\chi(\CP{G}{H})\leq
  \eta(\CP{G}{H})-1\enspace.$$
\end{theorem}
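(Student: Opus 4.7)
By Sabidussi's theorem, $\chi(\CP{G}{H})=\max\{\chi(G),\chi(H)\}=\chi(G)$, so it suffices to show that $\eta(\CP{G}{H})\geq\chi(G)+1$. The plan is to split on whether Brooks' theorem applies to $G$, using in each case a suitable star-minor bound from \corref{GraphStarMinor} together with the hypothesis $\verts{H}\geq\chi(G)+1$.

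First suppose $G$ is neither a complete graph nor an odd cycle. By Brooks' theorem, $\chi(G)\leq\Delta(G)\leq\STAR{G}$. Then \corref{GraphStarMinor} gives
$$\eta(\CP{G}{H})\geq\min\{\STAR{G}+1,\verts{H}\}\geq\min\{\chi(G)+1,\chi(G)+1\}=\chi(G)+1,$$
as required.

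Next suppose $G=K_n$ with $n=\chi(G)$. Since $H$ is connected and has at least $\chi(G)+1\geq2$ vertices, $H$ contains $K_2$ as a subgraph, so \CP{K_n}{K_2} is a subgraph of \CP{G}{H}. By \propref{CompleteEdge},
$$\eta(\CP{G}{H})\geq\eta(\CP{K_n}{K_2})=n+1=\chi(G)+1.$$

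Finally suppose $G$ is an odd cycle, so $\chi(G)=3$ and we must show $\eta(\CP{G}{H})\geq4$. Here $\STAR{G}\geq\Delta(G)=2$. Since $\verts{H}\geq4$ and $H$ is connected, $H$ contains a path on three vertices, hence $\STAR{H}\geq 2$ as well. The second inequality of \corref{GraphStarMinor} then gives
$$\eta(\CP{G}{H})\geq\min\{\STAR{G},\STAR{H}\}+2\geq 4=\chi(G)+1.$$

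The only mild obstacle is the two Brooks exceptions, but in each of them the hypothesis $\verts{H}\geq\chi(G)+1$ together with an already-proved bound (either \propref{CompleteEdge} for the clique case or the star-pair bound of \corref{GraphStarMinor} for the odd cycle case) immediately delivers the required extra vertex in the clique minor.
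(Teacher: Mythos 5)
Your proof is correct and follows essentially the same approach as the paper: reduce via Sabidussi's theorem, then handle the Brooks exceptions separately using \propref{CompleteEdge} for cliques and \corref{GraphStarMinor} for the generic case. The only cosmetic difference is in the cycle case: the paper treats all cycles $C_n$ in one stroke by noting that \CP{G}{H} contains a \CP{K_3}{K_2}-minor (so $\eta\geq 4$ by \propref{CompleteEdge}), whereas you isolate only odd cycles (the true Brooks exceptions) and invoke the $\min\{\STAR{G},\STAR{H}\}+2$ bound of \corref{GraphStarMinor} there, which works just as well.
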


\begin{proof}
  By Sabidussi's Theorem \citep{Sabidussi57} it suffices to prove that
  $\eta(\CP{G}{H})\geq \chi(G)+1$.

  \textbf{Case 1.} $G=K_n$ for some $n\geq3$: Then by
  \propref{CompleteEdge}, $$\eta(\CP{G}{H})\geq\eta(\CP{K_n}{K_2})=n+1=\chi(G)+1=\chi(\CP{G}{H})+1\enspace.$$

  \textbf{Case 2.} $G=C_n$ for some $n\geq3$: Then by
  \propref{CompleteEdge}, $$\eta(\CP{G}{H})\geq\eta(\CP{K_3}{K_2})=4\geq\chi(G)+1=\chi(\CP{G}{H})+1\enspace.$$

  \textbf{Case 3. } $G$ is neither a complete graph nor a cycle: Then
  by Brooks' Theorem \citep{Brooks41} and \corref{GraphStarMinor},
  \begin{align*}
    \eta(\CP{G}{H}) \geq\min\{\verts{H},\Delta(G)+1\}
    \geq\;&\min\{\verts{H},\chi(G)+1\}\\
    =\;&\chi(G)+1 \\
    =\;&\chi(\CP{G}{H})+1\enspace,
  \end{align*}
  as desired.
\end{proof}


Note that \thmref{HadwigerBigH} is best possible, since
\thmref{TreeComplete} implies that for $G=K_n$ and $H$ any tree (no
matter how big), $\chi(\CP{G}{H})=n=\eta(\CP{G}{H})-1$.

\twothmref{HadwigerNearChi}{HadwigerBigH} both prove (under certain
assumptions) that $\chi(\CP{G}{H})\leq\eta(\CP{G}{H})-1$, which is
stronger than Hadwiger's Conjecture for general graphs. This should
not be a great surprise, since if Hadwiger's Conjecture holds for all
graphs, then the same improved result holds for all non-trivial
products \CP{G}{H} with
$\chi(G)\geq\chi(H)$: $$\chi(\CP{G}{H})=\chi(G)\leq\eta(G)\leq\eta(\CP{G}{K_2})-1\leq\eta(\CP{G}{H})-1\enspace.$$

Whether Hadwiger's Conjecture holds for all non-trivial products
reduces to the following particular case.

\begin{theorem}
  \thmlabel{HadwigerConjecture} Let $G$ be a graph. Then Hadwiger's
  Conjecture holds for every non-trivial product \CP{G}{H} with
  $\chi(G)\geq\chi(H)$ if and only if Hadwiger's Conjecture holds for
  \CP{G}{K_2}.
\end{theorem}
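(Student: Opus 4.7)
The plan is to prove both directions by combining three ingredients already at our disposal: Sabidussi's formula $\chi(\CP{G}{H})=\max\{\chi(G),\chi(H)\}$, the monotonicity of $\eta$ under taking subgraphs, and the fact that $K_2$ embeds as a subgraph of every graph with at least one edge.

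For the forward direction, suppose Hadwiger's Conjecture holds for every non-trivial product $\CP{G}{H}$ with $\chi(G)\geq\chi(H)$. If $\CP{G}{K_2}$ is non-trivial, then $G$ is connected and has at least one edge, so $\chi(G)\geq 2=\chi(K_2)$. Hence $\CP{G}{K_2}$ falls within the hypothesis and Hadwiger's Conjecture holds for it.

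For the backward direction, suppose Hadwiger's Conjecture holds for $\CP{G}{K_2}$, and let $H$ be any connected graph with at least one edge and $\chi(H)\leq\chi(G)$. Since $H$ has an edge, $K_2$ is a subgraph of $H$, and therefore $\CP{G}{K_2}$ is a subgraph of $\CP{G}{H}$; in particular,
\begin{align*}
  \eta(\CP{G}{H})\;\geq\;\eta(\CP{G}{K_2}).
\end{align*}
By Sabidussi's Theorem, $\chi(\CP{G}{K_2})=\max\{\chi(G),2\}=\chi(G)$ (where we use $\chi(G)\geq 2$, which holds because $G$ has an edge). The assumed instance of Hadwiger's Conjecture thus gives $\chi(G)=\chi(\CP{G}{K_2})\leq\eta(\CP{G}{K_2})$. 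Combining these,
\begin{align*}
  \chi(\CP{G}{H})\;=\;\max\{\chi(G),\chi(H)\}\;=\;\chi(G)\;\leq\;\eta(\CP{G}{K_2})\;\leq\;\eta(\CP{G}{H}),
\end{align*}
which is Hadwiger's Conjecture for $\CP{G}{H}$.

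There is really no obstacle to this argument; the only subtlety is checking that the non-triviality hypotheses line up (so that $\chi(G)\geq 2$ and $K_2\subseteq H$ both hold), and that Sabidussi's identity collapses $\chi(\CP{G}{K_2})$ to $\chi(G)$. The proof is essentially a one-line reduction, and I would present it in the paper as such.
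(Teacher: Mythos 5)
Your proof is correct and follows essentially the same argument as the paper: the forward direction is immediate from the hypothesis, and the backward direction chains Sabidussi's identity $\chi(\CP{G}{H})=\chi(G)=\chi(\CP{G}{K_2})$ with the subgraph monotonicity $\eta(\CP{G}{K_2})\leq\eta(\CP{G}{H})$.
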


\begin{proof}
  The forward direction is immediate. Suppose that Hadwiger's
  Conjecture holds for \CP{G}{K_2}; that is,
  $\chi(\CP{G}{K_2})\leq\eta(\CP{G}{K_2})$.  Let $H$ be a graph with
  at least one edge and $\chi(G)\geq\chi(H)$.  Then
  $\chi(\CP{G}{H})=\chi(G)=\chi(\CP{G}{K_2})$ by Sabidussi's Theorem
  \citep{Sabidussi57}.  Since $K_2$ is a subgraph of $H$,
  $\eta(\CP{G}{H})\geq\eta(\CP{G}{K_2})$. In summary,
$$\chi(\CP{G}{H})
=\chi(G)=\chi(\CP{G}{K_2})\leq\eta(\CP{G}{K_2})\leq\eta(\CP{G}{H})\enspace.$$
Hence Hadwiger's Conjecture holds for \CP{G}{H}.
\end{proof}

\thmref{HadwigerConjecture} motivates studying $\eta(\CP{G}{K_2})$ in
more detail. By \eqnref{TreewdithTied}, $\eta(\CP{G}{K_2})$ is tied to
\tw{G}, the treewidth of $G$. By a minimum-degree-greedy algorithm,
$\chi(G)\leq\tw{G}+1$. Thus it is tempting to conjecture that the
lower bound on $\eta(\CP{G}{K_2})$ from \lemref{LowerBoundGraphEdge}
can be strengthened to
\begin{equation}
  \eqnlabel{StrongTreewidthBound}
  \eta(\CP{G}{K_2})\geq\tw{G}+1\enspace.
\end{equation}
This would imply that for all graphs $G$ and $H$ both with at least
one edge and $\chi(G)\geq\chi(H)$,
$$\chi(\CP{G}{H})=\chi(G)\leq\tw{G}+1\leq\eta(\CP{G}{K_2})\leq\eta(\CP{G}{H})\enspace;$$
that is, Hadwiger's Conjecture holds for every non-trivial product.
However, \eqnref{StrongTreewidthBound} is false. \citet{KB92} proved
that a random cubic graph on $n$ vertices has $\tw{G}\geq\Omega(n)$
but $\eta(\CP{G}{K_2})\leq\Oh{\sqrt{n}}$ by \lemref{UpperBound}.

We finish with some comments about Hadwiger's Conjecture for
$d$-dimensional products. In what follows $G_1,\dots,G_d$ are graphs,
each with at least one edge, such that
$\chi(G_1)\geq\dots\geq\chi(G_d)$. Thus
$\chi(\CCP{G_1}{G_2}{G_d})=\chi(G_1)$ by Sabidussi's Theorem
\citep{Sabidussi57}. Observe that \thmref{HadwigerConjecture}
generalises as follows: Hadwiger's Conjecture holds for all
\CCP{G_1}{G_2}{G_d} if and only if it holds for
\CP{G_1}{Q_{d-1}}. (Recall that $Q_d$ is the $d$-dimensional
hypercube.)\
Finally we show that if Hadwiger's Conjecture holds for all graphs,
then a significantly stronger result holds for $d$-dimensional
products. By \eqnref{HypercubeLower} and \thmref{CompleteComplete},
\begin{align*}
    \eta(\CCP{G_1}{G_2}{G_d}) 
  \geq\;&
  \eta(\CP{K_{\eta(G_1)}}{Q_{d-1}})\\
  \geq\;&
  \eta(\CP{K_{\eta(G_1)}}{K_{2^{d/2}}})\\
  \geq\;&
(2^{d/4}-o(1))  \,\eta(G_1)\\
  \geq\;&
(2^{d/4}-o(1))  \,\chi(G_1)\\
  =\;& 
(2^{d/4}-o(1))\,\chi(\CCP{G_1}{G_2}{G_d})
\enspace.
\end{align*}
This shows that if Hadwiger's Conjecture holds for all graphs, then
the multiplicative factor of $1$ in Hadwiger's Conjecture can be
improved to an exponential in $d$ for $d$-dimensional products.

\section*{Note Added in Proof}

\lemref{LowerBoundGraphEdge} can be restated as: if $\tw{G}\geq
2^{4\ell^4}$ then $\eta(\CP{G}{K_2})\geq\ell$. This exponential bound
was recently improved by \citet{ReedWood-EuJC} to the
following polynomial bound: for some constant $c$, if $\tw{G}\geq
c\ell^4\sqrt{\log\ell}$ then $\eta(\CP{G}{K_2})\geq\ell$.
Subsequently, the bounds in \eqnref{TreewdithTied}, in
\thmref{HadwigerBigTW}, and in the proof of
\lemref{BoundedHadwigerImplies} can be improved.

\end{document}